\theoremstyle{plain}
\newtheorem{theorem}{\bf Theorem}[section]
\newtheorem{lemma}[theorem]{\bf Lemma}
\newtheorem{proposition}[theorem]{\bf Proposition}
\newtheorem{corollary}[theorem]{\bf Corollary}
\newtheorem{question}[theorem]{\bf Question}
\theoremstyle{definition}
\newtheorem{definition}[theorem]{Definition}
\newtheorem{example}[theorem]{Example}
\newcommand{\ree}[1]{(\ref{#1})}
\theoremstyle{remark}
\newtheorem{remark}[theorem]{Remark}
\numberwithin{equation}{section}
\newcommand{\ZZ}{\mathbb{Z}}
\newcommand{\M}{\mathcal{M}}
\newcommand{\E}{\mathcal{E}}
\newcommand{\sym}{\mathfrak{S}}
\newcommand{\T}{\mathcal{T}}
\newcommand{\xx}{\mathbf{x}}
\newcommand{\cover}{\lessdot}
\newcommand{\ISF}{\mathcal{ISF}}
\newcommand{\SF}{\mathcal{SF}}
\newcommand{\NC}{\mathcal{NC}}
\newcommand{\NCDyck}{\mathcal{NCD}yck}
\renewcommand{\c}{\mathbf{c}}
\newcommand{\m}{\mathbf{m}}
\renewcommand{\d}{\mathbf{d}}
\newcommand{\D}{\mathbf{D}}
\newcommand{\C}{\mathbf{C}}
\renewcommand{\r}{\mathbf{r}}
\newcommand{\F}{\mathfrak{F}}
\newcommand{\FF}{\mathbf{F}}
\def\newop#1{\expandafter\def\csname #1\endcsname{\mathop{\rm #1}\nolimits}}
\author[R. S. Gonz\'alez D'Le\'on]{Rafael S. Gonz\'alez D'Le\'on}
\address{Escuela de Ciencias Exactas e Ingenier\'ia, Universidad Sergio Arboleda, Bogot\'a, 
Colombia}
\email{rafael.gonzalezl@usa.edu.co}
\author[J. Hallam]{Joshua Hallam}
\address{Department of Mathematics and Statistics, Wake Forest University,  Winston-Salem, NC 
27109, USA}
\email{hallamjw@wfu.edu}
\begin{document}

\title{The Whitney Duals of a Graded Poset}
\maketitle

\begin{abstract}
We introduce the notion of a \emph{Whitney dual} of a graded poset.  Two posets are Whitney duals to each other if (the absolute value of) their Whitney numbers of the first and second kind are interchanged between the two posets. We define new types of edge and chain-edge labelings which we call \emph{Whitney labelings}. We prove that every graded poset with a Whitney labeling has a Whitney dual. Moreover, we show how to explicitly construct a Whitney dual using a technique involving quotient posets. 

As applications of our main theorem, we show that geometric lattices, the lattice of noncrossing partitions, the poset of weighted partitions studied by Gonz\'alez D'Le\'on-Wachs, and most of the R$^*$S-labelable posets studied by Simion-Stanley all have Whitney duals. Our technique gives a combinatorial description of a Whitney dual of the noncrossing partition lattice in terms of a family of noncrossing Dyck paths. Our method also provides an explanation of the Whitney duality between the poset of weighted partitions and a poset of rooted forests studied by Reiner and Sagan. An integral part of this explanation is a new chain-edge labeling for the poset of weighted partitions which we show is a Whitney labeling.

Finally, we show that a graded poset with a Whitney labeling admits a local action of the $0$-Hecke algebra of type $A$ on its set of maximal chains. 
The characteristic of the associated representation is Ehrenborg's flag quasisymmetric function.  The existence of this action implies, using a result of McNamara, that 
 when the maximal intervals of the constructed Whitney duals are bowtie-free, they are also snellable.
In the case where 
these maximal intervals are lattices, they are supersolvable.

{\textbf{Keywords:} graded posets, Whitney numbers, Whitney duality, edge labelings, chain-edge labelings, quotient posets, noncrossing partitions, weighted partitions, rooted forests, $0$-Hecke algebra actions, flag quasisymmetric function.}
\end{abstract}

\tableofcontents

\section{Introduction}\label{section:introduction}
All \emph{partially ordered sets (or posets)} considered here will be finite, graded, and 
contain a minimum element (denoted by $\hat{0}$). We assume familiarity with poset and poset 
topology terminology and notation. For background on posets the reader should visit 
\cite[Chapter 3]{Stanley2012} and \cite{Wachs2007}.

Throughout the paper, $P$ will denote a finite graded poset with a  $\hat{0}$ and $\rho$ will 
denote its rank function. The \emph{M\"obius function} of a poset $P$ is defined 
recursively for pairs $x<y$ in $P$ by
\begin{equation}\label{equation:mobiusdefinition}
\mu(x,y)=\begin{cases} 1 &\mbox{if } x=y, \\ 
-\displaystyle \sum_{x\le z < y} \mu(x,z) & \mbox{if } x \neq y. \end{cases}
\end{equation}

We illustrate, with two examples, how to calculate the values $\mu(\hat{0},x)$ of the M\"obius 
function. These two examples will be of a particular relevance throughout this article.
\begin{example}\label{example:partitionlattice}
Let $\Pi_n$ denote the poset whose underlying set is formed by the partitions of the set 
$[n]:= \{1,2,\dots, n\}$  with order relation given for $\pi,\pi^{\prime} \in \Pi_n$ by $\pi \le 
\pi^{\prime}$ if every block of $\pi$ is contained in some block of $\pi^{\prime}$. Equivalently,  
the cover relation $\pi \lessdot \pi^{\prime}$ is defined whenever $\pi^{\prime}$ is obtained from 
$\pi$ when exactly two blocks of $\pi$ are merged to form a single block in $\pi^{\prime}$ while 
the remaining blocks of $\pi$ and $\pi^{\prime}$ are the same. We say that the 
partitions are ordered by \emph{refinement} and we call $\Pi_n$ the \emph{partition lattice} (since this poset has additional structure, that of a lattice). In 
Figure \ref{figure:examplemobius} we illustrate the values $\mu(\hat{0},x)$ of 
the M\"obius function for every $x \in \Pi_3$.
\end{example}

\begin{example}\label{example:ISF}
Let $T$ be a tree with vertices labeled by distinct integers.  We call the smallest vertex of 
$T$ the \emph{root}.  We say $T$ is an \emph{increasing tree} if  the sequence of vertex labels 
read along any path starting at the root  of $T$ is increasing.  An \emph{increasing spanning 
forest} is a 
collection of 
increasing trees whose vertex labels form a partition of $[n]$.  The word ``spanning"  here 
indicates that these forests are spanning forests of the complete graph.  For more information 
about increasing 
spanning forests 
see~\cite{HallamMartinSagan2016}. We use  $\ISF_n$ to denote the set 
of increasing spanning forests on $[n]$. A partial 
order on
$\ISF_n$ is defined by  $F_1 \cover F_2$  if exactly two trees in 
$F_1$ are replaced by the tree in $F_2$ that is obtained after joining their roots with an edge.  
Note that the root  of the resulting tree is the smaller label among the roots of the 
two joined trees. See  Figure \ref{figure:examplemobius} for the Hasse diagram of $\ISF_3$ 
together with the M\"obius values $\mu(\hat{0},x)$ for every $x \in \ISF_3$.
\end{example}

\begin{figure}
 \begin{tikzpicture}[scale=1]

\begin{scope}[xshift=0cm]
\tikzstyle{every node}=[inner sep=3pt, scale=1.1, minimum width=4pt]
 \node[pin={[pin distance=10pt, pin edge={red,-, dashed}, red, inner sep=0pt]200:$+1$}] (n102030) at 
(0,0)  {$1/ 2/ 3$};
\node at (0,-1) {$\Pi_3$}; 

  \node[pin={[pin distance=10pt, pin edge={red,-, dashed}, red, inner sep=0pt]200:$-1$}] (n12i30) at (-2,2) {$12/ 3$};
  \node[pin={[pin distance=10pt, pin edge={red,-, dashed}, red, inner sep=0pt]200:$-1$}] (n13i20) at (0,2) {$13/ 2$};
  \node[pin={[pin distance=10pt, pin edge={red,-, dashed}, red, inner sep=0pt]200:$-1$}] (n1023i) at (2,2)  {$1/ 23$};

 \node[pin={[pin distance=10pt, pin edge={red,-, dashed}, red, inner sep=0pt]200:$+2$}] (n123ii) at 
(0,4){$123$};

  \tikzstyle{every path}=[line width=1pt]
 \draw (n123ii) -- (n1023i) ;
 \draw (n123ii)-- (n13i20);
  \draw (n123ii) -- (n12i30);  

  \draw (n13i20) -- (n102030);
  \draw (n12i30) -- (n102030);
  \draw (n1023i) -- (n102030);
\end{scope}
\begin{scope}[xshift=6cm,yshift=0.1cm,scale=0.40]

\node at (0,-3) {$\mathcal{ISF}_3$};
\node[pin={[pin distance=10pt, pin edge={red,-, dashed}, red, inner 
sep=0pt]210:$+1$},fill,gray!10,circle, inner sep=0pt,minimum size=1.3cm] (p1) at (0,0) {â¢};
\node[pin={[pin distance=10pt, pin edge={red,-, dashed}, red, inner sep=0pt]210:$-1$},fill,gray!10,circle, inner sep=0pt,minimum size=1.3cm] (p2) at (0,5) {â¢};
\node[pin={[pin distance=10pt, pin edge={red,-, dashed}, red, inner sep=0pt]210:$-1$},fill,gray!10,circle, inner sep=0pt,minimum size=1.3cm] (p3) at (-5,5) {â¢};
\node[pin={[pin distance=10pt, pin edge={red,-, dashed}, red, inner sep=0pt]210:$-1$},fill,gray!10,circle, inner sep=0pt,minimum size=1.3cm] (p4) at (5,5) {â¢};

\node[pin={[pin distance=10pt, pin edge={red,-, dashed}, red, inner 
sep=0pt]210:$+1$},fill,gray!10,circle, inner sep=0pt,minimum size=1.3cm] (p5) at (-2.5,9) {â¢};
\node[pin={[pin distance=10pt, pin edge={red,-, dashed}, red, inner sep=0pt]210:$0$},fill,gray!10,circle, inner sep=0pt,minimum size=1.3cm] (p6) at (5,9) {â¢};

\tikzstyle{every node}=[black, draw, circle, inner sep=0.4pt, scale=1, minimum width=4pt,scale=0.8]
\tikzstyle{every path}=[blue, line width=0.2pt]

\node (a1) at (0,1){$1$};
\node (a2) at (1,0){$2$};
\node (a3) at (-1,0){$3$};

\node (b1) at (0,6){$1$};
\node (b2) at (1,5){$2$};
\node (b3) at (-1,5){$3$};
 \draw[thick] (b1) -- (b3) ;

\node (c1) at (-5,6){$1$};
\node (c2) at (-4,5){$2$};
\node (c3) at (-6,5){$3$};
 \draw[thick] (c1) -- (c2) ;

\node (d1) at (5,6){$1$};
\node (d2) at (6,5){$2$};
\node (d3) at (4,5){$3$};
 \draw[thick] (d2) -- (d3) ;

\node (e1) at (-2.5,10){$1$};
\node (e2) at (-1.5,9){$2$};
\node (e3) at (-3.5,9){$3$};
\draw[thick] (e2)--(e1)--(e3);

\node (f1) at (5,10){$1$};
\node (f2) at (6,9){$2$};
\node (f3) at (4,9){$3$};
\draw[thick] (f1)--(f2)--(f3);

\tikzstyle{every path}=[line width=1.5pt]
 \draw (p1) -- (p2) ;
 \draw (p1) -- (p3) ;
 \draw (p1) -- (p4) ;
 \draw (p2) -- (p5) ;
 \draw (p3) -- (p5) ;
 \draw (p4) -- (p6) ;

\end{scope}

\end{tikzpicture}
 \caption{M\"obius function values $\mu(\hat{0},x)$ on $\Pi_3$ and $\ISF_3$}
 \label{figure:examplemobius}
\end{figure}
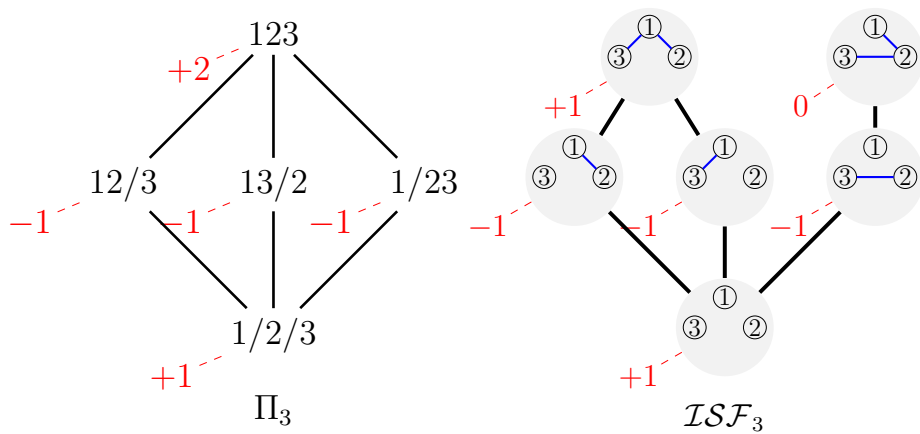

Two important invariants that we can associate to a graded poset $P$ are its Whitney numbers of the 
first and second kind. The \emph{ $k^{th}$ Whitney number of the first kind}, $w_k(P)$, is defined 
by
\begin{align}\label{definition:whitneyfirstkind}
w_k(P) = \sum_{\rho(x) =k} \mu(\hat{0},x),
\end{align} 
and the \emph{$k^{th}$ Whitney number of the second kind}, $W_k(P)$, is defined by
\begin{align}\label{definition:whitneysecondkind}
W_k(P)  = |\{x\in P \mid \rho(x) = k\}|.
\end{align}

The Whitney numbers of a graded poset play an important role in many areas of combinatorics.  For 
example, 
they appear as coefficients of the chromatic polynomial of a finite graph~\cite{rota1964}.  
Stanley~\cite{Stanley1973} showed they 
can be used to count the number of acyclic orientations of a graph.   When the poset is the 
intersection lattice of a real hyperplane arrangement, Zaslavsky~\cite{Zaslavsky1975} showed  that 
its Whitney numbers can be used to 
count the number of bounded and unbounded regions. For complex hyperplane 
arrangements the Whitney numbers can be used to compute the dimensions of the 
Orlik-Solomon algebra that is isomorphic to the Whitney cohomology of the intersection 
lattice~\cite{OrlikSolomon1980}.  Very recently, a long 
standing conjecture of Heron~\cite{Heron1972}, Rota~\cite{rota1971} and Welsh~\cite{Welsh1976} 
concerning the 
log-concavity of the Whitney numbers of the first kind of geometric lattices was settled by 
Adiprasito, Huh, and Katz~\cite{AdiprasitoHuhKatz2015} using ideas coming from Hodge theory.

\begin{table}
\begin{center}
\begin{tabular}{|c|c|c|c|c|}\hline
\bf{$k$} & \textbf{$w_k(\Pi_n)$} & \textbf{$W_k(\Pi_n)$} & \textbf{$w_k(\ISF_n)$} & 
\textbf{$W_k(\ISF_n)$}\\\hline
\textbf{$0$}  & $1$ & $1$& $1$ & $1$\\\hline
\textbf{$1$}  & $-3$ & $3$& $-3$ & $3$\\\hline
\textbf{$2$}  & $2$ & $1$ & $1$ & $2$\\\hline
\end{tabular}
\end{center}
\caption{Whitney numbers of the first and second kind for $\Pi_3$ for $\ISF_3$.}
\label{table:whitneynumberspi3if3}
\end{table}

Now consider the previous two examples in Figure~\ref{figure:examplemobius}. When we compute the 
Whitney 
numbers of $\Pi_3$ and $\ISF_3$ and we list them side by side (see Table 
\ref{table:whitneynumberspi3if3}), we notice a curious coincidence: (up to sign) their Whitney 
numbers of the first and second kind are interchanged. This surprising phenomenon was 
initially noticed by the authors of \cite{DleonWachs2016} for a different, but closely related, 
pair of posets: the poset $\Pi^w_n$ of
weighted partitions and the poset $\SF_n$ of rooted spanning forests on $[n]$ 
    studied by Reiner in \cite{Reiner1978} and Sagan in \cite{Sagan1983}. It turns out that this 
  phenomenon occurs for many other pairs of posets, in particular, the authors announced in
  \cite{DleonHallam2017} that for every geometric lattice there exist another poset   
  with their Whitney numbers interchanged. This seemingly common phenomenon motivates the following 
  definition.

  \begin{definition}\label{definition:whitneyduals}
  Let $P$ and $Q$ be graded posets. We say that $P$ and $Q$ are \emph{Whitney Duals} if for all 
  $k\ge0$ we have that 
  \begin{align}
  |w_k(P)|=W_k(Q) \text{ and } |w_k(Q)|=W_k(P).
  \end{align}
  \end{definition}

  According to this definition, $\Pi_3$ and $\ISF_3$ are Whitney duals. In fact, this is true in general for 
  $\Pi_n$ and $\ISF_n$ for $n\ge 1$ (see \cite{DleonHallam2017}).

Our investigation on Whitney duals is driven by the following two questions that we address to 
different extents in the present work:

\begin{question}\label{question:whitneydual1}
 When does a graded poset $P$ has a Whitney dual?
\end{question}

\begin{question}\label{question:whitneydual2}
Is there a method to construct a Whitney dual for a graded poset $P$? Perhaps under certain 
assumptions about $P$.
\end{question}

In this article we advance towards an answer to the first question as we provide an answer to the 
second question.  To do this, we use poset topology tools including edge labeling,  chain-edge labelings, and 
quotient posets.

To state our main theorem, we now briefly review a few basic ideas concerning edge and chain-edge labelings. Recall that the \emph{Hasse diagram of $P$} is the directed graph on $P$ with directed 
edges the 
covering relations $x\lessdot y$ in $P$, i.e., $x < y$ where there is no
$z\in P$ satisfying $x<z<y$.
Also recall  that an \emph{edge labeling or E-labeling} of $P$ is a map $\lambda: \E(P)\rightarrow 
\Lambda$ where $\E(P)$ is the set of edges of the Hasse 
diagram of $P$ and $\Lambda$ is some other poset of labels. An 
edge labeling is said to be an \emph{ER-labeling} if in every interval $[x,y]$ of $P$ there is a 
unique \emph{saturated or unrefinable chain} $$\c : 
(x 
= x_0 \lessdot x_1 \lessdot \cdots \lessdot x_{\ell-1} \lessdot x_{\ell}= y)$$  that is 
\emph{increasing}, i.e., such that
$$\lambda(x_0 \lessdot x_1) < \lambda(x_1 \lessdot x_2) < \cdots < \lambda(x_{\ell-1}\lessdot 
x_{\ell}).$$  
The concept of an  edge labeling was generalized 
by Bj\"orner and Wachs to a labeling of pairs 
formed by maximal chains and edges of the Hasse diagram known as \emph{chain-edge labelings or C-labelings}. The generalization of an ER-labeling is then known as a  CR-labeling.
Chain-edge 
labelings are more involved and technical in their definition than edge labelings. Thus,  as 
a presentation strategy across this article we  choose to first discuss  the 
definitions and constructions in the context of edge labelings and then explain how these 
constructions generalize to the context of chain-edge labelings.
The reader should visit \cite{ Bjorner1980, BjornerWachs1983, Stanley2012, Wachs2007} for information  on edge and edge-chain labelings. 

In Section \ref{section:whitneylabelings} we define new types of edge and chain-edge labelings that 
we call \emph{Whitney labelings} or \emph{W-labelings}, the name coming from the fact 
that they provide sufficient conditions for the construction of Whitney duals of graded posets.  A crucial property of Whitney labelings is the rank two switching property.   We say that an edge labeling $\lambda$ has the 
\emph{rank two switching property} if for every maximal chain 
$$\c : ( \hat{0}= x_0 \lessdot x_1 \lessdot \cdots \lessdot x_{k-1} \lessdot x_{k})$$ that 
has an increasing step $\lambda(x_{i-1} \lessdot x_i) < \lambda(x_i \lessdot x_{i+1})$ at rank $i$ 
there is a unique maximal chain 
$$\c' : ( \hat{0}= x_0 \lessdot x_1 \lessdot \cdots \lessdot x_{i-1} \lessdot x_i' \lessdot 
x_{i+1}\lessdot \cdots \lessdot x_{k-1} \lessdot x_{k}),$$
whose labels are the same as the ones from $\c$ except for $\lambda(x_{i-1} \lessdot 
x_i')=\lambda(x_{i} \lessdot x_{i+1})$ and $\lambda(x_{i}' \lessdot x_{i+1})=\lambda(x_{i-1} 
\lessdot x_i).$
For chain-edge labelings we require an  additional condition for the rank two switching property, namely that the 
choice of the $x_i'$ is consistent among maximal chains that coincide in the bottom $d$ elements 
when $d>i$.

A Whitney labeling of $P$ is an ER or CR-labeling with the rank two switching 
property and with the property that in every interval $[x,y]$ (a rooted interval in 
the case of CR-labelings) of $P$ each ascent-free maximal chain is determined uniquely by its 
sequence of labels from bottom to top. We will call Whitney labelings respectively EW or CW-labelings depending if the underlying labeling is an ER or a CR-labeling.

The main result of this paper is the following:
\begin{theorem}\label{theorem:WimpliesWhitney}
 Let $P$ be a poset with a Whitney labeling $\lambda$.  Then $P$ has a Whitney dual.
 Moreover, using $\lambda$ we can explicitly construct a Whitney dual $Q_\lambda(P)$ of $P$.
\end{theorem}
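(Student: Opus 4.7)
The plan is to first interpret both sides of the equality $|w_k(P)|=W_k(Q_\lambda(P))$ combinatorially and then construct $Q_\lambda(P)$ explicitly as a quotient. Recall that for an ER-labeled poset one has the standard consequence
\begin{equation*}
|\mu(\hat{0},x)|\;=\;\#\{\text{ascent-free saturated chains in }[\hat{0},x]\},
\end{equation*}
and summing over $\rho(x)=k$ gives $|w_k(P)|$ as the total number of falling saturated chains of length $k$ starting at $\hat{0}$ in $P$ (and similarly in the CR case using rooted intervals). This number is the target for $W_k(Q_\lambda(P))$.

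I would construct $Q_\lambda(P)$ as a quotient of the set $\M(P)$ of saturated chains of $P$ emanating from $\hat{0}$. Declare $\c\sim\c'$ if they are related by a sequence of the rank-two switches guaranteed by the W-labeling, set $Q_\lambda(P):=\M(P)/\!\sim$ with rank equal to chain length, and declare $[\c]\lessdot_Q[\c']$ whenever some representative of $[\c']$ extends a representative of $[\c]$ by adjoining a single top cover. The two hypotheses of a W-labeling combine into the pivotal lemma that \emph{every equivalence class contains a unique falling representative}. Existence follows by iteratively switching any remaining ascent: each switch replaces a pair $(a,b)$ with $a<b$ by $(b,a)$, so an inversion-style statistic decreases and the process terminates in an ascent-free chain. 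Uniqueness is precisely the hypothesis that ascent-free chains in an interval be determined by their label sequence. This immediately yields
\begin{equation*}
W_k(Q_\lambda(P))\;=\;\#\{\text{falling saturated chains of length }k\text{ in }P\}\;=\;|w_k(P)|.
\end{equation*}

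For the reverse identity $|w_k(Q_\lambda(P))|=W_k(P)$, I would endow $Q_\lambda(P)$ with an induced labeling $\lambda^{*}$: a cover $[\c]\lessdot_Q[\c']$ receives the $\lambda$-label of the top cover in any extending representative of $[\c']$. Three things must then be verified: (i) $\lambda^{*}$ is independent of the chosen representatives, (ii) $\lambda^{*}$ is itself an ER (respectively CR) labeling of $Q_\lambda(P)$, and (iii) its ascent-free saturated chains from $\hat{0}_Q$ are in bijection with the elements of $P$ (graded by rank). Granting (i)--(iii), reapplying the M\"obius-via-falling-chains formula inside $Q_\lambda(P)$ yields $|w_k(Q_\lambda(P))|=W_k(P)$. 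The bijection in (iii) goes as follows: a falling saturated chain $[\c_0]\lessdot_Q\cdots\lessdot_Q[\c_k]$ is reconstructed from the falling representative of $[\c_k]$, whose top element is a well-defined rank-$k$ element of $P$; conversely each $x\in P$ with $\rho(x)=k$ determines, via the canonical sort-to-falling procedure applied to any saturated chain ending at $x$, exactly one such tower.

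The main obstacle is the block (i)--(iii). Item (i) is delicate because a rank-two switch at the very top rank exchanges the label being assigned, and one must use the uniqueness-of-ascent-free-chains-by-labels hypothesis — together with the consistency clause built into the chain-edge version of the rank-two switching property — to see that $\lambda^{*}$ is nonetheless well defined on equivalence classes. Item (ii) requires transferring the unique-increasing-chain property of $\lambda$ to the quotient, a transfer again mediated by the rank-two switching property. The bijection in (iii) is then essentially formal once $\lambda^{*}$ is in hand. The running example of $\Pi_n$, which should yield $Q_\lambda(\Pi_n)=\ISF_n$ under a standard EL-labeling, serves as a useful guide throughout.
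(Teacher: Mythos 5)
Your high-level plan (quotient the chain poset by quadratic exchanges, induce a labeling on the quotient, count chains) is the same route the paper takes, so there is no "alternative approach" to report. But there are two real problems.

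First, the ``pivotal lemma'' that each equivalence class contains a unique falling representative is not ``precisely the hypothesis.'' Two ascent-free chains in the same quadratic-exchange class share a \emph{multiset} of labels, but since $\Lambda$ is only a poset, they could a priori have distinct label \emph{words}, so the uniqueness-of-labels hypothesis does not apply directly. The paper closes this gap by first deriving a braid relation ($U_iU_{i+1}U_i=U_{i+1}U_iU_{i+1}$ on double ascents) from the hypothesis and then invoking Newman's lemma on the quadratic-exchange graph: local confluence plus termination yields a unique sink per connected component. Without that confluence argument your ``existence plus uniqueness'' step is unjustified. (Also hidden here is the paper's \emph{cancellative property}, again a consequence of the hypotheses, which is what makes the induced labeling behave well on \emph{all} intervals of $Q_\lambda(P)$, not just those rooted at $[\hat 0]$.)

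Second, and more seriously, the second half is miscalibrated. The induced labeling $\lambda^{*}$ on $Q_\lambda(P)$ is not an ER-labeling; it is an ER$^{*}$-labeling (unique ascent-free maximal chain in each interval). Indeed most intervals $[\hat 0_Q,Y]$ have \emph{no} increasing chain at all (those with $\mu=0$), so they cannot have exactly one. Consequently $|w_k(Q_\lambda(P))|$ counts \emph{increasing} chains from $\hat 0_Q$ of length $k$, not falling ones, and the label-preserving bijection identifies these with increasing chains from $\hat 0$ in $P$, which number $W_k(P)$ because $\lambda$ is an ER-labeling. Your version would only give $|w_k(Q_\lambda(P))|=|w_k(P)|$, not $W_k(P)$. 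The map you sketch in (iii) — send a falling chain of $Q_\lambda(P)$ to the top element $e([\c_k])\in P$ — is not a bijection: it is $|\mu(\hat 0,x)|$-to-one, since each of the $|\mu(\hat 0,x)|$ classes $X$ with $e(X)=x$ is the endpoint of exactly one falling chain. You need to swap the roles of ``increasing'' and ``ascent-free'' throughout this half of the argument.
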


We actually prove a more general result. We define a more general kind of labeling that we call a 
\emph{generalized Whitney labeling}. Its definition relies on a set of more 
general but, at the same time, more technical conditions that also imply the existence of a Whitney dual. However, all the examples that we know of so far satisfy the nicer definition 
that we gave above.

Theorem \ref{theorem:WimpliesWhitney} guarantees for every Whitney labeling $\lambda$ of a graded 
poset $P$ a Whitney dual $Q_{\lambda}(P)$. In general, there is no reason to expect that this 
construction is independent of $\lambda$ and in fact in Section \ref{weightedPartSec} we show 
that for the poset of weighted partitions $\Pi_n^w$ there are at least two non-isomorphic Whitney 
duals. The two Whitney duals that we find are of the form $Q_{\lambda}(P)$ for two different Whitney labelings.  One of the labelings is an ER-labeling that was already introduced in 
\cite{DleonWachs2016}. We 
prove here that this labeling is also an EW-labeling. The second labeling is a new CR-labeling that 
we also show is a CW-labeling. The ascent-free chains of the new CW-labeling are indexed by 
rooted forests of $[n]$ and the construction of the poset $Q_{\lambda}(P)$  provides an explanation 
of the Whitney duality between the weighted partition poset, $\Pi_n^w$, and the poset of rooted spanning forest, $\SF_n$. This duality can be seen 
from the work of Gonz\'alez D'Le\'on - Wachs in \cite{DleonWachs2016} and the work 
of Sagan in \cite{Sagan1983} after comparing the Whitney numbers of $\Pi_n^w$ and $\SF_n$.  Our construction gives an explanation for this duality, which was our initial motivation for the 
current project. 

\begin{theorem}\label{theorem:weightedpartitionsSF}
For all $n\ge 1$, there is a CW-labeling $\lambda_C$ of $\Pi_n^w$ such that 
$Q_{\lambda_C}(\Pi_n^w)$ and $\SF_n$ are isomorphic posets. In particular  $\Pi_n^w$ and  $\SF_n$ 
are Whitney duals.
\end{theorem}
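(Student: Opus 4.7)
The plan is to exhibit an explicit CR-labeling $\lambda_C$ of $\Pi_n^w$ whose ascent-free maximal chains are in bijection with rooted spanning forests on $[n]$, verify that $\lambda_C$ is a CW-labeling so that Theorem \ref{theorem:WimpliesWhitney} applies, and then check that the resulting quotient poset $Q_{\lambda_C}(\Pi_n^w)$ is isomorphic as a poset (not merely in Whitney numbers) to $\SF_n$. Whitney duality will then follow as an immediate corollary.

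I would begin by defining $\lambda_C$ on cover steps along a rooted maximal chain. Each cover in $\Pi_n^w$ merges two weighted blocks $B_1, B_2$ and assigns a new admissible weight to $B_1 \cup B_2$. Along a fixed rooted chain, I would label this step by a pair encoding the two minima being joined together with a weight-increment datum; the chain-edge nature is essential, since the allowable weight increments depend on the history of how the current blocks were assembled. Verifying the CW-labeling axioms then has three parts: for the CR-condition I would show that in every rooted interval there is a unique increasing saturated chain, corresponding to performing the merges in a canonical lexicographic order on the encoded labels; for the rank two switching property I would check that any ascent at rank $i$ can be uniquely undone by swapping the two adjacent merges and transferring the weight data consistently, with the choice stable under agreement of the bottom $d > i$ elements; and for uniqueness of ascent-free chains I would argue that the ascent-free condition forces each merge to be executed as late as possible, so the chain is recovered from its label word.

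Next, I would set up the bijection between ascent-free rooted maximal chains of $\Pi_n^w$ and elements of $\SF_n$. At each merge step along an ascent-free chain, the weight-increment datum selects a vertex in one of the two merged blocks to serve as the attachment point of an edge between the two roots, and concatenating these choices produces a rooted spanning forest on $[n]$. Conversely, any rooted spanning forest can be built by a canonical sequence of root-joinings that is ascent-free under $\lambda_C$, and these two procedures are mutually inverse. This identifies the ascent-free chains, which by the construction underlying Theorem \ref{theorem:WimpliesWhitney} are precisely the elements of $Q_{\lambda_C}(\Pi_n^w)$.

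Finally, I would check that the cover relations match: a cover in $Q_{\lambda_C}(\Pi_n^w)$ comes from extending an equivalence class of chains by one additional cover in $\Pi_n^w$, and under the bijection above this extension corresponds exactly to joining two trees of a rooted spanning forest at their roots, i.e., to a cover $F_1 \lessdot F_2$ in $\SF_n$. The main obstacle I anticipate is the rank two switching verification in the chain-edge setting, because the admissibility of weight increments is history-dependent and the consistency clause across chains sharing a common prefix requires careful bookkeeping of how weights transform under block merges. A secondary but nontrivial point is promoting the bijection on chains to a poset isomorphism $Q_{\lambda_C}(\Pi_n^w) \cong \SF_n$, which requires showing the switching equivalence classes agree precisely with the ambiguity in edge insertion order when building a rooted forest.
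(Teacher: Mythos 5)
Your high-level plan (define a chain-edge labeling $\lambda_C$ of $\Pi_n^w$, prove it is a CW-labeling, identify ascent-free chains with rooted spanning forests, and match cover relations to conclude $Q_{\lambda_C}(\Pi_n^w)\cong\SF_n$) matches the overall architecture of the paper's proof. However, the specific label you propose, and your explanation of why a C-labeling rather than an E-labeling is required, both have gaps that would derail the argument.

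You propose to label a merge step by ``a pair encoding the two minima being joined together with a weight-increment datum.'' This is essentially the labeling $\lambda_E(x\lessdot y) = (\min A,\min B)^u$ from \cite{DleonWachs2016}, which is an ordinary edge labeling, not a genuine chain-edge labeling, and the paper shows it \emph{is} an EW-labeling — but the Whitney dual it produces, $Q_{\lambda_E}(\Pi_n^w)$, is explicitly \emph{not} isomorphic to $\SF_n$ (see Theorem \ref{theorem:Qlambdanonisomorphic} and Figure \ref{figure:second_whitney_dual_Pi3}). So the label scheme you describe would produce the wrong quotient poset. The justification you give for using a chain-edge labeling is also incorrect: the allowable weight increments are always $\{0,1\}$ and are determined by the weights of the two blocks being merged (independently of history), so that datum is not history-dependent. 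What is genuinely history-dependent, and what the paper's $\lambda_C$ exploits, is the rooted-forest structure $\F(\c_i)$ accumulated along the chain — specifically the identities of the current roots $r(T_i)$ (which need not be block minima) and a ``cost'' statistic $\gamma(T)=\sum_{v\in V(T)} d(v,r(T))$ of the subtree whose root is being demoted. The paper's label is $\lambda_C(\c,x_{i-1}\lessdot x_i)=(-\gamma(T_j),r(T_i),r(T_j))$, pulled back from a labeling of $\SF_n$ through the map $\F:C(\Pi_n^w)\to\SF_n$, and the negated cost in the leading coordinate is precisely what enforces (Lemma \ref{lemma:increasingmergingoftrees}) that along an increasing chain a chosen root can never later be demoted, which is the crux of the CR-condition (Proposition \ref{proposition:weightedpartitionsCR}). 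Your proposal replaces this with the vague claim that ``ascent-free forces each merge to be executed as late as possible,'' which has no force without a concrete label that makes it true; as stated, you have no mechanism to prove uniqueness of increasing chains in rooted intervals or to carry out the rank-two switching verification. To repair the argument you would need to recover (or reinvent) the pair $(\F,\gamma)$: a map sending rooted chains to rooted forests, and a tree statistic that strictly grows under merging.
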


The rest of the paper is organized as follows.  In Section~\ref{section:basicexamples} we 
consider basic notions and examples of Whitney duals.  In Section~\ref{section:whitneylabelings} we 
introduce the notion of Whitney labeling and we give a construction of a Whitney dual using a Whitney labeling and quotient posets.

In Section \ref{section:whitneydual} we give a characterization of the M\"obius values of 
$Q_\lambda(P)$. We also provide a simpler description of the posets $Q_\lambda(P)$ when $\lambda$ is a 
Whitney labeling (in the strict sense of the 
definition, i.e., not a generalized one).

In Section \ref{section:examplesWposets} we provide a series of examples of posets for which Theorem \ref{theorem:WimpliesWhitney} applies. These include all geometric lattices, the poset of weighted partitions $\Pi_n^w$ and the 
R$^*$S-labelable posets studied by Simion and Stanley \cite{SimionStanley1999}.   This last family 
includes the lattice $\NC_n$ of 
noncrossing  partitions of $[n]$, the posets of shuffles studied by Greene \cite{Greene1988} and also the noncrossing partition 
lattices of types B and D studied by Reiner \cite{Reiner1997} (see also \cite{Hersh1999}). With these 
examples we illustrate both the existence of EW and CW-labelings. For the particular 
cases of $\NC_n$ and $\Pi_n^w$ we also give explicit combinatorial descriptions of their Whitney 
duals $Q_\lambda(P)$. 

In Section \ref{section:heckeaction} we show that a Whitney labeling induces a local action
of the $0$-Hecke algebra  $H_n(0)$ of type A on the set of maximal chains of $P$ and hence a 
representation $\chi_P$ of $H_n(0)$ on the space spanned by the maximal chains of $P$. This action 
can be transported to a local action on the set of maximal chains of its Whitney dual 
$Q_\lambda(P)$ and hence also induces  a representation $\chi_{Q_\lambda(P)}$. In particular, we prove the 
following theorem. 

 \begin{theorem}\label{theorem:goodactions}
Let $P$ be a poset with a (generalized) CW-labeling $\lambda$.  Then
$$
ch(\chi_{Q_{\lambda}(P)})=ch(\chi_P) = F_p(x) = \omega F_{Q_{\lambda}}(x),
$$
where $F_P(x)$ is Ehrenborg's flag quasisymmetric function of the graded poset $P$, $ch(V)$ indicates the quasisymmetric characteristic of the $H_n(0)$-representation $V$ and $\omega$ is 
the classical involution in the ring of quasisymmetric functions that maps Gessel's fundamental 
quasisymmetric function $L_{S,n}$ indexed by a set $S$ to the one indexed by its complement $[n-1]\setminus S$.
\end{theorem}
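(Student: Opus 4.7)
The plan is to build a local action of $H_n(0)$ on the free module spanned by the set $\mathcal{M}(P)$ of maximal chains of $P$, check the $0$-Hecke relations via the axioms of a CW-labeling, and then read off the quasisymmetric characteristic. Let $n$ denote the common length of maximal chains of $P$ and, for each $\mathbf{c} = (\hat{0} = x_0 \lessdot x_1 \lessdot \cdots \lessdot x_n) \in \mathcal{M}(P)$, record the label sequence $(\lambda_1(\mathbf{c}), \ldots, \lambda_n(\mathbf{c}))$ and descent set $D(\mathbf{c}) = \{i \in [n-1] : \lambda_i(\mathbf{c}) \geq \lambda_{i+1}(\mathbf{c})\}$. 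Define generators $\pi_1, \ldots, \pi_{n-1}$ by $\pi_i(\mathbf{c}) = \mathbf{c}$ when $i \in D(\mathbf{c})$ and $\pi_i(\mathbf{c}) = \mathbf{c}'$ otherwise, where $\mathbf{c}'$ is the unique chain furnished by the rank two switching property at position $i$. By construction each $\pi_i$ depends only on the bottom $i+1$ elements of $\mathbf{c}$, so the action is local in the sense of the theorem.

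Next, I would verify the $H_n(0)$ relations. Idempotency $\pi_i^2 = \pi_i$ is immediate: after a switch at an ascent, position $i$ becomes a descent. The commutation $\pi_i \pi_j = \pi_j \pi_i$ for $|i-j| \geq 2$ follows because disjoint rank-two switches are independent; the chain-consistency clause in the CR-version of the switching property is precisely what ensures the two outputs agree. The braid relation $\pi_i \pi_{i+1} \pi_i = \pi_{i+1} \pi_i \pi_{i+1}$ is the crux. The essential case is a chain with $\lambda_i(\mathbf{c}) < \lambda_{i+1}(\mathbf{c}) < \lambda_{i+2}(\mathbf{c})$: both sides produce an ascent-free subchain of the rank-three sub-interval from $x_{i-1}$ to $x_{i+2}$ whose label sequence is the decreasing rearrangement $(\lambda_{i+2},\lambda_{i+1},\lambda_i)$, and the uniqueness clause of the Whitney labeling (ascent-free chains in each interval are determined by their label sequences) forces these outputs to coincide. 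The remaining sign patterns for positions $i, i+1, i+2$ involve fewer switches and are settled by direct case analysis.

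Once the action is established, I would invoke the standard descent formula, as in Krob--Thibon and in McNamara's work on $EL$-shellability, for the quasisymmetric characteristic of an $H_n(0)$-module whose generators act in the above fix-or-move fashion with respect to a descent statistic: $ch(\chi_P) = \sum_{\mathbf{c} \in \mathcal{M}(P)} L_{D(\mathbf{c}), n}$. This coincides with Ehrenborg's flag quasisymmetric function $F_P(x)$, since for a poset equipped with an ER- or CR-labeling the flag $f$-vector satisfies $\alpha_P(S) = |\{\mathbf{c} : D(\mathbf{c}) = S\}|$. The quotient construction of $Q_\lambda(P)$ from Section \ref{section:whitneylabelings} provides a switching-compatible bijection between $\mathcal{M}(P)$ and $\mathcal{M}(Q_\lambda(P))$, so the action transports intact and yields $ch(\chi_{Q_\lambda(P)}) = ch(\chi_P) = F_P(x)$.

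For the final identity $F_P(x) = \omega F_{Q_\lambda(P)}(x)$, observe that the labeling induced on $Q_\lambda(P)$ by the quotient construction assigns, to the maximal chain $\mathbf{c}^\ast$ corresponding to $\mathbf{c}$, the descent set $[n-1] \setminus D(\mathbf{c})$; this complementation of descent sets is the labeling-level manifestation of the Whitney-duality swap between first- and second-kind Whitney numbers. Hence $F_{Q_\lambda(P)}(x) = \sum_{\mathbf{c}} L_{[n-1]\setminus D(\mathbf{c}), n}$, and applying $\omega$, which by hypothesis sends $L_{S,n}$ to $L_{[n-1]\setminus S, n}$, recovers $F_P(x)$. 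The main obstacle throughout is the braid-relation verification in step two, where the full strength of the CW-labeling hypothesis, and especially the uniqueness of ascent-free chains by label sequence, is used.
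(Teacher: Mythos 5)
Your proposal follows essentially the same path as the paper's: build a local $H_n(0)$-action on $\M_P$ via the rank-two switches, verify the Hecke relations, identify the quasisymmetric characteristic with $F_P$, and transport the action to $Q_\lambda(P)$ through the label-preserving bijection on maximal chains. Two points deserve comment.

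First, your braid-relation argument appeals to the uniqueness clause of a CW-labeling (ascent-free chains determined by label sequences). That argument is exactly the paper's proof that a CW-labeling is a generalized CW-labeling (Theorem \ref{theorem:standardEWlabelings}), and it does not directly cover \emph{generalized} CW-labelings, which are defined with the braid relation as an axiom and need not have the uniqueness property. This is harmless — for the generalized case the braid relation is free — but you should state that dichotomy explicitly, since the theorem covers both.

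Second, the justification of $F_P = \omega F_{Q_\lambda(P)}$ contains a conceptual error. You claim that under the bijection the descent set of the corresponding chain in $Q_\lambda(P)$ is $[n-1]\setminus D(\c)$. This is false: Lemma \ref{lemma:saturatedchainssamelabelsinPandQ} (and its chain-edge analogue) shows the word of labels is \emph{preserved}, so $D(\c)$ is unchanged. The complementation actually enters because $\lambda^*$ is an ER$^*$-labeling rather than an ER-labeling, and for ER$^*$-labelings the flag $h$-vector satisfies $\beta_{[\hat{0},m']}(S) = |\{\c : D(\c) = S^c\}|$. Expanding $F_{Q_\lambda(P)}$ in the fundamental basis with this formula gives $F_{Q_\lambda(P)} = \sum_{\c} L_{D(\c)^c,n}$, which is the formula you wrote; your conclusion is right but the stated mechanism is not. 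This distinction matters: the two posets literally share the same chain-label data, and the Whitney-duality swap lives in the contrast between increasing-chain (ER) and ascent-free-chain (ER$^*$) uniqueness, not in any alteration of descent sets.
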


Theorem \ref{theorem:goodactions} implies that $\chi_{Q_{\lambda}(P)}$ is induced by a ``good 
action'' in the sense of \cite{SimionStanley1999}. The results in \cite{McNamara2003} then 
imply that the  bowtie-free  maximal intervals of the posets $Q_{\lambda}(P)$ are \emph{snellable} (see \cite{McNamara2003}) and the maximal intervals of $Q_\lambda(P)$ that are lattices are \emph{supersolvable}.

\section{Basic Examples  of posets with and without Whitney Duals}\label{section:basicexamples}

We start by a discussion on some basic examples of posets which have and do not have Whitney 
duals.

A graded poset $P$ is \emph{Eulerian} if $\mu(x,y)  = 
(-1)^{\rho(y)-\rho(x)}$ for all $x\leq y$ in $P$ (see Figure~\ref{fig:Eulerian} for an example).  
Other examples of Eulerian posets include the face lattice of a convex polytope and the (strong) 
Bruhat order.  For more information on Eulerian posets see the survey 
article \cite{Stanley1994}.  From the definition of Eulerian poset, it is immediate that we have
$|w_k(P)| 
= W_k(P)$ for all $k$.  Therefore, every Eulerian poset has a Whitney dual, namely itself.  It is 
natural to ask if all posets which are their own Whitney dual are Eulerian.   The poset in 
Figure~\ref{fig:selfdulnoneulerian} shows that this is not the case \footnote{We thank Cyrus Hettle 
 for pointing out this example to the authors.}.    This leads to the currently open question of whether there is a natural characterization of self-Whitney dual posets.

\begin{figure}
\begin{subfigure}[b]{0.5\textwidth}
                \centering
 \begin{tikzpicture}[line join=bevel,scale=.75]

\tikzstyle{every node}=[inner sep=3pt, scale=1, minimum width=4pt]
 \node (n102030) at (0,0)  {$a$};

  \node (n12i30) at (-2,2) {$b$};
  \node (n13i20) at (0,2) {$c$};
  \node (n1023i) at (2,2)  {$d$};

 \node (n123ii) at (-2,4){$e$};
 \node (nf) at (2,4){$f$};

 \draw (n123ii)-- (n13i20);
  \draw (n123ii) -- (n12i30);  

  \draw [] (n13i20) -- (n102030);
  \draw [] (n12i30)-- (n102030);
  \draw [] (n1023i)--(n102030);
 \draw [] (n1023i) --(nf)-- (n13i20) ;
\tikzstyle{every node}=[red, inner sep=0.4pt, scale=1, minimum width=4pt]

\node[] at (0.5,0){$+1$};
\node at (-1.4,2){$-1$};
\node at (0.5,2){$-1$};
\node at (2.5,2){$-1$};
\node at (-1.5,4){$+1$};
\node at (2.4,4){$+1$};
 
\end{tikzpicture}

\caption{Eulerian and self Whitney dual} 
                  \label{fig:Eulerian}
               \end{subfigure}~
\begin{subfigure}[b]{0.5\textwidth}
                \centering
 \begin{tikzpicture}[line join=bevel,scale=.75]

\tikzstyle{every node}=[inner sep=3pt, scale=1, minimum width=4pt]
 \node (n102030) at (0,0)  {$a$};

  \node (n12i30) at (-2,2) {$b$};
  \node (n13i20) at (0,2) {$c$};
  \node (n1023i) at (2,2)  {$d$};

 \node (n123ii) at (0,4){$e$};
 \node (nf) at (2,4){$f$};
 
 \draw (n123ii) -- (n1023i) ;
 \draw (n123ii)-- (n13i20);
  \draw (n123ii) -- (n12i30);  

  \draw [] (n13i20) -- (n102030);
  \draw [] (n12i30)-- (n102030);
  \draw [] (n1023i)--(n102030);
 \draw [] (n1023i) --(nf);
\tikzstyle{every node}=[red, inner sep=0.4pt, scale=1, minimum width=4pt]

\node[] at (0.5,0){$+1$};
\node at (-1.4,2){$-1$};
\node at (0.5,2){$-1$};
\node at (2.5,2){$-1$};
\node at (0.6,4){$+2$};
\node at (2.4,4){$0$};
 
\end{tikzpicture}

\caption{Self  Whitney dual, but not Eulerian} 
                  \label{fig:selfdulnoneulerian}
               \end{subfigure}~

\begin{subfigure}[b]{0.5\textwidth}
                \centering
                  
 \begin{tikzpicture}[line join=bevel,scale=.75]

\tikzstyle{every node}=[inner sep=3pt, scale=.9, minimum width=4pt]
 \node (n102030) at (0,0)  {$a$};

  \node (n13i20) at (0,2) {$b$};

 \node (n123ii) at (0,4){$c$};
 
 \draw (n123ii)-- (n13i20);

  \draw [] (n13i20) -- (n102030);
 
\tikzstyle{every node}=[red, inner sep=0.4pt, scale=1, minimum width=4pt]

\node[] at (0.5,0){$+1$};
\node at (0.5,2){$-1$};
\node at (0.6,4){$0$};
 
\end{tikzpicture}
\caption{No Whitney duals}
                  \label{fig:posetnowhitneydual}

\end{subfigure}~
\begin{subfigure}[b]{0.5\textwidth}
                \centering
                  
 \begin{tikzpicture}[line join=bevel,scale=.75]

\tikzstyle{every node}=[inner sep=3pt, scale=.9, minimum width=4pt]
 \node (a) at (0,0)  {$a$};

  \node (b) at (-2,2) {$b$};

 \node (c) at (0,2){$c$};
 
 \node (d) at (2,2){$d$};
 
  \node (e) at (-2,4) {$e$};

 \node (f) at (2,4){$f$};

 \node (g) at (0,6){$g$};

\draw (a) --(b)--(e) (a)--(c)--(e) (a)--(d)--(e);
\draw  (a)--(c)--(f) (a)--(d)--(f);
\draw  (e)--(g) (g)--(f);
 
\tikzstyle{every node}=[red, inner sep=0.4pt, scale=1, minimum width=4pt]

\node[] at (0.5,0){$+1$};
\node at (-1.4,2){$-1$};
\node at (0.5,2){$-1$};
\node at (2.5,2){$-1$};
\node at (-1.5,4){$+2$};
\node at (2.4,4){$+1$};
\node at (.5,6) {$-1$};

\end{tikzpicture}
\caption{No Whitney duals}
                  \label{fig:posetnowhitneydual2}

\end{subfigure}~

         \caption{Examples of self Whitney-duality and of posets without Whitney dual}
  \label{fig:exampleofselfdualityandnotwhitney}
\end{figure}
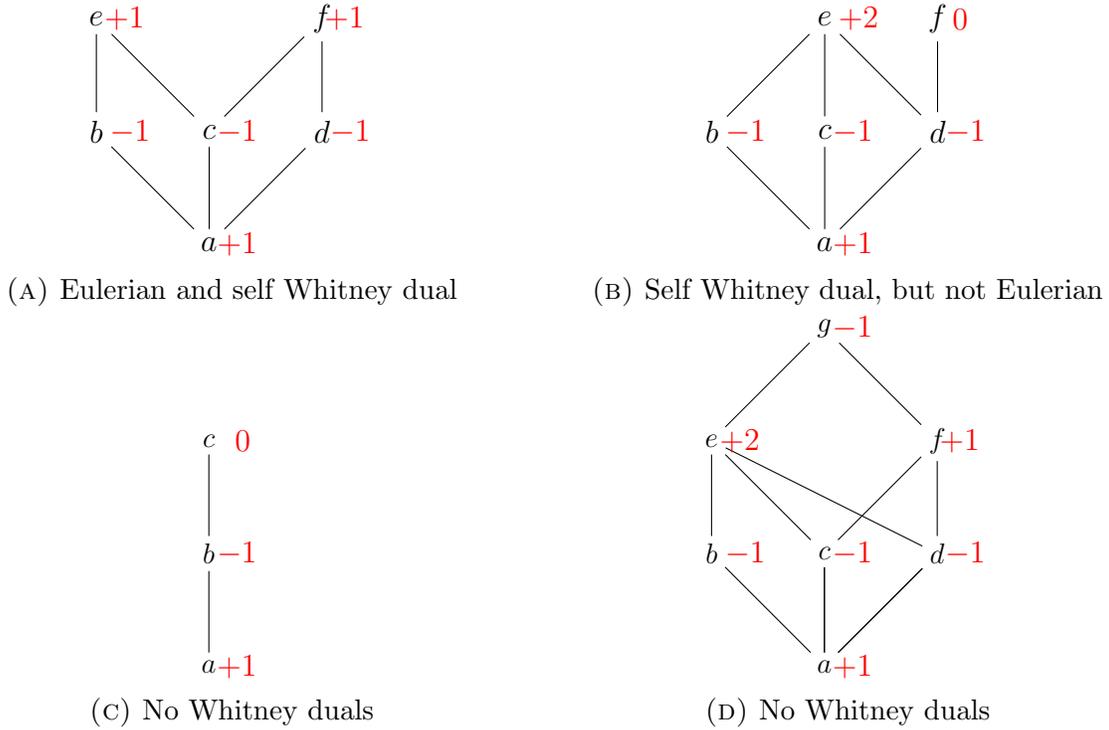

Not every ranked poset has a Whitney dual.  For example, consider the three element chain $C$ in 
Figure \ref{fig:posetnowhitneydual}.  We have that $w_2(C) =0$ and $W_2(C) =1$.  
If $Q$ was a Whitney dual of $C$, then $|w_2(Q)|=1$ and $W_2(Q)=0$, which is clearly impossible.  
This illustrates the fact that a poset $P$ with $|w_k(P)|=0$ for some $k$ smaller than the rank of 
the poset  cannot 
have a Whitney dual.

We also remark that there are more complicated reasons  that can prevent a poset from having a 
Whitney dual.  As an example, consider the poset in Figure~\ref{fig:posetnowhitneydual2}.   Suppose 
that this poset had a Whitney dual, $Q$.  Then the absolute value of the sequence of Whitney 
numbers of the first kind for $Q$ would be $1,3,2,1$.  Moreover, $Q$ would necessarily have a unique maximal element ($\hat{1}$).  This would imply that the sum of the Whitney numbers of the first kind would be 0.   
Now 
it must be the case that $w_0(Q) = 1$ and $w_2(Q) =-3$.  As a result we should be able to
assign a sign to $2$ and $1$ so that $1-3\pm 2 \pm 1=0$, which is impossible.

One might wonder if Whitney duals are unique.   By considering the posets in 
Figures~\ref{fig:Eulerian} and~\ref{fig:selfdulnoneulerian}, one can see this is not the case.  
These posets are both self Whitney dual and Whitney dual to each other.  In fact, a poset that is not self-Whitney dual can have multiple non-isomorphic Whitney duals.  We show in  
Section~\ref{weightedPartSec} that  $\Pi_n^w$ is an example of such a poset.

Now that we have seen some basic examples of posets with and without Whitney duals, we turn our attention to an approach on constructing Whitney duals.

\section{Whitney labelings and quotient posets}\label{section:whitneylabelings}
The idea of edge labelings or E-labelings is pervasive in the poset literature.
The concept of an ER-labeling (originally called an R-labeling) was introduced by Stanley 
(see \cite{Stanley1972}) to study the M\"obius function of rank selected subposets of a graded 
poset. Bj\"orner \cite{Bjorner1980} extended this notion by adding a lexicographic condition. Such labelings are called EL-labelings. The extra condition on an EL-labeling implies the shellability of the 
order (simplicial) complex of the poset and hence has stronger topological 
consequences that allow for the determination of its homotopy type.  Bj\"orner and Wachs 
\cite{BjornerWachs1982} extended  the theory of lexicographic shellability to certain type 
of 
labelings called chain-edge labelings or C-labelings (see also 
\cite{BjornerWachs1983,BjornerWachs1996,BjornerWachs1997}). This new concept of a CR-labeling (or CL-labeling when a lexicographic condition is considered) provided a more flexible 
description that helped in  the determination of the M\"obius numbers of posets that were not known 
to be included in the family of ER-labelable posets.  For example, C-labelings can be used to understand the M\"obius numbers    of the Bruhat order of a 
Coxeter group. An ER-labeling is a special case of a 
CR-labeling and hence chain-edge labelings are in principle more general than edge labelings. In 
fact it is not known whether a poset that has a CR-labeling also has an ER-labeling.
In this section we show how edge and chain-edge labelings can be used to construct Whitney duals. 
We will first describe all constructions using only the concept of an E-labeling 
since the presentation and proofs will be more clear and pleasant to the reader. The use of 
C-labelings require certain technical details, like the 
concept of a rooted interval, that might obscure the relevant ideas involved. However, we hope that the reader will find that the bottom-to-top nature of our constructions are compatible with C-labelings.

In the last subsection we discuss how all the theory developed for E-labelings continue to hold 
for C-labelings. We describe  the important modifications to the definitions and properties 
involved when transferring from the context of E-labelings to C-labelings.

\subsection{Edge labelings}

We now discuss edge labelings and their relation with Whitney numbers.  First, let 
us recall some basic facts about edge labelings.  For complete treatments 
on the topic, see~\cite{ Bjorner1980, BjornerWachs1983, Stanley2012}.  Let $P$ be a poset, and 
let 
$\E(P)$ be the set of edges of the Hasse diagram of $P$.  Moreover, let $\Lambda$ be 
an arbitrary fixed poset that will be considered as the \emph{poset of labels}.  An \emph{edge 
labeling} of $P$ is a map  $\lambda: \E(P)\rightarrow \Lambda$.  

Let $P$ be a poset with edge 
labeling $\lambda$. To every \emph{saturated chain} (also known as an \emph{unrefinable 
chain}) $$\c : 
(x 
= x_0 \lessdot x_1 \lessdot \cdots \lessdot x_{\ell-1} \lessdot x_{\ell}= y)$$ we can associate a  
corresponding \emph{word of labels}
$$\lambda(\c) = \lambda(x_0\lessdot x_1) \lambda(x_1\lessdot x_2) \cdots \lambda(x_{\ell-1}\lessdot 
x_{\ell}).$$
We say that  $\c$ is  \emph{increasing} if its word of labels $\lambda(\c)$ is
\emph{strictly}  increasing, that is, $\c$ is  increasing if 
\begin{align*}
 \lambda(x_0\lessdot x_1) < \lambda(x_1\lessdot x_2)<  \cdots < \lambda(x_{\ell-1}\lessdot 
x_{\ell}). 
\end{align*}
  We say that  $\c$ is  \emph{ascent-free} if its word of labels 
$\lambda(\c)$ has no ascents, i.e. $\lambda(x_{i-1}\lessdot x_i) \not<  \lambda(x_i\lessdot 
x_{i+1}) $, 
for 
all 
$i=1,\dots,\ell-1$. 
Clearly, there are chains that are neither increasing nor ascent-free.

\begin{definition}\label{definition:ER}
 An edge labeling is an \emph{ER-labeling} if in each closed
interval $[x,y]$ of $P$, there is a unique  increasing maximal chain. By analogy, we say that an 
edge labeling is an 
ER$^*$-labeling if 
in each closed interval $[x,y]$ of $P$, there is a unique ascent-free maximal chain. 
\end{definition}

We note that in~\cite{Stanley2012}, ER-labelings are referred to as R-labelings and  have a 
slightly different definition.    First, it is assumed that the labels are 
totally ordered. Also, 
increasing refers to weakly increasing and ascent-free is replaced with strictly decreasing. 
An almost identical proof as the one of Theorem 3.14.2 in~\cite{Stanley2012} gives 
the following.

\begin{theorem}[c.f. Theorem 3.14.2 in \cite{Stanley2012}]\label{theorem:muER}
Let $P$ be a graded poset with an ER-labeling (ER$^*$-labeling).  Then
$$
 \mu(x,y)= (-1)^{\rho(y)-\rho(x)}|\{\c \,\mid\,\c \mbox{ is an ascent-free (increasing) maximal 
chain 
in $[x,y]$}\}|.
$$
\end{theorem}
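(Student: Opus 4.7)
The plan is to adapt Stanley's classical proof (which uses Philip Hall's formula together with the R-labeling property) to this poset-valued label setting. The only delicate point is that ``non-ascent'' now means the two consecutive labels are either equal, incomparable, or satisfy $>$ in $\Lambda$, rather than simply ``descent'' in a total order; but the argument only ever uses the defining property of an ER-labeling, namely the existence of a unique strictly increasing saturated chain in each closed interval.

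I would focus on the ER-labeling case first (the ER$^*$ case is symmetric). Recall Philip Hall's formula
\begin{equation*}
\mu(x,y)=\sum_{k\ge 0}(-1)^k c_k(x,y),
\end{equation*}
where $c_k(x,y)$ is the number of strict chains $x=z_0<z_1<\cdots<z_k=y$. Set $\ell=\rho(y)-\rho(x)\ge 1$. Given such a chain, the ER-property lets me refine each sub-interval $[z_{i-1},z_i]$ to its unique increasing saturated chain; concatenating produces a distinguished maximal chain $\mathbf{c}$ of $[x,y]$. This gives a map from strict chains to maximal chains, and I want to describe its fibers.

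The key combinatorial observation is: a maximal chain $\mathbf{c}:x=x_0\lessdot\cdots\lessdot x_\ell=y$ arises from the refinement of the chain selected by $S=\{s_1<\cdots<s_{k-1}\}\subseteq\{1,\dots,\ell-1\}$ if and only if, between consecutive indices of $S\cup\{0,\ell\}$, the labels of $\mathbf{c}$ strictly increase — equivalently, iff $S$ contains $D(\mathbf{c})$, the set of positions $i\in\{1,\dots,\ell-1\}$ with $\lambda(x_{i-1}\lessdot x_i)\not<\lambda(x_i\lessdot x_{i+1})$. (Here uniqueness of the increasing chain on each sub-interval is exactly what the ER-property supplies.) Writing $d=|D(\mathbf{c})|$, this gives
\begin{equation*}
c_k(x,y)=\sum_{\mathbf{c}}\binom{\ell-1-d}{k-1-d},
\end{equation*}
with the convention that the binomial is $0$ when $k-1<d$.

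Substituting into Hall's formula and swapping sums yields
\begin{equation*}
\mu(x,y)=\sum_{\mathbf{c}}(-1)^{d+1}\sum_{j=0}^{\ell-1-d}(-1)^{j}\binom{\ell-1-d}{j}.
\end{equation*}
The inner binomial sum equals $(1-1)^{\ell-1-d}$, which vanishes unless $d=\ell-1$, and equals $1$ in that case. Since $d=\ell-1$ means every position is a non-ascent, i.e.\ $\mathbf{c}$ is ascent-free, only ascent-free maximal chains survive, each contributing $(-1)^{\ell}$. This is exactly the claimed formula.

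For the ER$^*$ version, I would run the identical argument with the roles of ``increasing'' and ``ascent-free'' swapped: use the unique ascent-free saturated chain of each sub-interval to refine strict chains, redefine $D(\mathbf{c})$ to be the ascent positions of $\mathbf{c}$, and observe that the binomial sum collapses precisely on the increasing maximal chains. The main (and essentially only) obstacle to be careful about is checking that the refinement map is well-defined and injective on fibers parametrized by $S$; this is immediate from the uniqueness clause in the definition of an ER-/ER$^*$-labeling, so no further assumption on $\Lambda$ (such as being a total order) is needed.
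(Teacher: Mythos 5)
Your argument is correct and is exactly the adaptation of Stanley's proof (Theorem 3.14.2 in Stanley's book) that the paper has in mind: the paper explicitly states that "an almost identical proof as the one of Theorem 3.14.2 in \cite{Stanley2012}" yields the result, without spelling out the details. You have correctly identified that the only point requiring care is that "non-ascent" now means $\lambda(x_{i-1}\lessdot x_i)\not<\lambda(x_i\lessdot x_{i+1})$ in a partially ordered label set, and that the Philip Hall / refinement argument uses only the existence and uniqueness of the increasing (resp.\ ascent-free) saturated chain in each subinterval, so the proof goes through unchanged.
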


Let us now consider examples of ER and ER$^*$ labelings.   In both examples the labels will 
come from the set $[n]\times [n]$ and we assume that this set is ordered 
lexicographically  using  the natural order on $[n]$ as integers.

\begin{example}\label{example:labelingposetofpartitions}
Let $\lambda:\E(\Pi_n)\rightarrow [n]\times [n]$ be the edge 
labeling defined on the partition lattice $\Pi_n$ by $\lambda(\pi \cover \sigma) = (i,j)$ where 
$i<j$ and $i$ and $j$ are the 
minimum elements of the two blocks of 
$\pi$ that were merged to obtain $\sigma$.  This edge labeling is an ER-labeling and is a special 
case of 
Bj\"orner's minimum labeling for 
geometric lattices described in \cite{Bjorner1982}. In  
Figure~\ref{figure:exampleER} the labeling $\lambda$ of $\Pi_3$ is depicted.
\end{example}

\begin{example}\label{example:labelingISF}
Let $\lambda^*:\E(\ISF_n)\rightarrow [n]\times [n]$ be the edge labeling defined on $\ISF_n$ by setting
$\lambda^*(F_1\cover F_2)$ to be the unique edge in $F_2$ that is not in $F_1$. It was proved in  
\cite[Proposition 3]{DleonHallam2017} that this edge labeling is an ER$^*$-labeling. In 
Figure~\ref{figure:exampleER}, the labeling $\lambda^*$ of $\ISF_3$ is 
depicted.
\end{example}

\begin{figure}
 \begin{tikzpicture}[scale=1]

\begin{scope}[xshift=0cm]
\tikzstyle{every node}=[inner sep=3pt, scale=1.1, minimum width=4pt]
 \node (n102030) at (0,0)  {$1/ 2/ 3$};
\node at (0,-1) {$\Pi_3$}; 

  \node (n12i30) at (-2,2) {$12/ 3$};
  \node (n13i20) at (0,2) {$13/ 2$};
  \node (n1023i) at (2,2)  {$1/ 23$};

 \node (n123ii) at (0,4){$123$};

  \tikzstyle{every path}=[line width=1pt]
 \draw (n123ii) -- (n1023i) ;
 \draw (n123ii)-- (n13i20);
  \draw (n123ii) -- (n12i30);  

  \draw (n13i20) -- (n102030);
  \draw (n12i30) -- (n102030);
  \draw (n1023i) -- (n102030);

\tikzstyle{every node}=[blue, inner sep=0.4pt, scale=.85, minimum width=4pt]
\node at (-1.5,1) {$(1,2)$};
\node at (-0.35,1) {$(1,3)$};
\node at (1.5,1) {$(2,3)$};

\node at (-1.5,3) {$(1,3)$};
\node at (-0.35,3) {$(1,2)$};
\node at (1.5,3) {$(1,2)$};

\end{scope}
\begin{scope}[xshift=6cm,yshift=0.1cm,scale=0.40]

\node at (0,-3) {$\mathcal{ISF}_3$};
\node[fill,gray!10,circle, inner sep=0pt,minimum size=1.3cm] (p1) at (0,0) {Ã¢ÂÂ¢};
\node[fill,gray!10,circle, inner sep=0pt,minimum size=1.3cm] (p2) at (0,5) {Ã¢ÂÂ¢};
\node[fill,gray!10,circle, inner sep=0pt,minimum size=1.3cm] (p3) at (-5,5) {Ã¢ÂÂ¢};
\node[fill,gray!10,circle, inner sep=0pt,minimum size=1.3cm] (p4) at (5,5) {Ã¢ÂÂ¢};

\node[fill,gray!10,circle, inner sep=0pt,minimum size=1.3cm] (p5) at (-2.5,9) {Ã¢ÂÂ¢};
\node[fill,gray!10,circle, inner sep=0pt,minimum size=1.3cm] (p6) at (5,9) {Ã¢ÂÂ¢};

\tikzstyle{every node}=[black, draw, circle, inner sep=0.4pt, scale=1, minimum width=4pt,scale=0.8]
\tikzstyle{every path}=[blue, line width=0.2pt]

\node (a1) at (0,1){$1$};
\node (a2) at (1,0){$2$};
\node (a3) at (-1,0){$3$};

\node (b1) at (0,6){$1$};
\node (b2) at (1,5){$2$};
\node (b3) at (-1,5){$3$};
 \draw[thick] (b1) -- (b3) ;

\node (c1) at (-5,6){$1$};
\node (c2) at (-4,5){$2$};
\node (c3) at (-6,5){$3$};
 \draw[thick] (c1) -- (c2) ;

\node (d1) at (5,6){$1$};
\node (d2) at (6,5){$2$};
\node (d3) at (4,5){$3$};
 \draw[thick] (d2) -- (d3) ;

\node (e1) at (-2.5,10){$1$};
\node (e2) at (-1.5,9){$2$};
\node (e3) at (-3.5,9){$3$};
\draw[thick] (e2)--(e1)--(e3);

\node (f1) at (5,10){$1$};
\node (f2) at (6,9){$2$};
\node (f3) at (4,9){$3$};
\draw[thick] (f1)--(f2)--(f3);

\tikzstyle{every path}=[line width=1.5pt]
 \draw (p1) -- (p2) ;
 \draw (p1) -- (p3) ;
 \draw (p1) -- (p4) ;
 \draw (p2) -- (p5) ;
 \draw (p3) -- (p5) ;
 \draw (p4) -- (p6) ;
\tikzstyle{every node}=[blue, inner sep=0.4pt, scale=.85, minimum width=4pt]
\node at (-3.5,2.2) {$(1,2)$};
\node at (-0.9,2.2) {$(1,3)$};
\node at (3.5,2.2) {$(2,3)$};

\node at (-4.7,7.2) {$(1,3)$};
\node at (-0.35,7.2) {$(1,2)$};
\node at (3.7,7.2) {$(1,2)$};

\end{scope}

\end{tikzpicture}
 \caption{Example of ER and ER$^*$-labelings on $\Pi_3$ and $\ISF_3$}
 \label{figure:exampleER}
\end{figure}
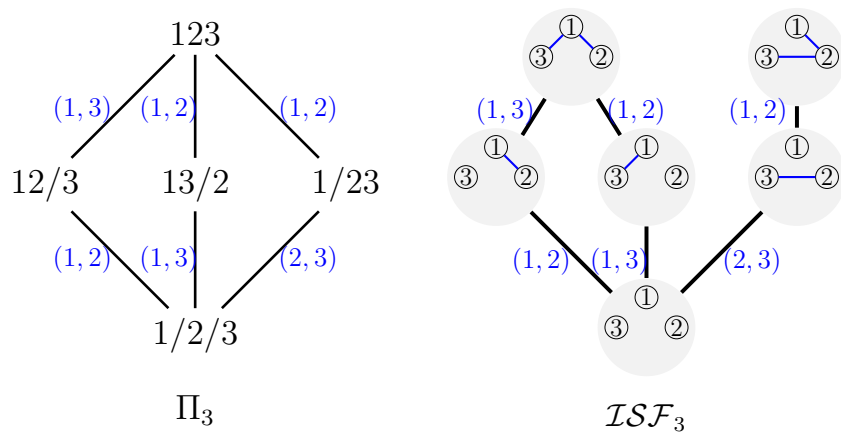

The reader may have noticed how similar are the labelings of $\Pi_3$ and $\ISF_3$.  This 
is no coincidence and as we will see later, the labeling of $\ISF_3$ can be obtained from the 
labeling of $\Pi_3$.

Using the definition of Whitney numbers, Definition~\ref{definition:ER}, and 
Theorem~\ref{theorem:muER}, we can describe the 
Whitney numbers of a poset using an ER-labeling (ER$^{*}$-labeling) by the enumeration of 
saturated chains as follows.

\begin{proposition}\label{proposition:ERwhitneynumbers}
Let $P$ be a graded poset with an ER-labeling (ER$^{*}$-labeling).  Then $|w_k(P)|$ is the number 
of 
ascent-free (increasing) saturated chains starting at $\hat{0}$  of length $k$.  Moreover, 
$|W_k(P)|$ 
is the number of increasing (ascent-free) saturated chains starting at $\hat{0}$ of length $k$.
\end{proposition}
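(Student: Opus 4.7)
The plan is to apply Theorem \ref{theorem:muER} to each interval $[\hat{0}, x]$ with $\rho(x) = k$ and then sum over all such $x$. Fix an ER-labeling $\lambda$ of $P$ and, for each $x \in P$, let $a(x)$ denote the number of ascent-free maximal chains in the interval $[\hat{0}, x]$. Theorem \ref{theorem:muER} gives $\mu(\hat{0}, x) = (-1)^{\rho(x)} a(x)$.

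The key observation is that when $\rho(x) = k$, the sign $(-1)^k$ is the same for every term appearing in the sum defining $w_k(P)$, so the absolute value passes through the sum:
$$
|w_k(P)| = \left| \sum_{\rho(x)=k} (-1)^k a(x) \right| = \sum_{\rho(x)=k} a(x).
$$
Since every ascent-free saturated chain of length $k$ starting at $\hat{0}$ lies in exactly one interval $[\hat{0}, x]$ (namely the one determined by its top element), the right-hand sum counts all such chains, proving the first statement. For the second statement, use the defining property of an ER-labeling directly: each interval $[\hat{0}, x]$ contains a unique increasing maximal chain. Summing $1$ over all $x$ with $\rho(x) = k$ simultaneously gives $W_k(P)$ by \eqref{definition:whitneysecondkind} and the total number of increasing saturated chains of length $k$ starting at $\hat{0}$, so the two are equal.

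The ER$^*$ case is entirely symmetric. Theorem \ref{theorem:muER} in its ER$^*$ form expresses $\mu(\hat{0}, x)$ with the sign $(-1)^k$ times the number of \emph{increasing} maximal chains of $[\hat{0}, x]$, and the defining property of an ER$^*$-labeling gives a unique \emph{ascent-free} maximal chain in each such interval. The same two-step argument then yields that $|w_k(P)|$ counts increasing saturated chains of length $k$ from $\hat{0}$ and $W_k(P)$ counts ascent-free ones.

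The only nontrivial point in the whole argument is the sign observation that lets the absolute value commute with the sum; everything else is bookkeeping and an appeal to previously established results. I do not anticipate any real obstacle, since both the M\"obius formula in Theorem \ref{theorem:muER} and the uniqueness clause in Definition \ref{definition:ER} are tailored exactly for this kind of chain-enumeration statement.
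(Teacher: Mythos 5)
Your argument is correct and matches the approach the paper takes: the paper states this proposition as an immediate consequence of Definition \ref{definition:ER}, Theorem \ref{theorem:muER}, and the definitions of the Whitney numbers, without writing out the details. Your proof simply fills in exactly those details, with the key sign observation ($(-1)^k$ is constant over rank $k$, so the absolute value distributes over the sum since each $a(x)\ge 0$) being the only point worth flagging.
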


In Table \ref{table:propositionERwhitneynumbers} we summarize in the conclusions of Proposition 
\ref{proposition:ERwhitneynumbers} to 
highlight the importance of this proposition and the fact that ER and ER$^*$-labelings switch the 
role of increasing and 
ascent-free saturated chains. This fact will be used later in a construction of Whitney duals.

\begin{table}
\begin{center}
\begin{tabular}{|c|c| c| } \hline
$\lambda$ is  an& $|w_k(P)|$ & $W_k(P)$ \\ \hline
 ER-labeling& $\#$ (ascent-free sat. chains& $\#$ (increasing sat. chains \\
  & of length $k$ starting at $\hat{0}$) &of length $k$ starting at $\hat{0}$)\\ \hline
 ER$^*$-labeling&  $\#$ (increasing sat. chains & $\#$ (ascent-free sat. chains\\
 &of length $k$ starting at $\hat{0}$) &of length $k$ starting at $\hat{0}$)\\\hline 
\end{tabular}
\end{center}
\caption{Proposition \ref{proposition:ERwhitneynumbers}}
\label{table:propositionERwhitneynumbers}
\end{table}

\subsection{Constructing Whitney Duals}\label{section:constructSec}
In the following we use edge labelings with the following property.

\begin{definition}\label{definition:ranktwoE}
Let $\lambda$ be an edge labeling on $P$. We say that $\lambda$ has the 
\emph{rank two switching property} if for every maximal chain 
$$\c : ( \hat{0}= x_0 \lessdot x_1 \lessdot \cdots \lessdot x_{k-1} \lessdot x_{k})$$ that 
has an increasing step $\lambda(x_{i-1} \lessdot x_i) < \lambda(x_i \lessdot x_{i+1})$ at rank $i$ 
there is a unique maximal chain 
$$\c' : ( \hat{0}= x_0 \lessdot x_1 \lessdot \cdots \lessdot x_{i-1} \lessdot x_i' \lessdot 
x_{i+1}\lessdot \cdots \lessdot x_{k-1} \lessdot x_{k}),$$
whose labels are the same as the ones from $\c$ except for $\lambda(x_{i-1} \lessdot 
x_i')=\lambda(x_{i} \lessdot x_{i+1})$ and $\lambda(x_{i}' \lessdot x_{i+1})=\lambda(x_{i-1} 
\lessdot x_i).$
 \end{definition}
\begin{remark}
For E-labelings, there is a simpler way to describe the rank two switching property. We can say that $\lambda$ has the rank two 
switching property  provided that for every interval $[x,y]$ with $\rho(y)-\rho(x)=2$, if $\lambda_1\lambda_2$ is the word of labels of the unique increasing 
maximal chain in the interval, then there exists a unique maximal chain in $[x,y]$ whose word of labels is $\lambda_2\lambda_1$.  We choose to give the seemingly more complicated definition because it closely resembles the condition for C-labelings that we provide later.
 \end{remark}

In Figure~\ref{figure:exampleER},  one can see  that the labeling of $\Pi_3$ given in 
Example~\ref{example:labelingposetofpartitions} has the rank two switching property.  Indeed, 
the increasing chain in the unique rank two interval of $\Pi_3$ is labeled by $(1,2)(1,3)$ and 
there is a unique chain labeled by $(1,3)(1,2)$.  In fact, $\Pi_n$ has the rank two switching property for all $n\geq 1$. One can verify 
this using the fact that there are only two types of rank two interval in $\Pi_n$. Each interval is  
isomorphic to $\Pi_3$ or to a boolean algebra of rank two.   

\begin{definition}\label{definition:quadraticexchanges}
Let $P$ be a graded poset and let $\lambda$ be an ER-labeling of $P$ with the rank two switching 
property.
Denote $\M_{[x,y]}$ the set of maximal chains in $[x,y]\subseteq P$ and let 
$$\c : (x = x_0 \lessdot x_1 \lessdot \cdots \lessdot x_{\ell-1} \lessdot x_{\ell}= y)\in 
\M_{[x,y]}$$ be a saturated chain having an ascent $\lambda(x_{i-1}\lessdot x_i)<\lambda(x_i 
\lessdot x_{i+1})$ at rank 
$i$. By the rank two switching property there is an element $\tilde x_i \in [x_{i-1},x_{i+1}]$ such 
that $\lambda(x_{i-1} \lessdot \tilde x_i)=\lambda(x_i \lessdot x_{i+1})$ and 
$\lambda(x_{i-1}\lessdot  x_i)= \lambda( \tilde x_i \lessdot x_{i+1})$. We say that the chain $\c 
\setminus 
\{x_i\} \cup \{\tilde x_i\}$, that is obtained from $\c$ after removing $x_i$ and adding $\tilde 
x_i$, was obtained by a \emph{quadratic exchange at rank $i$}.  We will  use the notation 
$U_i(\c)=\c'$ if $\c'$ is obtained from $\c$ by applying a quadratic exchange at rank level $i$ and 
whenever $\c$ does not have an ascent at rank $i$ we define $U_i(\c)=\c$.
\end{definition}

For every interval $[x,y]$ in $P$ we can define a labeled directed graph $G_{[x,y]}$ whose 
vertex 
set is $\M_{[x,y]}$ and where there is a labeled directed edge $\c_1 
\stackrel{U_i}{\rightharpoonup} \c_2$ if $\c_2$ is obtained from $\c_1$ by a quadratic exchange at 
rank $i$. 
We define also $S(\c):=\{\lambda(x_{i-1}\lessdot x_i)\,\mid\, i\in [\ell]\}$ the multiset of 
labels of the chain $\c$. Note that a quadratic exchange leaves the multiset of labels invariant, 
i.e., 
$S(\c_1)=S(\c_2)$ whenever  $\c_1$ and $\c_2$ are related by a sequence of quadratic exchanges.
Thus in general $G_{[x,y]}$ is not a connected graph, for example two chains in $\M_{[x,y]}$ with 
different multisets of labels belong to different connected components of $G_{[x,y]}$. 

We call a 
pair $(i<j)$ such that $\lambda(x_{i-1}\lessdot x_i) < \lambda(x_{j-1}\lessdot x_j)$ a 
\emph{label inversion} of $\c$.
Note that a quadratic exchange reduces the 
number of label 
inversions of $\c$ and, since the directed edges in $G_{[x,y]}$ are given by quadratic exchanges, 
this 
implies that $G_{[x,y]}$ does not contain directed cycles.

By our construction ascent-free chains in $\M_{[x,y]}$ are precisely the vertices of 
$G_{[x,y]}$ that have outdegree $0$, also known as \emph{sinks}. Indeed, quadratic exchanges can 
only happen at an ascending step of a saturated chain. In particular, by repeatedly applying 
quadratic exchanges we get the following lemma.

\begin{lemma}\label{lemma:switchinglemma}
For every chain $\c \in \M_{[x,y]}$ there exists at 
least one ascent-free maximal chain $\c^{\prime}\in \M_{[x,y]}$ such that $\c$ and $\c^{\prime}$ 
belong to the same connected component of $G_{[x,y]}$. Any such chain $\c^{\prime}$ satisfies 
$S(\c^{\prime})=S(\c)$.
\end{lemma}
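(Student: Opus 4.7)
The plan is to argue by a standard monovariant/termination argument on the directed graph $G_{[x,y]}$, using the quantity ``number of label inversions'' already introduced in the text. The key observations are all essentially set up in the paragraph preceding the lemma: quadratic exchanges occur only at ascents, ascent-free chains coincide with sinks of $G_{[x,y]}$, and each directed edge $\c_1 \stackrel{U_i}{\rightharpoonup} \c_2$ preserves the multiset of labels and strictly decreases the number of label inversions. From here the lemma is just a matter of following outgoing edges until we get stuck.

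First, I would formally verify that each quadratic exchange strictly decreases the label inversion count by exactly $1$. Fix $\c$ with an ascent at rank $i$, so the labels at positions $i$ and $i+1$ are $a = \lambda(x_{i-1}\lessdot x_i)$ and $b = \lambda(x_i \lessdot x_{i+1})$ with $a < b$. In $U_i(\c)$, these two labels are swapped and every other label is unchanged. For any third position $k \notin \{i, i+1\}$, the combined count of inversions contributed by the pairs $(\min(k,i), \max(k,i))$ and $(\min(k,i+1), \max(k,i+1))$ is a symmetric function of $\{a,b\}$ and is therefore unaffected by the swap. The pair $(i,i+1)$ itself contributes one inversion before the swap (since $a<b$) and zero after (since $b \not< a$). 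Hence the total inversion number drops by exactly $1$.

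Next, given any $\c \in \M_{[x,y]}$, I would construct $\c'$ by the following iterative procedure: if $\c$ has no ascent, set $\c' = \c$; otherwise pick any ascent at some rank $i$, replace $\c$ by $U_i(\c)$, and repeat. Because the inversion count is a nonnegative integer that strictly decreases at each step, the process must terminate after finitely many steps. The terminal chain $\c'$ has no ascent (otherwise the procedure would continue), hence is ascent-free, and therefore a sink of $G_{[x,y]}$. By construction, the sequence of quadratic exchanges provides a directed path in $G_{[x,y]}$ from $\c$ to $\c'$, so they lie in the same connected component. Finally, since each quadratic exchange preserves the multiset of labels, we have $S(\c') = S(\c)$, and the same argument shows that any ascent-free chain obtained from $\c$ by a sequence of quadratic exchanges has the same multiset.

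There is really no deep obstacle here; the entire content of the lemma is that the monovariant ``number of label inversions'' terminates the rewriting process in finite time. The only thing one must be careful about is confirming that the inversion count truly drops under a single quadratic exchange even though $\Lambda$ is only assumed to be a poset and not totally ordered, which the symmetry argument above handles cleanly.
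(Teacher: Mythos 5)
Your proof is correct and matches the paper's approach: the paper establishes the lemma via exactly the same monovariant argument in the paragraph immediately preceding its statement (quadratic exchanges strictly decrease the label-inversion count, so the rewriting terminates at an ascent-free chain, and the multiset of labels is preserved along the way). Your verification that the inversion count drops by exactly one is a bit more precise than the paper needs, but it is a harmless refinement rather than a different route.
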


Lemma \ref{lemma:switchinglemma} tells us that each connected component of $G_{[x,y]}$ has at least 
one vertex that is a sink, however this sink vertex is not necessarily unique. Ideally, for our 
construction, we would like to have a unique ascent-free chain $\c \in \M_{[x,y]}$ in each connected component of $G_{[x,y]}$. This would imply that the connected components of 
$G_{[x,y]}$ 
are indexed by  
ascent-free chains in $\M_{[x,y]}$. To guarantee this condition, we will use a classical result in 
graph theory and in the study of term rewriting systems, known either as the Diamond Lemma 
or Newman's Lemma (see \cite{Newman1942,Huet1980}). 

We say that a directed graph $G$ is \emph{confluent} if for 
every pair of vertices $x$ and $y$ in the same connected component of $G$ there are 
directed walks $x\rightsquigarrow u$ (a sequence of directed edges $x\rightharpoonup 
z \rightharpoonup \cdots 
\rightharpoonup u $) and $y\rightsquigarrow u$ that meet at a common vertex 
$u$ of $G$. Confluency has the following easy but interesting consequence.

\begin{lemma}\label{lemma:uniquesink}
 If $G$ is a confluent directed graph that does not contain infinite directed walks 
$x_0\rightharpoonup x_1\rightharpoonup x_2\rightharpoonup \cdots $ (or cycles) then every 
connected 
component of $G$ contains a unique sink vertex.
\end{lemma}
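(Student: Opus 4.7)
The plan is to prove existence and uniqueness of a sink separately in each connected component of $G$.

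For existence, I would fix a connected component and pick any vertex $x_0$ in it. If $x_0$ is not a sink, it has an outgoing edge to some $x_1$; if $x_1$ is not a sink, continue. Since $G$ has no infinite directed walks (nor directed cycles, which would produce one by repetition), this procedure must terminate after finitely many steps, producing a sink $u$ that lies in the same connected component as $x_0$. Hence each connected component contains at least one sink.

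For uniqueness, suppose $u$ and $v$ are two sinks lying in the same connected component. By confluency applied to the pair $(u,v)$, there exist directed walks $u \rightsquigarrow w$ and $v \rightsquigarrow w$ ending at a common vertex $w$. Since $u$ is a sink, it has no outgoing edges, so the walk $u \rightsquigarrow w$ must have length zero, forcing $u = w$. The same reasoning applied to $v$ gives $v = w$, and therefore $u = v$.

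The argument is essentially immediate from the hypotheses, so I do not anticipate any real obstacle; the only subtle point is that the statement requires ruling out \emph{both} infinite directed walks \emph{and} directed cycles (a cycle would generate an infinite walk), which is already built into the hypothesis as stated. The definition of ``directed walk'' in $u\rightsquigarrow w$ permitting length zero is also implicit but standard, and it is exactly what allows a sink to be ``reached from itself'' trivially in the confluency step.
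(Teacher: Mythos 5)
Your proof is correct, and since the paper presents this lemma without proof (calling it an ``easy but interesting consequence'' of confluency), your argument is exactly the kind of short justification the authors have in mind: existence of a sink by following out-edges until the no-infinite-walk hypothesis forces termination, and uniqueness by applying confluency to a pair of sinks and observing that a sink admits only the trivial (length-zero) walk out of itself. The remark about allowing length-zero walks in $u \rightsquigarrow w$ is the right subtlety to flag; with the paper's definition of confluency this convention is indeed what makes the uniqueness step go through.
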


We say that $G$ is \emph{locally confluent} if for 
every pair of directed edges $v \rightharpoonup x$ and $v \rightharpoonup y$ there are 
directed walks $x\rightsquigarrow u$ and $y\rightsquigarrow u$ that meet at a common vertex 
$u$ of $G$. 

\begin{lemma}[Newman's Lemma c.f. \cite{Newman1942,Huet1980}]\label{lemma:newman}
A directed graph $G$ without infinite directed walks is confluent if and only if it is 
locally confluent. 
\end{lemma}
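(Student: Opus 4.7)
The forward direction is immediate from the definitions: if $v\rightharpoonup x$ and $v\rightharpoonup y$ are two directed edges out of $v$, then the length-one walks show that $x$ and $y$ lie in the same connected component of $G$, so confluency applied to the pair $x,y$ produces a vertex $u$ with directed walks $x\rightsquigarrow u$ and $y\rightsquigarrow u$, which is exactly local confluency.

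For the backward direction, the main step is to establish the intermediate Church--Rosser statement: whenever $v\rightsquigarrow x$ and $v\rightsquigarrow y$, there is a vertex $u$ with $x\rightsquigarrow u$ and $y\rightsquigarrow u$. I plan to prove this by Noetherian induction on $v$ with respect to the strict partial order defined by $v>w$ iff there is a nontrivial directed walk from $v$ to $w$; the absence of infinite directed walks guarantees that this order is well-founded. The base case in which one of the walks $v\rightsquigarrow x$ or $v\rightsquigarrow y$ is trivial is immediate. For the inductive step, decompose the walks as $v\rightharpoonup x'\rightsquigarrow x$ and $v\rightharpoonup y'\rightsquigarrow y$. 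Local confluency applied to the directed edges $v\rightharpoonup x'$ and $v\rightharpoonup y'$ produces a vertex $w$ with $x'\rightsquigarrow w$ and $y'\rightsquigarrow w$. Since $v>x'$, the induction hypothesis applied to $x'$ together with its walks to $x$ and to $w$ yields a common descendant $u_1$ with $x\rightsquigarrow u_1$ and $w\rightsquigarrow u_1$. Since also $v>y'$, applying the induction hypothesis at $y'$ to the walks $y'\rightsquigarrow y$ and $y'\rightsquigarrow w\rightsquigarrow u_1$ produces a common descendant $u$ with $y\rightsquigarrow u$ and $u_1\rightsquigarrow u$; then $x\rightsquigarrow u_1\rightsquigarrow u$ and $y\rightsquigarrow u$, completing the induction.

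Finally, I would upgrade the Church--Rosser property to the stated connected-component form through a transitivity argument. Call two vertices \emph{joinable} if they admit a common directed descendant. Joinability is symmetric and reflexive, and any two endpoints of a single directed edge are joinable (one already reaches the other). The Church--Rosser step just proved implies joinability is transitive: if $x,y$ join at $u_1$ and $y,z$ join at $u_2$, then $y\rightsquigarrow u_1$ and $y\rightsquigarrow u_2$ so Church--Rosser at $y$ yields $u$ with $u_1\rightsquigarrow u$ and $u_2\rightsquigarrow u$, which is a common descendant of $x$ and $z$. Induction on the length of an undirected walk between $x$ and $y$ in a connected component then gives confluency. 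The principal obstacle will be the careful double application of the induction hypothesis in the Noetherian step, sandwiched around a single use of local confluency; this is the classical ``diamond-chasing'' argument and must be set up so that each recursive call is made at a vertex strictly below $v$.
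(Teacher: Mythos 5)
The paper does not supply its own proof of this lemma; it is stated with a citation to Newman and Huet, so there is nothing internal to compare against. Your argument is the standard well-founded-induction proof (essentially Huet's): Noetherian induction on the order ``$v>w$ iff $v\rightsquigarrow w$ nontrivially,'' a single application of local confluency at the top vertex sandwiched between two applications of the inductive hypothesis, and your diamond-chase is set up correctly so that both recursive calls occur at vertices strictly below $v$. You also correctly notice that the paper's definition of confluency is the stronger ``same connected component'' form rather than the usual ``common ancestor'' Church--Rosser form, and your upgrade via transitivity of joinability plus induction on the length of an undirected walk closes that gap. The proof is correct and complete.
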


Lemma \ref{lemma:newman} simplifies the procedure of checking confluency by restricting to 
local confluency which is an easier condition to test, especially given that $G_{[x,y]}$ is a 
finite graph 
without cycles. In $G_{[x,y]}$ the local confluency condition can be tested by considering two 
labeled directed 
edges $\c \stackrel{U_i}{\rightharpoonup} \c_1$ and $\c \stackrel{U_j}{\rightharpoonup}\c_2$. Note 
that  since $U_i(\c)$ produces a unique chain by the rank two switching property, we may always 
assume $i\neq j$.  If $|i-j|>1$ then we can always obtain a chain $\c^{\prime}$ either as $\c 
\stackrel{U_i}{\rightharpoonup} \c_1 \stackrel{U_j}{\rightharpoonup} \c^{\prime}$ or $\c 
\stackrel{U_j}{\rightharpoonup}\c_2 \stackrel{U_i}{\rightharpoonup} \c^{\prime}$ since the elements 
involved in the ascending steps are not adjacent to each other.    In other words, if $|i-j|>1$, 
then $U_iU_j(\c) = U_jU_i(\c)$ for all chains $\c$ with ascents at $i$ and $j$.
When $j=i+1$ we say that $\c$ has a  
\emph{double-ascent} or that $\c$ has a \emph{critical condition at rank $i$}, i.e., for $\c : (x = 
x_0 \lessdot x_1 \lessdot \cdots \lessdot x_{\ell-1} \lessdot x_{\ell}= y)$  we have that
$$\lambda(x_{i-1}\lessdot x_i)<\lambda(x_i\lessdot x_{i+1})<\lambda(x_{i+1}\lessdot x_{i+2}).$$

\begin{definition}\label{definition:braidrelation}
Let $P$ be a graded poset and let $\lambda$ be an ER-labeling of $P$ with the rank two switching 
property. For every saturated chain $\c \in \M_{[x,y]}$ with a critical condition at rank $i$ we 
can obtain saturated chains $\c^{\prime}$ and $\c^{\prime\prime}$ by removing the ascents at ranks 
$i$ and $i+1$ by a sequence of exchanges $\c 
\stackrel{U_i}{\rightharpoonup} \c_1 \stackrel{U_{i+1}}{\rightharpoonup} \c_2 
\stackrel{U_i}{\rightharpoonup} 
\c^{\prime}$ and $\c 
\stackrel{U_{i+1}}{\rightharpoonup} \c_3 \stackrel{U{i}}{\rightharpoonup}\c_4 
\stackrel{U_{i+1}}{\rightharpoonup} 
\c^{\prime \prime}$.
We say that $\lambda$ satisfies the \emph{braid relation}  if for every such  $\c \in \M_{[x,y]}$ 
we have 
that $\c^{\prime}=\c^{\prime\prime}$.
In 
other words, we have $U_{i}U_{i+1}U_{i}(\c) = U_{i+1}U_{i}U_{i+1}(\c)$ for chains $\c$ which have a 
critical condition at rank  $i$. 
\end{definition}

Note that by the discussion above, if the ER-labeling $\lambda$ satisfies the braid relation then 
the graph $G_{[x,y]}$ is locally confluent, hence we have  the following corollary.

\begin{corollary}\label{corollary:uniquesink}
 Let $P$ be a graded poset and let $\lambda$ be an ER-labeling of $P$ satisfying 
 \begin{itemize}
  \item the rank two switching property, and
  \item the braid relation.
 \end{itemize}

 Then for every interval $[x,y]$ in $P$, we have that 
each connected component of $G_{[x,y]}$ has a unique sink, i.e., a unique ascent-free saturated 
chain.
\end{corollary}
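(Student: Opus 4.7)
The plan is to deduce the result by combining Newman's Lemma (Lemma \ref{lemma:newman}) with Lemma \ref{lemma:uniquesink}. The hypotheses of Lemma \ref{lemma:uniquesink} require confluency and the absence of infinite directed walks in $G_{[x,y]}$; the latter is already established in the discussion before the corollary, since each quadratic exchange strictly decreases the number of label inversions of a maximal chain, and this is a nonnegative integer. So the crux is to verify local confluency, whence Newman's Lemma promotes local confluency to confluency.

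To verify local confluency, fix a vertex $\c \in \M_{[x,y]}$ together with two outgoing edges $\c \stackrel{U_i}{\rightharpoonup} \c_1$ and $\c \stackrel{U_j}{\rightharpoonup} \c_2$. The rank two switching property ensures that $U_i(\c)$ is uniquely determined, so without loss of generality $i < j$. I would then split into two cases according to whether the two ascents are adjacent or not.

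If $j - i > 1$, the exchanges act on disjoint triples of ranks $\{i-1,i,i+1\}$ and $\{j-1,j,j+1\}$. In particular, $U_i$ only modifies the element at rank $i$ and the two labels at ranks $i$ and $i+1$, so the ascent of $\c$ at rank $j$ is unchanged in $\c_1$; symmetrically the ascent at rank $i$ persists in $\c_2$. Thus $U_j$ is defined on $\c_1$, $U_i$ is defined on $\c_2$, and $U_j(\c_1) = U_i(\c_2)$ is a common descendant of $\c_1$ and $\c_2$. If $j = i+1$, then $\c$ has a critical condition at rank $i$, and the braid relation (Definition \ref{definition:braidrelation}) gives $U_i U_{i+1} U_i(\c) = U_{i+1} U_i U_{i+1}(\c)$, which exhibits a common descendant reachable from both $\c_1 = U_i(\c)$ and $\c_2 = U_{i+1}(\c)$ via the walks $\c_1 \stackrel{U_{i+1}}{\rightharpoonup}\cdot\stackrel{U_i}{\rightharpoonup}\cdot$ and $\c_2 \stackrel{U_i}{\rightharpoonup}\cdot\stackrel{U_{i+1}}{\rightharpoonup}\cdot$ respectively. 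This proves local confluency.

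By Newman's Lemma, $G_{[x,y]}$ is confluent, and by Lemma \ref{lemma:uniquesink} each connected component has a unique sink; since the sinks of $G_{[x,y]}$ are precisely the ascent-free maximal chains of $[x,y]$, the conclusion follows. The main obstacle is the bookkeeping in the non-adjacent case $j-i>1$, namely making sure that the ascent at rank $j$ is genuinely preserved by $U_i$ (and vice versa); this reduces to the observation that $U_i$ only alters the labels at positions $i$ and $i+1$ of the word of labels, which does not interfere with the ascent at position $j$. The critical case $j = i+1$ is precisely what the braid relation was designed to handle, so with that hypothesis in place the argument is essentially an application of the general rewriting-theoretic machinery.
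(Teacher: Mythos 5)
Your proof is correct and follows essentially the same route as the paper: reduce confluency to local confluency via Newman's Lemma, handle the $|i-j|>1$ case by commutativity of the exchanges and the $j=i+1$ case by the braid relation, and conclude with Lemma~\ref{lemma:uniquesink} using the absence of infinite walks (which follows from the decreasing number of label inversions). One small imprecision worth flagging: when $j=i+2$ the rank triples $\{i-1,i,i+1\}$ and $\{j-1,j,j+1\}$ are not actually disjoint (they share $i+1$); what truly matters, and what you correctly state in the next sentence, is that $U_i$ alters only the element at rank $i$ and the labels at positions $i$ and $i+1$, which do not interfere with the ascent at rank $j$ when $j-i>1$.
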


\subsubsection{Quotient posets}
We now turn our attention to quotient posets, which is the other main tool we use for the construction of Whitney duals.  We begin with a definition.

\begin{definition}\label{definition:quotientposet}
Let $P$ be a graded poset and let $\sim$ be an equivalence relation on $P$ such that if $x\sim y$,  
then 
$\rho(x) = \rho(y)$.  We define the \emph{quotient poset} $P/\sim$ to be the set of equivalence 
classes ordered by  $X\leq Y$ if and only if there exists $x\in X$, $y\in Y$ and $z_1,z_2, \dots, 
z_k\in P$ such that 
\begin{equation}\label{equation:equivalencerelation}
x= z_0\leq z_1\sim z_2 \leq\cdots\leq z_{n-1}\sim z_k\leq z_{k+1}= y.
\end{equation}
\end{definition}

For any element $x\in P$ we will denote by $[x]$ its corresponding equivalence 
class in $P/\sim$. 
The next proposition follows from Definition \ref{definition:quotientposet}.

\begin{proposition} \label{proposition:quotientposet}
Let $P$ be a graded poset and let $\sim$ be an equivalence relation on $P$ such that if $x\sim y$,  
then 
$\rho(x) = \rho(y)$.  Then we have the following.
\begin{enumerate}
\item $P/\sim$ is a poset.
\item For $X,Y\in P/\sim$,  $X\cover Y$ if and only if $x\cover y$ for some $x\in X$ and 
$y\in Y$.
\item $P/\sim$ is graded and for $X\in P/\sim$, we have $\rho(X) = \rho(x)$ for all $x\in X$.

\end{enumerate}
\end{proposition}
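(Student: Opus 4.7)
The plan is to prove the three parts in the order $(1) \Rightarrow (3) \Rightarrow (2)$, with a key intermediate lemma that the $P$-rank descends to a well-defined and monotone function on $P/\sim$. This intermediate lemma will do most of the heavy lifting, since it makes antisymmetry and the cover description almost automatic.

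First I would verify that the relation $\leq$ defined on $P/\sim$ is a partial order. Reflexivity follows by taking the trivial chain $x = z_0 = y$ in \eqref{equation:equivalencerelation}, and transitivity by concatenating two defining chains $X \leq Y$ and $Y \leq Z$: the two chains end and begin at elements of $Y$, which are $\sim$-equivalent, so they splice together into a single chain of the required form. For antisymmetry I would establish the key observation that if $X \leq Y$ in $P/\sim$ then $\rho(x) \leq \rho(y)$ for every $x\in X$ and $y \in Y$ (since rank is non-decreasing along $\leq$-steps and constant along $\sim$-steps, and the hypothesis guarantees that rank is well-defined on equivalence classes). Applied symmetrically, $X \leq Y$ and $Y \leq X$ force $\rho(x) = \rho(y)$, which means every $\leq$-step in the defining chain must actually be an equality, collapsing the chain into a sequence of $\sim$-relations; hence $X = Y$.

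Having this, part (3) is immediate: set $\rho(X) := \rho(x)$ for any $x \in X$, which is well-defined by hypothesis. The rank monotonicity just proved shows that $X < Y$ in $P/\sim$ implies $\rho(X) < \rho(Y)$, and once part (2) is in hand we see that a cover in $P/\sim$ increases rank by exactly one, so $P/\sim$ is graded with rank function inherited from $P$.

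Finally, for part (2), the easy direction is that if $x \lessdot y$ with $x \in X$ and $y \in Y$, then certainly $X \leq Y$, while $X \neq Y$ since the ranks differ, and no class $Z$ can satisfy $X < Z < Y$ because that would force $\rho(X) < \rho(Z) < \rho(Y) = \rho(X) + 1$. The main obstacle, and the step that needs the most care, is the forward direction: given $X \lessdot Y$ in $P/\sim$, produce representatives $x \in X$, $y \in Y$ with $x \lessdot y$ in $P$. My plan is to start from a defining chain realizing $X < Y$, refine each $\leq$-step into a sequence of $P$-covers, and then look at the sequence of equivalence classes traversed. This sequence is weakly increasing in $P/\sim$ and takes values in the interval $[X,Y] = \{X,Y\}$, so somewhere along the refined chain there must be a cover $x \lessdot y$ whose endpoints straddle from $X$ into $Y$ (the transition cannot occur at a $\sim$-step, which preserves the class). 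This yields the desired representatives and completes the proof.
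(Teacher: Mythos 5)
Your proof is correct and follows essentially the same route as the paper's: establish that rank is well defined and monotone on equivalence classes, use this for antisymmetry, splice chains for transitivity, and deduce (2) and (3) from the rank considerations. The main difference is one of detail rather than method: the paper dismisses part (2) as "easily verified from the definitions" and declares part (3) a consequence of (1) and (2), whereas you actually carry out the nontrivial forward direction of (2) by refining a defining chain for $X<Y$ into $P$-covers and observing that the weakly increasing sequence of classes takes values in the two-element interval $\{X,Y\}$, so some single cover step must jump from $X$ to $Y$. That argument is correct and is the right way to fill the gap the paper leaves; the rest of your reasoning matches the paper's.
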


\begin{proof}Part (2) can be easily verified from the definitions and part (3) is a consequence of 
parts (1) and (2) since they imply that the function $\rho(X) = \rho(x)$, for 
an arbitrary $x\in X$, is a well-defined rank function. 
We show that $P/\sim$ together with the relation $\leq$ satisfies the three properties of a poset.
\begin{enumerate}
\item (Reflexive) This is clear.
\item (Antisymmetric) Suppose that $X\leq Y$ and $Y\leq X$.  Since elements in each 
equivalence class have the same rank, then for any elements $x\in X$ and $y \in Y$ 
we have that $X\leq Y$ implies $\rho(x) \leq \rho(y)$.  Similarly, $Y\leq X$ implies 
$\rho(y)\leq 
\rho(x)$ and hence $\rho(x)=\rho(y)$.  Thus in (\ref{equation:equivalencerelation}) none of the 
inequalities can 
be strict, implying $x\sim y$ and hence $X=Y$.
\item (Transitive)  Suppose that $X\leq Y$ and $Y\leq Z$.  Then by the definition of $\le$ in 
(\ref{equation:equivalencerelation}), $X\leq Y$ implies that there exist $x\in X$, $y\in Y$ and 
$u_1,\dots, u_{k}$ such 
that 
\begin{align}\label{equation:chain1}
 x= u_0\leq u_1\sim u_2 \leq\cdots\leq u_{k-1}\sim u_k\leq u_{k+1}= y.
\end{align}
Also $Y\leq Z$ implies that there are $y'\in Y$, $z\in Z$ and $w_1,\dots, w_s$ such that
\begin{align}\label{equation:chain2}
 y'=w_0\leq w_1\sim w_2 \leq \cdots\leq w_{n-1}\sim w_n\leq w_{n+1}= z.
\end{align}
Since both $y$ and $y'$ are in $Y$, we have that $y\sim y'$. This together 
with (\ref{equation:chain1}) and (\ref{equation:chain2}) imply that $X\leq 
Z$.\qedhere
\end{enumerate}
\end{proof}

The poset $P/\sim$ with the relation defined above is called the \emph{quotient 
poset} of $P$ by the relation $\sim$.

\begin{definition}\label{definition:lambdarelation}
Given a poset $P$, let $C(P)$ denote the poset whose elements are saturated chains 
of $P$  starting at $\hat{0}$ ordered by inclusion. We call $C(P)$ the \emph{chain poset of 
$P$}.  
Figure~\ref{fig:chainPoset} depicts $\Pi_3$ and $C(\Pi_3)$.    If $\c\in C(P)$, we write $e(\c)$ 
for 
the 
maximal element of $\c$, i.e., the element of $P$ where $\c$ terminates. Suppose that $\lambda$ is 
an 
ER-labeling of $P$ with the rank two switching property.
Let $\sim_\lambda$ be the equivalence relation on $C(P)$ defined by $\c_1 \sim_\lambda \c_2$ 
whenever, $\c_1$ and $\c_2$ are in the same connected component 
of 
$G_{[\hat{0},e(\c_1)]}$.  We will use $Q_\lambda(P)$ to denote $ C(P)/\sim$. 
 
\end{definition}

Note that by the nature of the quadratic exchanges, for every  $X \in Q_\lambda(P)$  and $\c,\c'\in 
X$ we have that $e(\c)=e(\c')$.  Thus,  we can also define $e(X)=e(\c)$ for any 
$\c \in X$. 

\begin{figure}
\begin{subfigure}{0.25\textwidth}
                \centering
\begin{tikzpicture}[line join=bevel,scale=1]

\tikzstyle{every node}=[inner sep=3pt, scale=.8, minimum width=4pt]
 \node (n102030) at (0,0)  {$1/ 2/ 3$};

  \node (n12i30) at (-2,2) {$12/ 3$};
  \node (n13i20) at (0,2) {$13/2$};
  \node (n1023i) at (2,2)  {$1/ 23$};

 \node (n123ii) at (0,4){$123$};
 
 \draw (n123ii) -- (n1023i) ;
 \draw (n123ii)-- (n13i20);
  \draw (n123ii) -- (n12i30);  

  \draw [] (n13i20) -- (n102030);
  \draw [] (n12i30)-- (n102030);
  \draw [] (n1023i)--(n102030);

\tikzstyle{every node}=[blue, inner sep=0.4pt, scale=0.7, minimum width=4pt]

\node at (-1.5,1){$(1,2)$};
\node at (0.45,1){$(1,3)$};
\node at (1.5,1){$(2,3)$};

\node at (-1.5,3){$(1,3)$};
\node at (0.45,3){$(1,2)$};
\node at (1.5,3){$(1,2)$};

\end{tikzpicture}

		  \caption{$\Pi_3$ with labeling $\lambda$}
                  \label{fig:examplePi32}

\end{subfigure}~\hspace{-10 pt}  
\begin{subfigure}{0.5\textwidth}
                \centering
\begin{tikzpicture}[scale=0.6]

\tikzstyle{every node}=[inner sep=3pt, scale=.7, minimum width=4pt]
 \node (n102030) at (0,-1)  {$\hat{0}$};

  \node (n12i30) at (-3,2.5) {$\hat{0} \lessdot 12/3$};
  \node (n13i20) at (0,2.5) {$\hat{0} \lessdot 13/2$};
  \node (n1023i) at (3,2.5)  {$\hat{0} \lessdot 1/23$};

 \node[text width=1.6cm] (n123iia) at (-3,6){$\hat{0} \lessdot 12/3\allowbreak \lessdot 123$};

 \node[text width=1.6cm] (n123iiia) at (0,6){$\hat{0} \lessdot 13/2\allowbreak\lessdot 123$};
 
 \node[text width=1.6cm]  (n123iiiia) at (3,6){$\hat{0} \lessdot 1/23 \allowbreak \lessdot 123$};

  \draw (n123iia) -- (n12i30);  
  \draw (n123iiia) -- (n13i20);  
  \draw (n123iiiia) -- (n1023i);  

  \draw [] (n13i20) -- (n102030);
  \draw [] (n12i30)-- (n102030);
  \draw [] (n1023i)--(n102030);

\tikzstyle{every node}=[blue, inner sep=0.4pt, scale=0.7, minimum width=4pt]

\node at (-2.5,1){$(1,2)$};
\node at (0.5,1){$(1,3)$};
\node at (2.5,1){$(2,3)$};

\node at (-3.5, 4){$(1,3)$};
\node at (0.5,4){$(1,2)$};
\node at (3.5,4){$(1,2)$};
\end{tikzpicture}
                \caption{$C(\Pi_3)$}
                  \label{fig:exampleCPI3}

\end{subfigure}~	\hspace{-80  pt}
\begin{subfigure}{.5\linewidth}
                \centering

\begin{tikzpicture}[scale=0.6]

\tikzstyle{every node}=[inner sep=3pt, scale=.8, minimum width=4pt]

\node (a) at (1,-1) { $\emptyset$};

\node (b) at (-2,2.5)  {$\{(1,2)\}$};

\node (c) at (1,2.5)  {$\{(1,3)\}$};

\node (d) at (4,2.5)  {$\{(2,3)\}$};

\node (e) at (-.5,6)  {$\{(1,2), (1,3)\}$};
\node (f) at (4,6)  {$\{(1,2), (2,3)\}$};

 \draw (c)-- (a) -- (b);
\draw (a)--(d)--(f);
\draw (a)--(d)--(f);
\draw (c)--(e)--(b);
 
\tikzstyle{every node}=[blue, inner sep=0.4pt, scale=0.7, minimum width=4pt]

\node at (-1.4,1){$(1,2)$};
\node at (1.6,1){$(1,3)$};
\node at (3.5,1){$(2,3)$};

\node at (-2, 4){$(1,3)$};
\node at (1.35,4){$(1,2)$};
\node at (4.75,4){$(1,2)$};

\end{tikzpicture}
 \caption{$Q_\lambda(\Pi_3)$ with labeling $\lambda^*$}
                  \label{fig:exampleQPi}

\end{subfigure}~

         \caption{}
  \label{fig:chainPoset}

\end{figure}

\begin{example} Consider again $\Pi_3$ and its chain poset $C(\Pi_3)$ shown in 
Figure~\ref{fig:exampleCPI3}. Every element of $C(\Pi_3)$ is in its own equivalence class 
except for the chains 
$1/2/3\cover 12/3\cover 123$ and $1/2/3\cover 13/2\cover 123$ since 
$1/2/3\cover 12/3\cover 123 \stackrel{U_1}{\rightharpoonup} 1/2/3\cover 13/2\cover 123$ is the only 
directed edge in $G_{[1/2/3, 123]}$. Taking the quotient to obtain $Q_\lambda(\Pi_3)$, we get the  
poset in Figure~\ref{fig:exampleQPi}, where we have  identified the equivalence classes by the 
underlying set of labels on the chains.  By comparing with Figure~\ref{figure:exampleER}, one can 
observe that $\ISF_3$ and $Q_\lambda(\Pi_3)$ are isomorphic.
\end{example}

Note that by the definition of $\sim_{\lambda}$, if $\lambda$ satisfies the conditions of Corollary 
\ref{corollary:uniquesink} then each equivalence class $X \in 
Q_{\lambda}(P)$ contains a unique ascent-free maximal chain in $[\hat{0},e(X)]$. In fact this is a
correspondence between ascent-free saturated chains starting at $\hat{0}$ in $P$ of 
length $k$ and equivalence classes in $Q_\lambda(P)$ of rank $k$. Hence, using Proposition 
\ref{proposition:ERwhitneynumbers} and the definition of 
Whitney numbers of the second kind in Equation \ref{definition:whitneysecondkind} we are able 
to conclude at this point that $|w_k(P)| = W_k(Q_\lambda(P))$.

We now turn our 
attention to the task of satisfying the other half of Definition \ref{definition:whitneyduals}, that is, we 
would like to have in addition that  $W_k(P) = |w_k(Q_\lambda(P))|$.  This will allow us 
to conclude that 
$Q_\lambda(P)$ is a Whitney Dual 
of $P$. Our strategy will be to define an edge labeling $\lambda^*$ on $Q_\lambda(P)$ that under 
certain conditions is an ER$^*$-labeling. We will then show that the saturated 
chains from $\hat{0}$ in $Q_{\lambda}(P)$ under the newly defined ER$^*$-labeling and the ones 
in $P$ under the labeling $\lambda$ are in a label-preserving bijection. This together with 
Proposition \ref{proposition:ERwhitneynumbers} imply that $P$ and $Q_{\lambda}(P)$ are Whitney 
duals.

To define this labeling recall that, by  definition,
$\c_1 \sim_\lambda\c_2$ implies  $S(\c_1)= S(\c_2)$.   In 
light of this, we will use $S(X)$ to denote the multiset of labels in any chain in $X$. Moreover, 
if $X\cover Y$ in  $Q_\lambda(P)$  then there exists a unique element in $S(Y)\setminus S(X)$.  
Define an edge labeling $\lambda^*$ on $Q_\lambda(P)$ by
\begin{equation}\label{equation:lambdastar}
\lambda^*(X\cover Y)= S(Y)\setminus S(X).
\end{equation}
 This edge labeling for $Q_\lambda(\Pi_3)$ appears in 
Figure~\ref{fig:exampleQPi}.  We will now consider a pair of structural lemmas that will be useful 
in the 
following discussion.

\begin{lemma}\label{lemma:main}
 Let $X_1\cover X_2 \cover \cdots \cover X_k$ be a saturated chain in $Q_\lambda(P)$.  
\begin{enumerate}
\item [(a)] We have that $e(X_1)\cover e(X_2) \cover \cdots \cover e(X_k)$  is a saturated chain in 
\item[(b)] For any chain $\c \in X_1$ we have that $\c \cup \{e(X_2),\dots,e(X_i)\} \in X_i$ for 
all $i=1,\dots k$.
\item[(c)]  If $X_1=X_1'\cover X_2'\cover\cdots \cover X_k'$ is another saturated chain with 
$e(X_i) = e(X_i')$ for all $i$, then $X_i = X_i'$ for all $i$.

\end{enumerate}
\end{lemma}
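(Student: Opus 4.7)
The plan is to prove the three parts in order, with parts (b) and (c) building on part (a), and the main technical content concentrated in part (b). The key underlying observation I will use throughout is that quadratic exchanges are \emph{local}: an exchange at rank $i$ only modifies the element at rank $i$ and preserves everything above and below. This implies that the equivalence $\sim_\lambda$ is compatible with extending chains upward by a cover.

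For part (a), I would argue as follows. From Proposition \ref{proposition:quotientposet}(2), each cover $X_i \cover X_{i+1}$ in $Q_\lambda(P) = C(P)/\sim_\lambda$ lifts to a cover $\c_i \cover \c_{i+1}$ in $C(P)$ with $\c_i \in X_i$ and $\c_{i+1} \in X_{i+1}$. A cover in the chain poset $C(P)$ means $\c_{i+1} = \c_i \cup \{y\}$ for some new top element $y$ with $e(\c_i) \cover y$ in $P$. Since every chain in a $\sim_\lambda$-class shares the same top element (as noted after Definition \ref{definition:lambdarelation}), this forces $y = e(X_{i+1})$ and $e(\c_i) = e(X_i)$. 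Concatenating gives the desired saturated chain in $P$.

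The main step is part (b), which I would prove by induction on $i$. The base case $i=1$ is immediate. For the inductive step, suppose $\c \cup \{e(X_2), \ldots, e(X_{i-1})\} \in X_{i-1}$, and set $z = e(X_i)$. From the lift of $X_{i-1} \cover X_i$ given by part (a) there exist $\c' \in X_{i-1}$ and $\c'' = \c' \cup \{z\} \in X_i$. The crucial claim I need is: whenever $\d_1 \sim_\lambda \d_2$ with $e(\d_1) = e(\d_2) = e(X_{i-1})$, one has $\d_1 \cup \{z\} \sim_\lambda \d_2 \cup \{z\}$. This is where the main obstacle lies, but it is handled by the locality observation above: any sequence of quadratic exchanges $U_{j_1}, U_{j_2}, \ldots$ connecting $\d_1$ to $\d_2$ in $G_{[\hat 0, e(X_{i-1})]}$ operates only at ranks $j_t \le \rho(e(X_{i-1}))-1$, and therefore the identical sequence of exchanges connects $\d_1 \cup \{z\}$ to $\d_2 \cup \{z\}$ in $G_{[\hat 0, z]}$, the top edge being untouched. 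Applying the claim to $\d_1 = \c \cup \{e(X_2), \ldots, e(X_{i-1})\}$ and $\d_2 = \c'$ yields $\c \cup \{e(X_2), \ldots, e(X_{i-1}), z\} \sim_\lambda \c'' \in X_i$, completing the induction.

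Part (c) then follows almost for free. Fix any $\c \in X_1 = X_1'$. Part (b) applied to both saturated chains gives
\[
\c \cup \{e(X_2), \ldots, e(X_i)\} \in X_i \quad \text{and} \quad \c \cup \{e(X_2'), \ldots, e(X_i')\} \in X_i'.
\]
Because $e(X_j) = e(X_j')$ for all $j$, these two chains in $C(P)$ coincide, so $X_i$ and $X_i'$ share a representative and are therefore equal. I expect no further obstacles here; the delicate work is the compatibility claim in part (b), whose proof rests entirely on the rank-$i$ locality of the quadratic exchange operators $U_i$.
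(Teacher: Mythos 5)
Your proof is correct and follows essentially the same route as the paper. Part (a) is the same lift of covers through Proposition~\ref{proposition:quotientposet}; part (b) is the same induction with the same key step (the paper writes ``related by the exact same moves'' where you spell out the rank-$i$ locality of the quadratic exchange $U_i$ — which is exactly the reason the paper leaves implicit); and part (c) is derived from (b) identically.
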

\begin{proof}
First we show (a) holds. Consider the cover $X_{i-1}\cover X_i$.  By 
Proposition~\ref{proposition:quotientposet}, there exists a $\c \in X_{i-1}$ and $\d\in X_{i}$ with 
$\c \cover \d$ in $C(P)$.   By the definition of the poset $C(P)$ we have that  $\d= \c\cup 
\{e(X_i)\}$ and so $e(X_{i-1}) = 
e(\c)\cover 
e( \d) = e(X_i)$.  Thus, $e(X_1)\cover e(X_2)\cover \cdots \cover e(X_k)$ is a saturated chain in 
$[e(X_1), e(X_k)]$.

To show (b), we use induction on $k$. The case when $k=1$ is trivial. Now suppose $k \geq 2$.  By 
induction, for any  $\c \in X_1$ we have $\c \cup \{e(X_2),\dots,e(X_{i})\} 
\in X_{i}$ for $i \le k-1$. We want to show that $\c \cup \{e(X_2),\dots, e(X_{k-1}),e(X_k)\} \in X_k$.  First, 
note that by part (a), $\c \cup \{e(X_2),\dots,e(X_{k-1}), e(X_k))\}$ is a saturated chain in $P$. Now 
since 
$X_{k-1}\cover X_{k}$,  Proposition~\ref{proposition:quotientposet} implies that there exists $\d 
\in X_{k-1} $ and $\d'\in X_k$ with $\d' = \d \cup\{e(X_k)\}$.  Since  $\c \cup 
\{e(X_2),\dots,e(X_{k-1})\}$ and $\d$ both belong to $X_{k-1}$,  we know these two chains are 
related by a sequence of 
quadratic moves.  It follows that $\c \cup \{e(X_2),\dots,e(X_{k-1}), e(X_{k})\}$ and $\d'$ are 
related by the exact same moves and so are in the same equivalence class.  Thus, $\c \cup 
\{e(X_2),\dots,e(X_{k-1}), e(X_k))\}\in X_k$.

Note that part (c) follows directly from part (b).
\end{proof}

\begin{lemma}\label{lemma:saturatedchainssamelabelsinPandQ}
 Let $\C:(X_1\cover X_2 \cover \cdots \cover X_k)$ be a saturated chain in $Q_\lambda(P)$ and  let
$\c:(e(X_1)\cover e(X_2) \cover \cdots \cover e(X_k))$ be the corresponding saturated chain in $P$. 
The 
words of labels of these two saturated chains under their respective labelings are equal, i.e., 
$\lambda^*(\C)=\lambda(\c)$.
\end{lemma}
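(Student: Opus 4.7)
The plan is to show the word equality one cover at a time: for each $i$, I will argue that $\lambda^*(X_i \cover X_{i+1}) = \lambda(e(X_i) \cover e(X_{i+1}))$. Since $\lambda^*(X_i \cover X_{i+1})$ is defined as the unique element of $S(X_{i+1}) \setminus S(X_i)$, the whole task reduces to exhibiting, for every $i$, concrete representatives $\c_i \in X_i$ and $\c_{i+1} \in X_{i+1}$ whose multisets of labels differ exactly by $\lambda(e(X_i) \cover e(X_{i+1}))$.

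To manufacture these representatives simultaneously, I would pick any maximal chain $\c \in X_1$ of $[\hat{0}, e(X_1)]$ and set
\[
\c_i := \c \cup \{e(X_2), e(X_3), \dots, e(X_i)\}
\]
for $i = 1, \dots, k$. Lemma \ref{lemma:main}(a) guarantees that $e(X_1) \cover e(X_2) \cover \cdots \cover e(X_k)$ is a saturated chain in $P$, so each $\c_i$ really is a saturated chain of $P$ starting at $\hat{0}$; Lemma \ref{lemma:main}(b) then certifies that $\c_i \in X_i$ for every $i$. Consequently $S(X_i) = S(\c_i)$, and by construction
\[
S(\c_{i+1}) = S(\c_i) \cup \{\lambda(e(X_i) \cover e(X_{i+1}))\},
\]
so the single new label is precisely $\lambda(e(X_i) \cover e(X_{i+1}))$. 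Combining these observations yields $\lambda^*(X_i \cover X_{i+1}) = \lambda(e(X_i) \cover e(X_{i+1}))$ for every $i$, and reading off both words position by position gives $\lambda^*(\C) = \lambda(\c)$.

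There is no real obstacle here beyond keeping the bookkeeping honest: the crux is simply that Lemma \ref{lemma:main}(b) lets one lift the cover relation $X_i \cover X_{i+1}$ in $Q_\lambda(P)$ to an \emph{actual} cover $\c_i \cover \c_{i+1}$ in $C(P)$ that extends a fixed bottom chain, so the label added at the top of $C(P)$ is exactly the Hasse-diagram label $\lambda(e(X_i) \cover e(X_{i+1}))$ in $P$. Everything else is definition-chasing.
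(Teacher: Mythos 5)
Your proof is correct and essentially matches the paper's: both arguments reduce to observing that for each cover $X_{i}\cover X_{i+1}$ one can find representatives $\d\in X_i$ and $\d'\in X_{i+1}$ with $\d\cover\d'$ in $C(P)$, whence $S(X_{i+1})\setminus S(X_i)=S(\d')\setminus S(\d)=\{\lambda(e(X_i)\cover e(X_{i+1}))\}$. The only cosmetic difference is that you build a single coherent family of representatives $\c_i=\c\cup\{e(X_2),\dots,e(X_i)\}$ via Lemma \ref{lemma:main}(b), while the paper invokes Proposition \ref{proposition:quotientposet}(2) once per cover; the underlying mechanism is identical.
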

\begin{proof}
 By Equation~\ree{equation:lambdastar}, $\lambda^*(X_{i-1}\cover X_{i})= 
S(X_i)\setminus S(X_{i-1}) = S(\d')\setminus S(\d) = \lambda(e(X_{i-1})\cover e(X_i))$, where
$\d \in X_{i-1}$ and $\d' \in X_{i}$ are such that $\d \lessdot \d'$.
\end{proof}

To prove that the labeling $\lambda^*$ given in  Equation (\ref{equation:lambdastar}) is an 
ER${}^*$-labeling  of $Q_\lambda(P)$ we will need the additional following condition.

\begin{definition}\label{definition:cancellative}
Let $\lambda$ be an ER-labeling on $P$ with the rank two switching property. We say 
that $\lambda$ is \emph{cancellative} if for every $z<x<y$ 
in $P$,
$\c\in \M_{[z,x]}$ and $\c_1,\c_2 \in \M_{[x,y]}$ we have that
$$\c\cup \c_1 \sim_{\lambda} \c \cup \c_2 \text{ implies }  \c_1 \sim_{\lambda} \c_2.$$ 
\end{definition}

\begin{proposition}\label{proposition:QERStarLabel}
Let $P$ be a graded poset and let $\lambda$ be an ER-labeling of $P$ satisfying
 \begin{itemize}
  \item the rank two switching property, 
  \item the braid relation, and
  \item the cancellative property.
 \end{itemize}
 
Then the labeling $\lambda^*$ of $Q_\lambda(P)$ given by  Equation~\ree{equation:lambdastar} is an 
ER$^*$ labeling.
\end{proposition}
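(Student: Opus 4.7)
The plan is to transfer the question about ascent-free maximal chains in $[X,Y]\subseteq Q_\lambda(P)$ into a question about ascent-free maximal chains in $[e(X),e(Y)]\subseteq P$, then use the cancellative hypothesis to cut the relevant chains down to a single $\sim_\lambda$-equivalence class, and finally apply Corollary \ref{corollary:uniquesink} to extract exactly one ascent-free chain from that class.

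First I would fix $X\leq Y$ in $Q_\lambda(P)$ and a representative $\c\in X$, and set up a label-preserving bijection between saturated chains in $[X,Y]\subseteq Q_\lambda(P)$ and the set $\mathcal{A}:=\{\d\in\M_{[e(X),e(Y)]} : \c\cup\d\in Y\}$. In one direction, any saturated chain $X=X_1\cover\cdots\cover X_k=Y$ produces $\d:e(X)\cover e(X_2)\cover\cdots\cover e(X_k)=e(Y)$ by Lemma \ref{lemma:main}(a), and $\c\cup\d\in X_k=Y$ by Lemma \ref{lemma:main}(b), so $\d\in\mathcal{A}$. In the other direction, any $\d\in\mathcal{A}$ with covering chain $e(X)=y_0\cover y_1\cover\cdots\cover y_k=e(Y)$ lifts to a saturated chain in $[X,Y]$ via $X_i:=[\c\cup\{y_1,\ldots,y_i\}]$, and Lemma \ref{lemma:main}(c) makes this lift unique. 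Lemma \ref{lemma:saturatedchainssamelabelsinPandQ} ensures the bijection preserves the entire word of labels, and in particular preserves ascent-freeness.

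Then I would use the cancellative property to force $\mathcal{A}$ to be a single $\sim_\lambda$-equivalence class in $\M_{[e(X),e(Y)]}$: fixing any $\d_0\in\mathcal{A}$, membership $\d\in\mathcal{A}$ is equivalent to $\c\cup\d\sim_\lambda\c\cup\d_0$, which by cancellativity is equivalent to $\d\sim_\lambda\d_0$ (the converse implication is automatic, since quadratic exchanges on $\d$ lift verbatim to quadratic exchanges on $\c\cup\d$). So $\mathcal{A}=[\d_0]$ is precisely one connected component of $G_{[e(X),e(Y)]}$. Corollary \ref{corollary:uniquesink} now applies, thanks to the rank two switching property and the braid relation, and gives this component a unique sink, i.e., a unique ascent-free chain $\d^\ast$. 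Transporting $\d^\ast$ back through the label-preserving bijection yields the required unique ascent-free maximal chain in $[X,Y]$ under $\lambda^\ast$. The delicate step is pinning $\mathcal{A}$ down to one equivalence class: without cancellativity, $\mathcal{A}$ could split into several $\sim_\lambda$-classes and therefore contain several ascent-free chains, which would break the ER$^\ast$ property; the remainder of the argument is essentially bookkeeping with the three lemmas and corollary already at hand.
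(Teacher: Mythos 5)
Your proof is correct and rests on the same ingredients as the paper's argument: Lemmas \ref{lemma:main} and \ref{lemma:saturatedchainssamelabelsinPandQ} to pass between saturated chains in $[X,Y]$ and in $[e(X),e(Y)]$, the cancellative property, and Corollary \ref{corollary:uniquesink}. The paper proves existence and uniqueness of the ascent-free chain as two separate steps (existence needs only rank two switching plus the lemmas; uniqueness needs cancellativity plus the corollary), whereas you package both into the single cleaner observation that the label-preserving bijection carries $\M_{[X,Y]}$ onto exactly one connected component $\mathcal{A}$ of $G_{[e(X),e(Y)]}$ — the $\subseteq$ inclusion from cancellativity and the $\supseteq$ inclusion from the lift of quadratic exchanges. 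This is a reorganization rather than a different method, but it makes it slightly more transparent that the cancellative hypothesis is precisely what pins $\mathcal{A}$ down to one equivalence class.
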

\begin{proof}
Let $[X,Y]$ be an interval in $Q_{\lambda}(P)$.  We will show that there is a unique
ascent-free maximal chain in $[X,Y]$.

{\bf The interval $[X,Y]$ contains an ascent-free maximal chain.}  
Pick 
any $\c \in X$ and let $X=X_1\cover X_2\cover \cdots \cover X_k=Y$ be a maximal chain in $[X,Y]$.  
By 
Lemma~\ref{lemma:main} parts (a) and (b), $e(X_1)\cover e(X_2) \cover \cdots \cover 
e(X_k)$ is a saturated chain in $[e(X), e(Y)]$ and $\c\cup \{e(X_2), e(X_3),\dots, e(X_k)\} \in Y$. 
The chain  $e(X_1)\cover 
e(X_2) \cover \cdots \cover e(X_k)$ 
may not be ascent-free in $[e(X), e(Y)]$, but it is related to one by a sequence of quadratic 
exchanges.    Suppose 
the related ascent-free chain is $e(X)=x_1'\cover x_2'\cover \cdots \cover x_k'=e(Y)$.   For each $1\leq i \leq k$, let $X_i'$ 
be the equivalence class containing $\c \cup \{x_2',x_3', \dots, x_i'\}$.  Then it must be the 
case that $X_k' = Y$ since we are using quadratic exchanges on $\c\cup \{e(X_2), e(X_3),\dots, 
e(X_k)\}\in Y$.  
In $C(P)$ we have that $\c \cover \c \cup\{x_2'\} \cover 
\cdots\cover \c \cup \{x_2',x_3', \dots, x_k'\}$ and hence, by the quotient 
poset definition of $Q_{\lambda}(P)$ we have that $X=X_1'\cover X_2'\cover \cdots 
\cover X_k'=Y$ is a maximal chain in $[X,Y]$. 
 Moreover, by  Lemma~\ref{lemma:saturatedchainssamelabelsinPandQ}, the labels along this 
chain are the same as along 
$x_1'\cover x_2'\cdots \cover x_k'$.  It follows that $[X,Y]$ has an ascent-free maximal chain.

{\bf The ascent-free maximal chain found above is unique.} 
Suppose this 
was not the case and that $X=X_1\cover X_2\cover \cdots \cover X_k=Y$ and $X=X_1'\cover X_2'\cover 
\cdots \cover X_k'=Y$ are both ascent-free maximal chains in $[X,Y]$.  Pick any $\c \in X$.  Then 
by  Lemma~\ref{lemma:main} part (b), $\c\cup \{e(X_2), e(X_3),\dots, e(X_k)\}$ and $\c\cup 
\{e(X_2'), e(X_3'),\dots, e(X_k')\}$ are both chains in $Y$. By the cancellative property $\c\cup 
\{e(X_2), e(X_3),\dots, e(X_k)\} \sim \c\cup \{e(X_2'), e(X_3'),\dots, e(X_k')\}$ implies that 
$e(X_1)\cover e(X_2)\cover \cdots \cover e(X_k)$ and $e(X_1')\cover e(X_2') \cover \cdots \cover e(X_k')$ 
are related by quadratic exchanges.  It follows that both of these chains are in the same  
connected component of $G_{[e(X),e(Y)]}$ and by Lemma~\ref{lemma:saturatedchainssamelabelsinPandQ}, 
they are both ascent-free.  However, Corollary~\ref{corollary:uniquesink}, asserts  that there is a 
unique 
ascent-free maximal chain  in each connected component implying that $e(X_i) = e(X_i')$ for 
all $i$.   Applying Lemma~\ref{lemma:main} part (c) we conclude that $X_i =X_i'$ for all $i$.  It 
follows that there is a unique ascent-free maximal chain in each interval  and so $\lambda^*$ is an 
ER$^*$ labeling.
\end{proof}

Before stating the main theorem of this section let us discuss a very important class of labelings 
that satisfy the conditions of Proposition \ref{proposition:QERStarLabel}. 

\begin{theorem}\label{theorem:standardEWlabelings}
Let $\lambda$ be an $ER$-labeling satisfying 
 \begin{itemize}
  \item the rank two switching property, and
  \item in each interval each ascent-free maximal chain has a unique word of labels; 
 \end{itemize}
 then $\lambda$ satisfies
  \begin{itemize}
  \item the rank two switching property, 
  \item the braid relation, and
  \item the cancellative property.
 \end{itemize}
\end{theorem}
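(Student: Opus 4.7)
The rank two switching property is built into the hypothesis, so only two properties require proof: the braid relation and the cancellative property.

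For the braid relation, the plan is a direct verification. Fix a chain $\c$ with a critical condition at rank $i$, i.e., with labels $a < b < c$ at the three consecutive edges $(x_{i-1}\lessdot x_i)$, $(x_i \lessdot x_{i+1})$, $(x_{i+1}\lessdot x_{i+2})$. I apply the rank two switching property three times in each order. Under $U_i, U_{i+1}, U_i$ the labels on those three edges evolve as $(a,b,c)\to(b,a,c)\to(b,c,a)\to(c,b,a)$, while the rest of $\c$ is untouched; under $U_{i+1}, U_i, U_{i+1}$ they evolve as $(a,b,c)\to(a,c,b)\to(c,a,b)\to(c,b,a)$. Thus both $U_iU_{i+1}U_i(\c)$ and $U_{i+1}U_iU_{i+1}(\c)$ restrict to the rank-$3$ interval $[x_{i-1},x_{i+2}]$ as a maximal chain with label word $(c,b,a)$, which is strictly decreasing and hence ascent-free. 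The unique-word-of-labels hypothesis, applied to that rank-$3$ interval, forces these two restrictions to coincide, hence the two full chains coincide.

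For the cancellative property, assume $\c\cup\c_1 \sim_\lambda \c\cup\c_2$ with $\c\in \M_{[z,x]}$ and $\c_1,\c_2\in \M_{[x,y]}$. Let $\c_i^*$ denote the ascent-free representative of $\c_i$ in $[x,y]$; this exists by Lemma \ref{lemma:switchinglemma} and is unique in its $\sim_\lambda$-class by Newman's Lemma (Lemma \ref{lemma:newman}) combined with the braid relation proved above. Since any quadratic exchange confined to the top half of $[z,y]$ is simultaneously a valid exchange inside $[x,y]$, we get $\c\cup\c_i \sim_\lambda \c\cup\c_i^*$, and the hypothesis passes to $\c\cup\c_1^* \sim_\lambda \c\cup\c_2^*$. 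The unique-word-of-labels hypothesis means that $\c_1^*=\c_2^*$ is equivalent to equality of their label words, so the cancellative property reduces to showing these words agree.

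The main obstacle is controlling boundary exchanges at the interface rank $r=\rho(x)-\rho(z)$, which can move the chain off $x$ and swap a label from the lower half $\c$ with the first label of the upper half. My approach is to reduce both $\c\cup\c_1^*$ and $\c\cup\c_2^*$ down to the common ascent-free normal form $\C^*$ of their shared connected component in $G_{[z,y]}$, and to show by induction on the length of the reduction sequence that in every intermediate chain the label word on the top $\rho(y)-\rho(x)$ edges is the word of an ascent-free maximal chain in the interval rooted at that chain's current rank-$\rho(x)$ element. Using the braid relation and the commutation of non-adjacent exchanges to put the reduction sequence in a normalized order (for instance, performing all upper-half reductions first and then the boundary and lower-half ones), the top-half words at $\C^*$ coming from either starting chain coincide, and pulling back through the reductions yields $w(\c_1^*)=w(\c_2^*)$, hence $\c_1^*=\c_2^*$ and $\c_1\sim_\lambda\c_2$. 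The delicate bookkeeping required to track how labels propagate across the interface under repeated boundary exchanges, and to verify that the top-half ascent-free structure is preserved throughout, is the hard part of the argument.
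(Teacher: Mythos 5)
Your argument for the braid relation is correct and is the same as the paper's: both reduction sequences $U_iU_{i+1}U_i$ and $U_{i+1}U_iU_{i+1}$ produce a chain whose restriction to the rank-three interval $[x_{i-1},x_{i+2}]$ is ascent-free with word $(c,b,a)$, and the unique-word-of-labels hypothesis applied to that interval forces the restrictions (hence the full chains) to agree.

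Your treatment of the cancellative property, however, has a genuine gap. You set up the right reduction (replace $\c_1,\c_2$ by their ascent-free representatives $\c_1^*,\c_2^*$ and aim to show $w(\c_1^*)=w(\c_2^*)$), but the core of the argument is the ``delicate bookkeeping required to track how labels propagate across the interface,'' which you explicitly leave undone --- and that bookkeeping is precisely the content of the cancellative property, not a routine verification. Your proposed normalization (``perform all upper-half reductions first'') also cannot get off the ground: since $\c_i^*$ is already ascent-free in $[x,y]$, the chain $\c\cup\c_i^*$ has no ascent strictly above the interface, so the first exchange available must already be a boundary exchange, which immediately moves the chain off $x$ and changes what ``the top half'' even means. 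The paper sidesteps all of this with a cleaner double reduction: an induction on $\rho(x)-\rho(z)$ whose inductive step peels one element off the bottom chain $\c$ (applying the induction hypothesis twice, once to shrink the bottom and once to the length-one tail $c_{k-1}\lessdot x$), reducing everything to the base case $\rho(x)-\rho(z)=1$. In that base case the key observation is that, with $\d$ and $\d'$ taken ascent-free, the only possible quadratic exchanges in $\c\cup\d$ are shuffles of the single bottom label $a=\lambda(z\cover x)$ rightward through the ascent-free word $\lambda(\d)$; by confluence both $\c\cup\d$ and $\c\cup\d'$ land on the same ascent-free word $w$, and since $a$'s resting place in $w$ (and hence the subword obtained by deleting it) is determined by $a$ and $w$ alone, one recovers $\lambda(\d)=\lambda(\d')$, whence $\d=\d'$ by the unique-word hypothesis. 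You should replace your sketch for the cancellative property with this induction rather than trying to make the normalization/bookkeeping approach precise.
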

\begin{proof}
 We need to check that $\lambda$ satisfies the braid relation and the cancellative property. 

{\bf The braid relation is satisfied by $\lambda$.} 
Indeed, let $\c$ be a saturated chain that has a 
critical condition at rank $i$.  We obtain saturated chains $\c^{\prime}$ and $\c^{\prime\prime}$ 
by 
removing the ascents at ranks $i$ and $i+1$ by a sequence of exchanges $\c 
\stackrel{U_i}{\rightharpoonup} \c_1 \stackrel{U_{i+1}}{\rightharpoonup} \c_2 
\stackrel{U_i}{\rightharpoonup} 
\c^{\prime}$ and $\c \stackrel{U_{i+1}}{\rightharpoonup} \c_3 \stackrel{U_i}{\rightharpoonup} \c_4 
\stackrel{U_{i+1}}{\rightharpoonup} 
\c^{\prime \prime}$. By the definition of a quadratic exchange we have that  
$\c^{\prime}$ and $\c^{\prime\prime}$, when restricted to the interval between  ranks $i-1$ and 
$i+2$, have the same ascent-free word of labels and 
hence $\c^{\prime}=\c^{\prime\prime}$.

{\bf The cancellative property is satisfied by $\lambda$.} We are going to prove, using induction on the value of 
$\rho(x)-\rho(z)$, that for every $z<x<y$ in $P$, $\c\in \M_{[z,x]}$ and $\d,\d' \in \M_{[x,y]}$ we 
have that $\c\cup \d \sim_{\lambda} \c \cup \d' \text{ implies }  \d \sim_{\lambda} \d'.$

When $\rho(x)-\rho(z)=1$ we have  $\c:(z\cover x)$ and without loss of generality we can 
assume, 
perhaps after applying enough quadratic exchanges, that $\d$ and $\d'$ are ascent-free. Hence 
the only possible ascents must happen in the step $z\cover x \cover d_1$ for $\c \cup \d$ and in 
the 
step $z\cover 
x \cover d'_1$ for $\c\cup \d'$. Note that then quadratic exchanges on $\c\cup \d$ can only shuffle 
the label $\lambda(z \cover x)$ across the word $\lambda(\d)$ and quadratic exchanges on 
$\c\cup \d'$ 
can only shuffle $\lambda(z \cover x)$ across the word $\lambda(\d')$. Because we have $\c\cup \d 
\sim_{\lambda} \c 
\cup \d'$ and the braid relation, Corollary \ref{corollary:uniquesink} implies that the ascent-free word of labels obtained after all the quadratic exchanges have been 
applied to both $\c\cup \d$ and $\c\cup \d'$ is the same. Hence we originally had $\lambda(\d)=\lambda(\d')$. Uniqueness 
of the ascent-free word of labels implies then $\d=\d'$.

Now consider the case when $\rho(x)-\rho(z)=k>1$ and we have chains $\c\in \M_{[z,x]}$ and 
$\d,\d' \in \M_{[x,y]}$ such that $\c\cup \d \sim_{\lambda} \c \cup \d'$. Note first that 
$z<c_{k-1}<y$, and we will consider instead the saturated chains $\c^{\prime}:=\c\setminus \{x\} 
\in \M_{[z,c_{k-1}]}$, $\m:=\{c_{k-1}\}\cup 
\d \in \M_{[c_{k-1},y]}$ and $\m^{\prime}=\{c_{k-1}\}\cup \d' \in \M_{[c_{k-1},y]}$.  We have that
$\rho(c_{k-1})-\rho(z)=\rho(x)-\rho(z)-1$ and $\c^{\prime}\cup \m=\c\cup \d \sim_{\lambda} \c 
\cup \d'=\c^{\prime}\cup \m^{\prime}$, hence by induction we conclude that $\m 
\sim_{\lambda} \m^{\prime}$. Now if 
we consider $c_{k-1}\cover x < y$, the argument above says that$\{c_{k-1}\cover x\}\cup \d= 
\m \sim_{\lambda} \m^{\prime}=\{c_{k-1}\cover x\}\cup \d'$ in $\M_{[c_{k-1},y]}$ and since
$\rho(x)-\rho(c_{k-1})=1$ we are back in the base case that implies $\d \sim_{\lambda} \d'$.
\end{proof}

\begin{remark}
Note that in  Theorem \ref{theorem:standardEWlabelings} we can replace the second condition by the stronger requirement that  maximal chains have unique word of labels. 
\end{remark}

We are now in a position to provide names to the 
type of edge labelings that allow us to construct Whitney duals. We call these labelings 
EW-labelings, where the letter ``W" comes from  the fact that they provide sufficient conditions to 
construct Whitney duals.

\begin{definition}\label{definition:EW}
Let $\lambda:\E(P)\rightarrow \Lambda$ be an ER-labeling of $P$.  We 
say $\lambda$ is an \emph{EW-labeling} if it satisfies
 \begin{itemize}
  \item the rank two switching property, and
  \item in each interval each maximal chain has a unique word of labels.
 \end{itemize}
\end{definition}

\begin{definition}\label{definition:generalizedEW}
Let $\lambda:\E(P)\rightarrow \Lambda$ be an ER-labeling of $P$.  We 
say $\lambda$ is a \emph{generalized EW-labeling} if it satisfies
 \begin{itemize}
  \item the rank two switching property, 
  \item the braid relation, and
  \item the cancellative property.
 \end{itemize}
\end{definition}

The following proposition provides an insight into the construction process that generates 
$Q_{\lambda}(P)$ from $P$ using $\lambda$. According to Proposition 
\ref{proposition:bijectionbetweensaturatedchains}, we can   think  of $Q_{\lambda}(P)$ 
as a poset that is obtained from $P$ by ``pulling apart" saturated chains 
from $\hat{0}$.  

\begin{proposition} \label{proposition:bijectionbetweensaturatedchains}
Let $\lambda$ be a generalized EW-labeling 
of $P$.  There is a label preserving bijection from the set of saturated 
chains from $[\hat{0}]$ of length $k$ in $Q_{\lambda}(P)$ and the set of saturated chains from 
$\hat{0}$ of length $k$ in $P$. In particular, there is a label preserving bijection 
$\M_{Q_{\lambda}(P)}\rightarrow \M_{P}$ between maximal 
chains.
\end{proposition}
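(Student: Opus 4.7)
The plan is to construct the bijection explicitly in both directions using the endpoint map $e(\cdot)$ together with the structural lemmas already proved, and then invoke Lemma~\ref{lemma:saturatedchainssamelabelsinPandQ} for label preservation.

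First, I would define a forward map $\phi$ from saturated chains of length $k$ starting at $[\hat{0}]$ in $Q_\lambda(P)$ to saturated chains of length $k$ starting at $\hat{0}$ in $P$ by
$$\phi\bigl([\hat{0}]=X_0\cover X_1\cover\cdots\cover X_k\bigr) \;:=\; \bigl(\hat{0}=e(X_0)\cover e(X_1)\cover\cdots\cover e(X_k)\bigr).$$
Lemma~\ref{lemma:main}(a) guarantees that the image is indeed a saturated chain in $P$, and Lemma~\ref{lemma:saturatedchainssamelabelsinPandQ} says the word of labels is preserved.

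Next, I would define the inverse $\psi$. Given a saturated chain $\c:(\hat{0}=x_0\cover x_1\cover\cdots\cover x_k)$ in $P$, let $\c_i$ denote its truncation $(\hat{0}\cover x_1\cover\cdots\cover x_i)\in C(P)$ and set $X_i:=[\c_i]$ in $Q_\lambda(P)$. The sequence $\c_0\cover \c_1\cover\cdots\cover \c_k$ is a saturated chain in $C(P)$, so by the definition of the quotient poset we have $X_0\leq X_1\leq\cdots\leq X_k$. Moreover, since $\rho(X_i)=\rho(x_i)$ by Proposition~\ref{proposition:quotientposet}(3), consecutive $X_i$'s differ in rank by exactly one, so the chain $X_0\cover X_1\cover\cdots\cover X_k$ is saturated in $Q_\lambda(P)$. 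This defines $\psi(\c)$.

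The two maps are mutually inverse. For any $\c$, $\phi(\psi(\c))=\c$ by construction since $e([\c_i])=x_i$. For the other direction, take a saturated chain $X_0\cover X_1\cover\cdots\cover X_k$ in $Q_\lambda(P)$ with $X_0=[\hat{0}]$, and let $x_i:=e(X_i)$. Fix $\c\in X_0=\{\hat{0}\}$, i.e. $\c$ is the empty chain ending at $\hat{0}$; then by Lemma~\ref{lemma:main}(b), $\c\cup\{x_1,\ldots,x_i\}\in X_i$ for every $i$. Letting $X_i':=[\c\cup\{x_1,\ldots,x_i\}]$, this chain is precisely $\psi(\phi(X_0\cover\cdots\cover X_k))$ and satisfies $e(X_i')=x_i=e(X_i)$ for all $i$; Lemma~\ref{lemma:main}(c) then forces $X_i'=X_i$ for all $i$. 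This establishes the bijection, and specializing to $k$ equal to the rank of $P$ gives the bijection between maximal chains.

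The main obstacle is essentially already handled by Lemma~\ref{lemma:main}(c): without the uniqueness statement provided there, one could not rule out different saturated chains of equivalence classes producing the same endpoint chain in $P$. Everything else is routine bookkeeping, and label preservation is immediate from Lemma~\ref{lemma:saturatedchainssamelabelsinPandQ}.
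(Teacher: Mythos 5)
Your proof is correct and takes essentially the same route as the paper: the forward map is the same endpoint map, well-definedness and label preservation come from Lemma~\ref{lemma:main}(a) and Lemma~\ref{lemma:saturatedchainssamelabelsinPandQ}, and the preimage is built from the truncation chains $[\c_0]\cover\cdots\cover[\c_k]$ with Lemma~\ref{lemma:main}(c) enforcing uniqueness. The only cosmetic difference is that you package injectivity and surjectivity as an explicit two-sided inverse, whereas the paper checks them separately; and you describe $X_0$ as containing ``the empty chain'' when it is really the one-element chain $\{\hat{0}\}\in C(P)$, a harmless slip.
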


\begin{proof} 
Fix $k$ and let $S_{Q,k}$ be the set of saturated 
chains from $[\hat{0}]$ of length $k$ in $Q_{\lambda}(P)$ and $S_{P,k}$ be the set of saturated 
chains 
from $\hat{0}$ of length $k$ in $P$. Let 
$\varphi : S_{Q,k} \rightarrow  S_{P,k}$  be defined by $$\varphi([\hat{0}]=X_0\cover 
X_1\cover \cdots \cover X_k) = (\hat{0}= e(X_0)\cover e(X_1)\cover \cdots \cover e(X_k)).$$  
By Lemma~\ref{lemma:main} part (a) we know $\varphi$ is well-defined and by 
Lemma~\ref{lemma:saturatedchainssamelabelsinPandQ} we know that $\varphi$ preserves the word of 
labels.  By 
Lemma~\ref{lemma:main} part (c),  $\varphi$ is injective.

Finally, we show that $\varphi$ is  surjective.    Let $\hat{0}=x_0\cover x_1\cover \cdots \cover 
x_k \in S_{P,k}$.  Let $\c_i = \{x_0,x_1,\dots, x_i\}$, then by definition $\c_0\cover 
\c_1\cover \cdots \lessdot\c_k$ is in $C(P)$.  By the definition of a quotient poset, 
$[\c_0]\cover [\c_1]\cover 
\cdots \cover  [\c_k]$ is in $S_{Q,k}$ and it  is a preimage of 
$\hat{0}=x_0\cover 
x_1\cover \cdots \cover x_k$.
\end{proof}

Definition  \ref{definition:whitneyduals}  
together with Propositions \ref{proposition:ERwhitneynumbers}, 
\ref{proposition:QERStarLabel}  and \ref{proposition:bijectionbetweensaturatedchains}
imply our main theorem, that we are ready to state in the 
language of Definitions \ref{definition:EW} and \ref{definition:generalizedEW}.

\begin{theorem}\label{theorem:EW}
Let $P$ be a graded poset with a generalized EW-labeling $\lambda$.  Then 
$Q_\lambda(P)$ is a Whitney dual of $P$.
\end{theorem}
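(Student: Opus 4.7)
The plan is to verify the two equalities in Definition \ref{definition:whitneyduals} separately by converting each of the four quantities into a count of a certain family of saturated chains, and then exhibiting label-preserving bijections between those chain families. Since $\lambda$ is a generalized EW-labeling, it in particular satisfies the hypotheses of Corollary \ref{corollary:uniquesink} and of Proposition \ref{proposition:QERStarLabel}, and these are the only structural facts I will need beyond what is already available.

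First I would establish $|w_k(P)| = W_k(Q_\lambda(P))$. By Proposition \ref{proposition:ERwhitneynumbers} (top row of Table \ref{table:propositionERwhitneynumbers}), $|w_k(P)|$ equals the number of ascent-free saturated chains from $\hat{0}$ in $P$ of length $k$. On the other hand, by Proposition \ref{proposition:quotientposet}(3), $W_k(Q_\lambda(P))$ is the number of equivalence classes $X\in Q_\lambda(P)$ consisting of saturated chains of length $k$ from $\hat{0}$. By the very definition of $\sim_\lambda$, each such $X$ is a union of connected components of $G_{[\hat 0,e(X)]}$ restricted to chains with multiset $S(X)$; Corollary \ref{corollary:uniquesink} (applied using the rank two switching property and the braid relation from Definition \ref{definition:generalizedEW}) then provides exactly one ascent-free chain in each $X$. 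Sending $X$ to this distinguished ascent-free chain yields the required bijection.

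Next I would establish $|w_k(Q_\lambda(P))| = W_k(P)$. By Proposition \ref{proposition:QERStarLabel}, the edge labeling $\lambda^*$ of Equation~\ree{equation:lambdastar} is an ER$^*$-labeling of $Q_\lambda(P)$, so by the ER$^*$ row of Table \ref{table:propositionERwhitneynumbers} (i.e.\ Proposition \ref{proposition:ERwhitneynumbers}) the quantity $|w_k(Q_\lambda(P))|$ counts the increasing saturated chains of length $k$ from $[\hat 0]$ in $Q_\lambda(P)$. The label-preserving bijection $\varphi\colon S_{Q,k}\to S_{P,k}$ produced in Proposition \ref{proposition:bijectionbetweensaturatedchains} carries such chains to increasing saturated chains of length $k$ from $\hat 0$ in $P$. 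Finally, since $\lambda$ is an ER-labeling of $P$, every element $y\in P$ of rank $k$ admits a unique increasing saturated chain from $\hat 0$ to $y$, so these increasing chains are in bijection with $\{y\in P:\rho(y)=k\}$, which by Equation~\ree{definition:whitneysecondkind} has cardinality $W_k(P)$. Composing these bijections completes the argument.

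The two potential obstacles are already resolved by the earlier lemmas: existence and uniqueness of an ascent-free chain per equivalence class required confluence of the rewriting system on $G_{[x,y]}$, which is where the braid relation enters via Newman's Lemma; and the ER$^*$ property of $\lambda^*$ required the cancellative property to rule out two ascent-free chains in $[X,Y]$ arising from genuinely distinct equivalence classes at intermediate ranks. So the proof of Theorem \ref{theorem:EW} really amounts to chaining together Propositions \ref{proposition:ERwhitneynumbers}, \ref{proposition:QERStarLabel}, and \ref{proposition:bijectionbetweensaturatedchains} once and checking that the relevant counts coincide rank by rank; no further construction or verification beyond that bookkeeping is needed.
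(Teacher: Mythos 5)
Your proof is correct and follows essentially the same route as the paper: the paper states that the theorem follows from Definition~\ref{definition:whitneyduals} together with Propositions~\ref{proposition:ERwhitneynumbers}, \ref{proposition:QERStarLabel}, and \ref{proposition:bijectionbetweensaturatedchains}, which is exactly the bookkeeping you carry out, and your use of Corollary~\ref{corollary:uniquesink} for the first equality mirrors the paper's remark following Definition~\ref{definition:lambdarelation}. One small wording slip: each equivalence class $X$ is a \emph{single} connected component of $G_{[\hat 0, e(X)]}$ (that is the definition of $\sim_\lambda$), not ``a union of connected components restricted to chains with multiset $S(X)$''; if it were a union, Corollary~\ref{corollary:uniquesink} would not by itself give you the unique ascent-free representative you need.
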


\begin{remark}In \cite{DleonHallam2017} the authors defined the related concept of 
an $\overline{\mbox{EW}}$-labeling. The reason for the use of an overline in that article is that 
the conditions on 
those labelings are more restrictive but imply the conditions of 
Definition 
\ref{definition:EW}. 
While the definition of 
an  $\overline{\mbox{EW}}$-labeling greatly simplifies the proofs of the theorems we have presented 
here, there are posets with EW-labelings, but no known  $\overline{\mbox{EW}}$-labeling.   See 
Section~\ref{weightedPartSec} for some examples.

\end{remark}

\subsection{Chain-edge labelings}\label{section:CWlabelings}

We show in this subsection that the definitions and constructions given for EW-labelings also extend to the 
generality of chain-edge labelings with the same consequences with respect to Whitney duality.

\begin{definition}
Let $\M\E(P)$ denote the set of pairs $(\m,e)$ where $\m$ is a maximal chain in $P$ and $e$ is an 
edge in the Hasse diagram of $\m$. A \emph{chain-edge labeling} or 
\emph{C-labeling} of $P$ is a map $\lambda: \M\E(P)\rightarrow \Lambda$, where $\Lambda$ is some 
poset of labels, satisfying the condition 
that whenever two maximal chains coincide along the bottom $d$ edges then their labels also 
coincide on these $d$ edges.   
\end{definition}
\begin{definition}\label{definition:rooted_interval} A \emph{rooted interval} $[x,y]_{\r}$ in $P$ 
is a pair $([x,y],\r)$ where 
$[x,y]$ is 
an interval in $P$ and $\r$ is a saturated chain from 
$\hat{0}$ to $x$.
\end{definition}

The rationale behind Definition \ref{definition:rooted_interval} is that in a C-labeling, when we 
want to restrict to a smaller 
interval $[x,y]$ in $P$, the labels depend on the initial saturated chain $\r$ from 
$\hat{0}$ to $x$.

\begin{definition}
A C-labeling $\lambda$ of $P$ naturally induces a C-labeling $\lambda_{\r}$ in a rooted 
interval $[x,y]_{\r}$ by letting the labels of a maximal chain $\c$ of $[x,y]$ be the ones 
corresponding to the maximal chain $\r \cup \c$ in $[\hat{0},y]$. A  C-labeling is a 
\emph{CR-labeling} if in every rooted interval $[x,y]_{\r}$ there is a unique increasing maximal 
chain.
\end{definition}
It was shown by Bj\"orner and Wachs \cite{BjornerWachs1996} that a CR-labeling on a poset $P$ 
has the same implications with respect to M\"obius numbers as described in Theorem \ref{theorem:muER} in the case of an ER-labeling.
Hence we can describe the 
Whitney numbers of a poset with a CR-labeling by the enumeration of 
saturated chains in  the same way than Proposition \ref{proposition:ERwhitneynumbers}. 
We have that $|w_k(P)|$ is the number of 
ascent-free saturated chains starting at $\hat{0}$  of length $k$ and
$|W_k(P)|$ is the number of increasing saturated chains starting at $\hat{0}$ of length $k$ as 
before.

\begin{definition}\label{definition:ranktwoC} 
We say a C-labeling has the \emph{rank two switching property} if for every maximal chain 
of the 
form
$$\m:(\hat{0}= m_0 \lessdot m_1 \lessdot \cdots \lessdot m_{k})$$ with an 
ascending step $\lambda(\m, m_{i-1}\lessdot m_i)< \lambda(\m, m_{i} \lessdot m_{i+1})$   at some 
rank $i<k$ there is a unique 
element $m'_i \neq m_{i}$  such that the chains $\m$ and
$$\m':(\hat{0}= m_0 \lessdot m_1  \lessdot \cdots \lessdot m_{i-1}\lessdot m'_i \lessdot m_{i+1}  
\cdots\lessdot m_{k})$$
have the same word of labels except at rank $i$ where $\lambda(\m',m_{i-1}\lessdot m'_i)= 
\lambda(\m,m_{i}\lessdot m_{i+1})$ and 
$\lambda(\m',m'_{i}\lessdot m_{i+1})=\lambda(\m, m_{i-1}\lessdot m_i)$. Moreover, the rank two switching property requires a consistency condition, that for any other 
maximal  
chain $\widetilde \m$ that coincides with $\m$ in the first $i+2$ elements ($\widetilde m_j=m_j$ 
whenever $j\le i+1$) the choice of the unique element also coincides, i.e., $\widetilde 
m'_i=m'_i$. 
\end{definition}

\begin{remark}
Note that in Definition \ref{definition:ranktwoC} there is an additional 
consistency condition that is not present in Definition \ref{definition:ranktwoE}. This condition 
guarantees that the restriction of the rank two switching property for intervals of the form 
$[\hat{0},y]$ is well-defined when $y$ any  element of $P$ which is not necessarily maximal.
\end{remark}

In the situation of Definition \ref{definition:ranktwoC} we say that the chain 
$\m'$, is obtained from $\m$ by a \emph{quadratic exchange at rank 
$i$} and will  use the notation 
$\m'=U_i(\m)$. If $\m$ does not have an ascent at rank $i$ we define $U_i(\m)=\m$.

As in the discussion after Definition \ref{definition:quadraticexchanges}, we define 
graphs $G_{[x,y]_{\r}}$ given by quadratic exchanges but this time the elements are maximal chains 
in 
a rooted interval $[x,y]_{\r}$. To check confluency in $G_{[x,y]_{\r}}$ we invite the reader to 
verify that by the consistency condition of the rank two switching property for C-labelings in 
Definition \ref{definition:ranktwoC}, if a 
maximal chain $\c$ has ascents at ranks $i$ and $j$ with $|i-j|>1$ it is necessarily true that 
$U_iU_j(\c) = U_jU_i(\c)$.

We say that $\c : (\hat{0} = 
x_0 \lessdot x_1 \lessdot \cdots \lessdot x_{\ell-1} \lessdot x_{\ell}= y)$ has a 
\emph{double-ascent} or a \emph{critical condition at rank $i$} if 
$$\lambda(\c, x_{i-1}\lessdot x_i)<\lambda(\c, x_i\lessdot x_{i+1})<\lambda(\c, x_{i+1}\lessdot 
x_{i+2}).$$
For every saturated chain $\c$ from $\hat{0}$ with a critical condition at rank $i$ we say that 
$\lambda$ satisfies the \emph{braid relation}  if we have $U_{i}U_{i+1}U_{i}(\c) = 
U_{i+1}U_{i}U_{i+1}(\c)$.

When we have a  CR-labeling $\lambda$ satisfying the rank two switching property and the braid 
relation, we ensure local confluency in $G_{[x,y]_{\r}}$ and hence, by Lemma \ref{lemma:newman}, the conclusion of Corollary 
\ref{corollary:uniquesink} holds.  Therefore  each connected component of $G_{[x,y]_{\r}}$ has a 
unique 
sink, i.e., a unique ascent-free saturated 
chain. We use the exact same definitions of chain 
poset $C(P)$ and quotient poset 
$Q_{\lambda}(P)$ given in Definition~\ref{definition:lambdarelation}; and also give the same 
definition of the edge labeling $\lambda^*$ on $Q_\lambda(P)$ of Equation \ref{equation:lambdastar}, 
that is,
$$\lambda^*(X\cover Y)= S(Y)\setminus S(X). $$  
Note that $\lambda^*$ is actually an E-labeling on 
$Q_{\lambda}(P)$, 
i.e. does not depend on maximal chains, even though the labeling $\lambda$ of $P$ is a C-labeling. 
We 
would like to conclude that $\lambda^*$ is also an ER$^*$-labeling in this case. To do this, we show 
that the lemmas and propositions for 
E-labelings that appeared in the previous subsection still hold in the C-labeling scenario.  

It is straightforward to verify that Lemma \ref{lemma:main} is still valid in our new 
setting, but we need a C-labeling analogue (Lemma 
\ref{lemma:saturatedchainssamelabelsinPandQforClabeling} below) of Lemma 
\ref{lemma:saturatedchainssamelabelsinPandQ} to be able to produce  an analogue of Proposition 
\ref{proposition:bijectionbetweensaturatedchains}. 

\begin{lemma}\label{lemma:saturatedchainssamelabelsinPandQforClabeling}
Let $\D:(X_1\cover X_2 \cover \cdots \cover X_k)$ be a saturated chain in $Q_\lambda(P)$ and  let
$\d:(e(X_1)\cover e(X_2) \cover \cdots \cover e(X_k))$  be the corresponding saturated chain 
in $P$. Let $\c \in X_1$ and let $\lambda_{\c}$  be the induced labeling coming from $\lambda$ on 
the rooted 
interval $[e(X_1),e(X_k)]_{\c}$. Then the  words of labels of $\D$ and $\d$ under 
their respective labelings are equal, i.e., $\lambda^*(\D)=\lambda_{\c}(\d)$.
\end{lemma}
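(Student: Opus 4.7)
The plan is to mimic the proof of Lemma~\ref{lemma:saturatedchainssamelabelsinPandQ} step by step, being careful about the fact that labels now depend on maximal chains rather than just on edges. I would verify the equality rank-by-rank: for each $i=1,\dots,k-1$, I need to show that $\lambda^*(X_i \cover X_{i+1})$, which by definition is the element of $S(X_{i+1})\setminus S(X_i)$, equals $\lambda_{\c}(\d, e(X_i)\cover e(X_{i+1}))$.

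To produce an honest chain representative, I would invoke Lemma~\ref{lemma:main}(b). Given $\c\in X_1$, set
\[
\c_i := \c \cup \{e(X_2), e(X_3),\dots, e(X_i)\} \quad \text{for } i=1,\dots,k.
\]
Lemma~\ref{lemma:main}(b) gives $\c_i \in X_i$. In particular $\c_{k}=\c\cup\d$ is a saturated chain from $\hat{0}$ to $e(X_k)$, and its labels are exactly those appearing in $\lambda_{\c}(\d)$ on the top $k-1$ edges. Since $\c_i$ and $\c_{i+1}$ differ only by the element $e(X_{i+1})$, the multiset $S(X_{i+1})\setminus S(X_i) = S(\c_{i+1})\setminus S(\c_i)$ is the single label attached to the edge $e(X_i)\cover e(X_{i+1})$ inside any maximal chain that extends $\c_{i+1}$.

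The next step is the point where the C-labeling context actually matters: I need to argue that this label agrees with $\lambda_{\c}(\d,\, e(X_i)\cover e(X_{i+1}))$. By the definition of $\lambda_{\c}$ on the rooted interval $[e(X_1),e(X_k)]_{\c}$, this latter label is computed as the label assigned by $\lambda$ to the edge $e(X_i)\cover e(X_{i+1})$ inside the maximal chain $\c\cup\d=\c_k$. But $\c_{i+1}$ and $\c_k$ share their bottom $i+1$ edges, including the edge $e(X_i)\cover e(X_{i+1})$, so the consistency condition built into the definition of a C-labeling forces their labels on that edge to agree. This gives exactly $\lambda^*(X_i\cover X_{i+1})=\lambda_{\c}(\d,\, e(X_i)\cover e(X_{i+1}))$.

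Running this argument for each $i$ and concatenating yields $\lambda^*(\D)=\lambda_{\c}(\d)$. The main conceptual obstacle, as I expect, is simply bookkeeping: one must check that $S(X_i)$ is well-defined for a quotient class coming from a C-labeling, and that the multiset difference really does pick out the single label attached to the newly added edge. Both facts reduce to the consistency axiom of C-labelings plus the invariance of $S(\cdot)$ under quadratic exchanges, so no new structural ideas beyond those already used for Lemma~\ref{lemma:saturatedchainssamelabelsinPandQ} are needed.
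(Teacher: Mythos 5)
Your proof is correct and follows the same route as the paper: invoke Lemma~\ref{lemma:main}(b) to get an explicit representative $\c_i=\c\cup\{e(X_2),\dots,e(X_i)\}\in X_i$, compute $\lambda^*(X_i\cover X_{i+1})=S(\c_{i+1})\setminus S(\c_i)$, and then identify this with $\lambda(\c\cup\d, e(X_i)\cover e(X_{i+1}))$ using the defining consistency property of C-labelings. The only difference is expository: you spell out explicitly the step where the consistency axiom is used, whereas the paper leaves it implicit in the final equality of its three-line calculation.
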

\begin{proof}
By Lemma \ref{lemma:main} we have that for all $i \le k$, $\c \cup 
\{e(X_1),e(X_2),\cdots ,e(X_{i})\}\in X_{i}$. Then 
by Equation~\ree{equation:lambdastar}, we have that 
\begin{align*}
 \lambda^*(X_{i-1}\cover X_{i})&= 
S(X_i)\setminus S(X_{i-1})\\
&= S(\c \cup \{e(X_1),e(X_2),\cdots ,e(X_{i})\})\setminus 
S(\c\cup \{e(X_1),e(X_2),\cdots e(X_{i-1})\}) \\
&= \lambda(\c \cup \d, 
e(X_{i-1})\cover e(X_i)).\qedhere
\end{align*}
\end{proof}

 \begin{definition}\label{definition:cancellativeC}
Let $\lambda$ be a CR-labeling on $P$ with the rank two switching property. We say 
that $\lambda$ is \emph{cancellative} if for every $z<x<y$ 
in $P$, $\r \in \M_{[\hat{0},z]}$, $\c\in \M_{[z,x]}$ and $\c_1,\c_2 \in \M_{[x,y]}$ we have that
$$\c\cup \c_1 \sim_{\lambda_{\r}} \c \cup \c_2 \text{ implies }  \c_1 \sim_{\lambda_{\r \cup \c}} 
\c_2.$$ 
\end{definition}

The reader can verify that the proof of Proposition \ref{proposition:QERStarLabel} is still valid 
when $\lambda$ is a CR-labeling with the corresponding properties, replacing whenever necessary 
intervals in $P$ by rooted intervals to take into account the labeling. We then have that a 
CR-labeling $\lambda$ of $P$ with the rank two switching property, the braid relation and the 
cancellative property induces the ER$^*$ labeling $\lambda^*$ of $Q_\lambda(P)$ given by  
Equation~\ree{equation:lambdastar}. 
\begin{definition}\label{definition:generalizedCWandCW}
A \emph{generalized CW-labeling}  $\lambda:\M\E(P)\rightarrow \Lambda$ is a CR-labeling  that 
satisfies
 \begin{itemize}
  \item the rank two switching property, 
  \item the braid relation, and
  \item the cancellative property.
 \end{itemize}

We say $\lambda$ is a \emph{CW-labeling} if it satisfies
 \begin{itemize}
  \item the rank two switching property, and
  \item in each rooted interval each ascent-free maximal chain has a unique word of labels.
 \end{itemize}
\end{definition}

 As with EW-labelings, it turns out in this scenario  that every CW-labeling is a generalized CW-labeling but we 
do not know if the converse is true.

We are now ready to state the main theorem for (generalized) 
CW-labelings that follows from Definition  \ref{definition:whitneyduals}  
together with the C-labeling analogues of Propositions \ref{proposition:ERwhitneynumbers}, 
\ref{proposition:bijectionbetweensaturatedchains}, and 
\ref{proposition:QERStarLabel}.

\begin{theorem}\label{theorem:CW}
Let $P$ be a graded poset with a (generalized) CW-labeling $\lambda$.  Then 
$Q_\lambda(P)$ is a Whitney dual of $P$.
\end{theorem}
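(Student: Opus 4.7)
The plan is to mirror the proof of Theorem \ref{theorem:EW} for E-labelings, invoking the C-labeling analogues of the supporting lemmas and propositions that the paper has already established or sketched in Subsection \ref{section:CWlabelings}. Concretely, I would verify the two equalities required by Definition \ref{definition:whitneyduals}, namely $|w_k(P)| = W_k(Q_\lambda(P))$ and $W_k(P) = |w_k(Q_\lambda(P))|$, for each $k \ge 0$.

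For the first equality, I would appeal to the CR-version of Proposition \ref{proposition:ERwhitneynumbers}, which tells us that $|w_k(P)|$ equals the number of ascent-free saturated chains from $\hat{0}$ of length $k$ in $P$. Since $\lambda$ is a (generalized) CW-labeling, the rank two switching property and the braid relation imply, by Newman's Lemma (Lemma \ref{lemma:newman}) applied to each graph $G_{[\hat 0, y]_{\r}}$, that every connected component contains a unique sink; thus every equivalence class of $Q_\lambda(P)$ of rank $k$ contains exactly one ascent-free saturated chain from $\hat{0}$ of length $k$ in $P$. This yields a bijection, and so $|w_k(P)| = W_k(Q_\lambda(P))$ by \eqref{definition:whitneysecondkind}.

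For the reverse equality, the plan is first to establish that the edge labeling $\lambda^{*}$ on $Q_\lambda(P)$ defined by \eqref{equation:lambdastar} is an ER$^*$-labeling. This is the content of the C-labeling analogue of Proposition \ref{proposition:QERStarLabel}. I would verify that the proof of that proposition adapts with no essential changes once intervals in $P$ are replaced by rooted intervals $[x,y]_{\r}$: Lemma \ref{lemma:main} transfers verbatim, Lemma \ref{lemma:saturatedchainssamelabelsinPandQforClabeling} supplies the label-preservation, Newman's Lemma again gives uniqueness of the sink, and the cancellative condition of Definition \ref{definition:cancellativeC} is precisely what is needed to reduce equivalence of chains in the top portion of a rooted interval to equivalence of the corresponding chains further down. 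Once $\lambda^{*}$ is known to be an ER$^{*}$-labeling, Proposition \ref{proposition:ERwhitneynumbers} gives that $|w_k(Q_\lambda(P))|$ counts increasing saturated chains from $[\hat 0]$ of length $k$ in $Q_\lambda(P)$. The C-labeling analogue of Proposition \ref{proposition:bijectionbetweensaturatedchains} then yields a label-preserving bijection between saturated chains from $[\hat 0]$ in $Q_\lambda(P)$ and saturated chains from $\hat{0}$ in $P$; applying Proposition \ref{proposition:ERwhitneynumbers} a second time (now to $P$ with $\lambda$) identifies this count with $W_k(P)$, completing the proof.

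The main obstacle is bookkeeping rather than a new conceptual difficulty: in the C-labeling setting the labels along the top portion of a chain depend on the root, so each invocation of the E-labeling proofs must be relativized to a rooted interval $[x,y]_{\r}$, and the consistency condition built into Definition \ref{definition:ranktwoC} must be used to guarantee that nonadjacent quadratic exchanges commute (ensuring local confluency of $G_{[x,y]_{\r}}$). The slightly subtle point is the cancellative property, which in Definition \ref{definition:cancellativeC} now changes the root (from $\r$ to $\r \cup \c$) when passing to the smaller interval; this is exactly what is needed so that the uniqueness half of Proposition \ref{proposition:QERStarLabel} goes through, because the ascent-free chains of $[x,y]$ under $\lambda_{\r \cup \c}$ are what index the sinks of $G_{[x, y]_{\r \cup \c}}$.
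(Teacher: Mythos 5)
Your proposal is correct and follows essentially the same route as the paper, which proves Theorem~\ref{theorem:CW} precisely by invoking the C-labeling analogues of Propositions~\ref{proposition:ERwhitneynumbers}, \ref{proposition:QERStarLabel}, and~\ref{proposition:bijectionbetweensaturatedchains} together with Definition~\ref{definition:whitneyduals}. Your write-up simply makes explicit the bookkeeping (relativizing to rooted intervals, using the consistency condition for local confluence, and noting how the cancellative property changes the root) that the paper leaves implicit.
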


\section{The Whitney dual $Q_\lambda(P)$}\label{section:whitneydual}
In this section we first give a formula for the M\"obius function of 
$Q_\lambda(P)$ given that $\lambda$ is a generalized CW-labeling. In the second part of the section 
we provide a somewhat simpler description 
of $Q_\lambda(P)$ given that $\lambda$ is a CW-labeling. This characterization only applies to 
CW-labelings in the strict sense of the definition, so our description does not include all 
generalized CW-labelings.

\subsection{The M\"obius function of $Q_{\lambda}(P)$}

We can characterize the M\"obius numbers of $Q_{\lambda}(P)$ using the fact that $\lambda^*$ of Equation \ref{equation:lambdastar} is 
an ER${}^*$-labeling. An interesting fact is that $Q_{\lambda}(P)$ belongs to the famous family of posets whose M\"obius 
numbers are $0$ or $\pm 1$. Hersh and M\'esz\'aros in \cite{HershMeszaros2017} have defined a 
family 
of edge labelings, that they coined SB-labelings, and that allowed them to conclude that a lattice 
with 
such a labeling has M\"obius numbers  $0$ or $\pm 1$. Their result partially answers a question 
posed by  Bj\"orner  and Greene on why posets with these M\"obius values are plentiful in 
combinatorics. It is still an open problem to find a characterization of when the posets
$Q_{\lambda}(P)$ are lattices. The family of posets $Q_{\lambda}(P)$  provide a 
plethora of examples of posets whose M\"obius 
numbers are $0$ or $\pm 1$.

Fix $x\in P$ and let $X^1, X^2,\dots, X^n$ be the different elements of $Q_\lambda(P)$ such that for all $i$, $e(X^i)=x$. Since there is exactly one increasing maximal chain 
in $[\hat{0}, x]$, Definition \ref{definition:lambdarelation} implies that there is exactly one $X^i$ 
that contains this maximal increasing chain and all other $X^j$ for $j\neq i$ do not contain any 
increasing maximal chain.   As we see in the next proposition it is exactly this class which has a 
nonzero M\"obius value $\mu([[\hat{0}],X])$ in $Q_\lambda(P)$.

\begin{proposition}\label{proposition:mobiusQlambda}
 Let $\lambda$ be a generalized CW-labeling of $P$; $X,Y \in Q_{\lambda}(P)$ and 
$\c\in X$.
 Then in $Q_\lambda(P)$ we have
$$
\mu([X,Y])= \begin{cases}
(-1)^{\rho(Y)-\rho(X)} & \mbox{if $Y$ contains $\c \cup \d$, where $\d$ is the unique 
increasing }\\ & \mbox{maximal chain in $\M_{[e(X),e(Y)]_{\c}}$,}\\
0 & \mbox{otherwise.}
\end{cases}
$$
\end{proposition}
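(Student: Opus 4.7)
The plan is to apply Theorem~\ref{theorem:muER} to the ER${}^*$-labeling $\lambda^*$ on $Q_\lambda(P)$, which is granted by Proposition~\ref{proposition:QERStarLabel} together with its C-labeling analogue discussed after Definition~\ref{definition:cancellativeC}. This gives
\[
\mu([X,Y]) \;=\; (-1)^{\rho(Y)-\rho(X)}\,\#\{\text{increasing maximal chains of } [X,Y] \text{ under } \lambda^*\},
\]
so the task reduces to showing that the number of such chains is $1$ precisely when $Y$ contains $\c\cup\d$, and $0$ otherwise.

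First I would fix $\c\in X$ and establish a label-preserving bijection between the maximal chains of $[X,Y]$ in $Q_\lambda(P)$ and the saturated chains $\d'$ in $[e(X),e(Y)]$ satisfying $\c\cup\d'\in Y$. Given a maximal chain $X=X_0\lessdot\cdots\lessdot X_k=Y$, Lemma~\ref{lemma:main}(a) yields a saturated chain $e(X_0)\lessdot\cdots\lessdot e(X_k)$ in $[e(X),e(Y)]$, and part (b) of the same lemma certifies $\c\cup\{e(X_1),\dots,e(X_k)\}\in Y$. Injectivity of this map is part (c) of Lemma~\ref{lemma:main}. For surjectivity, given any saturated chain $e(X)=x_0\lessdot\cdots\lessdot x_k=e(Y)$ with $\c\cup\{x_1,\dots,x_k\}\in Y$, the classes $X_i:=[\c\cup\{x_1,\dots,x_i\}]$ form a maximal chain in $[X,Y]$ thanks to the cover relations inherited from $C(P)$ and Lemma~\ref{lemma:main}(b). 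The label-preserving property is exactly Lemma~\ref{lemma:saturatedchainssamelabelsinPandQforClabeling}: the word $\lambda^*(X_0\lessdot\cdots\lessdot X_k)$ equals the word $\lambda_\c(\d')$ on the rooted interval $[e(X),e(Y)]_\c$.

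Finally, since $\lambda$ is a CR-labeling, the rooted interval $[e(X),e(Y)]_\c$ contains a unique increasing maximal chain $\d$. Under the bijection, increasing maximal chains of $[X,Y]$ under $\lambda^*$ correspond exactly to increasing chains $\d'$ in $[e(X),e(Y)]_\c$ with $\c\cup\d'\in Y$; the uniqueness of $\d$ forces $\d'=\d$, so there is exactly one such chain when $\c\cup\d\in Y$ and none otherwise. Combined with Theorem~\ref{theorem:muER}, this yields the claimed formula.

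The main obstacle is the surjectivity step, which demands checking that the classes $[\c\cup\{x_1,\dots,x_i\}]$ are all distinct and that successive ones are related by a cover in $Q_\lambda(P)$; both follow from the definition of the quotient poset together with the fact that quadratic exchanges preserve rank. A secondary subtlety is that the C-labeling $\lambda_\c$ depends on the chosen root $\c$, so one should verify that the condition ``$Y$ contains $\c\cup\d$'' is independent of the representative $\c\in X$; this invariance is ensured by the cancellative property in Definition~\ref{definition:cancellativeC} applied to the unique increasing chain.
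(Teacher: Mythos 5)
Your proof is correct and follows essentially the same approach as the paper's: both apply Theorem~\ref{theorem:muER} to the ER$^*$-labeling $\lambda^*$ of $Q_\lambda(P)$, then use Lemma~\ref{lemma:main} and Lemma~\ref{lemma:saturatedchainssamelabelsinPandQforClabeling} to transfer the count of increasing chains in $[X,Y]$ to a count in the rooted interval $[e(X),e(Y)]_{\c}$, where CR-uniqueness finishes the job. The only stylistic difference is that you package the argument as an explicit label-preserving bijection, whereas the paper argues directly that two increasing chains in $[X,Y]$ would project to two increasing chains in $[e(X),e(Y)]_{\c}$; and your closing concern about independence of the choice of $\c\in X$ resolves more immediately than invoking cancellativity, since once the formula is established for one fixed $\c$, the value $\mu([X,Y])$ is intrinsic, so the stated condition must agree for all representatives.
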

\begin{proof}
Since $\lambda$ is a CR-labeling, there exists a unique maximal chain $\d$ in 
$[e(X),e(Y)]_{\c}$ which is increasing.  If $Y\in Q_\lambda(P)$ is the class that contains 
$\c\cup\d$ then, by the definition of $Q_{\lambda}(P)$ and 
Equation~\ree{equation:lambdastar},  
there is an increasing saturated chain which terminates at $Y$ say $X=X_1\cover X_2\cover 
\cdots \cover X_k=Y$.  Now suppose that there was another increasing chain $X=X_1'\cover 
X_2'\cover \cdots \cover X_k'$ in $Q_\lambda(P)$ with $e(X_k')=e(Y)$.  Then by  
Lemma~\ref{lemma:saturatedchainssamelabelsinPandQforClabeling} there would be a corresponding 
maximal chain in 
$[e(X), e(Y)]_{\c}$ which is increasing.  Since 
there is a unique increasing maximal chain in $[e(X),e(Y)]_{\c}$, we know that $e(X_i) =e(X_i')$ 
for all $i$.  But (the CW-labeling version of) Lemma~\ref{lemma:main} part (c) implies that 
$X_i=X_i'$ which is impossible.  Since (the CW-labeling version of)
Proposition \ref{proposition:QERStarLabel} asserts that $\lambda^*$ 
given in Equation (\ref{equation:lambdastar}) is an ER$^*$-labeling of $Q_{\lambda}(P)$ Theorem \ref{theorem:muER} gives the desired result.
\end{proof}
\begin{remark}
 Note that in the EW-version of Proposition \ref{proposition:mobiusQlambda} the chain $\c \in X$ is 
irrelevant.
\end{remark}

\subsection{Another description of $Q_{\lambda}(P)$}

Let  $\Lambda$ be a poset and let $w$ be a word with letters in the alphabet $\Lambda$. Assume 
that whenever $w_i<w_{i+1}$ we 
are allowed to do exchanges on $w$ of the form 
$$w_1w_2\cdots w_{i-1}w_iw_{i+1}w_{i+2}\cdots w_n \stackrel{i}{\rightarrow} w_1w_2\cdots 
w_{i-1}w_{i+1}w_{i}w_{i+2}\cdots w_n.$$
It is not hard to check that this type of exchange produces a locally confluent relation and 
after using Newman's Lemma \ref{lemma:newman} we can conclude that there is a unique ascent-free 
word $w'$ that is related to $w$ in this manner. We define $\sort(w):=w'$ to be this unique 
ascent-free word.  For example, if $\Lambda= \mathbb{Z}$, $w=85324$ then $\sort(w) = 85432$.

\begin{definition}\label{rLambdaDef}
Let $P$ be a poset with a CW-labeling $\lambda$. Let $R_\lambda(P)$ be the poset whose elements are 
pairs $(x,w)$ where $x\in P$ and $w$ is the word of labels of an ascent-free saturated chain 
$\c \in \M_{[\hat{0},x]}$ (note that by the definition of CW-labeling $w$ uniquely determines $\c$);
and such that $(x,w) \cover (y,u)$ whenever $x\cover y$ and $u = \sort(w \lambda(\c,x\cover 
y))$ ($uv$ here means the concatenation of the words $u$ and $v$). 
\end{definition}

\begin{example}
 
If we consider the EW-labeling of $\Pi_3$ given in 
Example~\ref{example:labelingposetofpartitions}, we obtain the poset  
$R_\lambda(\Pi_3)$ depicted in Figure~\ref{rLambdaPi3Fig}. Comparing $R_\lambda(\Pi_3)$  and 
$Q_\lambda(\Pi_3)$ given in Figure~\ref{fig:exampleQPi}, we can observe directly 
that these two posets are isomorphic.
\end{example}

\begin{figure}
\begin{tikzpicture}[scale=.7]

\tikzstyle{every node}=[inner sep=3pt, scale=.8, minimum width=4pt]

\node (a) at (1,-1) { $(1/2/3, \emptyset$)};

\node (b) at (-2,2.5)  {$(12/3, (1,2))$};

\node (c) at (1,2.5)  {$(13/2, (1,3))$};

\node (d) at (4,2.5)  {$(1/23, (2,3))$};

\node (e) at (-.5,6)  {$(123, (1,3)(1,2))$};
\node (f) at (4,6)  {$(123, (2,3)(1,2))$};

 \draw (c)-- (a) -- (b);
\draw (a)--(d)--(f);
\draw (a)--(d)--(f);
\draw (c)--(e)--(b);

\end{tikzpicture}
\caption{$R_\lambda(\Pi_3)$} \label{rLambdaPi3Fig}
\end{figure}
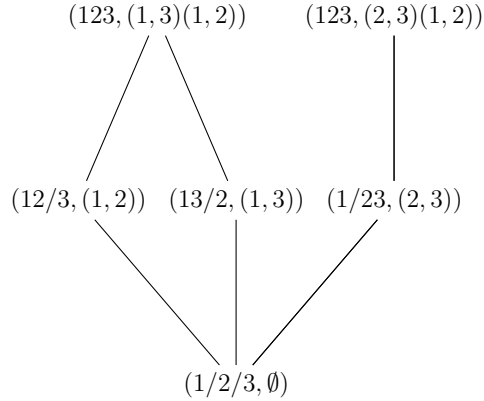

\begin{theorem}\label{theorem:secondcaracterizationQlambda}
If $\lambda$ is a CW-labeling of $P$, then $R_\lambda(P)\cong Q_\lambda(P)$.
\end{theorem}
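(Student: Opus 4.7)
The plan is to exhibit an explicit isomorphism $\phi \colon Q_\lambda(P) \to R_\lambda(P)$. For each class $X \in Q_\lambda(P)$, Corollary~\ref{corollary:uniquesink} (in its CW-labeling version, guaranteed because a CW-labeling satisfies the rank two switching property and braid relation, arguing as in Theorem~\ref{theorem:standardEWlabelings}) provides a unique ascent-free saturated chain $\c_X$ in the rooted interval $[\hat{0}, e(X)]$ lying in $X$. Define
\[
\phi(X) = \bigl(e(X),\, \lambda(\c_X)\bigr).
\]
This is well-defined into $R_\lambda(P)$ since $\lambda(\c_X)$ is by construction the word of labels of an ascent-free chain ending at $e(X)$.

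Next I would verify bijectivity. Injectivity: if $\phi(X) = \phi(Y)$, then $e(X) = e(Y)$ and $\lambda(\c_X) = \lambda(\c_Y)$; the CW-labeling assumption that ascent-free chains in a rooted interval are determined by their word of labels forces $\c_X = \c_Y$, hence $X = Y$. Surjectivity: given $(x, w) \in R_\lambda(P)$, by definition $w$ is the word of some ascent-free chain $\c$ in $[\hat{0}, x]$, and the equivalence class $[\c] \in Q_\lambda(P)$ satisfies $\phi([\c]) = (x, w)$.

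The core step is showing $\phi$ preserves covers in both directions. The key observation is that a single quadratic exchange $U_i$ applied to a saturated chain $\c$ transforms its word of labels by swapping the ascent at position $i$, and these are precisely the moves $\stackrel{i}{\rightarrow}$ used to define $\sort$. Suppose $X \cover Y$ in $Q_\lambda(P)$. By Lemma~\ref{lemma:main}(b), $\c_X \cup \{e(Y)\}$ lies in $Y$; its word of labels is $\lambda(\c_X)\,\ell$, where $\ell = \lambda(\c_X \cup \{e(Y)\}, e(X) \cover e(Y))$. Applying quadratic exchanges to reduce this chain to the unique ascent-free representative $\c_Y$ of $Y$ performs exactly the moves that compute $\sort(\lambda(\c_X)\,\ell)$ on the label word; hence $\lambda(\c_Y) = \sort(\lambda(\c_X)\,\ell)$, giving $\phi(X) \cover \phi(Y)$ in $R_\lambda(P)$. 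Conversely, if $(e(X), \lambda(\c_X)) \cover (y, u)$ in $R_\lambda(P)$, then $e(X) \cover y$ and $u = \sort(\lambda(\c_X)\,\ell)$; the equivalence class $Y$ of $\c_X \cup \{y\}$ satisfies $e(Y) = y$ and, by the same argument, $\lambda(\c_Y) = u$, while $X \cover Y$ holds in $Q_\lambda(P)$ by the definition of the quotient.

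The main obstacle is the correct bookkeeping between the chain-level quadratic exchanges (which depend on the rooted labeling $\lambda$ and are governed by the rank two switching property for C-labelings) and the purely word-level sorting moves. One must check that the consistency clause in Definition~\ref{definition:ranktwoC} together with local confluency (via Newman's Lemma) makes the word-level computation independent of which intermediate chain representatives one chooses, so that the abstract $\sort$ procedure on $\Lambda$-words faithfully mirrors the reduction of chains in $C(P)$. Once this is established, the two cover-verification arguments close the proof that $\phi$ is an order isomorphism.
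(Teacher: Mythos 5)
Your proposal is correct and follows essentially the same route as the paper's own proof: both arguments establish a bijection by exploiting the unique ascent-free representative in each class of $Q_\lambda(P)$, and both verify cover-preservation in both directions via the observation that chain-level quadratic exchanges on $\c_X\cup\{e(Y)\}$ realize exactly the word-level $\sort$ moves on $\lambda(\c_X)\,\ell$, with Lemma~\ref{lemma:main}(b) supplying the needed representative of $Y$. The only cosmetic difference is the direction of the map (you go $Q_\lambda(P)\to R_\lambda(P)$; the paper defines $\varphi\colon R_\lambda(P)\to Q_\lambda(P)$), which changes nothing of substance.
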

\begin{proof}
Let $\varphi: R_\lambda(P) \rightarrow Q_\lambda(P)$ be given by $\varphi((x,w)) = [\c]$ 
where $\c \in \M_{[\hat{0},x]}$ is the unique ascent-free chain determined by $w$. Using the fact 
that when $\lambda$ is a CW-labeling each equivalence class in $Q_\lambda(P)$ contains a unique 
ascent-free saturated chain, one can see that $\varphi$ is a well-defined bijection. We want to see 
that $\varphi$ and $\varphi^{-1}$ are poset maps. 

We now show that $\varphi$ is an order-preserving map. Since we are working with finite posets it is enough to 
show that   $\varphi$ preserves cover relations. Suppose that $(x,w) \cover (y,u)$ in 
$R_\lambda(P)$. Let $\c$ and $\d$ be  the unique saturated chains from $\hat{0}$ 
determined by $w$ and $u$, respectively. Then  $\varphi((x,w)) = [\c]$ and $\varphi((y,u)) = [\d]$.
Let $\d'=\c \cup \{y\}$, Since $\d'$ is a saturated chain from $\hat{0}$ to $y$ with word of 
labels $w \lambda(\c\cup \{y\},x\cover y)$, we can use the rank two switching property to see that $\d'$ 
is 
equivalent to an ascent-free chain with labels $\sort(w \lambda(\c, x\cover y))=u$. Since $u$ 
determines a unique saturated chain we have that  $\d' \sim \d$.  Moreover, $\c \cover \d'$ in 
$C(P)$, so we have $\varphi((x,w))= [\c] \cover [\d] =\varphi((y,u))$.  

Now we show $\varphi^{-1}$ is also an order-preserving map. To see why, suppose that $X \cover 
Y$ in $Q_{\lambda}(P)$; and let $x=e(X)$, $y= e(Y)$, $\varphi^{-1}(X)=(x,w)$ and 
$\varphi^{-1}(Y)=(y,u)$.  By the definition of 
$Q_\lambda(P)$ and Proposition \ref{proposition:quotientposet}, there are chains $\c \in X$ and 
$\d'\in Y$ such that $\d'=\c \cup \{y\}$ and hence $x\cover y$. We may assume, without loss of generality, 
that $\c$ is the ascent-free chain in $X$ with word $w$ (otherwise, apply quadratic exchanges until 
you obtain an ascent-free chain in $X$).
Note that $w\lambda(\d',x\cover y)$ is the word of 
labels of $\d'$. Let $\d$ be the ascent-free chain in $Y$ with word of labels $u$. Since $\d \sim 
\d'$ we also have that the word of labels of $\d$ is $\sort(w\lambda(\c,x\cover y))$ and so 
$u=\sort(w\lambda(\c,x\cover y))$. We obtain then that $\varphi^{-1}(X)=(x,w)\cover (y,u)=
\varphi^{-1}(Y)$ as desired.
\end{proof}

The new characterization of $Q_\lambda(P)$ that was given  in Theorem 
\ref{theorem:secondcaracterizationQlambda} can be helpful providing combinatorial descriptions of 
Whitney duals (see Section \ref{section:noncrossingpartitiondual}).  

\section{Examples of posets with Whitney labelings}\label{section:examplesWposets}
In this section we give several examples of posets with Whitney labelings.  By  
Theorem~\ref{theorem:CW}, 
this implies that these posets also have Whitney duals.
\subsection{Geometric lattices}

In \cite{Stanley1974} Stanley introduced an edge labeling for geometric lattices that is an 
ER-labeling (In fact, as shown by Bj\"orner in \cite{Bjorner1982}  it is an EL-labeling).   We give 
the 
definition 
below and then show it is also an EW-labeling.

\begin{definition}\label{geomLabDef}
Let $L$ be a geometric lattice with set of atoms $A(L)$.  Fix a total order on $A(L)$.  Now define 
$\lambda:\E(L) \rightarrow A(L)$ by  setting $\lambda(x\cover y)=a$ where $a$ is the smallest atom 
such that $x\vee a = y$.  We call $\lambda$ a \emph{minimum labeling of $L$}. Note that this 
labeling can be different for different total orders on $A(L)$
\end{definition}

\begin{example}\label{example:labelingPIn}
The labeling $\lambda$ of $\Pi_n$ in Example~\ref{example:labelingposetofpartitions} is  a minimum 
labeling. 
Here we associate each atom of $\Pi_n$ with the ordered pair $(i,j)$ where $ij$ is the unique 
nontrivial block in the atom and such that $i<j$.  We then order the atoms lexicographically. 
\end{example}
  
\begin{proposition}\label{rk2Geom}
For any geometric lattice $L$ a minimum labeling of $L$ is an 
EW-labeling. 
\end{proposition}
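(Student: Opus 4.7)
The plan is to verify the two conditions in Definition \ref{definition:EW}. That $\lambda$ is already an ER-labeling (indeed, an EL-labeling) is Bj\"orner's result from \cite{Bjorner1982}, so the work is to check (i) the rank two switching property, and (ii) that in every interval each maximal chain is determined by its word of labels. Condition (ii) is essentially immediate: if $x = x_0 \lessdot x_1 \lessdot \cdots \lessdot x_k = y$ has word of labels $a_1 a_2 \cdots a_k$, then by the definition of the minimum labeling the smallest atom $a_i$ witnessing the cover $x_{i-1}\lessdot x_i$ satisfies $x_i = x_{i-1}\vee a_i$, so the chain is reconstructed recursively from its word. No geometric structure beyond the definition of $\lambda$ is needed for this.

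For the rank two switching property, suppose we have an ascending pair $\lambda_1 = \lambda(x\lessdot z) < \lambda(z\lessdot y) = \lambda_2$ inside a rank-$2$ interval $[x,y]$, where $z$ is the middle element. I would define $z' := x \vee \lambda_2$. Since $\lambda_2$ is the smallest atom with $z \vee \lambda_2 = y$, it follows that $\lambda_2 \not\leq z$ and hence $\lambda_2 \not\leq x$, so $x \lessdot z'$ by the standard fact that in a geometric lattice $x \lessdot x \vee a$ whenever $a$ is an atom not below $x$. Since $\lambda_2 \leq y$, also $z' \leq y$, and $z' \neq y$ because $x \not\lessdot y$; so $z' \lessdot y$. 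Uniqueness of $z'$ given the labels is automatic, since any rank-$2$ switch producing label $\lambda_2$ on the bottom cover forces the middle element to equal $x \vee \lambda_2$. The nontrivial content is verifying the two labels on the new chain.

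To show $\lambda(x \lessdot z') = \lambda_2$, assume for contradiction some atom $a < \lambda_2$ has $x \vee a = z'$. Then $a \not\leq x$, and I can argue $a \not\leq z$ (otherwise $x \vee a = z \neq z'$); hence $z \vee a = y$ (since $z\lessdot y$ and $a \not\leq z$), contradicting the minimality of $\lambda_2$. To show $\lambda(z' \lessdot y) = \lambda_1$, first compute $z' \vee \lambda_1 = x \vee \lambda_1 \vee \lambda_2 = z \vee \lambda_2 = y$, so $\lambda_1$ achieves the join; then for any atom $a$ with $z' \vee a = y$, case-split on whether $a \leq z$ (in which case $a \not\leq x$ gives $x\vee a = z$, forcing $\lambda_1 \leq a$ by the minimality that defines $\lambda_1$) or $a \not\leq z$ (in which case $z \vee a = y$ forces $\lambda_2 \leq a$ by the minimality that defines $\lambda_2$). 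Either way $a \geq \lambda_1$, as desired. The main technical point is keeping straight the three possible positions of an atom $a$ relative to the pair of rank-one elements $z, z'$ above $x$, using the fact that $z \wedge z' = x$ in any rank-$2$ interval of a geometric lattice; once this casework is carried out, both conditions of Definition \ref{definition:EW} are in hand.
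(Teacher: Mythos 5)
Your proof is correct and follows essentially the same structure as the paper's: reduce to the rank two switching property (since $\lambda$ is already known to be an ER-labeling and the word of labels clearly determines the chain via joins), identify the switched middle element as $z' = x \vee \lambda_2$, and verify that the two labels on the new chain are $\lambda_2$ then $\lambda_1$. The only minor variation is in showing $\lambda(z' \lessdot y) = \lambda_1$, where you carry out a direct case split on whether a competing atom $a$ lies below $z$, while the paper instead invokes that $\lambda_1$ must be the smallest atom appearing as a label anywhere in $[x,y]$ (a consequence of the uniqueness of the increasing chain); both are valid.
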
  
\begin{proof} 
It was shown in~\cite{Stanley1974} that a minimum labeling is an ER-labeling.  Also, for any 
interval $[x,y]$ 
the labels along any maximal chain uniquely determine the chain since one can read off the elements 
of the chain by taking joins of $x$ with the labels along the chain.  Thus it suffices to 
show that a minimum labeling has the rank two switching property.

Let 
$\lambda$ be a minimum labeling of $L$, let $[x,y]$ be an interval of rank two and suppose that 
$ij$ is the word of labels of the increasing chain,  $x\cover x\vee i \cover x\vee i\vee j =y$. 
Since $L$ is geometric and $j$ is an atom not underneath $x$, $x\cover x\vee j\cover y$.  Observe 
that 
 $\lambda(x\cover x\vee j) =j$, since if this was  not the case, this would imply $\lambda(x\vee i 
\cover y) <j$ which is a contradiction.
  Moreover,  $i$ is not below $x\vee j$  and $i$  is below $y$.   Since there is a unique 
increasing 
chain in $[x,y]$,  $i$ is the smallest atom that appears as a label in $[x,y]$.  It follows that 
$\lambda(x\vee j \cover y) = i$. We conclude that the chain $x\cover x\vee j \cover y$ has the word 
of labels $ji$. Since joins are 
unique, there is only one chain in $[x,y]$ with word of labels $ji$ and thus any minimum labeling 
satisfies the rank two switching property.
\end{proof}

We have the following theorem as a corollary.

\begin{theorem}[\cite{DleonHallam2017}]
Every geometric lattice has a Whitney dual.
\end{theorem}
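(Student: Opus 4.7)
The plan is to derive this statement as a direct corollary of the machinery that has just been assembled in the paper. Given any geometric lattice $L$, the first step is to choose an arbitrary total order on the set of atoms $A(L)$; this determines a minimum labeling $\lambda$ of $L$ in the sense of Definition~\ref{geomLabDef}. Nothing is special about the choice of total order, so we may pick any convenient one.

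Next I would invoke Proposition~\ref{rk2Geom} to assert that $\lambda$ is an EW-labeling of $L$. Because every EW-labeling is in particular a generalized EW-labeling (this is precisely the content of Theorem~\ref{theorem:standardEWlabelings}, which shows that the rank two switching property together with uniqueness of the ascent-free word of labels in each interval implies the braid relation and the cancellative property), the hypotheses of Theorem~\ref{theorem:EW} are satisfied.

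Finally I would apply Theorem~\ref{theorem:EW} to conclude that the quotient poset $Q_\lambda(L)$ constructed from the chain poset $C(L)$ via the equivalence relation $\sim_\lambda$ is a Whitney dual of $L$, which proves the theorem. There is essentially no obstacle at this point: all the real work was carried out in establishing Proposition~\ref{rk2Geom} (verifying the rank two switching property for minimum labelings of geometric lattices via the uniqueness of joins with atoms) and in the general construction underlying Theorem~\ref{theorem:EW}. The only mild subtlety to remark on, for completeness, is to note explicitly why the uniqueness-of-labels hypothesis in Definition~\ref{definition:EW} holds: for a minimum labeling, the elements of any saturated chain in $[x,y]$ can be recovered from the word of labels by successively joining $x$ with the labels read from the bottom up, so the word of labels determines the chain uniquely.
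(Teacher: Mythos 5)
Your proposal is correct and follows the same route the paper takes: the theorem is stated as an immediate corollary of Proposition~\ref{rk2Geom} (a minimum labeling of a geometric lattice is an EW-labeling), combined with Theorem~\ref{theorem:standardEWlabelings} and Theorem~\ref{theorem:EW}. The explicit remark about recovering chain elements by joining with the labels is exactly the observation made in the proof of Proposition~\ref{rk2Geom}.
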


\begin{remark}For the poset $\Pi_n$ the authors proved in \cite{DleonHallam2017} that $\ISF_n$ is the Whitney dual corresponding to the minimal labeling of Example \ref{example:labelingPIn}.
\end{remark}

\subsection{The noncrossing partition lattice}\label{section:noncrossingpartitiondual}

\begin{figure}
\begin{center}
\begin{tikzpicture}[scale=0.9]
\tikzstyle{every node}=[inner sep=3pt, scale=1, minimum width=4pt]
\node (0) at (0,0) {$1/2/3/4$};

\node (a) at  (-5, 2)  {$12/3/4$};
\node (b)  at  (-3, 2)  {$13/2/4$};
\node (c) at  (-1, 2)  {$14/2/3$};

\node (d) at  (5, 2)  {$1/2/34$};
\node (e) at  (3, 2)  {$1/24/3$};
\node (f) at  (1, 2)  {$1/23/4$};

\node (g) at  (-5, 4)  {$123/4$};
\node (h) at  (-3, 4)  {$124/3$};
\node (i) at  (-1, 4)  {$12/34$};

\node (j) at  (5, 4)  {$1/234$};
\node (k) at  (3, 4)  {$14/23$};
\node (l) at  (1, 4)  {$134/2$};

\node (1) at (0,6) {$1234$};

\draw[red] (0)--(a);
\draw[red] (0)--(b);
\draw[red] (0)--(c);
\draw[red] (b)--(g);
\draw[red] (c)--(h);
\draw[red] (c)--(l);
\draw[red] (f)--(g);
\draw[red] (f) -- (k);
\draw[red] (e)--(h);
\draw[red] (d)--(i);
\draw[red] (d)--(l);
\draw[red] (l)--(1);
\draw[red] (k)--(1);
\draw[red] (j)--(1);

\draw [thick, dashed,blue] (0)--(e);
\draw [thick, dashed,blue] (0)--(f);
\draw [thick, dashed,blue] (a)--(g);
\draw [thick, dashed,blue] (a)--(h);
\draw[thick, dashed,blue] (c)--(k);
\draw[thick, dashed,blue] (e)--(j);
\draw[thick, dashed,blue] (d)--(j);
\draw[thick, dashed,blue] (h)--(1);
\draw[thick, dashed,blue] (i)--(1);

\draw[thick, dotted] (0)--(d);
\draw[thick, dotted] (a)--(i);
\draw[thick, dotted] (b)--(l);
\draw[thick, dotted] (g)--(1);
\draw[thick, dotted] (f)--(j);

\end{tikzpicture}
\caption{Edge labeling of $\NC_4$.  For clarity, the edge labels are represented by  line patterns.  
The (red) solid lines represent the label $1$, the (blue) dashed lines represent $2$, and the 
(black) dotted lines represent $3$.}\label{fig:NC4}
\end{center}
\end{figure}
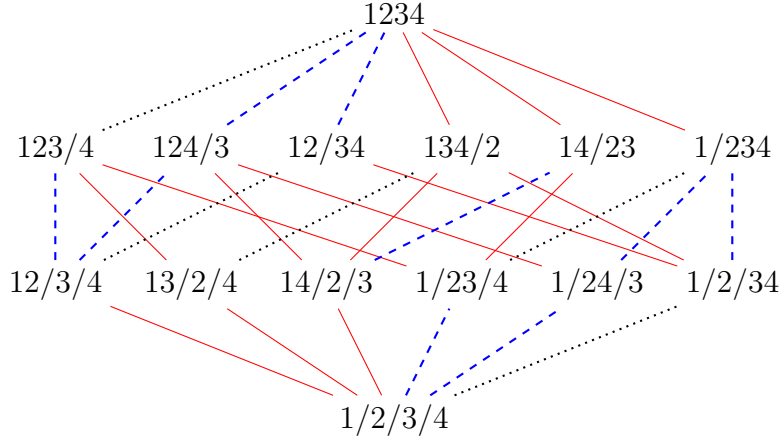
We say a partition $\pi=B_1 / B_2/\cdots /B_k$ of 
$[n]$ is \emph{noncrossing} if there are no $a<b<c<d$ such that $a,c\in B_i$ and $b,d\in B_j$ for 
some $i\neq j$.
For 
example, $124/35/67$ is not a noncrossing partition since $2<3<4<5$ and $\{2,4\}$ and $\{3,5\}$ are 
in two different blocks, but $127/36/45$ is noncrossing.  The 
\emph{noncrossing partition lattice}, denoted $\NC_n$, is the set of noncrossing partitions 
of $[n]$ ordered by refinement. As the name suggest, $\NC_n$ is a lattice and has many nice 
combinatorial properties 
(see Simions survey article~\cite{Simion1997} for more information).  $\NC_n$ is an induced subposet 
of $\Pi_n$, but it is not a sublattice of $\Pi_n$.  Figure~\ref{fig:NC4} depicts $\NC_4$.   

In~\cite{Stanley1997}, Stanley found a beautiful connection between $\NC_n$ and a set of 
combinatorial objects known as parking functions.  A \emph{parking function} of $n$ is a sequence 
of $n$ positive integers $(p_1,p_2,\dots,p_n)$ with the property that when it is rearranged 
in a weakly increasing order $p_{i_1}<p_{i_2}<\cdots < p_{i_n}$, then $p_{i_j}\le j$ for all $j$.
An edge labeling of $\NC_n$ is given in \cite{Stanley1997} with 
the property that the words of labels along the maximal chains are 
exactly the parking functions of $n-1$.   We will show that this edge labeling is in fact an 
EW-labeling, establishing that $\NC_n$ has a Whitney dual.  To describe the labeling of Stanley, 
first note that just as in the partition lattice $\Pi_n$, the cover relation is given by merging 
two blocks 
together.   Suppose that $\sigma$ is obtained from $\pi$ by merging $B_i$ and $B_j$, where $\min 
B_i < \min B_j$, then define 
\begin{align}\label{equation:parkingfunctionlabeling}
\lambda_{\NC}(\pi \cover \sigma)  = \max \{ a \in B_i \mid a<\min B_j\} .
\end{align}
See Figure~\ref{fig:NC4} to see this labeling for $\NC_4$.

The work in~\cite{Stanley1997} uses a slightly different definition of ER-labeling. 
There, an ER-labeling is defined as a labeling such that each interval has a unique 
weakly increasing maximal chain. It is not hard to see that the labeling in 
\eqref{equation:parkingfunctionlabeling} does not fit this definition.  However, we have chosen to 
define an ER-labeling as a labeling where each maximal interval has a unique strictly increasing 
maximal chain.  Under this definition, one can check that $\lambda_{\NC}$ is indeed an ER-labeling. 

In \cite{Stanley1997} this ER-labeling is used to prove that there is a local
$\sym_n$-action on the maximal chains of $\NC_{n+1}$.    This action is local in the sense that if a 
transposition of the form $(i, i+1)$ acts on a maximal chain it only changes the chain in at 
most the element at rank $i$.  
Suppose that $[x,y]$ is an interval of rank two in $\NC_n$ and such that $\rho(x) =i-1$ and $\rho(y) 
=i+1$.  Then for a maximal chain $\c$ in  $[x,y]$,
$$
(i,i+1) \c=
\begin{cases}
\c' & \mbox{ if $\c$ has a strict ascent or strict descent in $[x,y]$,}\\
\c & \mbox{otherwise,}
\end{cases}
$$
where $\c'$ is the unique maximal chain $[x,y]$ with the same label set as $\c$ which reverses the 
labels in $\c$.  In other words, the action switches strict ascents and strict descents 
and leaves equal labels fixed.   Note that this local action of $\sym_{n-1}$ coincides with the 
action on the set of parking functions where the transposition $(i,i+1)$ permutes the letters $i$ 
and $i+1$ of a parking function. Under this action, there is exactly one weakly decreasing 
parking function in each orbit.  The fact that this action exists immediately 
implies that 
Stanley's labeling of $\NC_n$ has the rank two switching property.  This, together with the 
fact that the maximal 
chains are in one-to-one correspondence with parking functions (which are all different) implies 
that the labeling $\lambda_{\NC}$ in \eqref{equation:parkingfunctionlabeling} satisfies the 
conditions of Definition 
\ref{definition:EW}, and so is an EW-labeling. Hence by Theorem \ref{theorem:WimpliesWhitney} we 
conclude that $\NC_n$ has a Whitney dual.
\begin{theorem}
The labeling $\lambda_{\NC}$ is an EW-labeling of $\NC_n$. Hence $Q_{\lambda}(\NC_n)$ is a Whitney dual of $\NC_n$. 
\end{theorem}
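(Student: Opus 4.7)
The plan is to verify the two defining conditions of an EW-labeling in Definition~\ref{definition:EW}: that $\lambda_{\NC}$ is an ER-labeling with the rank two switching property, and that in each interval the word of labels uniquely determines the maximal chain. Once both hold, Theorem~\ref{theorem:EW} immediately yields that $Q_{\lambda_{\NC}}(\NC_n)$ is a Whitney dual of $\NC_n$.

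First, I would recall Stanley's result from \cite{Stanley1997} stating that $\lambda_{\NC}$ is an ER-labeling in our sense: every interval $[x,y]$ of $\NC_n$ contains a unique strictly increasing maximal chain. To establish the rank two switching property, I would invoke the local $\sym_{n-1}$-action on maximal chains of $\NC_n$ described in the paragraph just before the statement. For any rank-two interval $[x,y]$ with $\rho(x)=i-1$ and $\rho(y)=i+1$, the transposition $(i,i+1)$ sends the unique increasing chain of $[x,y]$ to the unique maximal chain $\c'$ whose word of labels is the reverse of the increasing word. This produces the required switched chain, and the uniqueness of $\c'$ is built into the definition of the local action.

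Next, for the unique word of labels condition, I would use Stanley's bijection from \cite{Stanley1997} between the set of maximal chains of $\NC_n$ starting at $\hat{0}$ and the set of parking functions of length $n-1$, which sends each maximal chain to its word of labels under $\lambda_{\NC}$. Since parking functions are pairwise distinct sequences, distinct maximal chains from $\hat{0}$ have distinct label words. To extend this to an arbitrary interval $[x,y]$, I would use the fact that each interval of $\NC_n$ decomposes as a product of smaller noncrossing partition lattices, so the analogous statement in each factor, together with the bijective parking-function description, gives uniqueness within $[x,y]$ as well. Combining these two verifications with Theorem~\ref{theorem:EW} completes the proof.

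The main obstacle I anticipate is not any single hard step, but rather the need to carefully glue together the two ingredients from Stanley's paper (the parking function bijection and the local $\sym_{n-1}$-action) into the precise formulation required by Definition~\ref{definition:EW}. In particular, I would want to verify cleanly that the rank two switching property for $\lambda_{\NC}$ is exactly what the transposition $(i,i+1)$ in Stanley's local action encodes, and that the parking function uniqueness on maximal chains from $\hat{0}$ propagates down to every interval via the product structure of intervals in $\NC_n$.
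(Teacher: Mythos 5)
Your proposal follows the same route as the paper: cite Stanley's result that $\lambda_{\NC}$ is an ER-labeling (under the strict-increase convention), derive the rank two switching property from the local symmetric group action on maximal chains, and obtain the uniqueness-of-label-words condition from the bijection between maximal chains of $\NC_n$ and parking functions. The paper packages these observations exactly as you do and then invokes Theorem~\ref{theorem:EW}. The one place you add more machinery than necessary is the propagation of uniqueness from maximal chains of $\NC_n$ down to arbitrary intervals $[x,y]$: you appeal to the product decomposition of intervals into smaller noncrossing partition lattices. This can be made to work (the labels appearing in different factors are drawn from disjoint blocks, so the factor-by-factor interleaving is recoverable), but note that the factors of a general $[x,y]$ with $x\neq\hat 0$ are upper intervals of smaller $\NC$'s, not full $\NC_{B_j}$'s, so the parking-function bijection does not apply to them verbatim and you would need a further reduction. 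The simpler and more robust argument, which the paper leaves implicit, is to use the fact that $\lambda_{\NC}$ is an edge labeling: if two distinct maximal chains of $[x,y]$ had the same label word, padding both below $x$ and above $y$ by the same fixed saturated chains would produce two distinct maximal chains of $\NC_n$ with the same parking function, a contradiction. With that substitution your proof is complete and matches the paper.
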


We will now use Theorem \ref{theorem:secondcaracterizationQlambda} to provide a more familiar combinatorial description of the Whitney dual $Q_{\lambda_{\NC}}(\NC_n)$ of $\NC_n$.

Recall that a 
Dyck path of order $n$ is a lattice path from $(0,0)$ to $(n,n)$ that never goes below the line $y=x$ and only takes steps in the directions of the vectors $(1,0)$ (East) and $(0,1)$ (North).   
We will consider Dyck paths $D$ that come with a special labeling. Given an increasing sequence 
$b_1<b_2<\cdots< b_{n+1}$ of positive integers, we label the point $(i-1, 0)$ of $D$ by $b_i$ (see Figure \ref{fig:labeledboard}).  In
Figure~\ref{fig:labeledDyck} we illustrate two labeled Dyck paths.

\begin{figure}

\begin{subfigure}[t]{0.5\textwidth}
\centering
\begin{tikzpicture}[scale=0.9]

\draw (0,0)-- (1,0) (2,0)--(2.5,0) (0,.5)-- (1,.5) (2,.5)--(2.5,.5)(0,2.5)-- (1,2.5) (2,2.5)--(2.5,2.5);
%\draw (0,.5)-- (2.5,.5);
\draw (0,0)-- (0,2.5);
\draw (0.5,0)-- (0.5,2.5);

\draw[dotted] (.5,0) -- (2, 0);
\draw[dotted] (.5,.5) -- (2, .5);
\draw[dotted] (.5,2.5) -- (2, 2.5);

\draw (2.5,0)-- (2.5,2.5);
\draw (2.,0)-- (2.,2.5);
\draw (1.,0)-- (1.,2.5);

%\draw (0,2.5)-- (2.5,2.5);
\tikzstyle{every node}=[scale=0.8]
\node at (.1,-.25) {$b_1$};
\node at (.6,-.25) {$b_2$};
\node at (1.1,-.25) {$b_3$};

\node at (2.6,-.25) {$b_{n+1}$};
\node at (2.1,-.25) {$b_{n}$};

\end{tikzpicture}
\caption{Labeling of the bottom row of the grid}\label{fig:labeledboard}
\end{subfigure}~
\begin{subfigure}[t]{0.5\textwidth}
\centering
\begin{tikzpicture}
\draw (0,0)-- (1.5,0);
\draw (0,.5)-- (1.5, .5);
\draw (0,1)-- (1.5, 1);
\draw (0,1.5)-- (1.5, 1.5);

\draw (0,0) -- (0,1.5);
\draw (.5,0) -- (.5,1.5);
\draw (1,0) -- (1,1.5);
\draw (1.5,0) -- (1.5,1.5);

\node at (0,-.25) {$1$};
\node at (.5,-.25) {$5$};
\node at (1,-.25) {$6$};
\node at (1.5,-.25) {$7$};

\draw [ultra thick,red] (0,0) --(0,1);
\draw [ultra thick,red] (0,1)--(.5,1);
\draw [ultra thick,red] (.5,1)--(1,1);
\draw [ultra thick,red] (1,1)--(1,1.5);
\draw [ultra thick,red] (1, 1.5)--(1.5,1.5);

\begin{scope} [shift ={(3,0)}]

\draw (0,0)-- (1,0);
\draw (0,.5)-- (1, .5);
\draw (0,1)-- (1, 1);

\draw (0,0) -- (0,1);
\draw (.5,0) -- (.5,1);
\draw (1,0) -- (1,1);

\node at (0,-.25) {$2$};
\node at (.5,-.25) {$3$};
\node at (1,-.25) {$4$};

\draw [ultra thick,blue] (0,0) --(0,.5);
\draw [ultra thick,blue] (0,.5)--(.5,.5);
\draw [ultra thick,blue] (.5,1)--(.5,.5);
\draw [ultra thick,blue] (.5,1)--(1,1);
\end{scope}
\end{tikzpicture}
\caption{Two labeled Dyck paths}\label{fig:labeledDyck}
\end{subfigure}~
\caption{}\label{DyckPathFig}
\end{figure}

We now define a process of ``merging" two labeled Dyck paths $D_1$ and $D_2$ to obtain a new 
labeled Dyck path $D$. Suppose that $D_1$ and $D_2$ have disjoint and 
noncrossing label sets $B=\{b_1,b_2,\dots, b_j\}$ and $C=\{c_1,c_2,\dots, c_k\}$, where both sets 
are written in increasing order and $b_1<c_1$.    Since the sets are noncrossing then there exists 
an $i$ such that $b_i<c_1<c_2<\cdots < c_k<b_{i+1}$ (where we use the convention $b_{j+1}=\infty$). 
Then, the new lattice path $D$, will be a path from $(0,0)$ to $(j+k, j+k)$ whose labels 
along the bottom row are $b_1,b_2, \dots b_i, c_1,c_2,\dots, c_k,b_{i+1},\dots,b_j$.  From left to 
right  until we reach the vertical line labeled $b_{i}$, $D$ looks exactly the same as $D_1$.  
In the line labeled $b_{i}$ in $D$ we add all the north steps that $D_1$ had originally at 
$b_i$ plus one additional north step followed by an additional east step from the line labeled 
$b_i$ 
to the line labeled $c_1$. Then we glue $D_2$ where we left off in the line labeled $c_1$. After we 
finish gluing $D_2$, we glue the remaining part of $D_1$ that goes from the line labeled $b_{i}$ to the line 
labeled $b_j$. As an example,  suppose we wish to obtain a labeled Dyck path $D$ by merging the two labeled Dyck paths $D_1$ and $D_2$ in Figure~\ref{fig:labeledDyck} on label sets $\{1,3,6,7\}$ and $\{2,3,4\}$ respectively. We start by 
creating a grid from $(0,0)$ to $(6,6)$ and label the bottom row with the (ordered) union of 
the two labeled sets (see Figure~\ref{Dyck1}).  Since $1$ is the largest element in $\{1,3,5,6,7\}$ 
smaller than all the elements of $\{2,3,4\}$, we add in $D$ a new north step at the line labeled 
$1$ and add a new east step afterwards between lines labeled $1$ and $2$.  Since at the line 
labeled $1$, $D_1$ had two north steps, $D$ will have now $3$ north steps (see 
Figure~\ref{Dyck2}). Next, glue $D_2$ where we left off (see 
Figure~\ref{Dyck3}) and then the remaining part of $D_1$ to 
obtain $D$ (see Figure~\ref{Dyck4}).

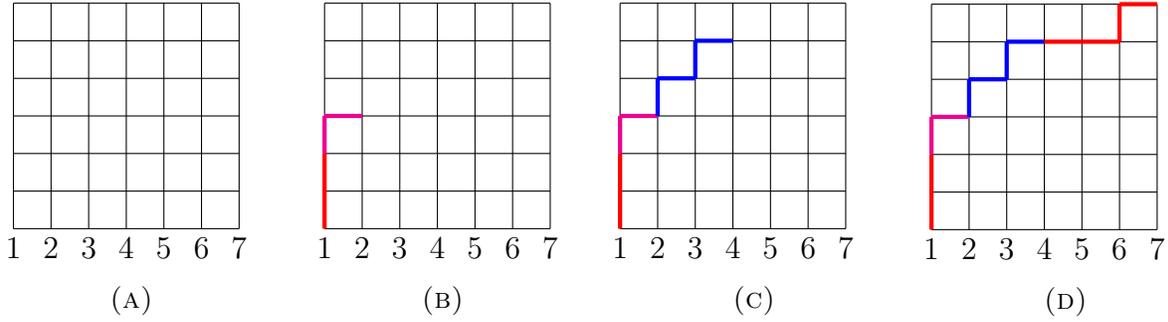
\begin{figure}
\begin{subfigure}{0.25\textwidth}
\centering

\begin{tikzpicture}
\draw (0,0)-- (3,0);
\draw (0,.5)-- (3, .5);
\draw (0,1)-- (3, 1);
\draw (0,1.5)-- (3, 1.5);
\draw (0,2)-- (3, 2);
\draw (0,2.5)-- (3, 2.5);

\draw (0,3)-- (3, 3);

\draw (0,0) -- (0,3);
\draw (.5,0) -- (.5,3);
\draw (1,0) -- (1,3);
\draw (1.5,0) -- (1.5,3);
\draw (2,0) -- (2,3);
\draw (2.5,0) -- (2.5,3);
\draw (3,0) -- (3,3);

\node at (0,-.25) {$1$};
\node at (.5,-.25) {$2$};
\node at (1,-.25) {$3$};
\node at (1.5,-.25) {$4$};
\node at (2,-.25) {$5$};
\node at (2.5,-.25) {$6$};
\node at (3,-.25) {$7$};
\end{tikzpicture}
\caption{}\label{Dyck1}

\end{subfigure}~
\begin{subfigure}{0.25\textwidth}
\centering
\begin{tikzpicture}
\draw (0,0)-- (3,0);
\draw (0,.5)-- (3, .5);
\draw (0,1)-- (3, 1);
\draw (0,1.5)-- (3, 1.5);
\draw (0,2)-- (3, 2);
\draw (0,2.5)-- (3, 2.5);

\draw (0,3)-- (3, 3);

\draw (0,0) -- (0,3);
\draw (.5,0) -- (.5,3);
\draw (1,0) -- (1,3);
\draw (1.5,0) -- (1.5,3);
\draw (2,0) -- (2,3);
\draw (2.5,0) -- (2.5,3);
\draw (3,0) -- (3,3);

\node at (0,-.25) {$1$};
\node at (.5,-.25) {$2$};
\node at (1,-.25) {$3$};
\node at (1.5,-.25) {$4$};
\node at (2,-.25) {$5$};
\node at (2.5,-.25) {$6$};
\node at (3,-.25) {$7$};

\draw [ultra thick,red] (0,0) --(0,1);
\draw [ultra thick,magenta] (0,1) --(0,1.5);
\draw [ultra thick,magenta] (0,1.5) --(.5, 1.5);

\end{tikzpicture}
\caption{}\label{Dyck2}
\end{subfigure}~
\begin{subfigure}{0.25\textwidth}

\begin{tikzpicture}
\draw (0,0)-- (3,0);
\draw (0,.5)-- (3, .5);
\draw (0,1)-- (3, 1);
\draw (0,1.5)-- (3, 1.5);
\draw (0,2)-- (3, 2);
\draw (0,2.5)-- (3, 2.5);

\draw (0,3)-- (3, 3);

\draw (0,0) -- (0,3);
\draw (.5,0) -- (.5,3);
\draw (1,0) -- (1,3);
\draw (1.5,0) -- (1.5,3);
\draw (2,0) -- (2,3);
\draw (2.5,0) -- (2.5,3);
\draw (3,0) -- (3,3);

\node at (0,-.25) {$1$};
\node at (.5,-.25) {$2$};
\node at (1,-.25) {$3$};
\node at (1.5,-.25) {$4$};
\node at (2,-.25) {$5$};
\node at (2.5,-.25) {$6$};
\node at (3,-.25) {$7$};

\draw [ultra thick,red] (0,0) --(0,1);
\draw [ultra thick,magenta] (0,1) --(0,1.5);
\draw [ultra thick,magenta] (0,1.5) --(.5, 1.5);
\draw [ultra thick,blue] (.5, 1.5)--(.5,2);
\draw [ultra thick,blue] (.5,2)--(1,2);
\draw [ultra thick,blue] (1,2)--(1,2.5);
\draw [ultra thick,blue] (1,2.5)--(1.5,2.5);

\end{tikzpicture}
\caption{}\label{Dyck3}
\end{subfigure}~
\begin{subfigure}{0.25\textwidth}
\begin{tikzpicture}
\draw (0,0)-- (3,0);
\draw (0,.5)-- (3, .5);
\draw (0,1)-- (3, 1);
\draw (0,1.5)-- (3, 1.5);
\draw (0,2)-- (3, 2);
\draw (0,2.5)-- (3, 2.5);

\draw (0,3)-- (3, 3);

\draw (0,0) -- (0,3);
\draw (.5,0) -- (.5,3);
\draw (1,0) -- (1,3);
\draw (1.5,0) -- (1.5,3);
\draw (2,0) -- (2,3);
\draw (2.5,0) -- (2.5,3);
\draw (3,0) -- (3,3);

\node at (0,-.25) {$1$};
\node at (.5,-.25) {$2$};
\node at (1,-.25) {$3$};
\node at (1.5,-.25) {$4$};
\node at (2,-.25) {$5$};
\node at (2.5,-.25) {$6$};
\node at (3,-.25) {$7$};

\draw [ultra thick,red] (0,0) --(0,1);
\draw [ultra thick,magenta] (0,1) --(0,1.5);
\draw [ultra thick,magenta] (0,1.5) --(.5, 1.5);
\draw [ultra thick,blue] (.5, 1.5)--(.5,2);
\draw [ultra thick,blue] (.5,2)--(1,2);
\draw [ultra thick,blue] (1,2)--(1,2.5);
\draw [ultra thick,blue] (1,2.5)--(1.5,2.5);
\draw [ultra thick, red] (1.5,2.5)--(2,2.5);
\draw [ultra thick,red] (2,2.5) --(2.5,2.5);
\draw [ultra thick,red] (2.5,2.5) --(2.5,3);
\draw [ultra thick,red] (2.5,3) --(3,3);
\end{tikzpicture}
\caption{}\label{Dyck4}\label{DyckPathFig2}
\end{subfigure}

\caption{The steps  required in merging the Dyck paths in Figure~\ref{DyckPathFig}.  New north and 
east steps are in magenta.}
\end{figure}

In order to verify that the resulting labeled lattice path is also a 
labeled Dyck path (that is, it has the same number of north and east steps and is always above the 
diagonal), we rely on an equivalent definition of a Dyck path. A \emph{ballot sequence} of 
length $2n$ is a $\{0,1\}$-string $s_1s_2\cdots s_{2n}$ with the same number of 
$1$'s and $0$'s and such that for every $i\in [2n]$ the subword $s_1s_2\cdots s_i$ has at least as 
many $1$'s as $0$'s. It is well-known that a lattice path that takes only north and east 
steps is a Dyck path if and only if the sequence obtained associating to each north step a $1$ 
and to each east step a $0$ is a ballot sequence. Relying on this equivalent definition, we see 
that in the resulting path $D$ the number of north steps and east steps is equal and the construction never breaks the property that every preamble in $D$ 
contains at least as many north steps as east steps. Hence $D$ is a well-defined labeled Dyck path.

Let $\NCDyck_n$ be the set whose objects are collections of labeled Dyck paths such that their underlying sets of labels form a noncrossing partition of $[n]$. We provide $\NCDyck_n$ with a partial order by defining for $F,F' \in \NCDyck_n$ the cover relation $F\cover F'$  whenever $F'$ can be 
obtained from $F$ by merging exactly two of the labeled Dyck paths in $F$. Note here that each labeled Dyck path can be represented by its set of labels together with an exponent for each label. The exponent of an element $i$ being the number of north steps in the vertical line labeled $i$ in its Dyck path. This notation extends to the elements in $\NCDyck_n$. For example, we can denote the collection of Dyck paths in Figure~\ref{fig:labeledDyck} by $1^25^06^17^0/2^13^14^0$. In Figure \ref{fig:NCDyck} we illustrate $\NCDyck_4$.

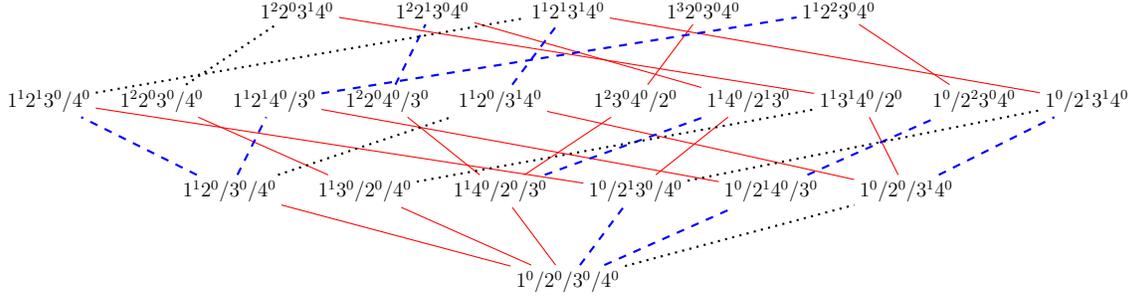
\begin{figure}
\begin{center}
    \begin{tikzpicture}[scale=0.6]
\tikzstyle{every node}=[inner sep=3pt, scale=0.65, minimum width=4pt]
\node (0) at (0,0) {$1^0/2^0/3^0/4^0$};

\node (a) at  (-7.5, 2)  {$1^12^0/3^0/4^0$};
\node (b)  at  (-4.5, 2)  {$1^13^0/2^0/4^0$};
\node (c) at  (-1.5, 2)  {$1^14^0/2^0/3^0$};

\node (d) at  (7.5, 2)  {$1^0/2^0/3^14^0$};
\node (e) at  (4.5, 2)  {$1^0/2^14^0/3^0$};
\node (f) at  (1.5, 2)  {$1^0/2^13^0/4^0$};

\node (g) at  (-11.5, 4)  {$1^12^13^0/4^0$};
\node (g1) at  (-9, 4)  {$1^22^03^0/4^0$};
\node (h) at  (-6.5, 4)  {$1^12^14^0/3^0$};
\node (h1) at  (-4, 4)  {$1^22^04^0/3^0$};

\node (i) at  (-1.5, 4)  {$1^12^0/3^14^0$};

\node (l) at  (6.5, 4)  {$1^13^14^0/2^0$};
\node (l1) at  (1.5, 4)  {$1^23^04^0/2^0$};

\node (k) at  (4, 4)  {$1^14^0/2^13^0$};

\node (j) at  (11.5, 4)  {$1^0/2^13^14^0$};
\node (j1) at  (9, 4)  {$1^0/2^23^04^0$};

\node (t1) at (0,6) {$1^12^13^14^0$};
\node (t2) at (-3,6) {$1^22^13^04^0$};
\node (t3) at (6,6) {$1^12^23^04^0$};
\node (t4) at (-6,6) {$1^22^03^14^0$};
\node (t5) at (3,6) {$1^32^03^04^0$};

\draw[red] (0)--(a);
\draw[red] (0)--(b);
\draw[red] (0)--(c);
\draw[red] (b)--(g1);
\draw[red] (c)--(h1);
\draw[red] (c)--(l1);
\draw[red] (f)--(g);
\draw[red] (f) -- (k);
\draw[red] (e)--(h);
\draw[red] (d)--(i);
\draw[red] (d)--(l);
\draw[red] (l)--(t4);
\draw[red] (l1)--(t5);
\draw[red] (k)--(t2);

\draw[red] (j)--(t1);
\draw[red] (j1)--(t3);

\draw [thick, dashed,blue] (0)--(e);
\draw [thick, dashed,blue] (0)--(f);
\draw [thick, dashed,blue] (a)--(g);
\draw [thick, dashed,blue] (a)--(h);
\draw[thick, dashed,blue] (c)--(k);
\draw[thick, dashed,blue] (e)--(j1);
\draw[thick, dashed,blue] (d)--(j);
\draw[thick, dashed,blue] (h)--(t3);
\draw[thick, dashed,blue] (h1)--(t2);

\draw[thick, dashed,blue] (i)--(t1);

\draw[thick, dotted] (0)--(d);
\draw[thick, dotted] (a)--(i);
\draw[thick, dotted] (b)--(l);
\draw[thick, dotted] (g)--(t1);
\draw[thick, dotted] (g1)--(t4);

\draw[thick, dotted] (f)--(j);

\end{tikzpicture}
 \caption{$\NCDyck_4$ (the colors follow the same standard as in Figure \ref{fig:NC4})}
    \label{fig:NCDyck}
\end{center}
\end{figure}

\begin{theorem}
For all $n\ge 1$,
$Q_{\lambda_{\NC}}(\NC_n)\cong \NCDyck_n$.
\end{theorem}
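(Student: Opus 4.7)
The plan is to invoke Theorem \ref{theorem:secondcaracterizationQlambda} to replace $Q_{\lambda_{\NC}}(\NC_n)$ by the concrete model $R_{\lambda_{\NC}}(\NC_n)$, whose elements are pairs $(\pi,w)$ with $\pi\in\NC_n$ and $w$ the weakly decreasing label word of an ascent-free saturated chain from $\hat{0}$ to $\pi$. I would then construct an explicit bijection $\phi\colon R_{\lambda_{\NC}}(\NC_n)\to \NCDyck_n$ as follows: given $(\pi,w)$, for every $a\in[n]$ let $e_a$ be the number of occurrences of $a$ in $w$, and define $\phi(\pi,w)$ to be the collection of labeled Dyck paths whose underlying partition is $\pi$ and whose path on each block $B=\{b_1<\cdots<b_{k+1}\}$ of $\pi$ has exponent sequence $(e_{b_1},\ldots,e_{b_{k+1}})$.

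To verify that $\phi$ is well defined and bijective, I would exploit the fact that the cover relations of any ascent-free chain from $\hat{0}$ to $\pi$ decompose into $m$ disjoint subchains indexed by the blocks $B_1,\ldots,B_m$ of $\pi$: every merge combines two subblocks of a unique $B_i$, so the corresponding subchain traces an ascent-free saturated chain in $\NC_{B_i}$ whose labels lie in $B_i\setminus\{\max B_i\}$. Because $w$ is weakly decreasing and labels from different blocks belong to disjoint subsets of $[n]$, the word $w$ is the unique weakly decreasing shuffle of the per-block subwords; conversely, any such shuffle of ascent-free subchains is itself an ascent-free chain in $\NC_n$, since merges in different blocks commute. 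Well-definedness and bijectivity of $\phi$ therefore reduce to the single-block statement: for a block $B$ of size $k+1$, ascent-free chains in $\NC_B$ correspond bijectively to labeled Dyck paths on $B$ under the exponent-count rule. This is the classical bijection between weakly decreasing parking functions of length $k$ and Dyck paths from $(0,0)$ to $(k,k)$ (applied inside the interval $[\hat{0},\{B\}]$ of $\NC_n$), and it supplies the conditions $e_{b_{k+1}}=0$, $\sum_i e_{b_i}=k$, and $\sum_{i=1}^j e_{b_i}\ge j-1$ for all $j$.

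The heart of the proof is showing that $\phi$ respects cover relations. Suppose $(\pi,w)\cover(\pi',w')$, so $\pi'$ is obtained from $\pi$ by merging two blocks $B_i=\{b_1<\cdots<b_s\}$ and $B_j=\{c_1<\cdots<c_t\}$ with $b_1<c_1$. The noncrossing property produces a unique index $p$ with $b_p<c_1<\cdots<c_t<b_{p+1}$ (using the convention $b_{s+1}=\infty$), so $\lambda_{\NC}(\pi\cover\pi')=b_p$ and $w'=\sort(w\,b_p)$ increases the exponent $e_{b_p}$ by one while leaving every other exponent untouched. This is exactly the effect on exponents of the merge operation for labeled Dyck paths described just before the definition of $\NCDyck_n$: one extra north step at column $b_p$, with the remaining columns inheriting their exponents from the original $D_1$ and $D_2$. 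I would verify that the resulting exponent sequence still satisfies the Dyck condition on $B_i\cup B_j$ (using the prefix-sum estimates for $D_1$ and $D_2$ together with the extra $+1$ contributed at column $b_p$) and that the maximum element of the new block still has exponent zero. Running this analysis in reverse identifies each cover in $\NCDyck_n$ with a unique cover in $R_{\lambda_{\NC}}(\NC_n)$. The main obstacle is the routine but slightly tedious bookkeeping for this cover verification, in particular handling uniformly the two geometric configurations $p<s$ (where $B_j$ sits in a gap of $B_i$) and $p=s$ (where $B_j$ lies entirely to the right of $B_i$); once that is in place, the resulting isomorphism together with Theorem \ref{theorem:secondcaracterizationQlambda} gives $Q_{\lambda_{\NC}}(\NC_n)\cong\NCDyck_n$.
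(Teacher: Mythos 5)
Your proposal is correct and follows essentially the same route as the paper: reduce via Theorem~\ref{theorem:secondcaracterizationQlambda} to $R_{\lambda_{\NC}}(\NC_n)$, encode ascent-free chains blockwise by exponent sequences coming from weakly decreasing parking functions, and check that a cover in $R_{\lambda_{\NC}}(\NC_n)$ (merging $B_i$ and $B_j$ and applying $\sort$) corresponds to the Dyck-path merge, which increments $e_{b_p}$ at $b_p=\max\{a\in B_i\mid a<\min B_j\}$. Your version is slightly more explicit about the well-definedness of the exponent encoding and about verifying the ballot/Dyck condition after a merge (a point the paper handles by its ballot-sequence remark preceding the theorem), but the decomposition, the key lemma, and the bijection are the same.
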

\begin{proof}
Theorem \ref{theorem:secondcaracterizationQlambda} characterizes 
the poset $Q_{\lambda_{\NC}}(\NC_n)$ as being isomorphic to the poset $R_{\lambda_{\NC}}(\NC_n)$ whose 
elements are pairs $(\pi,w)$ where $\pi \in \NC_n$ and $w$ is the word of labels of an ascent-free 
chain in $[\hat{0},\pi]$. We show that $R_{\lambda_{\NC}}(\NC_n)\cong \NCDyck_n$.

Since maximal chains in $\NC_n$ are labeled with parking functions, when $\pi=\{[n]\}$ is the partition with a single block, we have that $w$ is a weakly decreasing parking function of length $n-1$, which are known to be in bijection with Dyck paths.  For example if $n=4$ the weakly decreasing parking functions are $(1,1,1)$, 
$(2,1,1)$, $(2,2,1)$, $(3,1,1)$ and $(3,2,1)$. The bijection assigns to a parking function with $k_i$ occurrences of the label $i$ the Dyck path with $k_i$ north steps on the line $x=i-1$. In our notation, the pairs $([4],w) \in R_{\lambda_{\NC}}(\NC_4)$ can be 
represented as $1^32^03^04^0$, $1^22^13^04^0$, $1^12^23^04^0$, $1^22^03^14^0$ and $1^12^13^14^0$. 
Now, it is not hard to see that any interval of the form $[\hat{0},B_1/B_2/\cdots/B_k]$ is 
isomorphic to the product of smaller noncrossing 
partition lattices $\NC_{B_1}\times \NC_{B_2}\times \cdots \times \NC_{B_k}$, where $\NC_{B_j}$ is the 
lattice of noncrossing partitions of $B_j\subset [n]$.   
 Moreover the labels in any cover relation in 
$[\hat{0},B_1/B_2/\cdots/B_k]$ depend only on the two blocks being merged. So any 
ascent-free maximal chain can be represented as a noncrossing partition where each of the blocks 
$B_j$ have been decorated with exponents representing an ascent-free maximal chain in $\NC_{B_j}$. 
Note that words of labels on maximal chains of $\NC_{B_j}$ are  ``parking functions'' on $B_j$, 
that is, if $B_j=\{b_1<b_2<\cdots <b_l\}$ then in the word of labels of a maximal chain the 
number of occurrance of the letter $b_i$ is greater or equal to $i$ (an equivalent definition of 
a parking function). For example, the chain in $[\hat{0},1457/23/6/89]$ with word of labels 
$(8,4,4,2,1)$ is represented by $1^14^25^07^0/2^13^0/6^0/8^19^0$. Since $\lambda_{\NC}$ is an 
EW-labeling, the cover relation $(\pi,w)\lessdot (\pi',w')$ in $R_{\lambda_{\NC}}(\NC_n)$ is 
completely determined by the cover relation $\pi \lessdot \pi'$. Hence $(\pi',w')$ is obtained from 
$(\pi,w)$ by merging two blocks $B_i$ and $B_j$ of $\pi$ such that $\min B_i < \min B_j$ and 
$w'=\sort(wp)$ where $p=\lambda_{\NC}(\pi \lessdot \pi')=\max \{ a \in B_i \mid a<\min B_j\}$. The reader can note that this is equivalent to the definition of merging labeled Dyck paths. Indeed, in our notation this amounts to merging the weighted blocks on the sets $B_i$ and $B_j$ and increasing the exponent of $p\in B_i$ by one. For example,  if in $1^14^25^07^0/2^13^0/6^0/8^19^0$ we 
merge the blocks with sets $\{1,4,5,7\}$ and $\{2,3\}$ we get $1^22^13^04^25^07^0/6^0/8^19^0$. If 
we further merge the blocks with sets  $\{1,2,3,4,5,7\}$ and $\{6\}$ we get 
$1^22^13^04^25^16^07^0/8^19^0$. We then have 
that $R_{\lambda_{\NC}}(\NC_n)$  is indeed isomorphic to the poset $\NCDyck_n$.
\end{proof}

It is interesting to note the  well-known fact that the M\"obius function value of $\NC_n$ is (up 
to a sign) the Catalan number $C_{n-1}$. This information is recovered here since $\NCDyck_n$ is a 
Whitney dual of $\NC_n$ and its maximal elements are Dyck paths of order $n-1$ which  are Catalan objects.

\subsection{The weighted partition poset and the poset of rooted spanning 
forests}\label{weightedPartSec} Here we discuss 
the original example that motivated our work. This example was noticed by 
 Gonz\'alez D'Le\'on and Wachs in \cite{DleonWachs2016} while
studying a poset of partitions where each block has a weight that is a natural number. This 
poset, known as the poset of weighted partitions and denoted $\Pi_n^w$, was originally introduced 
by Dotsenko and Khoroshkin in \cite{DotsenkoKhoroshkin2007} and is 
related to the study of the operad of Lie algebras with two compatible brackets. The authors 
of \cite{DleonWachs2016} realized that the Whitney numbers of the first and 
second kind were switched with respect to those of the poset of rooted spanning forests $\SF_n$ on 
$[n]$ studied by Reiner \cite{Reiner1978} and Sagan \cite{Sagan1983}. Since the two pairs of 
Whitney numbers were already computed, by direct comparison, 
we can conclude that $\Pi_n^w$ and $\SF_n$ are Whitney duals. In this section we use the 
theory developed in Section \ref{section:whitneylabelings} to give a different proof of this fact.

A weighted partition $B_1^{v_1}/B_2^{v_2}/\cdots/B_t^{v_t}$ of $[n]$ is a partition 
$B_1/B_2/\cdots/B_t$ of $[n]$ such that each block $B_i$ is assigned a weight $v_i \in 
\{0,1,2,\dots,|B_i|-1\}$. The {\it poset of weighted partitions} $\Pi_n^{w}$  is the set of weighted 
partitions of $[n]$ with 
order relation given by 
$$
A_1^{w_1}/A_2^{w_2}/\cdots/A_s^{w_t} \le B_1^{v_1}/ B_2^{v_2}/\cdots/B_t^{v_t}
$$
if the following
conditions hold:
\begin{itemize}
 \item $A_1/A_2/\cdots /A_s \le B_1/B_2/ \cdots /B_t $ in $\Pi_n$
 \item if $B_k=A_{i_1}\cup A_{i_2}\cup \cdots \cup A_{i_l} $ then 
 $v_k-(w_{i_1} + w_{i_2} + \cdots + w_{i_l})\in \{0,1,\dots,l-1\}$.
\end{itemize}
Equivalently, we can define the covering relation  by 
$$A_1^{w_1}/A_2^{w_2}/\dots /A_s^{w_s}\lessdot B_1^{v_1}/ B_2^{v_2}/ \dots /B_{s-1}^{v_{s-1}}$$ 
if the following conditions hold:
\begin{itemize}
 \item $A_1/A_2/\cdots /A_s \lessdot B_1/B_2/ \cdots /B_{s-1} $ in $\Pi_n$
 \item if $B_k=A_{i}\cup A_{j}$, where $i \ne j$, then $v_k-(w_{i} + w_{j}) \in \{0,1\}$
 \item if $B_k = A_i$ then $v_k = w_i$.
 \end{itemize}

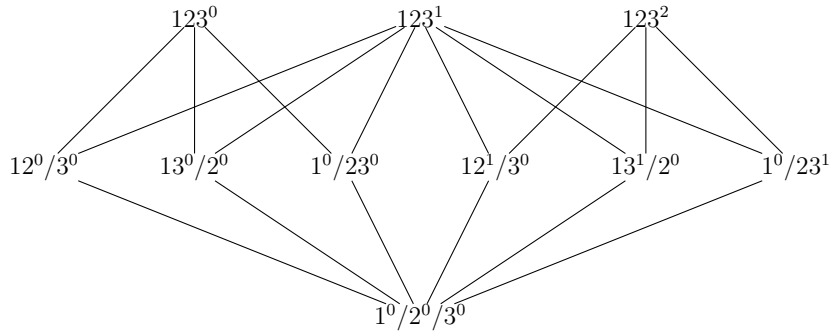
\begin{figure}[h]

\begin{center} 
\begin{tikzpicture}[line join=bevel,scale=1]

\tikzstyle{every node}=[inner sep=0pt, scale=0.8, minimum width=4pt]
\node (n1232) at (3,4) {$123^{2}$};
  \node (n13020) at (-3,2) {$13^{ 0}/ 2^{ 0}$};
  \node (n102030) at (0,0)  {$1^{0}/ 2^{0}/ 3^{0}$};
  \node (n1231) at (0,4) {$123^{1}$};
  \node (n12030) at (-5,2) {$12^{0}/ 3^{0}$};
  \node (n13120) at (3,2)  {$13^{1}/ 2^{0}$};
  \node (n1230) at (-3,4){$123^ {0}$};
  \node (n10230) at (-1,2)  {$1^{0}/ 23^{0}$};
  \node (n12130) at (1,2)  {$12^{ 1}/ 3^{0}$};
  \node (n10231) at (5,2) {$1^{0}/ 23^ {1}$};

  \draw (n1231) -- (n10230) ;
  \draw [] (n13020) -- (n102030);
  \draw [] (n1232) -- (n13120);
  \draw [] (n1231)-- (n13020);
  \draw [] (n10230)--(n102030);
  \draw [] (n1230) -- (n10230);
  \draw [] (n1231) -- (n13120);
  \draw [] (n12030)-- (n102030);
  \draw [] (n1231) --(n12130);
  \draw [] (n1232) -- (n12130);
  \draw [] (n13120) --(n102030);
  \draw [] (n1231) --(n10231);
  \draw [] (n1230) -- (n13020);
  \draw [] (n1230)  -- (n12030);
  \draw [] (n12130) --  (n102030);
  \draw [] (n1232)  --  (n10231);
  \draw [] (n10231)  --  (n102030);
  \draw [] (n1231) -- (n12030);

\end{tikzpicture}
\end{center}
\caption[]{Weighted partition poset for $n=3$}\label{figure:weightedn3k2}
\end{figure}

The poset $\Pi_n^w$ has a minimum element $$\hat 0:= 1^0/2^0 / \dots /n^0$$ and  $n$
maximal elements 
$$[n]^0, \, [n]^1, \dots,  [n]^{n-1}.$$ Note  that  for all   $i=0,1,\dots,n-1$, the maximal  
intervals 
$[\hat 0, [n]^i]$ and $[\hat 0, [n]^{n-1-i}]$ are isomorphic to each other, and the two maximal 
intervals $[\hat 0, [n]^0]$ and $[\hat 0, [n]^{n-1}]$ are isomorphic to $\Pi_n$. See 
Figure~\ref{figure:weightedn3k2} for the example of $\Pi_3^w$.

A \emph{rooted spanning forest} on $[n]$ is a spanning forest of the complete graph on vertex 
set $[n]$  such that in every connected component there is a unique specially marked vertex, called 
the 
\emph{root}.
Let $\SF_n$ be the set of rooted spanning forests on $[n]$. For $F\in\SF_n$ let $E(F)$ 
denote the edge set of $F$ and $R(F)$ be the set of roots in $F$. The set $\SF_n$ has the structure 
of a poset with order relation given by $F_1\le F_2$ whenever 
$$E(F_1)\subseteq E(F_2) \text{ and } R(F_2) \subseteq R(F_1).$$
Equivalently, the cover relation $F_1\lessdot F_2$ occurs if $F_2$ is obtained from $F_1$ 
by adding a new edge $\{x,y\}\in E(F_2)$ such that $x,y \in R(F_1)$ and by choosing either $x$ or 
$y$ as the new root of its component. Note that $R(F_2)$ is either $R(F_1)\setminus \{x\}$ or  
$R(F_1)\setminus \{y\}$.
See Figure \ref{figure:spanning_forests_3} for the 
example of $\SF_3$. For more on this poset see \cite{Reiner1978,Sagan1983}.

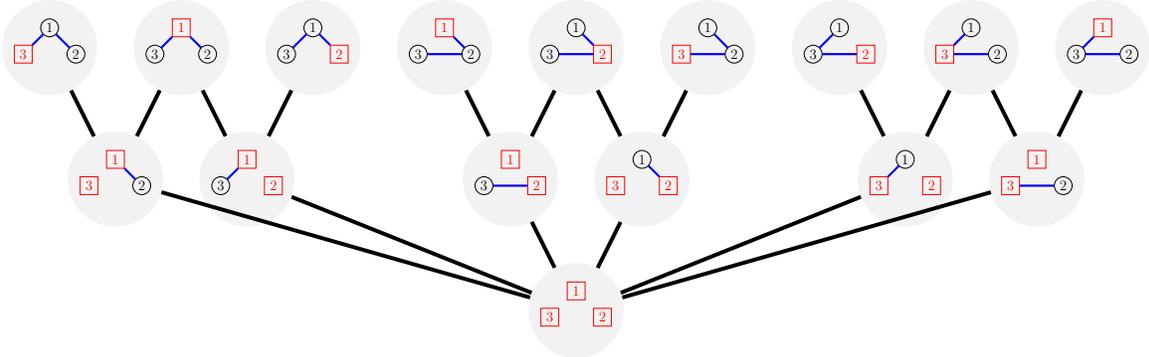
\begin{figure}[h]
\begin{center} 
\begin{tikzpicture}[scale=0.35]

%\node[scale=1] at (13,-3) {$\mathcal{SF}_3$};

\tikzstyle{every node}=[fill,gray!10,circle, inner sep=0pt,minimum size=80,scale=0.45]
\node (p1) at (13,0.25) {};
\node (q1) at (-4.5,5.25) {};
\node (q2) at (0.5,5.25) {};
\node (q3) at (10.5,5.25) {};
\node (q4) at (15.5,5.25) {};
\node (q5) at (25.5,5.25) {};
\node (q6) at (30.5,5.25) {};

\node (r1) at (-7,10.25) {};
\node (r2) at (-2,10.25) {};
\node (r3) at (3,10.25) {};
\node (r4) at (8,10.25) {};
\node (r5) at (13,10.25) {};
\node (r6) at (18,10.25) {};
\node (r7) at (23,10.25) {};
\node (r8) at (28,10.25) {};
\node (r9) at (33,10.25) {};

\tikzstyle{every node}=[black, draw, circle, inner sep=1, minimum size=15,scale=0.45]
\tikzstyle{every path}=[blue, line width=0.2pt]

\node[rectangle,color=red] (a1) at (13,1){$1$};
\node[rectangle,color=red]  (a2) at (14,0){$2$};
\node[rectangle,color=red]  (a3) at (12,0){$3$};

\node[rectangle,color=red]  (b11) at (0.5,6){$1$};
\node[rectangle,color=red]  (b12) at (1.5,5){$2$};
\node (b13) at (-0.5,5){$3$};
 \draw[thick] (b11) -- (b13) ;

\node[rectangle,color=red]  (c11) at (-4.5,6){$1$};
\node (c12) at (-3.5,5){$2$};
\node[rectangle,color=red]  (c13) at (-5.5,5){$3$};
 \draw[thick] (c11) -- (c12) ;

\node[rectangle,color=red]  (d11) at (10.5,6){$1$};
\node[rectangle,color=red]  (d12) at (11.5,5){$2$};
\node (d13) at (9.5,5){$3$};
 \draw[thick] (d12) -- (d13) ;

\node (b21) at (25.5,6){$1$};
\node[rectangle,color=red]  (b22) at (26.5,5){$2$};
\node[rectangle,color=red]  (b23) at (24.5,5){$3$};
 \draw[thick] (b21) -- (b23) ;

\node (c21) at (15.5,6){$1$};
\node[rectangle,color=red]  (c22) at (16.5,5){$2$};
\node[rectangle,color=red]  (c23) at (14.5,5){$3$};
 \draw[thick] (c21) -- (c22) ;

\node[rectangle,color=red]  (d21) at (30.5,6){$1$};
\node (d22) at (31.5,5){$2$};
\node[rectangle,color=red]  (d23) at (29.5,5){$3$};
 \draw[thick] (d22) -- (d23) ;

\node (e11) at (-7,11){$1$};
\node (e12) at (-6,10){$2$};
\node[rectangle,color=red]  (e13) at (-8,10){$3$};
\draw[thick] (e12)--(e11)--(e13);

\node[rectangle,color=red]  (e21) at (-2,11){$1$};
\node (e22) at (-1,10){$2$};
\node (e23) at (-3,10){$3$};
\draw[thick] (e22)--(e21)--(e23);

\node (e31) at (3,11){$1$};
\node[rectangle,color=red]  (e32) at (4,10){$2$};
\node (e33) at (2,10){$3$};
\draw[thick] (e32)--(e31)--(e33);

\node[rectangle,color=red]  (f11) at (8,11){$1$};
\node (f12) at (9,10){$2$};
\node (f13) at (7,10){$3$};
\draw[thick] (f11)--(f12)--(f13);

\node (f21) at (13,11){$1$};
\node[rectangle,color=red]  (f22) at (14,10){$2$};
\node (f23) at (12,10){$3$};
\draw[thick] (f21)--(f22)--(f23);

\node (f31) at (18,11){$1$};
\node (f32) at (19,10){$2$};
\node[rectangle,color=red]  (f33) at (17,10){$3$};
\draw[thick] (f31)--(f32)--(f33);

\node (g11) at (23,11){$1$};
\node[rectangle,color=red]  (g12) at (24,10){$2$};
\node (g13) at (22,10){$3$};
\draw[thick] (g11)--(g13)--(g12);

\node (g21) at (28,11){$1$};
\node (g22) at (29,10){$2$};
\node[rectangle,color=red]  (g23) at (27,10){$3$};
\draw[thick] (g21)--(g23)--(g22);

\node[rectangle,color=red]  (g31) at (33,11){$1$};
\node (g32) at (34,10){$2$};
\node (g33) at (32,10){$3$};
\draw[thick] (g31)--(g33)--(g32);

\tikzstyle{every path}=[line width=1.5pt]
 \draw (p1) -- (q1) ;
 \draw (p1) -- (q2) ;
 \draw (p1) -- (q3) ;
 \draw (p1) -- (q4) ;
 \draw (p1) -- (q5) ;
 \draw (p1) -- (q6) ;

 \draw (q1) -- (r1) ;
 \draw (q1) -- (r2) ;
 \draw (q2) -- (r2) ;
 \draw (q2) -- (r3) ;
 \draw (q3) -- (r4) ;
 \draw (q3) -- (r5) ;
 \draw (q4) -- (r5) ;
 \draw (q4) -- (r6) ;
 \draw (q5) -- (r7) ;
 \draw (q5) -- (r8) ;
 \draw (q6) -- (r8) ;
 \draw (q6) -- (r9) ;

\end{tikzpicture}
\end{center}
\caption[]{$\SF_3$ (here the roots are marked with red squares)}\label{figure:spanning_forests_3}
\end{figure}

Let $F \in \SF_n$ and $x,y$ be vertices in $F$, we say that $x$ is the \emph{parent} 
of $y$ (and $y$ the \emph{child} of $x$) if $\{x,y\}$ is an edge of $F$ that belongs to the unique path from $y$ to the root of their component in $F$. We  denote $x=p(y)$ whenever $x$ is the parent of $y$. We say that 
$x$ 
is an \emph{ancestor} of $y$ ($y$ is a descendant of $x$) if $x$  is in the unique path from $y$ to 
the root of their connected component in $F$. To every forest $F\in \SF_n$, we can associate an  
element  of $\Pi_n^w$,
\begin{align}\label{definition:piofanF}\pi(F):=\{V(T)^{w_T}\mid T 
\text{ a tree in } F\},
\end{align}
where $w_T$ is the number of \emph{descents} in $T$, i.e., edges 
$\{x,p(x)\}$ in $T$ where $p(x)>x$. As 
an example, if $F$ is the forest formed by the trees $T_1$ to $T_6$ of 
Figure~\ref{figure:example_unique_increasing_chain},  then 
$w_{T_1}= 0$, $w_{T_2}= 0$, $w_{T_3}= 
0$, $w_{T_4}= 3$, $w_{T_5}= 1$, $w_{T_6}= 1$ and thus $\pi(F) = 
1689\underline{11}\,\underline{14}^{3}/23^{0}/4^0/5^0/\underline{10}\,\underline{12}^{1} 
/7\underline{13}^{1}$ (where we have underlined the two digit numbers). 
The following propositions were proved in \cite{DleonWachs2016} and \cite{Sagan1983}.

\begin{proposition}[\cite{DleonWachs2016}]\label{proposition:mobiusweightedpartitions}
For all $x\in \Pi_n^w$,
$$\mu([\hat{0},x])=(-1)^{\rho(x)}|\{F\in \SF_n \mid \pi(F)=x\}|.$$
In particular,
$$\mu([\hat{0},[n]^i])=(-1)^{n-1}|\T_{n,i}|,$$
where $\T_{n,i}$ is the set of rooted trees with $i$ descents.
\end{proposition}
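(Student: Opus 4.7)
The plan is to exhibit an $\mathrm{ER}$-labeling of $\Pi_n^w$ whose ascent-free maximal chains in $[\hat 0,x]$ biject with the forests $F\in\SF_n$ satisfying $\pi(F)=x$, and then invoke Theorem~\ref{theorem:muER}. This is the same strategy already used in \cite{DleonWachs2016}, and it fits naturally into the EW-framework developed earlier in this paper (the resulting labeling is in fact the EW-labeling referenced at the beginning of Section~\ref{weightedPartSec}).

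First, I would define the labeling $\lambda$ on a cover $\pi\lessdot \pi'$ as follows. Let $A_i^{w_i}$ and $A_j^{w_j}$ be the two blocks of $\pi$, with $\min A_i<\min A_j$, that merge into the block $B_k^{v_k}$ of $\pi'$. Set
\[
\lambda(\pi\lessdot\pi')=\bigl((\min A_i,\min A_j),\,v_k-w_i-w_j\bigr)\in\bigl([n]\times[n]\bigr)\times\{0,1\},
\]
and order the label set lexicographically, using the natural order on $[n]\times[n]$ and the order $0<1$ on the second coordinate. Next I would verify that $\lambda$ is an $\mathrm{ER}$-labeling. Given an interval $[\pi,\pi']$, the unique increasing maximal chain is built greedily: among all merges available, pick the one with smallest label, that is, merge blocks whose minima form the lexicographically smallest pair and (when both weight increments $0$ and $1$ are possible) use increment $0$ first. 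Uniqueness follows from the usual argument that any reordering of merges creates a smaller increasing step elsewhere.

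The core step is the bijection $\Phi$ between ascent-free maximal chains of $[\hat 0,x]$ and $\{F\in\SF_n:\pi(F)=x\}$. Starting from a forest $F$, one traverses its edges in a canonical order (e.g.\ reverse order of when each non-root vertex is ``attached'' to its parent in a decreasing construction of $F$): each time an edge $\{u,p(u)\}$ is added, it merges the tree containing $u$ with the tree containing $p(u)$, producing a cover relation in $\Pi_n^w$ whose weight increment is $1$ precisely when $p(u)>u$ (a descent in $T$) and $0$ otherwise. The fact that descents of each tree $T$ contribute exactly $w_T$ to the weight of the resulting block guarantees that the resulting chain in $\Pi_n^w$ terminates at $\pi(F)=x$. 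For $\Phi$ to land in ascent-free chains, one chooses the traversal so that at every step the label is as large as possible (the canonical ``reverse-greedy'' choice), and checks that this yields the unique ascent-free chain in the connected component of $G_{[\hat 0,x]}$ corresponding to $F$. Conversely, given an ascent-free chain, reading the label sequence reconstructs the edge set and root structure of a unique $F\in\SF_n$ with $\pi(F)=x$.

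With the bijection in hand, Theorem~\ref{theorem:muER} gives
\[
\mu(\hat 0,x)=(-1)^{\rho(x)}\,\#\{\text{ascent-free maximal chains in }[\hat 0,x]\}=(-1)^{\rho(x)}\,|\{F\in\SF_n:\pi(F)=x\}|,
\]
which is the first assertion. For the second, specialize to $x=[n]^i$: then $\rho(x)=n-1$ and the forests $F$ with $\pi(F)=[n]^i$ are precisely the single-component rooted trees on $[n]$ whose descent count equals $i$, i.e.\ the elements of $\T_{n,i}$, yielding $\mu(\hat 0,[n]^i)=(-1)^{n-1}|\T_{n,i}|$. The main obstacle is getting the bijection $\Phi$ right: one must pin down the canonical traversal order of $F$ so that (i) weight increments along the chain encode exactly the descent edges of $F$, and (ii) the resulting chain is genuinely ascent-free rather than merely well-defined. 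Once the traversal is correctly specified, both directions of $\Phi$ are straightforward inductions on $n$.
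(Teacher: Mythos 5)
The paper does not prove this proposition itself; it is quoted from \cite{DleonWachs2016}. Your overall strategy---define an ER-labeling $\lambda$ of $\Pi_n^w$, biject ascent-free maximal chains of $[\hat{0},x]$ with forests $F$ satisfying $\pi(F)=x$, and invoke Theorem~\ref{theorem:muER}---is the approach taken in that reference, and the labeling you write down coincides (up to notation) with the labeling $\lambda_E$ that appears later in Section~\ref{weightedPartSec}. However, you placed the wrong order on the label set, and this is fatal. Under the lexicographic \emph{total} order on $\bigl([n]\times[n]\bigr)\times\{0,1\}$ one has $\bigl((1,2),1\bigr)<\bigl((1,3),0\bigr)$, so in the rank-two interval $[\hat{0},123^1]\subseteq\Pi_3^w$ there are \emph{two} increasing maximal chains: the one with label word $(1,2)^0,(1,3)^1$ and the one with label word $(1,2)^1,(1,3)^0$. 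Hence $\lambda$ is not an ER-labeling under your order, Theorem~\ref{theorem:muER} does not apply, and the count of ascent-free chains in that interval would come out to $4$ rather than the correct value $\mu([\hat{0},123^1])=|\T_{3,1}|=5$.

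The order that actually works (used in \cite{DleonWachs2016} and recorded as $\Lambda_n$ in Section~\ref{weightedPartSec}) is deliberately \emph{not} total: one sets $(a,b)^u\le(a,c)^v$ only when both $b\le c$ and $u\le v$, and then takes the ordinal sum $\Lambda_n=\Gamma_1\oplus\Gamma_2\oplus\cdots\oplus\Gamma_n$. This makes $(1,2)^1$ and $(1,3)^0$ incomparable; the offending chain is then no longer increasing (restoring the ER property) and simultaneously becomes ascent-free (restoring the count). The non-totality is essential to both sides of the argument, and your greedy uniqueness argument for the increasing chain fails precisely because it assumes lex comparability. Your sketch of the bijection $\Phi$ also cannot be made precise until the label poset is fixed---whether a given linear extension of $F$ yields an ascent-free word depends on which pairs of labels are comparable---so the missing piece is genuinely load-bearing, not a presentation detail.
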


\begin{proposition}[\cite{Sagan1983}]\label{proposition:mobiusrootedforests}
For all $F\in \SF_n$,
$$\mu([\hat{0},F])=\begin{cases}
(-1)^{\rho(F)} & \text{if $F$ is a forest in which every nonroot vertex is a leaf} \\
0 & \text{otherwise}.
  \end{cases} $$
\end{proposition}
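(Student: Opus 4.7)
My plan is to derive the formula by transporting the computation from $\SF_n$ to the weighted partition poset $\Pi_n^w$ using the Whitney-dual construction of Theorem \ref{theorem:weightedpartitionsSF}. That theorem provides a CW-labeling $\lambda_C$ of $\Pi_n^w$ together with a poset isomorphism $\psi\colon \SF_n \to Q_{\lambda_C}(\Pi_n^w)$. Under $\psi$, each forest $F$ corresponds to an equivalence class $\psi(F)$ of saturated chains from $\hat 0$ to $\pi(F)$ in $\Pi_n^w$, whose ascent-free representative encodes $F$ via the bijection implicit in Proposition \ref{proposition:mobiusweightedpartitions}. Since $\psi$ preserves rank, it also preserves M\"obius values on intervals $[\hat 0, F]$, so it suffices to compute $\mu_{Q_{\lambda_C}(\Pi_n^w)}([\hat 0], \psi(F))$.

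To do this I would apply Proposition \ref{proposition:mobiusQlambda} with $X=[\hat 0]$, $Y=\psi(F)$, and $\c$ the empty chain (so that $e(X)=\hat 0$). That proposition immediately yields
$$
\mu_{Q_{\lambda_C}(\Pi_n^w)}([\hat 0],\psi(F)) = \begin{cases} (-1)^{\rho(F)} & \text{if } \psi(F) \text{ contains the unique increasing maximal chain of } [\hat 0,\pi(F)], \\ 0 & \text{otherwise.} \end{cases}
$$
Combined with the previous paragraph, this reduces the entire statement to a single combinatorial identification: the class $\psi(F)$ contains the unique increasing maximal chain of $[\hat 0,\pi(F)]$ if and only if every nonroot vertex of $F$ is a leaf.

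The main obstacle, then, is verifying this characterization. The strategy is to start with the unique increasing maximal chain of $[\hat 0,\pi(F)]$ under $\lambda_C$ and reduce it, via a sequence of quadratic exchanges, to its unique ascent-free representative $\c^{\star}$; then translate $\c^{\star}$ through the forest correspondence of Proposition \ref{proposition:mobiusweightedpartitions} to obtain an explicit forest $F^{\star}$. One then argues that in $F^{\star}$ every nonroot vertex is a leaf, essentially because merging blocks in label-increasing order always attaches a singleton directly to the ``current root'' and never forces the creation of a longer root-to-leaf path. Conversely, if some nonroot vertex of $F$ has a descendant, one must show that the corresponding ascent-free chain $\c_F$ lies in a different equivalence class from $\c^{\star}$; this can be established by exhibiting an invariant of the equivalence relation (coming from the cancellative and braid-relation properties of $\lambda_C$) that distinguishes $\c_F$ from $\c^{\star}$, or equivalently by a rank-by-rank induction showing that the unique ascent-free descendant of the increasing chain always corresponds to a ``star-forest.''

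The delicate step is the forward direction of the combinatorial identification, since quadratic exchanges can rearrange the sequence of merges in nontrivial ways; here one leans on the fact that the ascent-free representative of each orbit is unique (a consequence of confluency established in Corollary \ref{corollary:uniquesink}), so it suffices to track only the terminal ascent-free chain rather than the whole orbit. Once $F^{\star}$ is identified as a star-forest and shown to be the only forest whose equivalence class contains the increasing chain, the two cases of Proposition \ref{proposition:mobiusQlambda} translate directly into the two cases of the desired formula, completing the proof.
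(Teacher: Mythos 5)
The paper does not prove this proposition; it is quoted directly from Sagan~\cite{Sagan1983}, so there is no in-paper proof to compare against. What you are proposing is to re-derive the cited formula from the paper's own Whitney-dual machinery, and the route you sketch is legitimate and non-circular: the isomorphism $Q_{\lambda_C}(\Pi_n^w)\cong\SF_n$ (Theorem~\ref{theorem:QSF}) and the M\"obius formula for $Q_\lambda(P)$ (Proposition~\ref{proposition:mobiusQlambda}) are established without appealing to Sagan's result.

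Two points of tightening. First, the step you flag as delicate is in fact already done in the paper. Since $\F\colon C(\Pi_n^w)\to\SF_n$ is constant on $\sim_{\lambda_C}$-classes and induces the inverse of your $\psi$, the class $\psi(F)$ contains the unique increasing maximal chain $\c^{\mathrm{inc}}$ of $[\hat 0,\pi(F)]$ if and only if $\F(\c^{\mathrm{inc}})=F$. The proof of Proposition~\ref{proposition:weightedpartitionsCR}, specialized to the root $x_k=\hat 0$, computes $\F(\c^{\mathrm{inc}})$ explicitly: for each block $B^v$ of $\pi(F)$ the increasing chain attaches all the singletons of $B$ directly to a single chosen center (the $(v{+}1)$st smallest element of $B$), producing a star with exactly $v$ descents. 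So $\F(\c^{\mathrm{inc}})$ is precisely the unique forest of shape $\pi(F)$ in which every nonroot vertex is a leaf; no orbit-tracking or new invariant is needed. Second, the converse direction you propose to argue separately (that non-star forests give disjoint classes) is automatic once you know $\overline\F$ is a bijection on classes, so that entire paragraph can be dropped. A small citation correction: the concrete isomorphism is Theorem~\ref{theorem:QSF}; Theorem~\ref{theorem:weightedpartitionsSF} is only the introductory statement, and the bijection you attribute to Proposition~\ref{proposition:mobiusweightedpartitions} is really the content of Theorem~\ref{theorem:QSF}, not of that proposition. With those adjustments the argument is complete.
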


Note that the map $\pi:\SF_n\rightarrow \Pi_n^w$ defined in Equation~\ree{definition:piofanF} is a bijection when we restrict the domain to the 
set of forests in which every one of its nonroot vertices are leaves. Furthermore, note that a 
forest $F \in \SF_n$ and its associated weighted partition $\pi(F)\in \Pi_n^w$ have the same rank 
in their respective posets. We obtain the following theorem as a corollary of the previous two 
propositions.

\begin{theorem}[Gonz\'alez D'Le\'on - 
Wachs, personal communication]\label{theorem:weightedpartitions_rootedforests_whitneyduals}
We have that
\begin{align*}
 w_k(\Pi_n^w)&=|\{F\in \SF_n \mid \rho(F)=n-k\}|=W_k(\SF_n)\\
 w_k(\SF)&=|\{x\in \Pi_n^w \mid \rho(x)=n-k\}|=W_k(\Pi_n^w).
\end{align*}
 Hence, the posets $\Pi_n^w$ and $\SF_n$ are Whitney duals.
\end{theorem}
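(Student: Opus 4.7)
The plan is to derive both identities directly from the Möbius formulas already stated in Propositions \ref{proposition:mobiusweightedpartitions} and \ref{proposition:mobiusrootedforests}, together with the rank-preserving nature of the map $\pi : \SF_n \to \Pi_n^w$ defined in Equation \ref{definition:piofanF}. The starting observation for all of the bookkeeping is that if $F \in \SF_n$ has $t$ connected components then $\pi(F)$ has $t$ blocks, so in both posets rank equals $n$ minus the number of components/blocks, and hence $\rho(F) = \rho(\pi(F))$.

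For the first identity I would unfold the definition of $w_k(\Pi_n^w)$ and substitute Proposition \ref{proposition:mobiusweightedpartitions}:
\begin{align*}
w_k(\Pi_n^w) \;=\; \sum_{\rho(x)=k}\mu(\hat 0,x) \;=\; (-1)^k \sum_{\rho(x)=k}\bigl|\pi^{-1}(x)\bigr| \;=\; (-1)^k\,\bigl|\{F \in \SF_n : \rho(F)=k\}\bigr|,
\end{align*}
where the last equality uses $\rho(F)=\rho(\pi(F))$ to reindex the double count. Taking absolute values gives $|w_k(\Pi_n^w)| = W_k(\SF_n)$, which is half of Definition \ref{definition:whitneyduals}.

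For the second identity I would apply Proposition \ref{proposition:mobiusrootedforests} to get
\begin{align*}
w_k(\SF_n) \;=\; \sum_{\rho(F)=k}\mu(\hat 0,F) \;=\; (-1)^k\,\bigl|\{F \in \SF_n : \rho(F)=k,\ \text{every nonroot vertex of $F$ is a leaf}\}\bigr|.
\end{align*}
The key step is then to verify the claim made in the paragraph following Proposition \ref{proposition:mobiusrootedforests}: the restriction of $\pi$ to ``star forests'' (forests in which every nonroot vertex is a leaf) is a rank-preserving bijection onto $\Pi_n^w$. Rank preservation is already guaranteed by the observation above. To invert $\pi$ on a weighted partition $B_1^{v_1}/B_2^{v_2}/\cdots/B_t^{v_t}$, note that a star on block $B_i$ with root $r \in B_i$ has exactly $|\{b \in B_i : b < r\}|$ descents, and so the condition $v_i \in \{0,1,\dots,|B_i|-1\}$ forces $r$ to be the $(v_i+1)$-th smallest element of $B_i$; this gives a unique preimage. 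Taking absolute values yields $|w_k(\SF_n)| = W_k(\Pi_n^w)$, completing the verification of Whitney duality.

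The argument has no real obstacle: the heavy lifting is done by the two Möbius computations, and the remaining content is purely bookkeeping. The one point requiring mild care is confirming the star-forest bijection, since this is what translates the support of the Möbius function on $\SF_n$ into the elements of $\Pi_n^w$; but this is immediate from the rule (weight) = (number of descents) combined with the freedom to choose any root in a star.
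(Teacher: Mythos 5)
Your proposal is correct and takes essentially the same route as the paper, which simply asserts the theorem "as a corollary of the previous two propositions" together with the remark that $\pi$ restricted to star forests is a rank-preserving bijection onto $\Pi_n^w$. You have usefully filled in the detail the paper leaves unjustified — that the root of a star on $B_i$ must be the $(v_i+1)$-th smallest element of $B_i$, which is exactly what makes the restricted $\pi$ a bijection.
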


\subsubsection{A CW-labeling of $\Pi_n^w$}

In the following we will use the structure of the poset $\SF_n$ and a surjective map from the poset 
of saturated chains $C(\Pi_n^w)$ to $\SF_n$ to give a CW-labeling $\lambda_C$ for $\Pi_n^w$. 
Theorem \ref{theorem:CW}  will then imply that $Q_{\lambda_C}(\Pi_n^w)$ is a Whitney dual of $\Pi_n^w$. 
We will also show that $Q_{\lambda_C}(\Pi_n^w)\simeq \SF_n$ providing a different proof of 
Theorem \ref{theorem:weightedpartitions_rootedforests_whitneyduals}.

We  first discuss some structural properties of $\SF_n$. For a rooted tree $T$ on vertex 
set $V(T)$, let $r(T)$ denote the \emph{root} of $T$; and  for any pair of vertices $v,w\in V(T)$ 
define the \emph{distance} $d(v,w)$ to be the number of edges in the unique path between $v$ and 
$w$. Note that by the definition of the cover relations in $\SF_n$, a saturated 
chain $\hat{0}=F_0\lessdot F_1 \lessdot \cdots \lessdot F_k=F$ in $\SF_n$ can be seen as a 
step-by-step instruction set on how to build the forest $F$. Here in the $i$-th step, exactly two 
trees $T_{i,1}$ and $T_{i,2}$ of $F_{i-1}$ are combined by adding the edge $\{r(T_{i,1}),r(T_{i,2})\}$ to get a new tree $T'_i$ in $F_i$ whose root $r(T'_i)\in 
\{r(T_{i,1}),r(T_{i,2})\}$. We call this process \emph{merging} the trees $T_{i,1}$ and $T_{i,2}$ by the roots  $r(T_{i,1})$ and  $r(T_{i,2})$. At $F_0=\hat{0}$ every element of $[n]$ is a root ($R(F)=[n]$), but at 
each step $i$ there is exactly one element $v_i \in \{r(T_{i,1}),r(T_{i,2})\}$ that stops being a 
root, i.e., $v_i \in R(F_{i-1})$ and $v_i \notin R(F_i)$. This process defines an ordered listing 
$v_1,v_2,\dots,v_k$ of the non-root vertices of $F$. 

We can also consider the converse situation:
let $v_1,v_2,\dots,v_k$ be an ordered listing of the non-root vertices of $F$ and let 
$\hat{0}=F_0, F_1, \cdots ,F_k=F$ be the sequence defined by obtaining $F_{i}$  from 
$F_{i-1}$ by adding the edge $\{v_i,p(v_i)\}$ (where $p(v_i)$ is the parent of $v_i$ in $F$) and letting $R(F_i)=R(F_{i-1})\setminus\{v_i\}$. It 
is 
clear that the rank $\rho(F_i)=i$ in $\SF_n$, but it is not clear if the set $\hat{0}=F_0, F_1, 
\cdots ,F_k=F$ forms a saturated chain in $\SF_n$ since a cover relation in $\SF_n$ happens {\bf 
exactly} when the two trees in $F_{i-1}$ are merged using an edge between their roots. The 
following lemma characterizes which  
sequences 
$v_1,v_2,\dots,v_k$ give a valid saturated chain $\hat{0}=F_0\lessdot F_1 \lessdot \cdots \lessdot 
F_k=F$ in $\SF_n$.

An ordered listing $v_1,v_2,\dots,v_k$  of some subset of $V(F)$ is said to be a \emph{linear 
extension} if whenever $v_i$ is a descendant of $v_j$ in $F$ then we have that $i<j$.

\begin{lemma}\label{lemma:building_forests}
 The sequence $\hat{0}=F_0,F_1, \cdots , F_k=F$ in which $F_{i}$  is obtained from 
$F_{i-1}$ by adding the edge $\{v_i,p(v_i)\}$ and setting $R(F_i)=R(F_{i-1})\setminus\{v_i\}$ is a 
saturated chain in $\SF_n$ if and only if the ordered listing $v_1,v_2,\dots,v_k$ of the 
non-root vertices of $F$ is a linear extension.
\end{lemma}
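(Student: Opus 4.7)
The plan is to prove both directions by carefully tracking the set $R(F_{i-1})$ of roots of the intermediate forests. Since each step of the construction removes exactly one vertex from the root set (namely $v_i$), and the final root set is $R(F)$, by reverse induction one obtains the key identity
\begin{equation*}
R(F_{i-1}) = R(F) \cup \{v_i, v_{i+1}, \ldots, v_k\}.
\end{equation*}
I would begin by recording this identity and the observation that $F_{i-1}$ is always a subforest of $F$ on vertex set $[n]$, so in particular no edge of the form $\{v_i, p(v_i)\}$ ever creates a cycle.

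For the forward direction, assume the sequence is a saturated chain in $\SF_n$. Then at each step $i$, the cover relation requires that $v_i$ and $p(v_i)$ be roots of $F_{i-1}$ lying in different components, and that the new edge be the edge between those two roots. By the identity above, the condition $p(v_i) \in R(F_{i-1})$ forces either $p(v_i) \in R(F)$ or $p(v_i) = v_m$ for some $m > i$ (the case $m = i$ is excluded since $p(v_i) \neq v_i$). Now I would argue that if $v_i$ is a descendant of $v_j$ in $F$, then $i < j$: consider the path $v_i = u_0, u_1 = p(u_0), \ldots, u_\ell = v_j$ in $F$. Each intermediate vertex $u_s$ lies strictly below the root of its tree (since $v_j$ is a proper ancestor on the path to the root), hence each $u_s$ is a non-root of $F$ and appears in the listing. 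Applying the observation at each consecutive pair $(u_s, u_{s+1} = p(u_s))$ and iterating, one concludes $v_i$ precedes $v_j$ in the listing.

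For the reverse direction, assume the listing is a linear extension of the descendant order on the non-roots of $F$. I need to verify that each $F_{i-1} \lessdot F_i$ is a valid covering in $\SF_n$. First, $v_i \in R(F_{i-1})$ is immediate from the root-set identity. Next, $p(v_i) \in R(F_{i-1})$: if $p(v_i) \in R(F)$ we are done, otherwise $p(v_i) = v_m$ for some $m$, and since $v_i$ is a descendant of $v_m$ the linear extension property gives $m > i$, whence $v_m \in R(F_{i-1})$. The edge $\{v_i, p(v_i)\}$ is not in $F_{i-1}$ (no previously-added edge coincides with it, as two vertices cannot be mutual parents in a forest), and since $F_{i-1}$ is a subforest of $F$ adding this edge creates no cycle, so $v_i$ and $p(v_i)$ lie in distinct components of $F_{i-1}$; being roots, they are the roots of those components. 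Finally, $R(F_i) = R(F_{i-1}) \setminus \{v_i\}$ leaves $p(v_i)$ as the unique root of the merged component, matching the cover-relation description in $\SF_n$.

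There is no serious obstacle here; the whole argument rests on the single observation that the root set of $F_{i-1}$ consists of the genuine roots of $F$ together with the vertices $v_i, \ldots, v_k$ still to be processed. The only mild subtlety is keeping track of why non-root intermediate ancestors of $v_i$ on the path to $v_j$ must themselves be non-roots of $F$ (so that the induction step propagates through the parent chain), which is a direct consequence of the ancestor/descendant definition.
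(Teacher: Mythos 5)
Your proof is correct and rests on the same core observation as the paper's: that $R(F_{i-1}) = R(F) \cup \{v_i,\dots,v_k\}$, so that the cover relation at step $i$ succeeds precisely when $p(v_i)$ has not yet been removed from the root set. The paper packages this observation differently on each side of the equivalence: for the forward direction it argues by contradiction, selecting a violating pair $(v_i,v_j)$ of minimal distance and showing $v_j$ must be $p(v_i)$, whereas you prove the linear-extension condition directly by iterating the ``parent appears later'' step along the parent path from $v_i$ up to $v_j$; for the reverse direction the paper uses induction on $k$ from the top (peeling off $v_k$), whereas you verify each cover $F_{i-1}\lessdot F_i$ directly from the root-set identity. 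Both routes are sound; yours trades the minimality and induction bookkeeping for a slightly more careful inspection of why $v_i$, $p(v_i)$ are the respective roots of distinct components of $F_{i-1}$ (which you handle correctly via the subforest/no-cycle argument plus the fact that $F_{i-1}$ is inductively a valid rooted forest).
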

\begin{proof}
 First let $\hat{0}=F_0\lessdot F_1 \lessdot \cdots \lessdot F_k=F$ be a saturated chain in 
$\SF_n$ and suppose that the associated ordered listing $v_1,v_2,\dots,v_k$ is not a linear 
extension. Then there are 
non-root vertices $v_i$ and $v_j$ in $F$ such that $v_i$ is a descendant of $v_j$ and $j<i$. We 
choose $v_i$ and $v_j$ such that $d(v_i,v_j)$ is minimal and we claim that in this case it must be 
that $v_j=p(v_i)$. Otherwise, there is a vertex $v_l$ that is a descendant of $v_j$ and an ancestor 
of $v_i$. If $l < j$ then the pair $(v_i,v_l)$ satisfies the condition above with 
$d(v_i,v_l)<d(v_i,v_j)$ and if $l >j$ then the pair $(v_l,v_j)$ satisfies the condition above 
with 
$d(v_l,v_j)<d(v_i,v_j)$. Now, if $v_j=p(v_i)$ and $j<i$ this implies that in the step $i$ between 
$F_{i-1}$ and $F_{i}$ we added the edge $\{v_i,v_j\}$ but $v_j \notin R(F_{i-1})$ since $v_j$ has 
been already removed from the set of roots in step $j$.  This implies that $F_i$ does not cover 
$F_{i-1}$, that is a contradiction. We conclude that 
$v_1,v_2,\dots,v_k$ is a linear 
extension. 

On the other hand, let the ordered listing $v_1,v_2,\dots,v_k$ be a linear extension of the non-root 
vertices of $F$ and let $\hat{0}=F_0,F_1, \cdots , F_k=F$ in which $F_{i}$ is defined as stated in 
the lemma. Note that $F_{k-1}$ is the forest obtained from $F$ by 
removing the edge $\{v_k,p(v_k)\}$ and is such that $R(F_{k-1})=R(F)\cup \{v_k\}$. We also clearly 
have 
that $v_1,v_2,\dots,v_{k-1}$ is a linear extension of the non-root vertices of $F_{k-1}$. 
Since we know that the trivial sequence $F_0=\hat{0}$ is a saturated chain in $\SF_n$ we assume by 
induction that $\hat{0}=F_0\lessdot F_1 \lessdot \cdots \lessdot F_{k-1}$ is also one. Note that 
$p(v_k) 
\neq v_j$ for $j=1,\dots,k$ since the ordered listing is a linear extension and so  $p(v_k)$ 
cannot appear before $v_k$. But this implies that both $v_k$ and $p(v_k)$ are in $R(F_{k-1})$ and 
that $F_k$ is obtained from $F_{k-1}$ by adding the edge $\{v_k,p(v_k)\}$  such that 
$R(F_{k})=R(F_{k-1})\setminus \{v_k\}$.  Thus $F_{k-1}\lessdot F_k$ and so $\hat{0}=F_0\lessdot 
F_1 \lessdot \cdots \lessdot F_k=F$ is a saturated chain  in $\SF_n$.
\end{proof}

We 
define the 
\emph{cost} of a rooted tree $T$ as 
\begin{align}
 \gamma(T)=\sum_{v \in V(T)}d(v,r(T)).
\end{align}
See Figure \ref{figure:example_weight_of_a_tree} for an example.
 
\begin{figure}[h]
\begin{center} 
\begin{tikzpicture}[scale=1]

\tikzstyle{every node}=[black, draw, circle, inner sep=0.4pt, scale=1, minimum width=4pt]
\tikzstyle{every path}=[black, line width=0.005in]

\node[pin={[color=red, pin distance=3pt, inner sep=0pt]120:\tiny$1$}] (f1) at (-1,6){$1$};
\node[pin={[color=red, pin distance=3pt, inner sep=0pt]120:\tiny$0$}]  (f2) at (0,7){$2$};
\node[pin={[color=red, pin distance=3pt, inner sep=0pt]120:\tiny$2$}]  (f3) at (0.5,5){$3$};
\node[pin={[color=red, pin distance=3pt, inner sep=0pt]30:\tiny$2$}]  (f4) at (1.5,5){$4$};
\node[pin={[color=red, pin distance=3pt, inner sep=0pt]120:\tiny$1$}]  (f5) at (0,6){$5$};
\node[pin={[color=red, pin distance=3pt, inner sep=0pt]30:\tiny$1$}]  (f6) at (1,6){$6$};
 \draw[] (f2) -- (f1);
 \draw[] (f2) -- (f5);
 \draw[] (f2) -- (f6);
 \draw[] (f3) -- (f6);
 \draw[] (f4) -- (f6);

\end{tikzpicture}
\end{center}
\caption[]{A tree $T$ with cost $\gamma(T)=7$}\label{figure:example_weight_of_a_tree}
\end{figure}
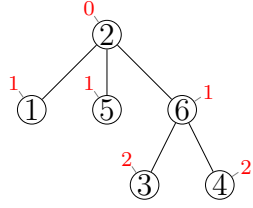

Recall that  $C(\Pi_n^w)$ is the poset 
of saturated chains from $\hat{0}$ in 
$\Pi_n^w$.
For a chain $$\c:(\hat{0}=x_0\lessdot x_1 \lessdot \cdots \lessdot x_{\ell})\in C(\Pi_n^w)$$  let 
$\c_i$ be the subchain of $\c$ consisting of its bottom $i+1$ elements, i.e.,
$$\c_i:(x_0\lessdot x_1 \lessdot \cdots \lessdot x_{i}).$$ 
We recursively define 
 a map $\F:C(\Pi_n^w)\rightarrow \SF_n$ as follows:
\begin{enumerate}
 \item Let $\F(\hat{0}_{C(\Pi_n^w)})=\hat{0}_{\SF_n}$, the rooted forest on $[n]$ with no edges. 
Note 
that $\pi(\hat{0}_{\SF_n})=\hat{0}_{\Pi_n^w}$ where the function $\pi$ is defined in Equation~\ree{definition:piofanF}.
 \item  Let $\c:(\hat{0}=x_0\lessdot x_1 
\lessdot \cdots \lessdot x_{k}) \in C(\Pi_n^w)$. 
The cover relation  
$$x_{k-1}:=A_1^{w_1}/A_2^{w_2}/\dots /A_s^{w_s}\lessdot B_1^{v_1}/ B_2^{v_2}/ \dots 
/B_{s-1}^{v_{s-1}}=:x_k$$ in $\Pi_n^w$ is such that exactly two weighted blocks $A_i^{w_i}$ 
and $A_j^{w_j}$ of 
$x_{k-1}$ are 
merged into a weighted block $B_m^{v_m}$ of $x_k$ where  $B_m=A_{i}\cup A_{j}$ and $v_m-(w_{i} 
+ 
w_{j}) \in \{0,1\}$.  We assume that we have recursively defined 
$\F(\c_{k-1})$ as a rooted spanning forest with the property that $\pi(\F(\c_{k-1}))=x_{k-1}$. 
We then define $\F(\c)$ to be the rooted spanning forest obtained from 
$\F(\c_{k-1})$ by connecting the two trees $T_i$ and 
$T_j$ with vertex sets $V(T_i)=A_i$ and 
$V(T_j)=A_j$ using the edge $\{r(T_i),r(T_j)\}$ and choosing the root of the new tree to be
$\min\{r(T_i),r(T_j)\}$ if 
$v_m-(w_{i} 
+ w_{j})=0$ or   $\max\{r(T_i),r(T_j)\}$ if $v_m-(w_{i} + w_{j})=1$.
\end{enumerate}
Note that by construction and equation \eqref{definition:piofanF}, we have that $\pi(\F(\c))=x_{k}$ 
and so the map $\F$ is inductively well-defined.  Moreover, it has  the property that for every $\c \in C(\Pi_n^w)$ 
we have that $\pi(\F(\c))=e(\c)$ (where, just as before, $e(\c)$ is maximum element of $\c$). Indeed, the 
blocks of $x_k$ are the same as the ones of $x_{k-1}$ except for $A_i^{w_i}$ and 
$A_j^{w_j}$ that now form the block $(A_i \cup A_j)^{v_m}$. It is not hard to see that $\F$ is 
sending cover relations in $C(\Pi_n^w)$ to cover relations in $\SF_n$ and so $\F$ is a rank 
preserving poset map.

\begin{lemma}\label{lemma:saturatedchainsinPIandSF}
Let $\widetilde \F:C(\Pi_n^w)\rightarrow C(\SF_n)$ be defined for $\c:(\hat{0}=x_0\lessdot 
x_1\lessdot \cdots \lessdot x_{k}) \in C(\Pi_n^w)$ by
$$\widetilde \F(\c)=\F(\c_0)\lessdot \F(\c_1)\lessdot \cdots \lessdot \F(\c_k),$$
and $\widetilde \pi:C(\SF_n)\rightarrow C(\Pi_n^w)$ defined for $\FF:(\hat{0}=F_0\lessdot 
F_1\lessdot \cdots \lessdot F_k)\in C(\SF_n)$ by 
$$\widetilde \pi(\FF)=(\pi(F_0)\lessdot \pi(F_{1})\lessdot 
\cdots \lessdot \pi(F_{k})).$$
Then we have that these maps are well-defined and that $\widetilde \pi \circ 
\widetilde \F=id_{C(\Pi_n^w)}$ and $\widetilde \F\circ \widetilde \pi=id_{C(\SF_n)}$. Hence, 
$\widetilde \F$ and $\widetilde \pi$ define an isomorphism $C(\Pi_n^w)\simeq C(\SF_n)$. 
\end{lemma}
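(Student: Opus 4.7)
The plan is to verify four things in order: well-definedness of $\widetilde{\F}$, well-definedness of $\widetilde{\pi}$, and then the two composition identities. I will use throughout the key property, observed in the construction of $\F$, that $\pi(\F(\c))=e(\c)$ for every $\c\in C(\Pi_n^w)$; this identity is essentially built into the recursive definition of $\F$ and so can be invoked freely.

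First I would verify that $\widetilde{\F}$ lands in $C(\SF_n)$. By construction, $\F(\c_k)$ is obtained from $\F(\c_{k-1})$ by adding a single edge between the roots of two trees and declaring one of the two endpoints to be the new root; this is exactly a cover relation in $\SF_n$, so each consecutive pair in $\widetilde{\F}(\c)$ is a cover. For $\widetilde{\pi}$, suppose $F_{i-1}\lessdot F_i$ in $\SF_n$, so $F_i$ is obtained from $F_{i-1}$ by merging two trees $T,T'$ (with roots $r,r'$) via the edge $\{r,r'\}$, and choosing either $r$ or $r'$ as the new root. The key calculation is on weights: if the new root is $\min(r,r')$, then the edge $\{r,r'\}$ is not a descent (parent is smaller than child), so the weight of the new tree equals $w_T+w_{T'}$; if the new root is $\max(r,r')$, the edge is a descent and the weight equals $w_T+w_{T'}+1$. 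In both cases $v_m-(w_T+w_{T'})\in\{0,1\}$, which is precisely the covering condition in $\Pi_n^w$, so $\pi(F_{i-1})\lessdot \pi(F_i)$.

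For $\widetilde{\pi}\circ\widetilde{\F}=\mathrm{id}_{C(\Pi_n^w)}$, the identity $\pi(\F(\c_i))=e(\c_i)=x_i$ applied term-wise immediately gives $\widetilde{\pi}(\widetilde{\F}(\c))=\c$. The more delicate direction is $\widetilde{\F}\circ\widetilde{\pi}=\mathrm{id}_{C(\SF_n)}$. Here I would proceed by induction on the length $k$ of the chain $\FF:(\hat 0=F_0\lessdot\cdots\lessdot F_k)$. The base case is trivial. For the inductive step, the inductive hypothesis gives $\F(\widetilde\pi(\FF)_{k-1})=F_{k-1}$. The cover $F_{k-1}\lessdot F_k$ merges two trees $T,T'$ of $F_{k-1}$ by their roots $r,r'$, and by the weight calculation above the difference $v_m-(w_T+w_{T'})$ is $0$ precisely when the new root is $\min(r,r')$ and $1$ precisely when it is $\max(r,r')$. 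Comparing with the recursive rule defining $\F$ on $\widetilde\pi(\FF)_k$, which chooses $\min$ when the weight difference is $0$ and $\max$ when it is $1$, we see that $\F(\widetilde\pi(\FF)_k)=F_k$. Iterating gives $\widetilde{\F}(\widetilde\pi(\FF))=\FF$.

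The main (really, only) obstacle is the bookkeeping in the step $\widetilde{\F}\circ\widetilde{\pi}=\mathrm{id}$: one must verify that the rule in the definition of $\F$ for selecting $\min$ vs.\ $\max$ as the root of the newly merged tree matches the actual root in $F_k$. Once the descent/weight correspondence above is cleanly stated and used, everything else is formal, and together these facts show that $\widetilde{\F}$ and $\widetilde{\pi}$ are mutually inverse rank-preserving poset maps, establishing $C(\Pi_n^w)\simeq C(\SF_n)$.
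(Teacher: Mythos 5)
Your proof is correct and follows the same approach as the paper's: you verify well-definedness via the descent/weight bookkeeping, get $\widetilde{\pi}\circ\widetilde{\F}=\mathrm{id}$ from the built-in identity $\pi(\F(\c))=e(\c)$, and prove $\widetilde{\F}\circ\widetilde{\pi}=\mathrm{id}$ by induction on chain length. The paper's proof is terser — it leaves the induction to the reader and cites the preceding discussion for the well-definedness of $\widetilde{\F}$ — but the mathematical content is the same, and your explicit weight calculation (that $v_m-(w_T+w_{T'})=\chi(\text{new root is the max})$ because exactly the new edge can be a descent) is precisely the calculation the paper compresses into the factor $\chi(r(T_1)>r(T_2))$.
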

\begin{proof}
Note that if $F\lessdot F'$, we have that $F'$ is obtained from $F$ by merging two trees $T_1$ and 
$T_2$ to get a tree $T'$ of $F'$ such that $r(T')=r(T_1)$. Hence we have that $\pi(F)$ and $\pi(F')$ 
are almost identical except that they differ in two weighted blocks $V(T_1)^{w_{T_1}}$ and 
$V(T_2)^{w_{T_2}}$ 
of $\pi(F)$ and one weighted block $(V(T_1)\cup V(T_2))^{w_{T_1}+w_{T_2}+\chi(r(T_1)>r(T_2))}$ of 
$\pi(F')$ (where $\chi(A)=1$ if the statement $A$ is satisfied and $0$ otherwise). This means 
exactly that $\pi(F)\lessdot \pi(F')$ so $\widetilde \pi$ is a well-defined map that preserves 
cover relations and hence is order-preserving. The comments preceding this lemma also 
imply that $\widetilde \F$ is a well-defined order-preserving map. 

By the recursive definition of $\F$ it follows that $\widetilde \pi \circ \widetilde 
\F=id_{C(\Pi_n^w)}$. The reader can verify using induction on the length $k$ of a chain 
$\FF:(\hat{0}=F_0\lessdot F_1\lessdot \cdots \lessdot F_k)\in C(\SF_n)$ that $\widetilde \F\circ 
\widetilde \pi=id_{C(\SF_n)}$. Hence we obtain the desired isomorphism $C(\Pi_n^w)\simeq C(\SF_n)$.
\end{proof}
\begin{remark}
Note that Lemma \ref{lemma:saturatedchainsinPIandSF} implies that the information encoded in the 
saturated chain $\c:(\hat{0}=x_0\lessdot x_1\lessdot \cdots \lessdot x_{k}) \in C(\Pi_n^w)$ can be 
recovered uniquely from the saturated chain 
$$\widetilde \F(\c)=\F(\c_0)\lessdot \F(\c_1)\lessdot \cdots \lessdot \F(\c_k)  \in C(\SF_n).$$
We will use this fact in the construction of a C-labeling of $\Pi_n^w$.
\end{remark}

We define first an E-labeling $\lambda_{\SF}:\E(\SF_n)\rightarrow \Lambda$ as follows: Let 
$F\lessdot F'$ be such that $T_i$ and $T_j$ are the trees of $F$ that have been merged to get a 
tree $T'$ of $F'$ and assume without loss of generality that $r(T_i)<r(T_j)$. We define
\begin{align}\label{equation:Elabelingforests}
\lambda_{\SF}(F\lessdot F')= 
\begin{cases}
\left(-\gamma(T_j),r(T_i),r(T_j)\right) & \text{if $r(T')=r(T_i)$} \\
\left(-\gamma(T_i),r(T_j),r(T_i)\right) & \text{if $r(T')=r(T_j)$},
  \end{cases} 
\end{align}
and we define $\Lambda$ to be the poset $\ZZ^3$ with lexicographic order.

We will define now a C-labeling $\lambda_C:\M\E(\Pi_n^w)\rightarrow \Lambda$ as follows: 
Let $\c:(\hat{0}=x_0\lessdot x_1 \lessdot \cdots \lessdot x_{\rho(\Pi_n^w)})$ be a maximal chain 
of $\Pi_n^w$. We define
\begin{align}\label{equation:Clabelingweightedpartitions}
\lambda_C(\c, x_{i-1} \lessdot x_i)= \lambda_{\SF}(\F(\c_{i-1})\lessdot \F(\c_i)).
\end{align}
The bottom to top 
construction 
that  we have used to define $\lambda_C$, i.e.,  using the information on the saturated chains from 
$\hat{0}$ to $x_{i-1}$, guarantees that this labeling is a C-labeling of $\Pi_n^w$. Indeed, for any 
maximal chain $\tilde \c$ that coincides with $\c$ in the bottom $d$ 
elements we have that $\F(\tilde \c_i)=\F(\c_i)$ for $i=0,1,\dots,d-1$ and so $\tilde \c$ shares 
the same labels with $\c$ along the first $d-1$ edges. We will prove that $\lambda_C$ is in fact a 
CW-labeling.

Note that the definition of $\lambda_C$ says that for a chain 
$$\c:( \hat{0}=x_0\lessdot x_1 \lessdot \cdots \lessdot x_k),$$
the label $\lambda_C(\c, x_{i-1} \lessdot x_{i})$ 
only depends on the forests $\F(\c_{i-1})$ and $\F(\c_i)$. 
Hence the label sequence 
$$\lambda_C(\c, x_i \lessdot x_{i+1}), \lambda_C(\c, x_{i+1} \lessdot x_{i+2}), \dots , 
\lambda_C(\c, x_{j-1}\lessdot x_{j})$$
only depends on the sequence
$\F(\c_{i})\lessdot \F(\c_{i+1})\lessdot \cdots \lessdot \F(\c_{j})$.
\begin{lemma}\label{lemma:increasingmergingoftrees}
Let the sequence  $F_i\lessdot F_{i+1}\lessdot \cdots \lessdot F_{j}$ be a saturated chain in $\SF_n$ such 
that the following holds:
\begin{itemize}
 \item The forest $F_{i+1}$ is obtained from $F_{i}$ by merging two trees $T_1$ and $T_2$ of $F_{i}$ 
to obtain a new tree $T'$ in $F_{i+1}$ with $r(T')=r(T_1)$, and
 \item the forest $F_{j}$ is obtained by merging $T'$ with another tree $T_3$ to obtain a new 
tree $T''$ with root $r(T'')=r(T_3)\neq r(T')=r(T_1)$.
\end{itemize}
Then
$$\lambda_{\SF}(F_{i}\lessdot F_{i+1})>\lambda_{\SF}(F_{j-1}\lessdot F_{j}).$$
\end{lemma}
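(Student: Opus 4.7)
The plan is to compare only the first coordinates of the two labels, since $\Lambda = \ZZ^3$ is ordered lexicographically and any strict inequality in the first coordinate alone suffices. By the definition \eqref{equation:Elabelingforests}, the first coordinate of $\lambda_{\SF}(F\lessdot F')$ equals $-\gamma(S)$, where $S$ is the unique tree being merged whose root is \emph{not} the root of the resulting tree. For the cover $F_i\lessdot F_{i+1}$, the hypothesis $r(T')=r(T_1)$ says that $T_2$ is the tree losing its root, giving first coordinate $-\gamma(T_2)$. For the cover $F_{j-1}\lessdot F_j$, let $\widetilde T$ denote the tree of $F_{j-1}$ containing $r(T_1)$; the hypothesis $r(T'')=r(T_3)\neq r(T_1)$ says that $\widetilde T$ is the tree losing its root at this step, so the first coordinate is $-\gamma(\widetilde T)$. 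Thus it suffices to prove $\gamma(\widetilde T) > \gamma(T_2)$.

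First I would compute $\gamma(T')$ directly. The only edge of $T'$ that is not in $T_1$ or $T_2$ is $\{r(T_1), r(T_2)\}$, so for every $v \in V(T_2)$ one has $d_{T'}(v, r(T_1)) = d_{T_2}(v, r(T_2)) + 1$, while distances within $V(T_1)$ are unchanged. Summing over vertices yields
\[
\gamma(T') \;=\; \gamma(T_1) + \gamma(T_2) + |V(T_2)| \;\geq\; \gamma(T_2) + 1.
\]

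Next I would track the tree rooted at $r(T_1)$ as it evolves along the subchain from $F_{i+1}$ to $F_{j-1}$. By the description of the cover relation in $\SF_n$, roots can only be removed along a saturated chain, so the hypothesis that $r(T_1)$ is a root of $F_{j-1}$ forces $r(T_1)$ to remain a root in every $F_k$ with $i+1 \le k \le j-1$. Each cover $F_k\lessdot F_{k+1}$ in that range either leaves the component of $r(T_1)$ unchanged, or merges it with another tree $U$ while keeping $r(T_1)$ as the new root; in the latter case the same distance computation shows that $\gamma$ of this component grows by exactly $\gamma(U) + |V(U)| \geq 1$. Chaining these estimates gives $\gamma(\widetilde T) \geq \gamma(T') \geq \gamma(T_2) + 1 > \gamma(T_2)$, whence $-\gamma(T_2) > -\gamma(\widetilde T)$ and the claimed lexicographic inequality follows.

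The only real bookkeeping point, and the one place I would spell out carefully, is the assertion that $r(T_1)$ is still a root in every intermediate $F_k$; once this monotonicity of the non-root set is established, the lemma collapses to the single-line cost calculation above.
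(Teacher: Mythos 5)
Your proof is correct and rests on the same key inequality as the paper's: the first coordinates of the two labels already give a strict lexicographic comparison because the tree that loses its root at step $j$ has larger cost $\gamma$ than $T_2$. The paper's proof is essentially a one-liner: reading the hypothesis literally, the tree being merged at step $j$ is exactly $T'$, and $\gamma(T')>\gamma(T_2)$ follows because $T_2$ sits inside $T'$ with each root-distance increased by one. Where you differ is that you allow the component of $r(T_1)$ to have grown into some $\widetilde T\supseteq T'$ during the intermediate steps $F_{i+1}\lessdot\cdots\lessdot F_{j-1}$, and you close the gap by two monotonicity observations: the root set only shrinks along a saturated chain (so $r(T_1)$ remains a root throughout), and $\gamma$ of the component of $r(T_1)$ can only increase at each merge. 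This extra bookkeeping is not needed under the literal reading of the lemma's hypothesis, but it matches the way the lemma is actually invoked in the proof of Proposition~\ref{proposition:weightedpartitionsCR}, where the tree rooted at $r(T_1)$ may well accumulate further vertices before losing its root; so your version is a small but sensible strengthening. Your explicit identity $\gamma(T')=\gamma(T_1)+\gamma(T_2)+|V(T_2)|$ is also a cleaner justification of the key inequality than the paper's terse appeal to ``$T_2$ is a proper subtree of $T'$.''
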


\begin{proof}
Under these assumptions we have that $$\lambda_{\SF}(F_{i}\lessdot 
F_{i+1})=(-\gamma(T_2),r(T_1),r(T_2))>(-\gamma(T'),r(T_3),r(T'))=\lambda_{\SF}(F_{j-1}\lessdot 
F_{j}),$$
since $T_2$ is a proper subtree of $T'$ and so $\gamma(T')>\gamma(T_2)$.
\end{proof}

\begin{proposition}\label{proposition:weightedpartitionsCR}
 $\lambda_C:\M\E(\Pi_n^w)\rightarrow \Lambda$ is a CR-labeling.
\end{proposition}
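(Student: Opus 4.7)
The plan is to use the bijection $\widetilde{\F}:C(\Pi_n^w)\to C(\SF_n)$ from Lemma \ref{lemma:saturatedchainsinPIandSF} to transfer the problem to $\SF_n$, where the combinatorics of the labels are more transparent. Fix a rooted interval $[x,y]_\r$ in $\Pi_n^w$ and set $F_0=\F(\r)$. By the bijection, together with the definition \eqref{equation:Clabelingweightedpartitions} of $\lambda_C$, maximal chains of $[x,y]_\r$ are in label-preserving correspondence with saturated chains $F_0\lessdot F_1\lessdot\cdots\lessdot F_k$ in $\SF_n$ whose terminus $F_k$ satisfies $\pi(F_k)=y$, with the label on the $i$-th step equal to $\lambda_{\SF}(F_{i-1}\lessdot F_i)$. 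Hence it suffices to show that exactly one such $\SF_n$-chain is strictly increasing with respect to $\lambda_{\SF}$.

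For existence, I would explicitly construct the chain by a greedy procedure: at each rank $i$, among the cover relations $F_{i-1}\lessdot F_i$ that are admissible (i.e., that merge two trees whose vertex sets lie inside a common block of $y$ and that still allow the chain to be completed to one with terminus $F$ satisfying $\pi(F)=y$), choose the one whose label is lex-maximal. The definition of $\lambda_{\SF}$ gives a clean combinatorial meaning: the first coordinate $-\gamma(T_{\text{abs}})$ is maximized by absorbing subtrees of minimal cost $\gamma$, and the remaining coordinates determine which root survives and which is demoted. The weight $v_k$ of each block of $y$, which encodes the required number of descents in the eventual tree on that block, then forces a specific choice of surviving root. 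One needs to verify that this greedy procedure never gets stuck, i.e., that the resulting chain always terminates at some $F$ with $\pi(F)=y$.

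For uniqueness, I would proceed by induction on the rank: if two increasing chains agree on their bottom $i-1$ steps, they share $F_{i-1}$, and the lex-maximality of the $i$-th label (which any increasing continuation must respect) forces them to agree on the $i$-th step. The essential tool is Lemma \ref{lemma:increasingmergingoftrees}: it prohibits, in an increasing chain, the ``demotion'' of a root that has survived an earlier merger, because such a configuration produces a strict decrease in labels. This rigidifies the order in which vertices become non-roots, and, combined with the lex-ordering on labels, pins down the chain uniquely.

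The main obstacle, I expect, is reconciling the lex-maximality of labels with the weight constraints: each block $B_k$ of $y$ has a prescribed weight $v_k$, and the greedy choice of ``absorb the smallest $\gamma$ first'' must produce, in the end, exactly $v_k$ descents in the tree on $B_k$. This requires a ``remaining descent budget'' argument, namely that after each greedy step, the remaining steps can be realized in some way that meets the weights, and that among these, the greedy choice of surviving root is consistent with the budget. This double bookkeeping of label constraints on one hand, and block weights on the other, is where the bulk of the work of the proof lies; once it is settled, existence and uniqueness follow cleanly from the reformulation in $\SF_n$ and from Lemma \ref{lemma:increasingmergingoftrees}.
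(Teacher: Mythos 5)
Your proposal starts the same way the paper does: transfer the question to $\SF_n$ via the bijection of Lemma~\ref{lemma:saturatedchainsinPIandSF}, so that maximal chains of the rooted interval $[x,y]_{\r}$ correspond (label-preservingly) to $\SF_n$-chains starting at $\F(\r)$ and terminating at a forest $F$ with $\pi(F)=y$, labeled by $\lambda_{\SF}$. You also correctly identify Lemma~\ref{lemma:increasingmergingoftrees} as the structural constraint. But from there your sketch has a genuine gap at exactly the step you flag as ``where the bulk of the work lies.''

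The greedy procedure is both under-specified and mis-oriented. You say to pick the \emph{lex-maximal} admissible label at each rank, but an increasing chain is built from its smallest label upward, so a bottom-up greedy would have to pick lex-minimal among labels exceeding the previous one; and ``admissible'' already hides a completability condition you do not resolve. More seriously, you cast the reconciliation of label choices with the block weights $v_k$ as an open ``remaining descent budget'' problem and defer it. The paper's proof shows that no such budget accounting is needed: Lemma~\ref{lemma:increasingmergingoftrees} forces the surviving root in all of the merges that assemble a given block of $y$ to be one and the same vertex. Once that is observed, if the block is $(A_1\cup\cdots\cup A_l)^{w_1+\cdots+w_l+v}$ and the corresponding trees in $F_k$ have roots $r(T_1)<\cdots<r(T_l)$, keeping a fixed root $r(T_j)$ adds exactly $|\{i\neq j:\ r(T_i)<r(T_j)\}|$ to the weight, so $j=v+1$ is forced. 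The surviving root is thus determined up front, which in turn determines the multiset of labels $(-\gamma(T_i),r(T_{v+1}),r(T_i))$ for this block; these are distinct elements of the total order $\Lambda=\ZZ^3$, so they admit a unique increasing arrangement, and the increasing arrangements for the different blocks shuffle together in a unique increasing way. This is the decisive observation your sketch circles around (you note that $v_k$ ``forces a specific choice of surviving root'') but never actually makes, and without it neither the existence nor the uniqueness half of your argument goes through.
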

\begin{proof}
To show that $\lambda_C$ is a $CR$-labeling we have to show that in each rooted interval 
$[x,y]_{\c}$ there is a unique increasing chain. Another way to describe this is to say 
that for any saturated chain $\c:(\hat{0}=x_0\lessdot x_1 \lessdot \cdots \lessdot x_k=x)$ and 
 $y\in \Pi_n^w$ such that $x< y$ there is a unique saturated chain 

$$\tilde \c:(\hat{0}=x_0\lessdot x_1 \lessdot \cdots \lessdot x_k\lessdot x_{k+1}\lessdot \cdots 
\lessdot x_{\rho(y)}=y)$$ such that

\begin{align}\lambda_C(\tilde \c, x_k \lessdot x_{k+1})<\lambda_C(\tilde \c, x_{k+1} \lessdot 
x_{k+2})<\cdots < \lambda_C(\tilde \c, x_{\rho(y)-1}\lessdot 
x_{\rho(y)}).\label{equation:increasing_condition}
\end{align}

Note that by the comments that precede Lemma \ref{lemma:increasingmergingoftrees} this label 
sequence only depends on the saturated chain
$\F(\tilde \c_{k})\lessdot \F(\tilde \c_{k+1})\lessdot \cdots \lessdot \F(\tilde \c_{\rho(y)})$ in 
$\SF_n$. Hence if we are able to determine that there is a unique saturated chain $\F(\tilde 
\c_{k})=F_k \lessdot F_{k+1} \lessdot \cdots \lessdot F_{\rho(y)}$ in $\SF_n$ with $\pi( 
F_{\rho(y)})=y$ that has a sequence of labels that is increasing, then as a consequence of Lemma 
\ref{lemma:saturatedchainsinPIandSF}, the chain 
$$\hat{0}=x_0\lessdot x_1 \lessdot \cdots \lessdot x_k=x=\pi(F_k)\lessdot \pi(F_{k+1})\lessdot 
\cdots \lessdot \pi(F_{\rho(y)})=y$$ is the unique saturated chain $\tilde \c$ with the 
desired property.

 Lemma \ref{lemma:increasingmergingoftrees} implies that for $\tilde \c$ to satisfy the increasing 
condition in equation \eqref{equation:increasing_condition} from steps $k+1$ to $\rho(y)$ we can 
only merge trees in a way that after a root $r(T_1)$ has been chosen in a step between $k+1$ 
and $\rho(y)$ the same root has to continue being a root in all consecutive steps.
Now, recall from the recursive definition of the map $\F$ that the process of selecting a root 
depends on the value of $u \in \{0,1\}$ where $A_1^{w_1}$ 
and $A_2^{w_2}$ are the blocks of $x_{s-1}$ that are merged to obtain the block $(A_1\cup 
A_2)^{w_1+w_2+u}$ of $x_{s}$. By the definition of the order relation in $\Pi_n^w$, we have that 
each block in $y$ is of the form $(A_{1}\cup A_{2}\cup \cdots \cup A_{l})^{w_1+w_2+\cdots + 
w_l+v}$ where $v \in \{0,1,\dots,l-1\}$ and $A_1^{w_1}, A_2^{w_2},\dots,A_l^{w_l}$ are blocks 
of $x_{k}$. Assume that in $\F(\c_k)=F_k$ the trees corresponding to these weighted blocks are 
$T_1, 
T_2,\dots,T_l$ and without loss of generality assume that the indexing is such that 
$r(T_1)<r(T_2)<\cdots < r(T_l)$. The reader can easily check that there is a unique tree 
$T'$ obtained by merging the $l$ trees by the roots step by step 
selecting at each step the same root $r(T_j)$ such that exactly $v$ of the 
other $l-1$ roots are 
smaller than $r(T_j)$. In fact this tree is the one where $j=v+1$. All labels that come from 
the step-by-step merging process that creates $T'$ are then of the form 
$(-\gamma(T_i),r(T_{v+1}),r(T_i))$ for $i\in \{1,\dots,l\}\setminus\{v+1\}$. If there are two trees 
$T_i$ and $T_j$ for $i,j\in \{1,\dots,l\}\setminus\{v+1\}$ such that $\gamma(T_i)<\gamma(T_j)$ then
$$(-\gamma(T_j),r(T_{v+1}),r(T_j))<(-\gamma(T_i),r(T_{v+1}),r(T_i)),$$ and if 
$\gamma(T_i)=\gamma(T_j)$ but 
$r(T_i)<r(T_j)$ then $$(-\gamma(T_i),r(T_{v+1}),r(T_i))<(-\gamma(T_j),r(T_{v+1}),r(T_j)).$$ 
Hence there is a unique increasing way of constructing $T'$ by attaching the roots $r(T_i)$ to the 
selected root $r(T_{v+1})$ by going first in reverse order of $\gamma(T_i)$ and then in order of 
$r(T_i)$.  

As an example of the argument above, consider the forest formed by the trees $T_1$ to $T_6$ of 
Figure \ref{figure:example_unique_increasing_chain} with
$r(T_1)<r(T_2)<r(T_3)<r(T_4)<r(T_5)<r(T_6)$. Suppose that we want to find an increasing maximal 
chain in the rooted interval 
$$\left[1689\underline{11}\,\underline{14}^{3}/23^{0}/4^0/5^0/\underline{10}\,\underline{12}^{1} 
/7\underline{13}^{1}, 
123456789\,\underline{10}\,\underline{11}\,\underline{12}\,\underline{13}\,\underline{14}^{	
7}\right]_{\c}$$ where $\c$ is a saturated chain from $\hat{0}$ to 
$1689\underline{11}\,\underline{14}^{3}/23^{0}/4^0/5^0/\underline{10}\,\underline{12}^{1}
/7\underline{13}^{1}$ such that $\F(\c)$ is the forest $\{T_1,T_2,T_3,T_4,T_5,T_6\}$. Since 
$7-w_{T_1}-w_{T_2}-w_{T_3}-w_{T_4}-w_{T_5}-w_{T_6}=2$, the unique increasing chain $\tilde \c$ in 
this rooted interval produces a tree $T'$ whose root $r(T')=r(T_3)$ (since the edges 
$\{r(T_3),r(T_1)\}$ and $\{r(T_3),r(T_2)\}$ will create exactly the $2$ additional descents in 
$T'$). Since $\gamma(T_4)>\gamma(T_1)=\gamma(T_5)=\gamma(T_6)>\gamma(T_2)$ the new steps in $\tilde 
\c$ consist in adding first the edge $\{r(T_3),r(T_4)\}$ then the edges $\{r(T_3),r(T_1)\}$, 
$\{r(T_3),r(T_5)\}$, $\{r(T_3),r(T_6)\}$ in increasing order of their roots and finally the edge 
$\{r(T_3),r(T_2)\}$. This will give the sequence of labels
$$(-7,5,6)<(-1,5,2)<(-1,5,12)<(-1,5,13)<(0,5,4).$$

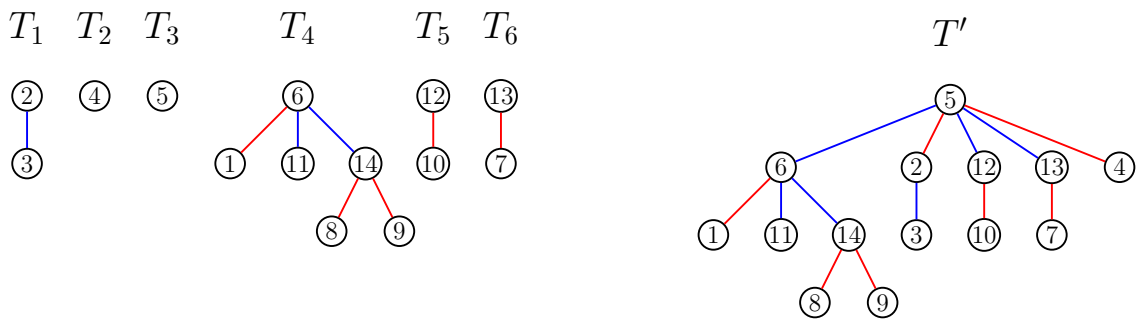
\begin{figure}
 \begin{tikzpicture}[scale=0.9]

\begin{scope}[xshift=0]
\node  at (-10,8){\large$T_1$};
\node  at (-9,8){\large$T_2$};
\node  at (-8,8){\large$T_3$};
\node  at (-6,8){\large$T_4$};
\node  at (-4,8){\large$T_5$};
\node  at (-3,8){\large$T_6$};

\tikzstyle{every node}=[black, draw, circle, inner sep=0.4pt, scale=0.8, minimum width=14pt]
\tikzstyle{every path}=[blue, line width=0.01in]

%\node[pin={[color=red, pin distance=3pt, inner sep=0pt]120:\tiny$1$}] (f1) at (-1,6){$1$};
\node (a1) at (-10,6){$3$};
\node (a2) at (-10,7){$2$};
 \draw[color=blue] (a1) -- (a2);

\node (d1) at (-9,7){$4$};
\node (e1) at (-8,7){$5$};

\node (f1) at (-7,6){$1$};
\node (f2) at (-6,7){$6$};
\node  (f3) at (-5.5,5){$8$};
\node  (f4) at (-4.5,5){$9$};
\node (f5) at (-6,6){$11$};
\node  (f6) at (-5,6){$14$};
 \draw[color=red] (f2) -- (f1);
 \draw[] (f2) -- (f5);
 \draw[] (f2) -- (f6);
 \draw[color=red] (f3) -- (f6);
 \draw[color=red] (f4) -- (f6);

\node (b1) at (-4,6){$10$};
\node (b2) at (-4,7){$12$};
 \draw[color=red] (b1) -- (b2);

\node (c1) at (-3,6){$7$};
\node (c2) at (-3,7){$13$};
 \draw[color=red] (c1) -- (c2);
\end{scope}

\begin{scope}[xshift=260,yshift=-30]

\node  at (-5.5,9){\large$T^{\prime}$};
\tikzstyle{every node}=[black, draw, circle, inner sep=0.4pt, scale=0.8, minimum width=14pt]
\tikzstyle{every path}=[blue, line width=0.01in]

%\node[pin={[color=red, pin distance=3pt, inner sep=0pt]120:\tiny$1$}] (f1) at (-1,6){$1$};
\node (a1) at (-6,6){$3$};
\node (a2) at (-6,7){$2$};
 \draw[color=blue] (a1) -- (a2);

\node (d1) at (-3,7){$4$};
\node (e1) at (-5.5,8){$5$};

\node (f1) at (-9,6){$1$};
\node (f2) at (-8,7){$6$};
\node  (f3) at (-7.5,5){$8$};
\node  (f4) at (-6.5,5){$9$};
\node (f5) at (-8,6){$11$};
\node  (f6) at (-7,6){$14$};
 \draw[color=red] (f2) -- (f1);
 \draw[] (f2) -- (f5);
 \draw[] (f2) -- (f6);
 \draw[color=red] (f3) -- (f6);
 \draw[color=red] (f4) -- (f6);

\node (b1) at (-5,6){$10$};
\node (b2) at (-5,7){$12$};
 \draw[color=red] (b1) -- (b2);

\node (c1) at (-4,6){$7$};
\node (c2) at (-4,7){$13$};
 \draw[color=red] (c1) -- (c2);

 \draw[color=red] (e1) -- (a2);
 \draw[color=red] (e1) -- (d1);
 \draw[color=blue] (e1) -- (f2);
 \draw[color=blue] (e1) -- (c2);
 \draw[color=blue] (e1) -- (b2);

\end{scope}

\end{tikzpicture}
 \caption{Example of the unique tree that can be constructed along an increasing chain in a rooted 
interval $[x,y]_{\c}$ of $\Pi_n^w$. Here the edges have been distinguished using red for descending edges and  blue for ascending edges.} 
 \label{figure:example_unique_increasing_chain}
\end{figure}

Finally, the sequence $\F(\tilde \c_{k})=F_k\lessdot F_{k+1}\lessdot \cdots \lessdot 
F_{\rho(y)}$ describes a process of merging the trees in $F_k$ until we obtain 
the various trees in $F_{\rho(y)}$. By the discussion above we have that each tree $T' 
\in F_{\rho(y)}$ uniquely determines the subsequence of steps to build it given that the 
labels 
$(-\gamma(T_i),r(T_{v+1}),r(T_i))$ are required to satisfy equation 
\eqref{equation:increasing_condition}. Furthermore, since these labels are all distinct and belong 
to the total order $\Lambda=\mathbb{Z}^3$,  there is a unique way of organizing the labels (the unique increasing 
shuffle of all the increasing subsequences of labels for all trees) for the various trees 
$T'$ in an increasing order, hence there is a unique sequence $\F(\tilde \c_{k})=F_k\lessdot 
F_{k+1}\lessdot \cdots \lessdot 
F_{\rho(y)}$ with $\pi(F_{\rho(y)})=y$, that satisfy the increasing condition in equation 
\eqref{equation:increasing_condition}.
\end{proof}

\begin{lemma}\label{lemma:ranktwoswitchingweightedpartitions}
 The labeling $\lambda_C$ satisfies the rank two switching property. Moreover for 
every maximal chain $\c$ in $\Pi_n^w$ and $i=1,...,\rho(\Pi_n^w)$ we have that $\F(\c)=\F(U_i(\c))$, where $U_i(\c)$ is the unique saturated chain obtained after possibly applying a quadratic exchange to $\c$ at rank $i$.
\end{lemma}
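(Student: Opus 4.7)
The plan is to lift everything to $\SF_n$ via the map $\F$. By Lemma \ref{lemma:saturatedchainsinPIandSF}, each maximal chain $\c:(\hat{0}=x_0 \lessdot x_1 \lessdot \cdots \lessdot x_{n-1})$ in $\Pi_n^w$ corresponds bijectively to the saturated chain $\widetilde \F(\c):(\hat{0}=F_0 \lessdot F_1 \lessdot \cdots \lessdot F_{n-1})$ in $\SF_n$ with $F_j = \F(\c_j)$, and by Equation \eqref{equation:Clabelingweightedpartitions} the C-label $\lambda_C(\c, x_{i-1}\lessdot x_i)$ equals the $\SF_n$-label $\lambda_{\SF}(F_{i-1}\lessdot F_i)$. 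Hence the rank two switching property and the identity $\F(\c) = \F(U_i(\c))$ reduce to finding, for every ascent $\lambda_{\SF}(F_{i-1}\lessdot F_i)<\lambda_{\SF}(F_i \lessdot F_{i+1})$, a unique forest $F_i'\ne F_i$ with $F_{i-1}\lessdot F_i'\lessdot F_{i+1}$ whose two edge labels are the swap of the original pair.

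I would handle two cases depending on whether the two consecutive cover relations share a tree of $F_{i-1}$. In the \textbf{independent case}, where $F_{i-1}\lessdot F_i$ merges two trees $A, B$ and $F_i \lessdot F_{i+1}$ merges two other trees $C, D$, simply reverse the order: let $F_i'$ be the result of first merging $C$ with $D$ using the same root choice as in $F_i \lessdot F_{i+1}$. The two labels obviously swap and the final forest is unchanged. In the \textbf{overlapping case}, $F_{i-1}\lessdot F_i$ merges $T_1, T_2$ into $T'$ and $F_i \lessdot F_{i+1}$ merges $T'$ with a third tree $T_3$ into $T''$. Here Lemma \ref{lemma:increasingmergingoftrees} is the key input: it forces $r(T'')=r(T')$, since otherwise the pair would be a descent, contradicting the ascent. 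To build $F_i'$, I would first merge $T_3$ with whichever of $T_1, T_2$ carries $r(T')$, keeping that root, and then merge the resulting tree with the remaining one of $T_1, T_2$, again keeping $r(T')$. A direct computation from the definition of $\lambda_{\SF}$ in Equation \eqref{equation:Elabelingforests} (the first coordinate of each label is $-\gamma$ of the tree whose root is discarded, and the other two coordinates are the new root and the discarded root) shows that in both subcases $r(T')=r(T_1)$ and $r(T')=r(T_2)$ the pair of labels along $F_{i-1}\lessdot F_i' \lessdot F_{i+1}$ is exactly the reverse of the original pair, and that the three edges added to combine $T_1, T_2, T_3$ into $T''$ are the same in either order, so the top forest remains $F_{i+1}$.

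For uniqueness, the swapped first label $(-\gamma(X), r_{\mathrm{new}}, r_{\mathrm{old}})$ names the pair of trees merged at step $i$ together with the new root chosen; together with $F_{i-1}$ this determines $F_i'$ completely. Setting $x_i' := \pi(F_i')$ and applying the isomorphism of Lemma \ref{lemma:saturatedchainsinPIandSF} produces the unique $x_i' \ne x_i$ required by Definition \ref{definition:ranktwoC}. The consistency clause of that definition is immediate because the construction of $F_i'$ depends only on $F_{i-1}, F_i, F_{i+1}$, i.e., only on the bottom $i+2$ elements of $\c$, so any other maximal chain sharing this prefix yields the same $F_i'$ and hence the same $x_i'$.

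Finally, the identity $\F(\c)=\F(U_i(\c))$ follows by induction up the chain: the two chains $\c$ and $U_i(\c)$ agree at all positions except $i$, and the construction above delivers $\F(U_i(\c))_{i+1} = F_{i+1} = \F(\c)_{i+1}$. Because $\F$ is defined recursively, so that $\F(\c)_j$ depends only on $\F(\c)_{j-1}$ and on the cover relation $x_{j-1}\lessdot x_j$, agreement at rank $i+1$ propagates to all higher ranks. The main obstacle is the careful bookkeeping in the overlapping case, where one must verify in both subcases $r(T')=r(T_1)$ and $r(T')=r(T_2)$ that the alternative merging order indeed yields the swapped labels, using that the first coordinate of $\lambda_{\SF}$ records the cost of the tree losing its root, a quantity that is insensitive to whether the kept root was originally the smaller or larger of the two merged roots.
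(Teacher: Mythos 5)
Your proof is correct and follows essentially the same route as the paper's: lift to $\SF_n$ via $\F$, split into the two shapes of a rank-two ascent (disjoint pairs of trees merged, versus three trees merged in sequence), invoke Lemma \ref{lemma:increasingmergingoftrees} in the overlapping case to pin down the preserved root, and observe that the constructed $F_i'$ has the same top forest $F_{i+1}$, which gives both uniqueness (via the label determining the merge data) and the propagation $\F(\c)=\F(U_i(\c))$. The only blemish is the phrase ``the three edges added to combine $T_1, T_2, T_3$ into $T''$'' — only two edges are added — but this does not affect the argument.
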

\begin{proof}
Any rank two increasing sequence
$$\lambda_C(\c,x_{k-1}\lessdot x_k)<\lambda_C(\c,x_k\lessdot x_{k+1})$$ is determined by a 
sequence $\F(\c_{k-1})\lessdot \F(\c_{k})\lessdot \F(\c_{k+1})$ of forests. In any such sequence we 
have two possible cases.
\begin{enumerate}
 \item[{\bf Case I}:] We start with trees $T_1,T_2,T_3,T_4$ of $\F(\c_{k-1})$ and obtain trees 
$T'$ 
and $T''$ of $\F(\c_{k+1})$ such that $T'$ is obtained by merging $T_1$ and $T_2$ with 
$r(T')=r(T_1)$; and $T''$ is obtained by merging $T_3$ and $T_4$ with $r(T'')=r(T_3)$. In this 
case, 
the label sequence is $(-\gamma(T_2),r(T_1),r(T_2))<(-\gamma(T_4),r(T_3),r(T_4))$ and there is 
another 
forest $\F(\c_{k-1})\lessdot F \lessdot \F(\c_{k+1})$ obtained by merging first $T_3$ and 
$T_4$ and then merging $T_1$ and $T_2$ in $F$ to get $\F(\c_{k+1})$. Let $\tilde 
x_k=\pi( F)$ and observe (by considering the definition of the cover relations in $\Pi_n^w$) that 
$\pi(\F(\c_{k-1}))\cover \tilde x_k \cover \pi(\F(\c_{k+1}))$. Then the chain $\tilde \c= \c \cup 
\{\tilde x_k\} \setminus  \{x_k\}$ satisfies that $\lambda_C(\tilde\c,x_{k-1}\lessdot \tilde 
x_k)=(-\gamma(T_4),r(T_3),r(T_4))=\lambda_C(\c,x_{k}\lessdot x_{k+1})$ and 
$\lambda_C(\tilde\c, \tilde x_{k}\lessdot  
x_{k+1})=(-\gamma(T_2),r(T_1),r(T_2))=\lambda_C(\c,x_{k-1}\lessdot x_{k}).$

 \item[{\bf Case II}:]
In this case we start with trees $T_1,T_2,T_3$ of $\F(\c_{k-1})$ and obtain a tree $T''$ 
in $\F(\c_{k+1})$ by first merging $T_1$ and $T_2$ to get $T'$ with $r(T')=r(T_1)$ in $\F(\c_{k})$ 
and then merge $T'$ and $T_3$ in $\F(\c_{k})$ to get $T''$. Note that by Lemma 
\ref{lemma:increasingmergingoftrees}, it is necessarily true that $r(T'')=r(T')=r(T_1)$, 
otherwise the labels would not be increasing.  In this case, 
the label sequence is $(-\gamma(T_2),r(T_1),r(T_2))<(-\gamma(T_3),r(T_1),r(T_3))$.
Again here there is 
another 
forest $\F(\c_{k-1})\lessdot F \lessdot \F(\c_{k+1})$ obtained by merging first $T_1$ and 
$T_3$ to get a tree $\tilde T'$ with $r(\tilde T')=r(T_1)$ in $ F$  and then merging $\tilde 
T'$ and $T_2$ in $ F$ to get again the same $T''$ in $\F(\c_{k+1})$. We let $\tilde 
x_k=\pi( F)$ and observe here again that 
$\pi(\F(\c_{k-1}))\cover \tilde x_k \cover \pi(\F(\c_{k+1}))$. Then $\tilde \c= \c \cup 
\{\tilde x_k\} \setminus  \{x_k\}$ satisfies that $\lambda_C(\tilde\c,x_{k-1}\lessdot \tilde 
x_k)=(-\gamma(T_3),r(T_1),r(T_3))=\lambda_C(\c,x_{k}\lessdot x_{k+1})$ and 
$\lambda_C(\tilde\c, \tilde x_{k}\lessdot  
x_{k+1})=(-\gamma(T_2),r(T_1),r(T_2))=\lambda_C(\c,x_{k-1}\lessdot x_{k}).$
\end{enumerate}
Note that in the two cases above the choice of the element $\tilde x_k$ is unique. A 
label of the form $(-\gamma(T_2),r(T_1),r(T_2))$ determines exactly that the trees of 
$\F(\c_{k-1})$ with roots $r(T_1)$ an $r(T_2)$ are being merged into a tree $T'$ of $\F(\c_{k})$  
with root $r(T')=r(T_1)$. Hence Lemma \ref{lemma:saturatedchainsinPIandSF} implies the uniqueness 
of the resulting saturated chain in $\Pi_n^w$.  Note that a common feature of the two cases above 
is that after the quadratic exchange we have $\F(\tilde \c_{k+1})=\F(\c_{k+1})$. 
Hence applying a quadratic exchange at level $k$ does not affect the sequence
$$\F(\tilde \c_{k+1})\lessdot \F(\tilde \c_{k+2})\lessdot \cdots \lessdot \F(\tilde 
\c_{\rho(\Pi_n^w)}),$$
nor the labels $\lambda_C(\c,x_{i-1}\lessdot x_i)$ for $i=k+2,\dots,\rho(\Pi_n^w)$ since they 
depend only on this sequence. In particular we have $\F(\c)=\F(\tilde \c)=\F(U_i(\c))$. Also, 
the choice of the element $\tilde x_k$ in the two cases above only depends on $\F(\c_{k+1})$ and so 
it has to be the same for any other chain that coincides with $\c$ in the bottom $k+2$ elements. We 
conclude then that $\lambda_C$ satisfies the rank two switching property. 
\end{proof} 

We note here that whenever $\c \in C(\Pi_n^w)$, we have $\widetilde \F(\c)=(\F( \c_{0})\lessdot 
\F(\c_{1})\lessdot \cdots \lessdot \F(\c_{k}))$ in $C(\SF_n)$. Since the sequence of ordered 
pairs 
$(r(T_{i,1}),r(T_{i,2}))$ of roots that are being merged at each step $i$  (selecting $r(T_{i,1})$ 
as the new root) provides enough 
information to reconstruct the element of $C(\SF_n)$, by Lemma 
\ref{lemma:saturatedchainsinPIandSF}, it also provides enough information to reconstruct the 
elements in $ C(\Pi_n^w)$. Hence the sequences of labels given by $\lambda_C$
uniquely determine elements in $ C(\Pi_n^w)$. This also implies that the same is true for 
all maximal chains in rooted intervals $[x,y]_{\c}$, or otherwise we can extend two maximal chains 
$\c_1$ and $\c_2$ with the same word of labels to $\c \cup \c_1$ and $\c\cup \c_2$ with the same 
property. This together with Proposition \ref{proposition:weightedpartitionsCR}, Lemma 
\ref{lemma:ranktwoswitchingweightedpartitions} and Theorem \ref{theorem:CW} imply
the following corollary.
\begin{corollary}
 $\lambda_C$ is a CW-labeling and hence $Q_{\lambda_C}(\Pi_n^w)$ is a Whitney dual of $\Pi_n^w$
\end{corollary}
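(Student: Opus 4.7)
The plan is essentially to assemble the three ingredients of Definition~\ref{definition:generalizedCWandCW} (in the strict CW form) from the pieces already at hand, and then invoke Theorem~\ref{theorem:CW}. First, Proposition~\ref{proposition:weightedpartitionsCR} gives that $\lambda_C$ is a CR-labeling, and Lemma~\ref{lemma:ranktwoswitchingweightedpartitions} gives the rank two switching property. So the one condition left to verify is that in each rooted interval $[x,y]_{\c}$, each ascent-free maximal chain is determined by its word of labels.

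For that last step I would argue as the paragraph preceding the corollary suggests: a single label of $\lambda_C$ of the form $(-\gamma(T_j), r(T_i), r(T_j))$ already encodes exactly which two trees of the current forest are merged and which of their roots becomes the new root. Concretely, given a maximal chain $\tilde{\c}:(\hat{0}=x_0 \lessdot x_1 \lessdot \cdots \lessdot x_{\rho(y)}=y)$ in $[\hat{0},y]_{\emptyset}$ (or more generally in a rooted interval, after prepending the root chain $\c$), the sequence of labels $\lambda_C(\tilde\c,x_{i-1}\lessdot x_i)$ uniquely reconstructs the sequence $\F(\tilde\c_0)\lessdot \F(\tilde\c_1)\lessdot \cdots \lessdot \F(\tilde\c_{\rho(y)})$ in $\SF_n$ step by step: at each rank we know which two roots in the current forest are joined and which is kept as root. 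By Lemma~\ref{lemma:saturatedchainsinPIandSF}, the chain of forests $\widetilde\F(\tilde\c)$ in turn determines $\tilde\c$ itself. Hence the word of labels determines $\tilde\c$, so in particular ascent-free maximal chains in any rooted interval have distinct words of labels.

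Combining these three facts with Definition~\ref{definition:generalizedCWandCW}, $\lambda_C$ is a CW-labeling. Theorem~\ref{theorem:CW} then immediately yields that $Q_{\lambda_C}(\Pi_n^w)$ is a Whitney dual of $\Pi_n^w$.

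The main (and essentially only) obstacle is cleanly justifying the uniqueness of the word of labels in every rooted interval. This needs to be done for rooted intervals $[x,y]_{\c}$, not just intervals based at $\hat{0}$. The trick is to observe that if two maximal chains $\c_1,\c_2$ in $[x,y]_{\c}$ had the same word of labels, then $\c\cup\c_1$ and $\c\cup\c_2$ would be two maximal chains in $[\hat{0},y]$ sharing the first $|\c|$ edges (so their $\lambda_C$-labels there agree by the C-labeling property) and sharing the remaining labels by assumption; applying the previous reconstruction argument to $\c\cup\c_1$ and $\c\cup\c_2$ forces $\c_1=\c_2$. No new computation is required beyond this reduction, which is why the corollary follows quickly from the groundwork in Proposition~\ref{proposition:weightedpartitionsCR} and Lemma~\ref{lemma:ranktwoswitchingweightedpartitions}.
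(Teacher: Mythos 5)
Your proposal is correct and follows the paper's argument almost verbatim: you invoke Proposition~\ref{proposition:weightedpartitionsCR} and Lemma~\ref{lemma:ranktwoswitchingweightedpartitions} for the CR and rank-two-switching conditions, reconstruct the chain of forests from the label word (and hence the chain in $C(\Pi_n^w)$ via Lemma~\ref{lemma:saturatedchainssamelabelsinPandQforClabeling} together with Lemma~\ref{lemma:saturatedchainsinPIandSF}), and use the same prepending trick to pass from intervals at $\hat{0}$ to general rooted intervals before applying Theorem~\ref{theorem:CW}. This is exactly the paper's proof.
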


Note here that Lemma \ref{lemma:ranktwoswitchingweightedpartitions} also implies  that the map 
$\F:C(\Pi_n^w)\rightarrow \SF_n$ has the property that if $\c$ and $\c'$ are such that $[\c]=[\c']$ 
in $Q_{\lambda_C}(\Pi_n^w)$ then $\F(\c)=\F(\c')$. Hence we obtain a well-defined map 
$\overline \F: Q_{\lambda_C}(\Pi_n^w)\rightarrow \SF_n$ given by $\overline \F(X)=\F(\c)$ for any 
$\c \in X$. 

\begin{theorem}\label{theorem:QSF}
 The map $\overline \F: Q_{\lambda_C}(\Pi_n^w)\rightarrow \SF_n$ is a poset isomorphism.
\end{theorem}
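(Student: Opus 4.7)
The plan is to verify that $\overline{\F}$ is a bijection whose inverse also preserves cover relations; well-definedness has already been established. The organizing idea is a two-step correspondence: by Lemma \ref{lemma:saturatedchainsinPIandSF}, elements of $C(\Pi_n^w)$ are in bijection with elements of $C(\SF_n)$, and by Lemma \ref{lemma:building_forests}, saturated chains in $C(\SF_n)$ ending at a fixed forest $F$ are in bijection with linear extensions of the non-root vertices of $F$ under the ancestor order in $F$. Thus the fiber $\F^{-1}(F) \subseteq C(\Pi_n^w)$ is identified with the set of linear extensions of non-root vertices of $F$, and our task becomes to show that $\sim_{\lambda_C}$ collapses this fiber to a single equivalence class.

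For surjectivity of $\overline{\F}$, given any $F \in \SF_n$, pick any linear extension of the non-root vertices of $F$; Lemma \ref{lemma:building_forests} yields a saturated chain in $C(\SF_n)$ ending at $F$, and $\widetilde{\pi}$ of Lemma \ref{lemma:saturatedchainsinPIandSF} transports this to $\c \in C(\Pi_n^w)$ with $\F(\c) = F$, so $\overline{\F}([\c]) = F$.

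For injectivity, suppose $\overline{\F}(X) = \overline{\F}(Y) = F$ and pick $\c \in X$, $\c' \in Y$. The associated linear extensions of non-root vertices of $F$ differ by a sequence of adjacent transpositions swapping two consecutive vertices $v_i, v_{i+1}$ such that neither is a descendant of the other (a standard fact for linear extensions of any finite poset). I would show that each such adjacent transposition is realized by a quadratic exchange in $C(\Pi_n^w)$, in two cases matching exactly those analyzed in the proof of Lemma \ref{lemma:ranktwoswitchingweightedpartitions}: (A) if $p(v_i) \neq p(v_{i+1})$ in $F$, then steps $i$ and $i+1$ merge four disjoint trees in pairs, which is Case I; (B) if $p(v_i) = p(v_{i+1}) = r$, then both steps merge a child tree into the tree containing $r$ with $r$ retained as the new root at both steps, which is Case II. In both cases the induced chains in $C(\Pi_n^w)$ differ by a single quadratic exchange at rank $i$, hence are equivalent under $\sim_{\lambda_C}$. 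Iterating, $\c \sim_{\lambda_C} \c'$ and so $X = Y$.

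For the cover relations: if $X \lessdot Y$ in $Q_{\lambda_C}(\Pi_n^w)$, Proposition \ref{proposition:quotientposet} supplies $\c \in X$, $\d \in Y$ with $\c \lessdot \d$ in $C(\Pi_n^w)$, and $\F$ then sends this to a cover $\F(\c) \lessdot \F(\d)$ in $\SF_n$, so $\overline{\F}(X) \lessdot \overline{\F}(Y)$. Conversely, if $F \lessdot F'$ in $\SF_n$, pick any $\c \in \overline{\F}^{-1}(F)$; extending the corresponding chain in $C(\SF_n)$ by the cover to $F'$ and applying $\widetilde{\pi}$ yields $\d \in C(\Pi_n^w)$ with $\c \lessdot \d$ and $\F(\d) = F'$, so $[\c] \lessdot [\d]$ in $Q_{\lambda_C}(\Pi_n^w)$ and $\overline{\F}([\d]) = F'$. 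The main technical obstacle is the case analysis in injectivity: one must verify that the combinatorial restriction on swappable pairs in a linear extension of $F$ (namely, neither vertex is a descendant of the other) matches exactly the structural hypotheses of Cases I and II in Lemma \ref{lemma:ranktwoswitchingweightedpartitions}, and that no additional type of adjacent transposition can occur.
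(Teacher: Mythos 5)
Your proof is correct in outline and it takes a genuinely different route for the injectivity part of the bijection; I'll focus on that, since surjectivity and the cover-relation verification match the paper's argument closely.

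The paper proves that $\overline{\F}$ is a bijection by showing that for each $F\in\SF_n$ there is a \emph{unique ascent-free} chain $\c\in C(\Pi_n^w)$ with $\F(\c)=F$. Concretely, it observes that every chain mapping to $F$ has the same label multiset $L(F)=\{(-\gamma(T(v_i)),p(v_i),v_i)\}$, that the labels are distinct in a total order so there is a unique ascent-free word, and then checks (using that $\gamma(T(v_{i_l}))>\gamma(T(v_{i_j}))$ when $v_{i_l}$ is a proper ancestor of $v_{i_j}$) that the resulting ordering is itself a linear extension. Combined with Corollary~\ref{corollary:uniquesink} (each class in $Q_{\lambda_C}(\Pi_n^w)$ has a unique sink), this gives the bijection via the canonical ascent-free representatives. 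You instead show directly that the fiber $\F^{-1}(F)$ is a single $\sim_{\lambda_C}$-class by transporting the classical fact that the graph of linear extensions of a finite poset is connected under adjacent transpositions of incomparable pairs, and then matching each transposition to a quadratic exchange. This works: since the label at step $j$ depends only on the vertex $v_j$ (it equals $(-\gamma(T(v_j)),p(v_j),v_j)$, as noted in the paper's proof), swapping $v_i\leftrightarrow v_{i+1}$ simply swaps the labels at ranks $i$ and $i+1$; one of the two chains thus has an ascent at rank $i$, and by the uniqueness in Lemma~\ref{lemma:ranktwoswitchingweightedpartitions} together with the fact that label sequences determine chains in $C(\Pi_n^w)$, the chain produced by $U_i$ is exactly the one for the swapped extension. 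The case split you describe is exhaustive: when $p(v_i)\neq p(v_{i+1})$ the four trees involved really are distinct (because $p(v_i)$ and $p(v_{i+1})$ are both roots in $F_{i-1}$ and neither equals $v_i$ or $v_{i+1}$, which uses incomparability), putting you in Case~I; when $p(v_i)=p(v_{i+1})=r$ both steps retain $r$ as root (since $v_i,v_{i+1}$ both leave the root set), putting you in Case~II. What the two approaches buy: the paper's argument is self-contained, needing only the label multiset computation and Corollary~\ref{corollary:uniquesink}; yours imports the connectivity of linear extensions but in exchange avoids exhibiting the ascent-free ordering explicitly and makes the mechanism of the collapse $\sim_{\lambda_C}$ very transparent. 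Both are valid; the remaining ``obstacle'' you flag is real but fillable exactly as above.
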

\begin{proof}
To be able to prove that this is a poset isomorphism we should show that $\overline \F$ is a bijection and that 
$\overline \F$ and $\overline \F^{-1}$ are both poset (order preserving) maps.

{\bf $\overline \F$ is a bijection:} To prove this we will show that for 
any $F \in \SF_n$ there exist a unique ascent-free chain $\c \in C(\Pi_n^w)$ such that 
$\F(\c)=F$. The conclusion then follows from the fact that for every  $X \in 
Q_{\lambda_C}(\Pi_n^w)$ there is a unique ascent-free chain $\c \in C(\Pi_n^w)$ such that $X=[\c]$.

Recall from Lemma \ref{lemma:building_forests} that a sequence $\hat{0}=F_0\lessdot F_1 
\lessdot \cdots \lessdot F_k=F$ is a saturated chain in $\SF_n$ if and only if the associated 
ordered listing $v_1,v_2,\dots,v_k$ of the non-root vertices of $F$ is a linear 
extension.
Call $T(v_i)$ the induced rooted subtree of $F$ formed by all descendants of $v_i$ (including 
itself) and recall that $p(v_i)$  is the
the parent of $v_i$ with respect to $F$. 
Since
$v_1,v_2,\dots,v_k$ is a linear extension,  every $v_j \in T(v_i)$ satisfies $j<i$. This
implies that every saturated chain $\hat{0}=F_0\lessdot F_1 \lessdot \cdots \lessdot F_k=F$ has 
associated labels of the form $\lambda_{\SF}(F_{i-1}\lessdot F_i)=(-\gamma(T(v_i)),p(v_i),v_i)$. 
 Hence all saturated chains 
$\hat{0}=F_0\lessdot F_1 \lessdot \cdots \lessdot F_k=F$ in $[\hat{0}, F]$ for a given $F \in 
\SF_n$ have the same set of labels 
$$L(F):=\{(-\gamma(T(v_i)),p(v_i),v_i)\mid i\in [k]\}.$$ 
The labels in $L(F)$  are clearly all different and come from a totally ordered set, so there is a 
unique ascent-free way to order 
them $$(-\gamma(T(v_{i_1})),p(v_{i_1}),v_{i_1})>(-\gamma(T(v_{i_2})),p(v_{i_2}),v_{i_2})>\cdots > 
(-\gamma(T(v_{i_k})),p(v_{i_k}),v_{i_k}).$$ 
Let $v_{i_1},v_{i_2},\dots,v_{i_k}$ be the ordered listing that we obtain in this 
way.  We want to check that this sequence is also a linear extension which, by  Lemma 
\ref{lemma:building_forests}, implies that it has an associated saturated chain from 
$\hat{0}$ to $F$. Indeed, if this ordered listing is not a linear extension then there are vertices 
$v_{i_l}$ and $v_{i_j}$ such that $v_{i_l}$ is an ancestor of $v_{i_j}$ and $l<j$. But then we have 
that $(-\gamma(T(v_{i_l})),p(v_{i_l}),v_{i_l})<(-\gamma(T(v_{i_j})),p(v_{i_j}),v_{i_j})$ since 
$\gamma(T(v_{i_l}))>\gamma(T(v_{i_j}))$, a contradiction. 
Hence the sequence $v_{i_1},v_{i_2},\dots,v_{i_k}$ is a linear extension that gives a valid chain 
$\hat{0}=F_0\lessdot F_1 \lessdot \cdots \lessdot F_k=F$ in 
$\SF_n$  and so by Lemma \ref{lemma:saturatedchainsinPIandSF} $\c:(\hat{0}=\pi(F_0)\lessdot 
\pi(F_1) \lessdot \cdots \lessdot \pi(F_k)=\pi(F))$ 
is the unique ascent-free chain with $\F(\c)=F$. 

{\bf $\overline \F$ and $\overline \F^{-1}$ are poset maps:} The fact that $\F$ is order 
preserving implies, using Definition \ref{definition:quotientposet}, Proposition 
\ref{proposition:quotientposet} and the well-definedness of 
$\overline \F$,  that $\overline \F$ is also order preserving. Now, if we have $F\lessdot F'$ 
in 
$\SF_n$, 
given that $\F$ is surjective, there is a chain $\c$ such that $F=\F(\c)$ and the recursive 
definition of $\F$ implies that $F'=\F(\c \cup \pi(F'))$. But $\c \lessdot (\c \cup \pi(F'))$ in 
$C(\Pi_n^w)$ and so $\overline \F^{-1}(F)=[\c] \lessdot [(\c \cup \pi(F'))]=\overline
\F^{-1}(F')$ in $Q_{\lambda_C}(\Pi_n^w)$. Since all posets are finite this implies $\overline
\F^{-1}$ is order preserving.
\end{proof}

\begin{remark}
 As we mentioned at the beginning of this subsection, Theorem \ref{theorem:QSF} provides a new proof of Theorem 
\ref{theorem:weightedpartitions_rootedforests_whitneyduals} as a corollary.
\end{remark}

\subsubsection{A different Whitney dual for $\Pi_n^w$}
In \cite{DleonWachs2016}, Gonz\'alez D'Le\'on and Wachs gave an ER-labeling (that is in fact an 
EL-labeling) for $\Pi_n^w$, quite different from the CR-labeling constructed above. 

The map $\lambda_E:\E(\Pi_n^w)\rightarrow \Lambda_n$ was defined as follows: let $x\lessdot y $ in 
$\Pi^w_n$ so that $y$ is obtained from $x$ by merging two blocks $A^{w_A}$ and $B^{w_B}$ into a 
new block $(A \cup B)^{w_A + w_B+u}$, where $u \in \{0,1\}$ and where we assume without loss of 
generality that $\min A < \min B$.  We 
define $$\lambda_E(x \lessdot y) = (\min A, \min B)^u.$$
Here $\Lambda_n$ is defined as follows: for each $a \in [n]$, let $\Gamma_a:= \{(a,b)^u :   
a<b \le n+1, \,\, u \in \{0,1\} \}$. 
We partially order $\Gamma_a$  by letting $(a,b)^u \le  (a,c)^v$ if $b\le  c$ and $u \le v$.   
Note that $\Gamma_a$ is isomorphic to the direct product of the chain $a+1< a+2 <\dots < n+1 $ and
the chain $0 < 1$.  Now define $\Lambda_n$ to be the 
ordinal sum
$\Lambda_n := \Gamma_1 \oplus  \Gamma_2  \oplus \cdots \oplus \Gamma_{n}$. See Figure 
\ref{figure:weightedn3k2EL} for an example.

The labeling $\lambda_E$ has the property that when restricted to the intervals $[\hat 0, [n]^0]$ 
and $[\hat 0, [n]^{n-1}]$,  which are both isomorphic to $\Pi_n$, it reduces to the 
minimal labeling of $\Pi_n$ in \cite{Bjorner1982,Stanley1974}. 

\begin{figure}[h]

\begin{center} 
\begin{tikzpicture}
\node at (0,-1) {$\Pi_3^w$};
\node at (6,-1) {$\Lambda_3$};

\begin{scope}[xshift=0,scale=0.7]
\tikzstyle{every node}=[inner sep=0pt, scale=0.7, minimum width=4pt]
\node (n1232) at (3,4) {$123^{2}$};
  \node (n13020) at (-3,2) {$13^{ 0}/ 2^{ 0}$};
  \node (n102030) at (0,0)  {$1^{0}/ 2^{0}/ 3^{0}$};
  \node (n1231) at (0,4) {$123^{1}$};
  \node (n12030) at (-5,2) {$12^{0}/ 3^{0}$};
  \node (n13120) at (3,2)  {$13^{1}/ 2^{0}$};
  \node (n1230) at (-3,4){$123^ {0}$};
  \node (n10230) at (-1,2)  {$1^{0}/ 23^{0}$};
  \node (n12130) at (1,2)  {$12^{ 1}/ 3^{0}$};
  \node (n10231) at (5,2) {$23^ {1}/ 1^{0}$};

  \draw (n1231) -- (n10230) ;
  \draw [] (n13020) -- (n102030);
  \draw [] (n1232) -- (n13120);
  \draw [] (n1231)-- (n13020);
  \draw [] (n10230)--(n102030);
  \draw [] (n1230) -- (n10230);
  \draw [] (n1231) -- (n13120);
  \draw [] (n12030)-- (n102030);
  \draw [] (n1231) --(n12130);
  \draw [] (n1232) -- (n12130);
  \draw [] (n13120) --(n102030);
  \draw [] (n1231) --(n10231);
  \draw [] (n1230) -- (n13020);
  \draw [] (n1230)  -- (n12030);
  \draw [] (n12130) --  (n102030);
  \draw [] (n1232)  --  (n10231);
  \draw [] (n10231)  --  (n102030);
  \draw [] (n1231) -- (n12030);

\tikzstyle{every node}= [scale=0.6]

\node  at (-3.3,1) {\color{blue}$(1,2)^0$};
\node  at (-2,1) {\color{blue}$(1,3)^0$};
\node  at (-0.9,1) {\color{blue}$(2,3)^0$};
\node  at (0.9,1) {\color{blue}$(1,2)^1$};
\node  at (2,1) {\color{blue}$(1,3)^1$};
\node  at (3.3,1) {\color{blue}$(2,3)^1$};

\node  at (-4,3.5) {\color{blue}$(1,3)^0$};
\node  at (-3.3,3.3) {\color{blue}$(1,2)^0$};
\node  at (-2,3.5) {\color{blue}$(1,2)^0$};

\node  at (-3.9,2.6) {\color{blue}$(1,3)^1$};
\node  at (-2.4,2.6) {\color{blue}$(1,2)^1$};
\node  at (-1.1,2.6) {\color{blue}$(1,2)^1$};
\node  at (1.1,2.6) {\color{blue}$(1,3)^0$};
\node  at (2.4,2.6) {\color{blue}$(1,2)^0$};
\node  at (3.9,2.6) {\color{blue}$(1,2)^0$};

\node  at (4,3.5) {\color{blue}$(1,2)^1$};
\node  at (3.3,3.3) {\color{blue}$(1,2)^1$};
\node  at (2,3.5) {\color{blue}$(1,3)^1$};
\end{scope}
\begin{scope}[xshift=150,scale=0.4]
 \tikzstyle{every node}=[inner sep=1pt, minimum width=14pt,scale=0.7, font=\footnotesize]
\draw (0,0) node (n120) {\color{blue}$(1,2)^0$};
\draw (0,1) node (n130) {\color{blue}$(1,3)^0$};
\draw (0,2) node (n140) {\color{blue}$(1,4)^0$};
\draw (2,1) node (n121) {\color{blue}$(1,2)^1$};
\draw (2,2) node (n131) {\color{blue}$(1,3)^1$};
\draw (2,3) node (n141) {\color{blue}$(1,4)^1$};

\draw (n141) -- (n140) ;
\draw (n131) -- (n130) ;
\draw (n121) -- (n120) ;
\draw (n140)-- (n130) -- (n120) ;

\draw (2,4) node (n230) {\color{blue}$(2,3)^0$};
\draw (2,5) node (n240) {\color{blue}$(2,4)^0$};
\draw (4,5) node (n231) {\color{blue}$(2,3)^1$};
\draw (4,6) node (n241) {\color{blue}$(2,4)^1$};
\draw (4,7) node (n340) {\color{blue}$(3,4)^0$};
\draw (6,8) node (n341) {\color{blue}$(3,4)^1$};
\draw (n241) -- (n240) ;
\draw (n231) -- (n230) ;

\draw (n240)-- (n230)  -- (n141) -- (n131) -- (n121);

\draw (n341) -- (n340) ;

\draw (n340) -- (n241) -- (n231);

\end{scope}

\end{tikzpicture}
\end{center}
\caption[]{ER-labeling of  $\Pi_3^w$}\label{figure:weightedn3k2EL}
\end{figure}

\begin{theorem}[\cite{DleonWachs2016} Theorem 3.2] $\lambda_E$ is an ER-labeling.
 
\end{theorem}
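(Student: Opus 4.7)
The plan is to analyze increasing maximal chains in an arbitrary interval $[x,y]$ of $\Pi_n^w$. My first step is to exploit a product decomposition. Writing $x = A_1^{w_1}/\cdots/A_s^{w_s}$ and $y = B_1^{v_1}/\cdots/B_t^{v_t}$, each block $B_k$ is a union $A_{i_1}\cup \cdots \cup A_{i_{l_k}}$ of blocks of $x$, and $[x,y]$ is naturally isomorphic to a product $\prod_k [\hat 0_k, B_k^{v_k}]$ where the $k$-th factor is a single-block-merge interval living on the ground set $B_k$. Every cover relation inside the $k$-th factor receives a label in $\Gamma_{\min B_k}$, because the smallest element of $B_k$ is always present in one of the two blocks being merged in that factor. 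Since $\Lambda_n$ is an ordinal sum of the $\Gamma_a$'s, any increasing chain in the full interval must completely exhaust the labels of the factor with smallest $\min B_k$, then those of the next factor, and so on. This reduces the problem to proving uniqueness of the increasing maximal chain in a single-block-merge interval.

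Next I would treat the single-block case. Suppose we are merging $A_1,\ldots,A_l$ into $B$ with weight jump $u\in\{0,1,\ldots,l-1\}$, with minima $a_1<a_2<\cdots<a_l$. All labels in this interval lie in $\Gamma_{a_1}\cup\Gamma_{a_2}\cup\cdots\cup\Gamma_{a_l}$, but for an increasing chain the last label, produced when the final merge occurs, must label a cover into $B$; since $\min B = a_1$, that last label lies in $\Gamma_{a_1}$. Combined with the ordinal sum restriction, this forces every one of the $l-1$ labels to lie in $\Gamma_{a_1}$, hence forces each of the $l-1$ merge steps to involve the growing block that contains $a_1$. The set of merges is thereby determined; what remains is to choose the order in which $A_2,\ldots,A_l$ are absorbed and to decide at which of the $l-1$ steps the weight jump of $1$ happens (there are $u$ such steps).

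Within $\Gamma_{a_1}$ the order is the product of two chains, so a strict inequality $(a_1,b)^{u_j}<(a_1,b')^{u_{j+1}}$ forces $b\le b'$ and $u_j\le u_{j+1}$ with strictness in at least one coordinate. Since the second coordinates of the successive labels are distinct (they are pairwise distinct minima of the blocks being absorbed), the only way to get an increasing sequence is to absorb $A_2,A_3,\ldots,A_l$ in that order while also requiring $u_1\le u_2\le \cdots\le u_{l-1}$. With exactly $u$ of the $u_j$ equal to $1$, this pins down the exponent sequence to $0,0,\ldots,0,1,1,\ldots,1$ (with $l-1-u$ zeros followed by $u$ ones). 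Existence is then clear by exhibiting this chain and checking it is saturated; uniqueness follows because any deviation creates a descent either in the second coordinate or in the exponent. Assembling the local increasing chains for the factors in the order dictated by $\min B_1<\min B_2<\cdots<\min B_t$ produces the unique increasing maximal chain in $[x,y]$.

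The main technical obstacle is verifying carefully that the product decomposition of $[x,y]$ is label-preserving, i.e.\ that a cover inside the factor indexed by $B_k$ receives the same $\lambda_E$-label as the corresponding cover in the ambient poset; this is essentially the statement that the blocks not involved in a given merge contribute nothing to the label, and it must be checked against the definitions of $\min A$, $\min B$, and the exponent $u$ for the merge. Once this bookkeeping is in place, the ordinal sum of the $\Gamma_a$'s does the separation work, and the product-of-two-chains structure of each $\Gamma_a$ does the ordering work, giving the unique increasing maximal chain in every interval and hence the ER-labeling property.
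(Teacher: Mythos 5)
The paper does not give its own proof of this theorem; it is cited directly from [DleonWachs2016], so there is nothing in the present source to compare against line by line. (The original actually proves the stronger EL property.) Your overall strategy --- factoring $[x,y]$ into single-block intervals, using the ordinal-sum structure of $\Lambda_n$ to force the factors to be processed in order of $\min B_k$, and then exploiting the product-of-two-chains structure of $\Gamma_{a_1}$ inside a single factor --- is the natural one and reaches the right conclusion.

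There is, however, one step asserted as obvious that is in fact false as stated. You write that \emph{every} cover relation inside the $k$-th factor receives a label in $\Gamma_{\min B_k}$ ``because the smallest element of $B_k$ is always present in one of the two blocks being merged in that factor.'' That is not so: if $B_k=\{1,2,3,4\}$ and the factor currently contains the blocks $\{1\},\{2\},\{3\},\{4\}$, merging $\{3\}$ and $\{4\}$ yields a label in $\Gamma_3$, not $\Gamma_1$, even though $1=\min B_k$. What is true unconditionally is only that the \emph{last} cover in the factor (the one producing $B_k$) has label in $\Gamma_{\min B_k}$. The correct deduction then needs a short peeling argument: in an increasing maximal chain of $[x,y]$, look at the last cover belonging to the factor $B_1$ with smallest minimum; its label lies in $\Gamma_{\min B_1}$, so by the ordinal sum all earlier labels lie in $\Gamma_{\min B_1}$, hence involve the block containing $\min B_1$ and therefore belong to factor $B_1$; thus the first $l_1-1$ steps exhaust factor $B_1$, and one recurses on the remaining factors. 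This repairs the gap without changing your approach, but as written the product/ordinal-sum reduction rests on a false intermediate claim rather than on this argument. The single-factor analysis (strictly increasing second coordinates forcing absorption of $A_2,\dots,A_l$ in order, and weakly increasing exponents with a prescribed sum forcing $0,\dots,0,1,\dots,1$) is correct.
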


\begin{theorem}\label{theorem:lambdaE_EW}
 $\lambda_E$ is an EW-labeling and hence $Q_{\lambda_E}(\Pi_n^w)$ is a Whitney dual of $\Pi_n^w$.
\end{theorem}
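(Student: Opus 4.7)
The plan is to apply Theorem \ref{theorem:EW}: since the cited work already establishes that $\lambda_E$ is an ER-labeling, it suffices to verify the two additional conditions from Definition \ref{definition:EW}---the rank two switching property and that in each interval every maximal chain has a unique word of labels. Once those are confirmed, Theorem \ref{theorem:EW} immediately yields that $Q_{\lambda_E}(\Pi_n^w)$ is a Whitney dual of $\Pi_n^w$.

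The unique word of labels condition is essentially built into the definition of $\lambda_E$. A label $\lambda_E(x \lessdot y) = (a,b)^u$ records precisely which two blocks of $x$ (those with minima $a < b$) are being merged, together with the weight increment $u \in \{0,1\}$. Consequently, once the starting element of a saturated chain is fixed, one can reconstruct the chain cover by cover from its word of labels, so distinct maximal chains in a common interval must produce distinct words.

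For the rank two switching property, I would argue by dividing each rank-two interval $[z,y]$ of $\Pi_n^w$ into two shapes. In the first shape, $y$ is obtained from $z$ by independently merging two disjoint pairs of blocks; the interval then has exactly two maximal chains carrying the same pair of labels $\ell_1, \ell_2$ in opposite orders, and because these labels lie in different components of the ordinal sum $\Lambda_n = \Gamma_1 \oplus \cdots \oplus \Gamma_n$ (their first coordinates being distinct block minima), they are comparable in $\Lambda_n$, so the two chains are automatically swaps of one another. In the second shape, $y$ is obtained from $z$ by merging three blocks $A, B, C$ with minima $a<b<c$ and total weight increment $v \in \{0,1,2\}$; a direct enumeration of the possible merges together with the ordering on $\Gamma_a$ shows that the unique increasing chain has labels $(a,b)^{u_1}(a,c)^{u_2}$ with $0 \le u_1 \le u_2 \le 1$ and $u_1 + u_2 = v$, whose required swap $(a,c)^{u_2}(a,b)^{u_1}$ is realized by the unique chain that first merges $A \cup C$ with weight increment $u_2$ and then joins $B$ with weight increment $u_1$. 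The main obstacle is the weight bookkeeping in the three-block merge when $v = 1$ (there are six candidate maximal chains), but the partial-order structure inside $\Gamma_a$ pins down exactly one ascent and a unique swap, completing the verification.
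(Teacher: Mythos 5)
Your proposal is correct and takes essentially the same approach as the paper: you invoke the known ER-property, observe that each label $(\min A,\min B)^u$ determines the merge so that words of labels reconstruct chains, and then verify the rank two switching property by a case analysis of the rank-two intervals. The only stylistic difference is organizational: the paper splits the three-block-merge interval into two separate types (Type II, where both merge steps add the same weight $u$, and Type III, where one step adds $0$ and the other adds $1$), whereas you fold these into one case parameterized by the total weight increment $v\in\{0,1,2\}$, with the increasing chain determined by $u_1\le u_2$ and $u_1+u_2=v$; both presentations ultimately perform the same computation inside $\Gamma_a$.
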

\begin{proof}Note that the information contained in the labels $(\min A, \min B)^u$ is enough to 
recover any saturated chain from $\hat{0}$. Hence the sequence of labels in each interval 
uniquely determines a chain. To show that $\lambda_E$ is an EW-labeling we are left to show that it 
satisfies the rank two switching property.
 The rank two intervals $[x,y]$ in $\Pi_n^w$ are of three possible  different types (see Figure 
\ref{figure:ranktwointervals_weighted}), we will show 
that in each of these types the rank two switching property is satisfied. 
  
\begin{enumerate}
\item[{\bf Type I}:] Two pairs of distinct blocks $\{A^{w_A},B^{w_B}\}$ and $\{C^{w_C},D^{w_D}\}$ 
of $x$ are merged to get $y$. Assume without loss of generality that $\min A < \min B$, $\min 
A < \min C$ and $\min C < \min D$.  The open interval
$(x,y)$ equals $\{z_1,z_2\}$ where $z_1$ is like $x$, but with a block $(A\cup B)^{w_A+w_B+u_1}$ 
instead of $\{A^{w_A},B^{w_B}\}$ and $z_2$ is like $x$, but with a block $(C\cup D)^{w_C+w_D+u_2}$ 
instead of $\{C^{w_C},D^{w_D}\}$, where $u_1,u_2 \in \{0,1\}$.  In this case this interval has two 
maximal chains with labels
$$\lambda_E(x\lessdot z_1)=(\min A, \min B)^{u_1}<(\min C,\min D)^{u_2}=\lambda_E(z_1\lessdot y), 
\text{ and}$$
$$\lambda_E(x\lessdot z_2)=(\min C, \min D)^{u_2}>(\min A,\min B)^{u_1}=\lambda_E(z_2\lessdot y)$$

\item[{\bf Type II}:] Three distinct blocks $\{A^{w_A},B^{w_B},C^{w_C}\}$ of $x$ are merged to get 
$y$, adding at each merging step the same weight $u \in \{0,1\}$.   Assume without loss of 
generality that $\min A < \min B<\min C$. The open interval
$(x,y)$ equals $\{z_1,z_2,z_3\}$, where each weighted partition  $z_i$ is obtained from $x$ by
merging two of the three blocks and adding $u$ to the total weight of the resulting block.   In 
this case the interval has a unique increasing chain with labels 
$$\lambda_E(x\lessdot z_1)=(\min A, \min B)^{u}<(\min A,\min C)^{u}=\lambda_E(z_1\lessdot y),$$
and there is a unique chain with labels
$$\lambda_E(x\lessdot z_2)=(\min A, \min C)^{u}>(\min A,\min B)^{u}=\lambda_E(z_2\lessdot y).$$

\item[{\bf Type III}:] Three distinct blocks $\{A^{w_A},B^{w_B},C^{w_C}\}$ of $x$ are merged to 
get $y$ in one step adding $0$ to the total weight and in the other step adding $1$. Assume without 
loss of generality that $\min A < \min B<\min C$. The open interval
$(x,y)$ equals $\{z_1,z_2,z_3,z_4,z_5,z_6\}$, where each weighted partition  $z_i$ is obtained from
$x$ by either merging two of the three blocks and adding either $0$ or $1$ to the total weight of 
the resulting block.    In 
this case the interval has a unique increasing chain with labels 
$$\lambda_E(x\lessdot z_1)=(\min A, \min B)^{0}<(\min A,\min C)^{1}=\lambda_E(z_1\lessdot y),$$
and there is a unique chain with labels
$$
    \lambda_E(x\lessdot z_2)=(\min A, \min C)^{1}>(\min A,\min B)^{0}=\lambda_E(z_2\lessdot y).
$$
\end{enumerate}
Thus, $\lambda_E$ has the rank two switching property.
\end{proof}

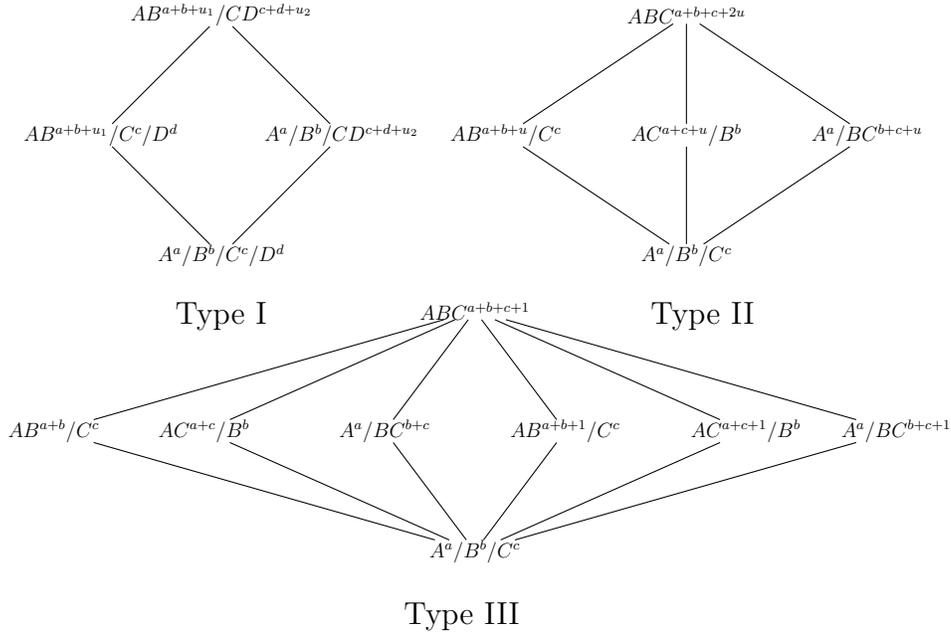
\begin{figure}
\centering
\begin{tikzpicture}[line join=bevel,scale=0.8]
  \node at (0,-1)  {Type I};
  \node at (8,-1)  {Type II};
  \node at (4,-6)  {Type III};

\begin{scope}
  \tikzstyle{every node}=[inner sep=0pt, scale=0.65, minimum width=4pt]
  \node (v1-2-3-4) at (0,0)  {$A^{a}/ B^{b}/ C^{c}/ D^{d}$};
  \node (v12-34) at (0,4)  {$AB^ {a+b+u_1}/CD^ {c+d+u_2}$};
  \node (v12-3-4) at (-2,2)  {$AB^{a+b+u_1}/ C^{c}/ D^{d}$};
  \node (v1-2-34) at (2,2)  {$A^{a}/ B^{b}/ CD^{c+d+u_2}$};
  \draw [] (v12-34) -- (v12-3-4);
  \draw [] (v12-34) -- (v1-2-34);
  \draw [] (v1-2-34) -- (v1-2-3-4); 
  \draw [] (v12-3-4) -- (v1-2-3-4);
 \end{scope}
\begin{scope}[xshift=220]
  \tikzstyle{every node}=[inner sep=0pt, scale=0.65, minimum width=4pt]
  \node (v1-2-3) at (0,0)  {$A^{a}/ B^{b}/ C^{c}$};
  \node (v123) at (0,4)  {$ABC^ {a+b+c+2u}$};
  \node (v12-3) at (-3,2)  {$AB^{a+b+u}/ C^{c}$};
  \node (v1-23) at (3,2)  {$A^{a}/ BC^{b+c+u} $};
 \node (v13-2) at (0,2)  {$AC^{a+c+u}/ B^{b}$};
  \draw [] (v123) -- (v12-3);
  \draw [] (v123) -- (v1-23);
 \draw [] (v123) -- (v13-2);
  \draw [] (v1-23) -- (v1-2-3); 
  \draw [] (v12-3) -- (v1-2-3);
  \draw [] (v13-2) -- (v1-2-3);
 \end{scope}

\begin{scope}[xshift=120,yshift=-140]
\tikzstyle{every node}=[ inner sep=0pt, scale=0.65, minimum width=4pt]
  \node (v1-2-3) at (0,0)  {$A^{a}/ B^{b}/ C^{c}$};
  \node (v123) at (0,4)  {$ABC^{a+b+c+1}$};
  \node (v12a-3) at (-7,2)  {$AB^{a+b}/ C^{c}$};
  \node (v13a-2) at (-4.5,2)  {$AC^{a+c}/ B^{b}$};
  \node (v1-23a) at (-1.5,2)  {$A^{a}/ BC^{b+c}$};   
  \node (v12b-3) at (1.5,2)  {$AB^{a+b+1}/ C^{c}$};
  \node (v13b-2) at (4.5,2)  {$AC^{a+c+1}/ B^{b}$};
  \node (v1-23b) at (7,2)  {$A^{a}/BC^ {b+c+1}$};

  \draw [] (v123)-- (v12a-3);
  \draw [] (v123) -- (v1-23a);
  \draw [] (v123)--(v12b-3);
  \draw [] (v123) --(v13a-2);
  \draw [] (v123) --(v13b-2);
  \draw [] (v123) --(v1-23b);
  \draw [] (v1-23a)-- (v1-2-3); 
  \draw [] (v12a-3) --(v1-2-3);
  \draw [] (v13a-2)--(v1-2-3);
  \draw [] (v1-23b) -- (v1-2-3);
  \draw [] (v12b-3) --(v1-2-3);
  \draw [] (v13b-2) --(v1-2-3);
\end{scope}

\end{tikzpicture}
\label{fig:type1}
\caption[]{Rank two intervals in $\Pi_n^w$}\label{figure:ranktwointervals_weighted}
\end{figure}

Now that we know, by Theorem \ref{theorem:lambdaE_EW}, that $\lambda_E$ is an EW-labeling of 
$\Pi_n^w$, we can  use Theorem \ref{theorem:secondcaracterizationQlambda} to describe 
$Q_{\lambda_E}(\Pi_n^w)$. We leave the general characterization of $Q_{\lambda_E}(\Pi_n^w)$ for a 
future article, but we explicitly compute the example of $Q_{\lambda_E}(\Pi_3^w)$ in Figure 
\ref{figure:second_whitney_dual_Pi3}. An interesting fact here is that $Q_{\lambda_E}(\Pi_3^w)$ 
and 
$Q_{\lambda_C}(\Pi_3^w)\simeq \SF_3$ are evidently not isomorphic (see Figures 
\ref{figure:spanning_forests_3} and \ref{figure:second_whitney_dual_Pi3}).

\begin{theorem}\label{theorem:Qlambdanonisomorphic}
 There exist a poset $P$ and two CW-labelings $\lambda_1$ and $\lambda_2$ of $P$ such that the 
posets $Q_{\lambda_1}(P)$ and $Q_{\lambda_2}(P)$ are not isomorphic. 
\end{theorem}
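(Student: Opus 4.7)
The plan is to exhibit an explicit example. Take $P=\Pi_3^w$, $\lambda_1=\lambda_C$ the CW-labeling constructed from the surjection $\F \colon C(\Pi_n^w) \to \SF_n$, and $\lambda_2=\lambda_E$ the ER-labeling of Gonz\'alez D'Le\'on--Wachs recalled above. Theorem~\ref{theorem:lambdaE_EW} says $\lambda_E$ is an EW-labeling, and since an EW-labeling is in particular a CW-labeling, both $\lambda_1$ and $\lambda_2$ qualify. By Theorem~\ref{theorem:QSF} we have $Q_{\lambda_C}(\Pi_3^w)\cong \SF_3$, which is the poset depicted in Figure~\ref{figure:spanning_forests_3}. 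On the other hand, $Q_{\lambda_E}(\Pi_3^w)$ can be computed directly; it is the poset displayed in Figure~\ref{figure:second_whitney_dual_Pi3}, which we obtain by applying the simpler description from Theorem~\ref{theorem:secondcaracterizationQlambda} to $\lambda_E$ on $\Pi_3^w$ (each element is a pair $(x,w)$ with $w$ the word of labels of an ascent-free saturated chain from $\hat 0$ to $x$, and covers are governed by $\sort$).

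The only remaining task is to verify that $\SF_3$ and $Q_{\lambda_E}(\Pi_3^w)$ are not isomorphic. Since both are Whitney duals of $\Pi_3^w$, they share the same rank generating function, so the obstruction must be structural rather than enumerative. I would identify a distinguishing local invariant at the middle rank: for instance, count for each atom $a$ the number of rank-two elements covering $a$, or equivalently look at the multiset of up-degrees of atoms, and compare with the corresponding multiset in the other poset. In $\SF_3$ this degree sequence can be read directly from Figure~\ref{figure:spanning_forests_3}, and in $Q_{\lambda_E}(\Pi_3^w)$ from Figure~\ref{figure:second_whitney_dual_Pi3}. Alternatively, one may compare the isomorphism types of principal order filters or principal order ideals at a specific rank, or note that the covering relations between rank one and rank two yield non-isomorphic bipartite incidence structures.

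The main (and really only) obstacle is to make the non-isomorphism argument airtight and concise. Because both posets are small, this is essentially a finite check; the cleanest presentation is to exhibit a combinatorial invariant—such as the sorted sequence of up-degrees of rank-one elements or the multiset $\{\#\{y > a\} : a \text{ an atom}\}$—that takes different values for $\SF_3$ and for $Q_{\lambda_E}(\Pi_3^w)$. Once this is in place, no isomorphism can exist, and the theorem follows immediately from this comparison together with the fact that both posets arise as $Q_\lambda(\Pi_3^w)$ for bona fide CW-labelings $\lambda_C$ and $\lambda_E$.
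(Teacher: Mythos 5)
Your choice of example is exactly the paper's: $P=\Pi_3^w$ with the two labelings $\lambda_C$ and $\lambda_E$, with $Q_{\lambda_C}(\Pi_3^w)\cong\SF_3$ by Theorem~\ref{theorem:QSF} and $Q_{\lambda_E}(\Pi_3^w)$ computed as in Figure~\ref{figure:second_whitney_dual_Pi3}. The paper itself merely asserts that the two posets are ``evidently not isomorphic,'' so your instinct to supply a concrete distinguishing invariant is a genuine improvement in rigor. However, the specific invariants you lean on do not actually work. Both posets have rank $2$, every atom in each is covered by exactly two maximal elements, and the multiset of down-degrees of maximal elements is $\{2,2,2,1,1,1,1,1,1\}$ in both. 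So the sorted sequence of up-degrees of rank-one elements, the multiset $\{\#\{y>a\}: a\text{ an atom}\}$, and the isomorphism types of principal order ideals at coatoms all coincide; those suggestions would leave a reader stuck. Your last-mentioned alternative is the one that succeeds: the bipartite incidence graph between ranks $1$ and $2$ of $\SF_3$ decomposes into three paths on $5$ vertices (reading from Figure~\ref{figure:spanning_forests_3}, the atom-to-coatom adjacencies pair off as $\{q_1,q_2\}$, $\{q_3,q_4\}$, $\{q_5,q_6\}$), whereas for $Q_{\lambda_E}(\Pi_3^w)$ (Figure~\ref{figure:second_whitney_dual_Pi3}) the components are paths on $3$, $9$, and $3$ vertices. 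Equivalently, $\SF_3$ has no atom whose set of upper covers is disjoint from every other atom's, while $Q_{\lambda_E}(\Pi_3^w)$ has two such atoms. Once this (or any equally concrete) obstruction is written out, your argument is complete and it matches the paper's intent; without it, the claim of non-isomorphism is not yet airtight.
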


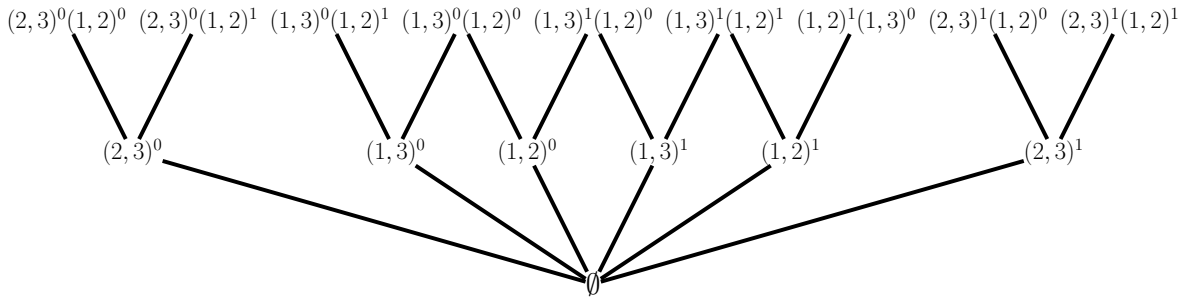
\begin{figure}[h]
\begin{center} 
\begin{tikzpicture}[scale=0.35]

%\node[scale=1] at (13,-3) {$\mathcal{SF}_3$};

\tikzstyle{every node}=[inner sep=0pt,minimum size=10,scale=0.45]
\node (p1) at (13,0.25) {\Huge$\emptyset$};
\node (q1) at (-4.5,5.25) {\LARGE$(2,3)^0$};
\node (q2) at (5.5,5.25) {\LARGE$(1,3)^0$};
\node (q3) at (10.5,5.25) {\LARGE$(1,2)^0$};
\node (q4) at (15.5,5.25) {\LARGE$(1,3)^1$};
\node (q5) at (20.5,5.25) {\LARGE$(1,2)^1$};
\node (q6) at (30.5,5.25) {\LARGE$(2,3)^1$};

\node (r1) at (-7,10.25) {\LARGE$(2,3)^0(1,2)^0$};
\node (r2) at (-2,10.25) {\LARGE$(2,3)^0(1,2)^1$};
\node (r3) at (3,10.25) {\LARGE$(1,3)^0(1,2)^1$};
\node (r4) at (8,10.25) {\LARGE$(1,3)^0(1,2)^0$};
\node (r5) at (13,10.25) {\LARGE$(1,3)^1(1,2)^0$};
\node (r6) at (18,10.25) {\LARGE$(1,3)^1(1,2)^1$};
\node (r7) at (23,10.25) {\LARGE$(1,2)^1(1,3)^0$};
\node (r8) at (28,10.25) {\LARGE$(2,3)^1(1,2)^0$};
\node (r9) at (33,10.25) {\LARGE$(2,3)^1(1,2)^1$};

\tikzstyle{every path}=[line width=1.5pt]
 \draw (p1) -- (q1) ;
 \draw (p1) -- (q2) ;
 \draw (p1) -- (q3) ;
 \draw (p1) -- (q4) ;
 \draw (p1) -- (q5) ;
 \draw (p1) -- (q6) ;

 \draw (q1) -- (r1) ;
 \draw (q1) -- (r2) ;
 \draw (q2) -- (r3) ;
 \draw (q2) -- (r4) ;
 \draw (q3) -- (r4) ;
 \draw (q3) -- (r5) ;
 \draw (q4) -- (r5) ;
 \draw (q4) -- (r6) ;
 \draw (q5) -- (r6) ;
 \draw (q5) -- (r7) ;
 \draw (q6) -- (r8) ;
 \draw (q6) -- (r9) ;
\end{tikzpicture}
\end{center}
\caption[]{$Q_{\lambda_E}(\Pi_3^w)$}\label{figure:second_whitney_dual_Pi3}
\end{figure}

\subsection{R$^*$S-labelable posets}  In~\cite{SimionStanley1999} Simion and Stanley introduced the notion 
of an R$^*$S-labeling as a tool to study local actions of the symmetric group on maximal chains of 
a poset. In this subsection, we show that the R$^*$S labelings that respect the consistency 
condition of the rank two switching property are CW-labelings.   Simion and Stanley~\cite{SimionStanley1999} showed the posets of shuffles $W_{M,N}$ introduced by 
Greene~\cite{Greene1988} 
have R$^*$S labelings and Hersh~\cite{Hersh1999} showed that the noncrossing partition lattices 
$NC_n^B$ and $NC_n^D$ of 
types B and D introduced by Reiner~\cite{Reiner1997} also have such labelings.   As noted 
in~\cite{SimionStanley1999},  Stanley's parking function labeling of the noncrossing partition 
lattice $NC_n$ of type $A$ described in a previous subsection  is an R$^*$S labeling. All of these 
are examples of  R$^*$S labelings that are CW-labelings, hence we have as a corollary that these 
posets have Whitney duals.

We start with the definition of an S-labeling of a poset given in~\cite{SimionStanley1999}.  We 
present it here in a slightly different language than how was originally stated in 
\cite{SimionStanley1999} to highlight the connection with 
CW-labelings.

\begin{definition} [\cite{SimionStanley1999}]\label{rStarDef}
Let $P$ be a graded poset  of rank $n$ with a $\hat{0}$ and  $\hat{1}$. Let $\lambda$ be a 
C-labeling such that the labels are totally ordered. We say $\lambda$ is an S-labeling if  
\begin{enumerate}
\item \label{condition:S1} For each  maximal chain $\c =  (\hat{0}=x_0\cover x_1 \cover \cdots 
\cover x_n =\hat{1})$  
and for each $1\leq i \leq n-1$ such that $\lambda(\c, x_{i-1} \cover x_i) \neq  \lambda(\c, x_{i} 
\cover x_{i+1})$, there exists a unique maximal chain $\c' =   (\hat{0}=x_0\cover x_1 \cover \cdots 
\cover x_{i-1}\cover x'_{i}\cover x_{i+1} \cover \cdots  \cover x_n =\hat{1})$ such that $\c$ and 
$\c'$ have the same sequence of labels except that  $\lambda(\c, x_{i-1} \cover x_i )  = 
\lambda(\c', x'_{i} \cover x_{i+1})$ and $\lambda(\c, x_{i} \cover x_{i+1} )  = \lambda(\c', 
x_{i-1} 
\cover x'_{i})$
\item \label{condition:S2} $\lambda$  is one-to-one on maximal chains.  That is,  two different 
maximal chains must have 
different sequences of labels from  bottom to top.
\end{enumerate}
\end{definition}

Considering the condition \eqref{condition:S1} of an S-labeling in Definition~\ref{rStarDef}, one 
can see that 
if the choice of the element $x'_{i}$ is consistent among all maximal chains that coincide in 
the first $i+2$ edges then the S-labeling also satisfies the rank two 
switching property of Definition \ref{definition:ranktwoC}. We will call S-labelings satisfying 
this additional condition \emph{consistent}. In that case, we note that condition 
\eqref{condition:S1} on S-labelings is in fact stronger than the rank two switching property.  For 
example, the labeling  of $\Pi_3$ given in 
Figure~\ref{figure:exampleER}  has the rank two switching property,  but it is not an S-labeling 
since
the chain with label sequence $(2,3), (1,3)$ cannot be switched with anything as it 
should be in the case of an S-labeling.  Also, note that condition \eqref{condition:S2} of 
Definition \ref{rStarDef} is also stronger than the simpler requirement of Definition 
\ref{definition:EW} that ascent-free chains are one-to-one. Hence any S-labeling satisfying the 
consistency condition and which is also a CR-labeling, is a CW-labeling.

In~\cite{SimionStanley1999} Simion and Stanley refer to C-labelings that are also  CR-labelings  as 
R$^*$-labelings. Moreover, any labeling which is both an R$^*$-labeling and an S-labeling is 
called an R$^*$S-labeling. 
We warn the reader of a possible source of confusion since in this paper 
we have used the term ER$^*$ to mean a different type of labeling.   Using 
Theorem~\ref{theorem:CW}, we have the following.

\begin{theorem}
A consistent R$^*$S-labeling is a CW-labeling.  Consequently, every poset with a consistent 
R$^*$S-labeling has a Whitney dual.
\end{theorem}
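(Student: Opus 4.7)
The plan is to verify that a consistent R$^*$S-labeling $\lambda$ meets the two defining conditions of a CW-labeling in Definition \ref{definition:generalizedCWandCW} (on top of being a CR-labeling); the conclusion about Whitney duals then follows at once from Theorem \ref{theorem:CW}. The CR-labeling hypothesis is granted outright, since by definition an R$^*$-labeling is already a CR-labeling, so there is nothing to check for the first ingredient.

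For the rank two switching property (Definition \ref{definition:ranktwoC}), the argument is to exploit condition (1) of the S-labeling. That condition provides, for every maximal chain $\c$ and every position $i$ at which the two labels $\lambda(\c, x_{i-1}\cover x_i)$ and $\lambda(\c, x_i\cover x_{i+1})$ differ, a unique maximal chain $\c'$ that agrees with $\c$ off position $i$ and has those two labels transposed. Restricting this condition to the ascent case produces exactly the existence and uniqueness clause of Definition \ref{definition:ranktwoC}, and the consistency hypothesis supplies the remaining coherence clause, namely that the swapped element $x_i'$ is determined by the bottom $i+2$ elements of $\c$.

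The second CW-requirement, that in each rooted interval $[x,y]_{\r}$ every ascent-free maximal chain has a unique word of labels, is where condition (2) of the S-labeling enters: S-(2) asserts that the map sending a maximal chain of $P$ to its sequence of labels is injective. The plan is to push this global injectivity down to rooted intervals. Given two distinct ascent-free maximal chains $\c_1,\c_2$ of $[x,y]_{\r}$ with the same word of labels, one extends each to a maximal chain of $P$ using the unique increasing chain above $y$ guaranteed by the CR-structure; if the label sequences of the two extensions can be shown to coincide, then S-(2) forces the extensions (and hence $\c_1,\c_2$) to be equal, a contradiction.

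The main obstacle will be this last step, because in a genuine C-labeling (as opposed to an E-labeling) the label of an edge above $y$ depends on the entire chain below, not merely on its label sequence; so the coincidence of the top labels of the two extensions does not hold purely formally, and must be derived from a careful interplay of the swapping mechanism supplied by S-(1) with the injectivity supplied by S-(2). Once this technical step is cleared, $\lambda$ qualifies as a CW-labeling in the sense of Definition \ref{definition:generalizedCWandCW}, and Theorem \ref{theorem:CW} immediately gives that $Q_{\lambda}(P)$ is a Whitney dual of $P$, completing the proof.
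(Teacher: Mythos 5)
Your outline follows the paper's argument step by step: the CR part is granted for free, the rank two switching property is deduced from condition (1) of the S-labeling together with the consistency hypothesis, and the uniqueness-of-label-words clause is targeted via condition (2). The first two steps are correct and are exactly what the paper does.

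The trouble is the third step, and you have put your finger on a genuine gap without closing it. Condition (2) of Definition \ref{rStarDef} gives injectivity of the label map on maximal chains of $P$, whereas Definition \ref{definition:generalizedCWandCW} asks for injectivity on ascent-free maximal chains in every \emph{rooted interval} $[x,y]_{\r}$. For an E-labeling these coincide for the reason you describe (labels do not depend on the chain, so extending two label-coincident chains of $[x,y]$ by a common top and bottom produces two label-coincident maximal chains of $P$); but for a genuine C-labeling the labels above $y$ depend on the whole chain below $y$, not merely on its label word, so the lifted maximal chains need not share a label sequence and S-(2) cannot be invoked directly. You explicitly leave this as an unresolved obstacle, so the proposal, as written, does not prove the theorem. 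It is worth knowing that the paper itself is terse here: it asserts that condition (2) "is also stronger than" the required uniqueness property and then concludes without further argument, so it does not supply the missing step either. To finish, one would need to argue carefully that two ascent-free maximal chains of $[x,y]_{\r}$ with the same label word must coincide, for instance by first using the consistency clause of the rank two switching property (which guarantees that the swap at rank $i\le\rho(y)-1$ depends only on, and alters only, the bottom $\rho(y)$ part of a maximal chain, so the S-labeling swaps restrict to $\M_{[x,y]_{\r}}$) and then combining the resulting local swap structure with the uniqueness of the increasing chain in $[x,y]_{\r}$ coming from the R$^*$ hypothesis. As it stands, your proof and the paper's sketch both leave that final verification implicit.
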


\begin{remark}
 The consistency condition is automatically satisfied for E-labelings and so it only needs to be 
checked when 
the underlying labeling is a C-labeling.
\end{remark}

The reader can check in \cite{SimionStanley1999} that the labeling for the poset of shuffles 
$W_{M,N}$ is an example of a consistent R$^*$S-labeling and hence a CW-labeling.
In \cite{Stanley1997} and \cite{Hersh1999} edge labelings of the noncrossing 
partition lattices of type A, B and D are given. These are all examples of R$^*$S-labelings and hence they are also 
EW-labelings. We then have the following corollary.

\begin{corollary}
The poset of shuffles $W_{M,N}$ and the noncrossing partition lattices of type A, B and D all have 
Whitney duals.
\end{corollary}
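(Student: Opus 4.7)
The proof plan is to reduce each of the four families to the theorem immediately preceding the corollary, which asserts that any poset with a consistent R$^*$S-labeling is a CW-labeling and hence, by Theorem \ref{theorem:CW}, has a Whitney dual $Q_\lambda(P)$. So the work to be done is entirely on the labeling side: I would exhibit in each case an R$^*$S-labeling and verify the consistency hypothesis of Definition \ref{definition:ranktwoC}.

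For the three noncrossing partition lattices $\NC_n$, $\NC_n^B$, and $\NC_n^D$, I would invoke the edge labelings built by Stanley \cite{Stanley1997} (the parking function labeling, already discussed in Section \ref{section:noncrossingpartitiondual}) and by Hersh \cite{Hersh1999} (the type B and D analogues). Since these are all E-labelings rather than genuine C-labelings, the consistency requirement in Definition \ref{definition:ranktwoC} is automatic: the swapping partner $x_i'$ depends only on the rank-two interval $[x_{i-1}, x_{i+1}]$ and the pair of labels on it, which is independent of the rest of the chain. One is then left to quote from \cite{Stanley1997, Hersh1999} the facts that (i) each interval has a unique increasing chain (the R$^*$-labeling property), (ii) any two adjacent unequal labels along a maximal chain can be swapped, and (iii) distinct maximal chains have distinct label sequences. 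Together these give the S-labeling condition, and since condition (iii) is stronger than the uniqueness of label sequences on ascent-free chains required in Definition \ref{definition:generalizedCWandCW}, the labeling is a CW-labeling (indeed an EW-labeling).

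For the poset of shuffles $W_{M,N}$, the labeling in \cite{SimionStanley1999} is a genuine chain-edge labeling, and here the R$^*$S-labeling conditions have already been verified by Simion and Stanley. The only genuinely new item to check is the consistency clause in the rank two switching property: whenever two maximal chains $\m$ and $\widetilde{\m}$ coincide through rank $i+1$, the swapping partner $x_i'$ chosen for $\m$ must agree with the one chosen for $\widetilde{\m}$. The expected approach is to unpack the combinatorial description of the labeling on $W_{M,N}$ (swapping consecutive letters in the shuffle words at positions $i$ and $i+1$) and observe that this local move only reads off the two elements $x_{i-1}, x_{i+1}$ and the pair of labels in between, which are fixed by the shared truncation of $\m$ and $\widetilde{\m}$. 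This is the main obstacle of the proof, but a routine one.

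Once each of the four posets is equipped with a (consistent) R$^*$S-labeling $\lambda$, Theorem \ref{theorem:CW} delivers the Whitney dual $Q_\lambda(P)$ directly, completing the proof. As a byproduct, in the case of $\NC_n$ one recovers the explicit Whitney dual $\NCDyck_n$ described earlier in Section \ref{section:noncrossingpartitiondual}, while for the remaining three families the duals are given implicitly by the quotient construction, and an explicit combinatorial description in the spirit of Theorem \ref{theorem:secondcaracterizationQlambda} can be extracted from the ascent-free words of labels.
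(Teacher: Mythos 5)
Your proposal matches the paper's argument exactly: invoke the preceding theorem on consistent R$^*$S-labelings, observe that the labelings of Stanley and Hersh for $\NC_n$, $\NC_n^B$, $\NC_n^D$ are E-labelings so consistency is automatic, and note that for the genuine chain-edge labeling of $W_{M,N}$ one must (and can) verify the consistency clause from Definition \ref{definition:ranktwoC}. The paper likewise leaves the latter check to the reader, so your treatment is, if anything, slightly more explicit about what that check entails.
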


\section{$H_n(0)$-actions and Whitney labelings}\label{section:heckeaction}
In this section we describe an action of the $0$-Hecke algebra on the maximal chains of 
a poset $P$ with a generalized CW-labeling $\lambda$.  We will also see that the same action can be 
associated to the Whitney dual $Q_\lambda(P)$ constructed in Section 
\ref{section:whitneylabelings}.  The characteristic of this action is Ehrenborg's flag 
quasisymmetric function in the case of $P$ and is Ehrenborg's flag quasisymmetric function with 
$\omega$ applied in the case of $Q_\lambda(P)$.  The techniques we 
describe here   closely  follow the work of McNamara in \cite{McNamara2003} who studied 
actions of this kind on posets with EL-labelings in which the word of labels in every chain is 
a permutation of $\sym_n$, also known as $\sym_n$ EL-shellable  or \emph{snellable} posets.

\subsection{An action of the $0$-Hecke algebra}
Suppose that $P$ is a graded poset of rank $n$.  Moreover, suppose that  $\lambda$ is a generalized
CW-labeling of $P$.   Recall that  $\mathcal{M}_P$ is the set of maximal chains of $P$.  
Define maps $U_1,U_2,\dots,U_{n-1}: \mathcal{M}_P\rightarrow \mathcal{M}_P$ such that for  
$\c:(\hat{0}=x_0\cover x_1\cover\cdots \cover x_n)$

$$
U_i (\c)=
\begin{cases}
\c' & \mbox{if $\lambda(\c,x_{i-1}\cover x_i)<\lambda(\c, x_{i}\cover x_{i+1})$,}\\
\c & \mbox{otherwise,}
\end{cases}
$$
where $\c'$ the unique maximal chain of $P$ obtained by applying a quadratic exchange at rank $i$. As an example, consider the maximal chain $\c:(1/2/3/4\cover 13/2/4\cover 123/4\cover 1234)$ in $\NC_4$ with the parking function labeling (see Figure~\ref{fig:NC4}).  Since there is no ascent at rank $1$, $U_1(\c) =\c$.  However, there is an ascent at rank $2$, and $U_2(\c)= 1/2/3/4\cover 13/2/4\cover 134/2\cover 1234$.

We 
note that in \cite{McNamara2003}, where the labelings are snellings, 
the maps $U_i$ are similar except that instead of exchanging ascents by descents, they 
exchange descents by ascents.

\begin{proposition}\label{proposition:heckerelations}
The maps $U_1,U_2,\dots, U_{n-1}$ have the following properties.
\begin{enumerate}
\item For all $\c\in \mathcal{M}_P$, $U_i(\c)$ and $\c$ are the same 
except possibly at rank $i$. 
\item $U_i^2 = U_i$ for all $i$.
\item $U_iU_j = U_jU_i $ for all $i,j$ such that $|i-j|>1$.
\item  $U_iU_{i+1}U_i = U_{i+1}U_iU_{i+1}$ for all 
$i$.
\end{enumerate}
\end{proposition}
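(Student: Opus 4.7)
The plan is to verify the four properties in order, using only the rank two switching property and the braid relation from Definition~\ref{definition:generalizedCWandCW}; the cancellative property will not be required here. Property~(1) is built directly into Definition~\ref{definition:ranktwoC}, which states that the quadratic exchange at rank $i$ changes only the element at that rank. Property~(2) follows at once: if $\c$ has no ascent at rank $i$ then $U_i(\c)=\c$ by definition, while if it does have an ascent then $U_i(\c)=\c'$ has the labels at edges $i$ and $i+1$ swapped by the rank two switching property, turning a strict ascent into a strict descent, so $U_i$ fixes $\c'$.

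For property~(3), I would assume without loss of generality that $j\ge i+2$. The rank two switching property preserves the elements of the chain at every rank other than $i$ and preserves the labels at every edge other than $i$ and $i+1$. Hence $U_i$ does not alter the ascent condition at rank $j$, nor the rank-two neighborhood on which $U_j$ operates; symmetrically, $U_j$ does not touch the bottom $i+2$ elements of the chain, so the consistency clause of Definition~\ref{definition:ranktwoC} yields that the new element at rank $i$ produced by $U_i$ is the same whether $U_j$ has been applied first or not. Together these give $U_iU_j(\c)=U_jU_i(\c)$, which is essentially the sketch offered in the discussion preceding Definition~\ref{definition:braidrelation}.

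Property~(4) is the heart of the proposition. Let $(a,b,c)=(\lambda(\c,x_{i-1}\cover x_i),\lambda(\c,x_{i}\cover x_{i+1}),\lambda(\c,x_{i+1}\cover x_{i+2}))$ and split into cases by the ascent pattern. When $a\not<b$ and $b\not<c$, both compositions act as the identity. When exactly one of the ascents $a<b$ or $b<c$ holds, I will walk through each of the six operator applications on both sides and show that they execute the same sequence of genuine switches interleaved with identities, yielding identical label sequences; uniqueness in the rank two switching property then forces the chains themselves to agree. The critical case $a<b<c$ is precisely the hypothesis of Definition~\ref{definition:braidrelation}, and that axiom gives the required equality at once.

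The main obstacle I anticipate is the bookkeeping in property~(4): a given $U$-move may install a new element at some rank, and one must check that both compositions produce the same new element rather than merely the same label at that edge. The key is repeated appeal to uniqueness in the rank two switching property together with its consistency clause: once the two sides coincide on an intermediate chain, any subsequent switch is uniquely determined, so the two sides remain equal from that point onward. Verifying this carefully in each of the three nontrivial subcases of the ascent pattern of $(a,b,c)$ will complete the argument.
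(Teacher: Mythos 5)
Your plan matches the paper's own proof, which is quite terse: the paper asserts that (1)--(3) are ``immediate from the definition'' and that (4) follows from the braid relation plus an ``easily verified'' non-critical case. Your observation that the cancellative property is not needed is correct, and your treatment of (1), (2), and (4) is sound---in particular, your case split for (4) into no ascent, exactly one ascent, and the critical condition $a<b<c$ (handled directly by the braid axiom) is the right one, and the subcases check out.

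There is, however, a gap in your justification of (3) that is specific to the chain-edge setting. For a C-labeling, the element that $U_j$ installs at rank $j$ is not determined by the elements $m_{j-1},m_j,m_{j+1}$ together with the nearby labels; it can depend on the entire bottom segment of the chain up to $m_{j+1}$, and the consistency clause in Definition~\ref{definition:ranktwoC} guarantees only that it depends on the bottom $j+2$ elements. Your appeal to consistency correctly shows that $U_i$ installs the same element at rank $i$ whether or not $U_j$ was applied first, because $U_j$ leaves the bottom $i+2$ elements untouched. But the ``symmetric'' direction does not follow as you state it: $U_i$ \emph{does} change the bottom $j+2$ elements (it alters rank $i<j$), so the claim that $U_i$ ``does not alter the rank-two neighborhood on which $U_j$ operates'' is false for C-labelings, and consistency alone does not yield that $U_j$ installs the same element at rank $j$ after $U_i$ as before. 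What closes the gap is the uniqueness in the rank two switching property: having used consistency to compute $U_iU_j(\c)$ and observed that it differs from $U_i(\c)$ only at rank $j$ and carries the label word of $U_i(\c)$ with ranks $j$ and $j+1$ swapped, uniqueness of the quadratic exchange at rank $j$ on $U_i(\c)$ forces $U_jU_i(\c)=U_iU_j(\c)$. You already invoke this uniqueness mechanism in your discussion of (4); the same tool is what is actually needed to finish (3).
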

\begin{proof}
 The first three properties are immediate from the definition of $U_i$ and Definition 
\ref{definition:ranktwoC} of the rank two switching property for C-labelings. 
The 
last property is a consequence of the braid relation in a generalized CW-labeling when there is a 
critical condition at rank $i$ and is easily verified when there is no critical condition at rank 
$i$.
\end{proof}

The $0$-Hecke algebra of type $A$ is defined by abstract generators satisfying the same relations of those in Proposition \ref{proposition:heckerelations}. Thus the properties described in the proposition imply that there is an action of the generators of the $0$-Hecke algebra $H_n(0)$ on the set $\mathcal{M}_P$.  This action is said to be \emph{local} since the chains $U_i(\c)$  and $\c$ are the same except possibly at rank $i$.  Moreover, this action gives rise to a representation of the $0$-Hecke algebra on the space $\mathbb{C}\mathcal{M}_P$ linearly spanned by $\mathcal{M}_P$.

It turns out that the characteristic of this action is a well-known quasisymmetric function.  
Before 
we look at this characteristic, we need to review some material on quasisymmetric functions.

\subsection{Ehrenborg's flag quasisymmetric function}
In~\cite{Ehrenborg1996}, Ehrenborg introduced a formal power series now known as \emph{Ehrenborg's 
flag 
quasisymmetric function}.  Given a graded poset $P$ with a $\hat{0}$ and $\hat{1}$, it is defined by
$$
F_P(x_1,x_2,\dots ) = F_P(\xx) = \sum_{\hat{0}=t_1\leq t_2\leq \cdots \leq t_{k-1}< t_k = \hat{1}} 
x_{1}^{rk(t_0,t_1)}x_{2}^{rk(t_1,t_2)}\cdots x_{k}^{rk(t_{k-1},t_{k})}
$$
where the sum is over all multichains from $\hat{0}$ to $\hat{1}$ where $\hat{1}$ appears exactly 
once.  As the name suggests, $F_P(\xx)$ belongs to the ring of quasisymmetric functions.  That is, 
for each sequence $n_1,n_2,\dots, n_k$ the monomial $x_{i_1}^{n_1}x_{i_2}^{n_2}\cdots 
x_{i_k}^{n_k}$ has the same coefficient as 
$x_{j_1}^{n_1}x_{j_2}^{n_2}\cdots x_{j_k}^{n_k}$ whenever $i_1<i_2<\cdots< i_k$ and $j_1<j_2<\cdots 
<j_k$.   In addition to being a quasisymmetric function, $F_P(\xx)$ also keeps track of the flag 
$f$-vector and the flag $h$-vector of $P$ as we describe next.

Let $P$ be a graded poset with a $\hat{0}$ and $\hat{1}$.  For $S\subseteq [n-1]$ define
$$
\alpha_P(S) = |\{\hat{0}<x_1<x_2<\cdots <x_{|S|}<\hat{1}\mid \{\rho(x_1),\rho(x_2),\dots , 
\rho(x_{|S|})\} = S\}|.
$$
In other words, $\alpha_P(S)$ is the number of chains from $\hat{0}$ to $\hat{1}$ which 
use elements whose rank set is $S$. The function given by $\alpha_P: 2^{[n-1]} \rightarrow 
\mathbb{Z}$ 
is called the \emph{flag f-vector} of $P$.  We also define
$$
\beta_P(S) = \sum_{T\subseteq S} (-1)^{|S\setminus T|} \alpha_P(T).
$$
The function $\beta_P: 2^{[n-1]}\rightarrow \mathbb{Z}$ is called the \emph{flag h-vector} of $P$.  
The reason for the names flag $f$-vector and flag $h$-vector is that they refine the classical 
$f$-vector and $h$-vector of the order complex of $P$.  See~\cite{Stanley2012}[\S 3.13] for 
more details.

When $P$ has a $\hat{0}$ and $\hat{1}$, there is a nice relationship between $F_P(\xx)$ and 
$\beta_P(S)$.  Indeed, it is well-known that if $P$ has rank $n$, then
$$
F_P(\xx)  =  \sum_{S\subseteq [n-1]}  \beta_P(S) L_{S,n}(\xx)
$$
where $L_{S,n}$ is Gessel's fundamental quasisymmetric function defined by
$$
L_{S,n}(\xx) = \sum_{\substack{1\leq i_1\leq i_2\leq \cdots  \leq i_n\\ i_j<i_{j+1} \mbox{ if } j\in 
S} 
} x_{i_1}x_{i_2}\cdots x_{i_n}.
$$

The original definition of $F_P(\xx)$ requires that $P$ have a $\hat{1}$, however we would like to extend this 
to more general posets.   In order to do this, we consider a slight generalization of $F_P(\xx)$ to 
deal with posets with a single minimal element $\hat{0}$, but possibly with multiple maximal elements.  Let $P$ be 
a graded poset with a $\hat{0}$, then we 
define
$$
F_P(\xx) =  \sum_{m}F_{[\hat{0},m]}(\xx)
$$
where the sum is over all maximal elements $m$ of $P$. Note that in the case that $P$ has a 
$\hat{1}$, this is just Ehrenborg's classical 
definition.  
Since  intervals always have a $\hat{0}$ and 
a 
$\hat{1}$, we have that 
$$
F_P(\xx) = \sum_{m} \left( \sum_{S\subseteq [n-1]} \beta_{[\hat{0},m]} (S) L_{S,n} \right).
$$

Now suppose that $\lambda$ is a CR-labeling of $P$ and  that $P$ has a $\hat{0}$ and a $\hat{1}$.  
Recall that $\mathcal{M}_P$ denotes the set of maximal chains in $P$.  For $\c: (x_0\cover  
x_1\cover 
x_2\cover \cdots \cover x_n)\in\mathcal{M}_P$,  the \emph{descent set} of $\c$ is defined 
to be
$$
D(\c) = \{i \mid \lambda(\c, x_{i-1},x_i) \nless \lambda(\c, x_i,x_{i+1})\}.
$$
It was shown by Stanley~\cite{Stanley2012}[c.f. Theorem 3.14.2] for ER-labelings and of Bj\"orner 
and Wachs \cite{BjornerWachs1997} for CR-labelings that $\beta_S(P)$ is  the number of 
maximal chains with 
descent set  $S$. 
A simple modification of Stanley's proof of the combinatorial description of the numbers $\beta_P(S)$, shows that if $\lambda$ is an ER$^*$-labeling of a poset $P$ with $\hat{0}$ and $\hat{1}$, then $\beta_{P}(S) =\{\c \in 
\mathcal{M}_P\mid D(\c) =S^c\}$ for any $S\subseteq [n-1]$.

\begin{example}
We compute $F_P(\xx)$ for $P=\Pi_3$. As one  can see in 
Figure~\ref{figure:exampleER}, 
$\Pi_3$ 
has three maximal chains.  Under the ER-labeling of Example \ref{example:labelingposetofpartitions}, one of the maximal chains is increasing  and the other two are ascent-free.   It 
follows that $\beta_{\Pi_3}(\emptyset) = 1$ and $\beta_{\Pi_3}(\{1\}) = 2$.  Thus, 
$$
F_{\Pi_3}(\xx) = L_{\emptyset, 2}(\xx)+2L_{\{1\},2}(\xx).
$$
\end{example}

\begin{example}
We consider the example of $F_P(\xx)$ when $P = \ISF_3$.  As  one can see in 
Figure~\ref{figure:exampleER}, 
there are two maximal intervals with three maximal chains altogether.  Moreover, using the ER$^*$-labeling of Example \ref{example:labelingISF} we can compute  $\beta_{[\hat{0},m]}(S)$ with the number of maximal chains with strict ascent set given by $S$.

If $F_1$ 
is the increasing spanning forest with edge set $\{(1,2), (1,3)\}$ and $F_2$ is the one with  
$\{(1,2), (2,3)\}$, we see that 
$$
\beta_{[\hat{0},F_1]}(\emptyset) = 1,  \beta_{[\hat{0},F_1]}(\{1\}) 
=1,\beta_{[\hat{0},F_2]}(\emptyset) =1, \mbox{and } 
\beta_{[\hat{0},F_2]}(\{1\}) =0.
$$
Therefore
$$
F_{\ISF_3}(\xx)  = 2 L_{\emptyset, 2}(\xx)  +L_{\{1\},2}(\xx).
$$
\end{example}

\begin{example}
We compute $F_P(\xx)$ for $P=\NC_4$. Recall from Section \ref{section:noncrossingpartitiondual} that $\NC_4$ has an ER-labeling where the labels on the set $\M_{\NC_4}$ of maximal chains of $\NC_4$ correspond to parking functions of length $3$, see Figure~\ref{fig:NC4}. So $\NC_4$ 
has $16$ maximal chains with label words given by $(1,1,1)$, the three permutations of each $(1,1,2)$ $(1,1,3)$ and $(1,2,2)$; and the six permutations of $(1,2,3)$.  Considering the descent sets of each of these sequences we can compute that $\beta_{\NC_4}(\emptyset) = 1$,  $\beta_{\NC_4}(\{1\}) = 5$,  $\beta_{\NC_4}(\{2\}) = 5$ and $\beta_{\NC_4}(\{1,2\}) = 5$.  Thus, 
$$
F_{\NC_4}(\xx) = L_{\emptyset, 3}(\xx)+5L_{\{1\},3}(\xx)+5L_{\{2\},3}(\xx)+5L_{\{1,2\},3}(\xx).
$$
The quasisymmetric function $F_{\NC_n}(\xx)$ is in fact symmetric. Stanley~\cite{Stanley1997} showed that $\omega(F_{\NC_n}(\xx))$ is \emph{Haiman's Parking Function Symmetric Function} of $n$, where $\omega$ is the involution on the 
ring of quasisymmetric functions given by  $\omega(L_{S,n}) = L_{S^c, n}$ where $S^c$ is the 
complement 
of $S$ in $[n-1]$.

\end{example}

\begin{example}
Now consider $F_P(\xx)$ when $P=\NCDyck_4\cong Q_{\lambda}(\NC_4)$ together with its inherited  ER$^*$-labeling from $\NC_4$ in Section \ref{section:noncrossingpartitiondual}. Using Proposition  \ref{proposition:bijectionbetweensaturatedchains} we know that the maximal chains are in a label-preserving bijective correspondence with the ones of $\NC_4$, so they are labeled by parking functions as well.  One can show then that $F_{\NCDyck_4}(\xx) = 5L_{\emptyset, 3}(\xx) + 5L_{\{1\},3}(\xx)+5L_{\{2\},3}(\xx)+ L_{\{1,2\},3}(\xx)$.  
\end{example}

The reader may have noticed that the quasisymmetric functions above are very closely 
related.   Our examples show that 
$$
F_{\Pi_3}(\xx) = \omega (F_{\ISF_3}(\xx)) \text{ and } F_{\NC_4}(\xx) = \omega(F_{\NCDyck_4}(\xx)).
$$

This is no coincidence as we now see.    
\begin{theorem}\label{FandOmegaF}
Let $\lambda$ be a generalized CW-labeling of $P$. Then
$$ F_{Q_{\lambda}(P)}(\xx)=\omega(F_P(\xx)).$$
\end{theorem}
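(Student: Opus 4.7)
The plan is to expand both sides of the desired identity in Gessel's fundamental basis $\{L_{S,n}\}$, using the CR-labeling $\lambda$ on $P$ to index the expansion of $F_P(\xx)$ by descent sets of maximal chains, and the induced ER$^*$-labeling $\lambda^*$ on $Q_\lambda(P)$ to index the expansion of $F_{Q_\lambda(P)}(\xx)$ by \emph{complements} of descent sets. The label-preserving bijection $\M_{Q_\lambda(P)} \to \M_P$ coming from (the C-labeling analogue of) Proposition~\ref{proposition:bijectionbetweensaturatedchains} then automatically converts one expansion into the other after a single application of $\omega$.

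First I would write
\[
F_P(\xx) \;=\; \sum_m F_{[\hat{0},m]}(\xx) \;=\; \sum_m \sum_{S\subseteq[n-1]} \beta_{[\hat{0},m]}(S)\,L_{S,n}(\xx),
\]
where the outer sum runs over maximal elements $m$ of $P$ and $n=\rho(P)$. Since $\lambda$ is a CR-labeling, the Bj\"orner–Wachs identification of $\beta_{[\hat{0},m]}(S)$ with the number of maximal chains of $[\hat 0, m]$ whose descent set equals $S$ gives
\[
F_P(\xx) \;=\; \sum_{\c\in \M_P} L_{D(\c),n}(\xx).
\]
Next, the C-labeling analogue of Proposition~\ref{proposition:QERStarLabel} states that $\lambda^*$ is an ER$^*$-labeling of $Q_\lambda(P)$, so the ER$^*$ variant of Stanley's theorem recalled just before Theorem~\ref{FandOmegaF} yields $\beta_{[\hat{0},M]}(S) = |\{\C \in \M_{[\hat{0},M]} : D(\C) = S^c\}|$ for each maximal element $M$ of $Q_\lambda(P)$. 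Consequently
\[
F_{Q_\lambda(P)}(\xx) \;=\; \sum_{\C\in \M_{Q_\lambda(P)}} L_{D(\C)^c,n}(\xx) \;=\; \omega\!\left(\sum_{\C\in \M_{Q_\lambda(P)}} L_{D(\C),n}(\xx)\right),
\]
where the second equality uses $\omega(L_{S,n}) = L_{S^c,n}$ together with linearity of $\omega$.

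Finally I would invoke the label-preserving bijection $\varphi:\M_{Q_\lambda(P)} \to \M_P$ from (the C-labeling form of) Proposition~\ref{proposition:bijectionbetweensaturatedchains}, which is label-preserving by Lemma~\ref{lemma:saturatedchainssamelabelsinPandQforClabeling}. Because the word of labels along $\C$ under $\lambda^*$ agrees with the word of labels along $\varphi(\C)$ under $\lambda$, we have $D(\C) = D(\varphi(\C))$, so $\sum_\C L_{D(\C),n}(\xx) = \sum_\c L_{D(\c),n}(\xx) = F_P(\xx)$, yielding $F_{Q_\lambda(P)}(\xx)=\omega(F_P(\xx))$.

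The argument is essentially an assembly of tools already in place, so the only real hazard is bookkeeping: one must check that both the ER$^*$-version of the $\beta$-formula and the label-preserving bijection on maximal chains truly go through in the generalized CW-labeling setting (as opposed to merely the EW setting where they are first proved). Both extensions are, however, exactly what the last paragraph of Section~\ref{section:CWlabelings} asserts, so no new combinatorial ideas are required beyond noting that the descent statistic $D(\cdot)$ is transported correctly by the bijection $\varphi$.
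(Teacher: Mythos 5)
Your proof is correct and follows essentially the same route as the paper's: both proofs expand $F_P$ and $F_{Q_\lambda(P)}$ in the fundamental basis via the $\beta$-interpretations for CR- and ER$^*$-labelings, then transport descent sets through the label-preserving bijection of Proposition~\ref{proposition:bijectionbetweensaturatedchains} and apply $\omega(L_{S,n})=L_{S^c,n}$. The only cosmetic difference is that you sum over maximal chains directly rather than first summing over subsets $S$, which the paper does before rearranging; the underlying ingredients and logical flow are identical.
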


\begin{proof}
Recall that since $\lambda$ is a CR-labeling, $\beta_P(S)$ counts the number 
of maximal chains with descent set $S$.  Similarly since $\lambda^*$ is an ER$^*$-labeling of 
$Q_{\lambda}(P)$, $\beta_{Q_{\lambda}(P)}(S)$ is the number of maximal chains with ascent set 
$A(\c) 
:= \{i \mid \lambda(\c, x_{i-1},x_i) < \lambda(\c, x_i,x_{i+1})\}=S$.  The 
CW-analogue of Proposition~\ref{proposition:bijectionbetweensaturatedchains} implies that there is 
a bijection between maximal chains in $P$ and $Q_{\lambda}(P)$ which preserves labels.  It follows 
that for each $S\subseteq [n-1]$,
$$
\sum_{m\in P} \beta_{[\hat{0},m]}(S^c)= \sum_{m' \in Q_{\lambda}(P)}\beta_{[\hat{0},m']}(S)
$$
where each sum is over maximal elements of $P$ and $Q_{\lambda}(P)$ respectively.
Therefore
\begin{align*}
 \omega F_{Q_{\lambda}(P)}(\xx) &=  \omega\left( \sum_{m'\in Q_{\lambda}(P)} F_{[\hat{0},m']}(\xx) 
\right)\\
  &=  \omega\left(  \sum_{m'\in Q_{\lambda}(P)} \left(\sum_{S\subseteq [n-1]} 
\beta_{[\hat{0},m']}(S) L_{S,n}\right) \right)\\
  &=  \omega  \left(\sum_{S\subseteq [n-1]} \left(  \sum_{m'\in 
Q_{\lambda}(P)}\beta_{[\hat{0},m']}(S)\right) L_{S,n} \right)\\
&=  \omega\left( \sum_{S\subseteq [n-1]}\left(  \sum_{m\in P}\beta_{[\hat{0},m]}(S^c)\right) 
L_{S,n}\right)\\
&=  \sum_{S\subseteq  [n-1]} \left(  \sum_{m\in P}\beta_{[\hat{0},m]}(S^c) \right) \omega(L_{S,n})\\
&=\sum_{S\subseteq [n-1]} \left(  \sum_{m\in P}\beta_{[\hat{0},m]}(S^c) \right) L_{S^c,n}\\
&=\sum_{m\in P} \left(\sum_{S\subseteq [n-1]} \beta_{[\hat{0},m]}(S) L_{S,n}\right)\\
&= F_P(\xx)
\end{align*}
Thus we have proved the desired result. 
\end{proof}

\subsection{The characteristic of the action}

In~\cite{Norton1979}, Norton investigated  the representation theory of $H_n(0)$.  It is known that 
there are $2^{n-1}$ irreducible representations, all of them one-dimensional and hence they can be 
indexed by subsets of $[n-1]$.  With this indexing,  we have that if $U_i$ is one of the generators 
of $H_n(0)$ then the representation $\psi_S$ is given by
$$
\psi_S(U_i)=
\begin{cases}
1& \mbox{ if $i\in S,$}\\
0 & \mbox{otherwise.}
\end{cases}
$$
Hence, the character of the action is given by
$$
\chi_S(U_{i_1}U_{i_2}\cdots U_{i_k})=
\begin{cases}
1& \mbox{ if $i_1, i_2,\dots, i_k\in S,$}\\
0 & \mbox{otherwise.}
\end{cases}
$$
The \emph{(quasisymmetric) characteristic} of the character $\chi_S$ is defined by
$$
ch(\chi_S) = L_{S,n}
$$
where, as before, $L_{S,n}$ is Gessel's fundamental quasisymmetric function.  We will 
use $\chi_P$ to denote the character of the defining representation of a $H_n(0)$-action on $P$.

\begin{theorem}\label{CharacteristicOfAction}
Let $P$ be a graded poset of rank $n$ with a generalized CW-labeling.  The local $H_n(0)$-action 
previously described is such that 
$$
ch(\chi_P)=F_P(\xx).
$$ 
\end{theorem}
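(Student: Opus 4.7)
The plan is to establish the equality of $H_n(0)$-characters
\[
\chi_P \;=\; \sum_{\c \in \mathcal{M}_P} \chi_{D(\c)},
\]
after which applying the linear characteristic map together with $ch(\chi_S)=L_{S,n}$ and the identity $F_P(\xx) = \sum_{S}\beta_P(S)L_{S,n} = \sum_{\c} L_{D(\c),n}$ (valid because $\lambda$ is a CR-labeling, so $\beta_P(S)$ counts maximal chains with descent set $S$) immediately yields $ch(\chi_P) = F_P(\xx)$. I will obtain this character identity by exhibiting an ordering of $\mathcal{M}_P$ in which every $U_i$ is represented by a triangular matrix whose diagonal entries encode exactly the descent sets, and then invoking the standard fact that the diagonal of a product of triangular matrices is the entry-wise product of their diagonals.

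Totally order $\mathcal{M}_P$ so that chains with a larger number of label inversions come earlier, breaking ties arbitrarily. The first technical step is to prove that $U_i$ strictly decreases the label inversion count, so that whenever $U_i \c = \c' \neq \c$ the chain $\c'$ is strictly later in this ordering. Only pairs involving the positions $i$ or $i+1$ can change status under the exchange of the labels at ranks $i$ and $i+1$, and a short case analysis using only the hypothesis $\lambda(\c,x_{i-1}\cover x_i) < \lambda(\c,x_i\cover x_{i+1})$ and transitivity in $\Lambda$ shows that for each $j<i$ a potential gain of inversion at the pair $(j,i)$ is matched by a loss at $(j,i+1)$, and symmetrically for $k>i+1$ the gain at $(i+1,k)$ is matched by a loss at $(i,k)$; the pair $(i,i+1)$ alone contributes the net loss of exactly one inversion. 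Consequently, in this ordering the matrix of $U_i$ is lower triangular with diagonal entry at $\c$ equal to $1$ if $i \in D(\c)$ (so $U_i\c = \c$) and $0$ otherwise.

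Since the diagonal of a product of triangular matrices is the product of diagonals, the diagonal of $U_{i_1}U_{i_2}\cdots U_{i_k}$ at position $\c$ equals $\prod_{j}[i_j \in D(\c)]$, which by multiplicativity of the one-dimensional representation $\psi_{D(\c)}$ is precisely $\chi_{D(\c)}(U_{i_1}\cdots U_{i_k})$. Summing along the diagonal gives
\[
\chi_P(U_{i_1}\cdots U_{i_k}) \;=\; \sum_{\c \in \mathcal{M}_P} \chi_{D(\c)}(U_{i_1}\cdots U_{i_k}),
\]
and because the $U_i$'s generate $H_n(0)$ this determines $\chi_P$ globally, yielding the claimed character identity and hence the theorem. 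The main obstacle is the inversion-decreasing claim, which is where the rank two switching property of Definition \ref{definition:ranktwoC} genuinely enters; the braid relation and the cancellative property are used only implicitly via Proposition \ref{proposition:heckerelations} to guarantee a well-defined $H_n(0)$-action, while the trace computation above is triangularity-driven and avoids constructing any refined basis of $\mathbb{C}\mathcal{M}_P$.
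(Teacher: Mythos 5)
Your proof is correct and follows essentially the same route as the paper's own argument, which is modeled on McNamara's Proposition 4.1. Both proofs hinge on the same key observation: that a non-trivial $U_i$ strictly decreases the number of label inversions, which forces the trace of $U_{i_1}\cdots U_{i_k}$ to count exactly the chains $\c$ with $D(\c)\supseteq\{i_1,\dots,i_k\}$. You package this via a total order by inversion count and explicit triangularity of the $U_i$ matrices, while the paper phrases it directly as a fixed-point count and then re-indexes by the flag $h$-vector $\beta_{[\hat 0,m]}(S)$; but these are two presentations of the same argument, and your triangularity framing if anything makes the ``only if'' direction of the fixed-point characterization a bit more explicit than the paper's terse ``it follows that.''
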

We note that the proof we present is almost identical to the one in~\cite[Proposition 4.1]{McNamara2003}.
\begin{proof}
Let $[L_{S,n}] f(\xx)$ denote the coefficient of $L_{S,n}$ in the expansion of the quasisymmetric 
function $f(\xx)$ in the fundamental basis.  We will show for any subset $S$ 
of $[n-1]$,
$$
 [L_{S,n}]ch(\chi_P)=[L_{S,n}]F_p(\xx). 
$$
 As we saw earlier, the coefficient in $F_P(\xx)$ is 
$$
\sum_{m} \beta_{[\hat{0}, m]}(S).
$$
    Thus, it suffices to show
$$
 [L_{S,n}]ch(\chi_P)=\sum_{m} \beta_{[\hat{0}, m]}(S).
$$

Now let $J\subseteq [n-1]$ and let $\{i_1,i_2,\dots, i_k\}$ be a multiset of $J$ where each element 
of $J$ appears at least once.  For $\c\in \mathcal{M}_P$, if $U_i(\c) \neq 
\c$, then $\c$ has an ascent at $i$.   It follows that $U_{i_1}U_{i_2}\cdots 
U_{i_k}(\c)=\c$ if and only if $\c$ has descent set containing $J$. 
Therefore,
\begin{align*}
\chi_P(U_{i_1}U_{i_2}\cdots U_{i_k})&= \#\{\c\in \mathcal{M}_P \mid D(\c)\supseteq J\}\\
&= \sum_{S\supseteq J} \#\{\c\in \mathcal{M}_P \mid D(\c)=S\}\\\
&= \sum_{m} \sum_{S\supseteq J} \#\{\c\in \mathcal{M}_{[\hat{0},m]} \mid D(\c)=S\}\\\
&= \sum_{m} \left(\sum_{S\supseteq J} \beta_{[\hat{0},m]}(S) \right)\\
&= \sum_{S\subseteq [n-1]} \left(\sum_{m} \beta_{[\hat{0},m]}(S) \right)\chi_S(U_{i_1}U_{i_2}\cdots 
U_{i_k})
\end{align*}

It follows that 
$$
[L_{S,n}] ch(\chi_P) = [L_{S,n}]  ch\left(  \sum_{S\subseteq [n-1]} \left(\sum_{m} 
\beta_{[\hat{0},m]}(S) \right)\chi_S\right) = \sum_{m} \beta_{[\hat{0},m]}(S)
$$
which completes the proof.
\end{proof}

The analogue of Proposition~\ref{proposition:bijectionbetweensaturatedchains} for CW-labelings 
implies that there is a bijection between maximal chains of $P$ and $Q_{\lambda(P)}$  which 
preserves labels.  It follows that the local $H_n(0)$-action on 
$\mathbb{C}\mathcal{M}(P)$ can be also transported to a $H_n(0)$-action on 
$\mathbb{C}\mathcal{M}(Q_{\lambda})$.  It turns out that this action on the maximal chains of $Q_\lambda(P)$ is local.

\begin{lemma}
Let $P$ be a graded poset with a generalized CW-labeling $\lambda$.  The $0$-Hecke algebra action 
on $Q_\lambda(P)$ is local.
\end{lemma}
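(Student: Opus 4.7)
The plan is to unpack the transported action of $U_i$ on $\mathcal{M}(Q_\lambda(P))$ explicitly through the label-preserving bijection $\varphi$ of (the CW-labeling analogue of) Proposition~\ref{proposition:bijectionbetweensaturatedchains}, and then to exploit the consistency clause in Definition~\ref{definition:ranktwoC} to show that a single quadratic exchange in $P$ is compatible with every initial segment of the underlying maximal chain.

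First, I would fix $\C : ([\hat 0] = X_0 \lessdot X_1 \lessdot \cdots \lessdot X_n)$ in $\mathcal{M}(Q_\lambda(P))$ together with its image $\c = \varphi(\C) : (\hat 0 = x_0 \lessdot x_1 \lessdot \cdots \lessdot x_n)$ in $\mathcal{M}(P)$, where $x_j = e(X_j)$. Writing $\c_j = \{x_0, x_1, \dots, x_j\}$, the explicit construction of $\varphi^{-1}$ in the proof of Proposition~\ref{proposition:bijectionbetweensaturatedchains} gives $X_j = [\c_j]$, and by definition the transported action satisfies $U_i(\C) = \varphi^{-1}(U_i(\c))$. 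The case where $\c$ has no ascent at rank $i$ is immediate, so I would assume $\c$ has such an ascent and let $x_i'$ denote the unique element produced by the quadratic exchange, so that $U_i(\c)$ differs from $\c$ only by replacing $x_i$ by $x_i'$.

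Next, I would write $\varphi^{-1}(U_i(\c)) = ([\c_0'] \lessdot [\c_1'] \lessdot \cdots \lessdot [\c_n'])$ where $\c_j' = \c_j$ for $j < i$ and $\c_j' = \{x_0, \dots, x_{i-1}, x_i', x_{i+1}, \dots, x_j\}$ for $j \geq i$. Locality of the transported action then reduces to the claim $[\c_j'] = X_j$ for every $j \neq i$. The case $j < i$ is trivial since $\c_j' = \c_j$, so the entire argument hinges on establishing $[\c_j'] = [\c_j]$ for each $j > i$. To do this I would view $\c_j$ as a maximal chain of the rooted interval $[\hat 0, x_j]_{\emptyset}$; the C-labeling condition forces its induced labels to coincide with the first $j$ labels on $\c$, so $\c_j$ still has an ascent at rank $i$. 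The rank two switching property supplies a unique quadratic exchange there, and the consistency clause in Definition~\ref{definition:ranktwoC} guarantees that the replacement element depends only on the first $i+2$ elements of the chain. Since $\c_j$ and $\c$ coincide on those elements, that replacement must be $x_i'$, so the quadratic exchange on $\c_j$ produces exactly $\c_j'$. This yields a directed edge $\c_j \rightharpoonup \c_j'$ in $G_{[\hat 0, x_j]_{\emptyset}}$, whence $[\c_j'] = [\c_j] = X_j$.

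The main obstacle, if any, is recognizing that the consistency clause of Definition~\ref{definition:ranktwoC} is tailor-made for exactly this restriction argument: it forces the quadratic exchange on a maximal chain of $P$ to be determined by rank-two-local data and therefore to descend coherently to every initial segment $\c_j$ with $j \ge i+1$. Once this observation is in place, locality of the transported $H_n(0)$-action on $Q_\lambda(P)$ is a bookkeeping consequence of the explicit description of $\varphi^{-1}$ and the definition of $\sim_\lambda$.
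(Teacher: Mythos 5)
Your proof is correct and follows essentially the same route as the paper's: both arguments reduce locality of the transported action to the observation that the quadratic exchange at rank $i$ on $\c$ restricts to a quadratic exchange on each initial segment $\c_j$ for $j\geq i+1$, so that $[\c_j]=[\c'_j]$ in $Q_\lambda(P)$, and both rely on the explicit description of $\varphi^{-1}$ from the proof of Proposition~\ref{proposition:bijectionbetweensaturatedchains}. The only notable difference is one of emphasis: you make the role of the consistency clause in Definition~\ref{definition:ranktwoC} fully explicit, whereas the paper's proof invokes the equivalence $\c_j \sim \c'_j$ more tersely (the consistency point having already been flagged in the remark following that definition).
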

\begin{proof}
We must show that if we apply $U_i$  to any maximal chain $\d$ of $Q_\lambda(P)$, the chain we get 
agrees with $\d$ everywhere except possible at rank $i$.   Suppose that $\d$ and  $\d'
$ are maximal chains in $Q_\lambda(P)$ such that $U_i(\d) = \d'$ and $\d\neq \d'$.  Let $\c$ and 
$\c'$ be respectively the preimages of these chains under the label preserving bijection between 
$\M_{Q_\lambda(P)}$ and $\M_P$ described in the generalized CW-labeling version of 
Proposition~\ref{proposition:bijectionbetweensaturatedchains}.  Then $U_i(\c) =\c'$ and $\c \neq 
\c'$  Since the action on $P$ is local, we can write $\c: (\hat{0}=x_0\cover x_1\cover \cdots 
x_{i-1}\cover x_i \cover x_{i+1} \cover \cdots \cover x_n)$ and $\c': (\hat{0}=x_0\cover x_1\cover 
\cdots x_{i-1}\cover x'_i \cover x_{i+1} \cover \cdots \cover x_n)$.  Denote $\c_k$ the 
subchain formed by the smallest $k+1$ elements of $\c$ and $\c'_k$ the one formed by the smallest 
$k+1$ elements of $\c'$.  We have that $\c_j = \c'_j$ for all $0\leq j\leq i-1$ and $\c_i \neq 
\c'_i$, but $\c_j $ and $\c'_j$ are equivalent for $j\geq i+1$ (since one chain is obtained from 
the other after applying a quadratic exchange at rank $i$).  Thus the chains 
$[\c_0]\cover [\c_1]\cover \cdots \cover [\c]$ and $[\c'_0]\cover [\c'_1]\cover \cdots \cover [\c']$ 
 in $Q_\lambda(P)$ agree everywhere except at rank $i$.  Moreover, by the (generalized 
CW-labeling versions of) Lemma \ref{lemma:main} and the proof of 
Proposition~\ref{proposition:bijectionbetweensaturatedchains} these chains are exactly  $\d$ and 
$\d'$.  We conclude that the action on $Q_\lambda(P)$ is local.
\end{proof}

\begin{proposition}\label{charOfQ}
Let $P$ be a poset with a generalized CW-labeling $\lambda$.  For any maximal interval $I$ in 
$Q_\lambda(P)$, 
$$
ch(\chi_I) = \omega(F_I(\xx))
$$
\end{proposition}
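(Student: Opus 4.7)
The plan is to parallel the proof of Theorem \ref{CharacteristicOfAction} for the Whitney dual side, taking careful account of the fact that, although the $H_n(0)$-action on $\mathcal{M}_I$ is transported through the label-preserving bijection of the CW-analogue of Proposition \ref{proposition:bijectionbetweensaturatedchains} (so that $U_i$ fixes a chain $\d \in \mathcal{M}_I$ exactly when its corresponding chain in $P$ has no ascent at rank $i$), the labeling $\lambda^*$ on $Q_\lambda(P)$ is an ER${}^*$-labeling rather than an ER-labeling. This flip is precisely what will produce the $\omega$ in the statement.

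First I would fix a maximal interval $I$ in $Q_\lambda(P)$, a subset $J \subseteq [n-1]$, and a multiset $\{i_1,\dots,i_k\}$ whose underlying set is $J$. Exactly as in the proof of Theorem \ref{CharacteristicOfAction}, the locality of the action and the definition of $U_i$ (which does something nontrivial only at an ascent) imply
\begin{equation*}
\chi_I(U_{i_1}U_{i_2}\cdots U_{i_k}) = \#\{\d \in \mathcal{M}_I \mid A(\d) \cap J = \emptyset\} = \sum_{S \subseteq [n-1]\setminus J} \#\{\d \in \mathcal{M}_I \mid A(\d) = S\},
\end{equation*}
where $A(\d)$ denotes the ascent set of $\d$ under $\lambda^*$. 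Next I would invoke the fact recalled after Theorem \ref{FandOmegaF} that, since $\lambda^*$ is an ER${}^*$-labeling, $\beta_I(S) = \#\{\d \in \mathcal{M}_I \mid A(\d)= S\}$. Substituting this, and re-indexing via $T = [n-1] \setminus S'$, yields
\begin{equation*}
\chi_I(U_{i_1}\cdots U_{i_k}) = \sum_{S \subseteq [n-1]\setminus J} \beta_I(S) = \sum_{S' \supseteq J} \beta_I(S'^{\,c}) = \sum_{S' \subseteq [n-1]} \beta_I(S'^{\,c})\, \chi_{S'}(U_{i_1}\cdots U_{i_k}),
\end{equation*}
using that $\chi_{S'}(U_{i_1}\cdots U_{i_k}) = 1$ iff $J \subseteq S'$. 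Since this identity holds on all products of generators, it forces the virtual-character identity $\chi_I = \sum_{S'} \beta_I(S'^{\,c})\, \chi_{S'}$.

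Finally, I would apply the quasisymmetric characteristic map $ch(\chi_{S'}) = L_{S',n}$ and re-index once more to compare with $\omega(F_I(\xx))$:
\begin{equation*}
ch(\chi_I) = \sum_{S'} \beta_I(S'^{\,c})\, L_{S',n} = \sum_{S} \beta_I(S)\, L_{S^c,n} = \omega\!\left(\sum_S \beta_I(S)\, L_{S,n}\right) = \omega(F_I(\xx)),
\end{equation*}
where the last equality uses the description of $F_I(\xx)$ in terms of $\beta_I$ and the definition $\omega(L_{S,n}) = L_{S^c,n}$. The only subtle point, which I expect to be the main obstacle to set up cleanly, is the verification that the combinatorial description of $\beta_I(S)$ in terms of chain ascent sets transfers to each maximal interval $I$ of $Q_\lambda(P)$, rather than only to $Q_\lambda(P)$ as a whole; this follows because the induced labeling $\lambda^*$ restricted to $[\hat{0},m']$ inherits the ER${}^*$-property from the CW-analogue of Proposition \ref{proposition:QERStarLabel}, so the sum over maximal intervals in the definition $F_I(\xx) = \sum_{m'} F_{[\hat 0,m']}(\xx)$ interacts correctly with the descent-set enumeration.
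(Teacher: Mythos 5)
Your proposal is correct and follows the same route the paper takes. The paper's own proof is terse — it records the three facts you isolate (locality of the action, $U_i$ nontrivial exactly at an ascent of $\lambda^*$, and $\beta_I(S)=\#\{\c\in\mathcal{M}_I\mid D(\c)=S^c\}$ because $\lambda^*$ is an ER$^*$-labeling on $I$) and then says that a slight modification of the proof of Theorem \ref{CharacteristicOfAction} gives the result; you have spelled that modification out, and the computation with the re-indexing $S \leftrightarrow S^c$ that produces the $\omega$ is exactly the intended one.
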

\begin{proof}
First note that since the action on $Q_\lambda(P)$ is local,  the action only permutes maximal 
chains within maximal intervals of $Q_\lambda(P)$.   Also, just as with $P$,  $U_i(\c) \neq \c'$ if 
and only if $\c$ has ascent at rank level $i$.    Finally,  note that since the labeling on 
$Q_\lambda(P)$ (and hence on $I$) is an ER$^*$-labeling, we have that $\beta_{I}(S) =\{\c \in 
\mathcal{M}_I\mid D(\c) =S^c\}$ for any $S\subseteq [n-1]$.   With this in mind, one can check that 
a slight modification of the proof of Theorem~\ref{CharacteristicOfAction} gives the result.
\end{proof}

\begin{remark}
Note that $F_{Q_{\lambda}(P)}(\xx)=\sum_{I}(F_I(\xx))$ and that $\chi_{P}=\chi_{Q_{\lambda}(P)}=\sum_{I} \chi_I$, where the sums are over maximal intervals $I$ of $Q_{\lambda}(P)$. Hence we obtain  Theorem 
\ref{theorem:goodactions} as a corollary of Theorems \ref{FandOmegaF}  and \ref{CharacteristicOfAction}; and Proposition \ref{charOfQ}.
\end{remark}

A poset $P$  is called \emph{bowtie-free} if  there does not exist distinct $a,b,c,d\in P$ with $c 
\lessdot a$, $d \lessdot a$, $c \lessdot b$ and $d \lessdot b$.
In~\cite{McNamara2003}, McNamara showed that a bowtie-free poset $P$ with a $\hat{0}$ and a 
$\hat{1}$ has a local 
$H_n(0)$-action with the property that the characteristic of this action is $\omega(F_p(\xx))$ if and 
only if $P$ is snellable.   Additionally, he showed that if $P$ is a lattice, then $P$ is 
supersolvable.  Proposition~\ref{charOfQ} then   implies the following corollary.

\begin{corollary}\label{corollary:mcnamara}
Let $P$ be a poset with a generalized CW-labeling $\lambda$.  If $I$ is a maximal interval of 
$Q_\lambda(P)$ 
and is bowtie free, then $I$ is snellable.  Moreover, if $I$ is a lattice, then $I$ is 
supersolvable.
\end{corollary}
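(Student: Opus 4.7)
The plan is to reduce the claim to McNamara's characterization in \cite{McNamara2003} by restricting the global $H_n(0)$-action on $Q_\lambda(P)$ to each maximal interval $I$. First I note that any maximal interval $I$ of $Q_\lambda(P)$ automatically has a $\hat 0$ (namely $[\hat 0]$, the equivalence class of the empty chain, which is the minimum of $Q_\lambda(P)$) and a $\hat 1$ (its maximal element by definition), so $I$ satisfies the basic structural requirements needed to speak of snellability and supersolvability in McNamara's sense.

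Next I would verify that the $0$-Hecke action transports well to $I$. By the lemma immediately preceding Proposition \ref{charOfQ}, the $H_n(0)$-action on $Q_\lambda(P)$ obtained via the label-preserving bijection from $\M_P$ is local, meaning each generator $U_i$ modifies a maximal chain only at rank $i$. Since the bottom element of any maximal chain in a maximal interval $I$ is $\hat 0$ and its top element is the maximum of $I$, an operator $U_i$ cannot send a chain in $\M_I$ to a chain outside $\M_I$. Therefore the restriction of the action to $\mathbb C \M_I$ is well-defined and is itself a local $H_n(0)$-representation.

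Now I would invoke Proposition \ref{charOfQ}, which already computes that the quasisymmetric characteristic of $\chi_I$ is exactly $\omega(F_I(\xx))$. This is precisely the form of characteristic required to apply McNamara's theorem from \cite{McNamara2003}: a bowtie-free graded poset with $\hat 0$ and $\hat 1$ that admits a local $H_n(0)$-action on its set of maximal chains with characteristic $\omega(F_I(\xx))$ is snellable, and if in addition it is a lattice, it is supersolvable. Applying this theorem interval-by-interval to the bowtie-free (resp.\ lattice) maximal intervals of $Q_\lambda(P)$ yields both conclusions.

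The only potentially subtle point in this plan is confirming that the notion of ``local $H_n(0)$-action" used by McNamara agrees with ours and that his theorem genuinely requires only the characteristic identity plus the bowtie-free hypothesis, not some further structural property inherited from the ambient poset. Once that matching is observed, the corollary follows immediately without any new computation, since Proposition \ref{charOfQ} has already done all the representation-theoretic work and the restriction argument above handles the passage from $Q_\lambda(P)$ to individual maximal intervals.
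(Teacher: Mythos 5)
Your proposal is correct and takes essentially the same approach as the paper: Proposition~\ref{charOfQ} already establishes $ch(\chi_I)=\omega(F_I(\xx))$ (its proof begins by noting that locality restricts the action to each maximal interval), and the corollary then follows directly from McNamara's characterization of bowtie-free snellable posets and of supersolvable lattices. The extra detail you supply about the restriction of the $U_i$'s to $\M_I$ is correct but is exactly the observation already made at the start of the proof of Proposition~\ref{charOfQ}.
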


\section{Open questions and further work}
In this Section we leave a few open questions that are motivated by the present work. 
In Theorem \ref{theorem:WimpliesWhitney} we showed that posets that have Whitney labelings  
also have Whitney duals. It is reasonably to expect that there are posets without Whitney labelings  
that have Whitney duals. Indeed, the poset $\ISF_3$ has $\Pi_3$ as a Whitney dual. 
However $\ISF_3$ cannot have a Whitney labeling since in one of the maximal intervals the rank two 
switching property cannot be satisfied, see Figure \ref{figure:examplemobius}. We would like to 
know if there is a general characterization of graded posets that have Whitney duals that 
completely answers Question \ref{question:whitneydual1}.  Additionally, we would like to know if there are other
different and insightful methods of constructing Whitney duals, we propose the following question.
\begin{question}
 Is there a systematic way to construct a Whitney dual of $P$ without the use of labelings?
\end{question}

In the context of Whitney labelings we have provided two definitions: Whitney labelings and 
generalized Whitney labelings. Although the conditions of a generalized Whitney labeling are the 
ones we use in the proofs, all our examples satisfy the, a priori stronger, requirements of Whitney 
labelings.

\begin{question}
Are the families of Whitney labelable graded posets and of generalized Whitney labelable
graded posets equal?
\end{question}

Of main interest is also to better understand the structure of the posets $Q_{\lambda}(P)$ that are 
constructed using Whitney labelings $\lambda$ of $P$. We know from Theorem 
\ref{theorem:Qlambdanonisomorphic} that these posets are strongly dependent on $\lambda$ and in 
Theorem \ref{theorem:secondcaracterizationQlambda} we provide a different description of its 
poset structure.
\begin{question}
 Is there a nice way of characterizing all the posets that are of the form $Q_{\lambda}(P)$ for some 
poset $P$ and some Whitney labeling $\lambda$?
\end{question}

In light of Corollary \ref{corollary:mcnamara} determining the structural properties of the posets $Q_{\lambda}(P)$ also becomes relevant.

\begin{question}
Are all $Q_{\lambda}(P)$ lattices? If this is not the case, are all of them bowtie-free?
\end{question}

\section*{Acknowledgments}
The authors are extremely grateful to Michelle Wachs for the various stimulating discussions that led to the concept of Whitney duality and this project. The authors are also very thankful to Peter McNamara for many useful conversations.

\bibliographystyle{plain}
\bibliography{whitneydualref}

\begin{thebibliography}{10}

\bibitem{AdiprasitoHuhKatz2015}
Karim Adiprasito, June Huh, and Eric Katz.
\newblock {Hodge Theory for Combinatorial Geometries}.
\newblock {\em ArXiv e-prints}, November 2015.

\bibitem{Bjorner1980}
Anders Bj{\"o}rner.
\newblock {Shellable and {C}ohen-{M}acaulay partially ordered sets}.
\newblock {\em Trans. Amer. Math. Soc.}, 260(1):159--183, 1980.

\bibitem{Bjorner1982}
Anders Bj{\"o}rner.
\newblock {On the homology of geometric lattices}.
\newblock {\em Algebra Universalis}, 14(1):107--128, 1982.

\bibitem{BjornerWachs1982}
Anders Bj{\"o}rner and Michelle Wachs.
\newblock {Bruhat order of {C}oxeter groups and shellability}.
\newblock {\em Adv. in Math.}, 43(1):87--100, 1982.

\bibitem{BjornerWachs1983}
Anders Bj{\"o}rner and Michelle Wachs.
\newblock {On lexicographically shellable posets}.
\newblock {\em Trans. Amer. Math. Soc.}, 277(1):323--341, 1983.

\bibitem{BjornerWachs1996}
Anders Bj{\"o}rner and Michelle~L. Wachs.
\newblock {Shellable nonpure complexes and posets. {I}}.
\newblock {\em Trans. Amer. Math. Soc.}, 348(4):1299--1327, 1996.

\bibitem{BjornerWachs1997}
Anders Bj{\"o}rner and Michelle~L. Wachs.
\newblock {Shellable nonpure complexes and posets. {II}}.
\newblock {\em Trans. Amer. Math. Soc.}, 349(10):3945--3975, 1997.

\bibitem{DotsenkoKhoroshkin2007}
V.~V. Dotsenko and A.~S. Khoroshkin.
\newblock {Character formulas for the operad of a pair of compatible brackets
  and for the bi-{H}amiltonian operad}.
\newblock {\em Funktsional. Anal. i Prilozhen.}, 41(1):1--22, 96, 2007.

\bibitem{Ehrenborg1996}
Richard Ehrenborg.
\newblock {On posets and {H}opf algebras}.
\newblock {\em Adv. Math.}, 119(1):1--25, 1996.

\bibitem{DleonHallam2017}
Rafael~S. {Gonz{\'a}lez D'Le{\'o}n} and Joshua Hallam.
\newblock {Whitney Duals of Geometric Lattices}.
\newblock {\em S{\'e}minaire Lotharingien de Combinatoire}, XX:1--12, 2017.

\bibitem{DleonWachs2016}
Rafael~S. {Gonz{\'a}lez D'Le{\'o}n} and Michelle~L. Wachs.
\newblock {On the (co)homology of the poset of weighted partitions}.
\newblock {\em Trans. Amer. Math. Soc.}, 368(10):6779--6818, 2016.

\bibitem{Greene1988}
Curtis Greene.
\newblock {Posets of shuffles}.
\newblock {\em J. Combin. Theory Ser. A}, 47(2):191--206, 1988.

\bibitem{HallamMartinSagan2016}
Joshua {Hallam}, Jeremy~L. {Martin}, and Bruce~E. {Sagan}.
\newblock {Increasing spanning forests in graphs and simplicial complexes}.
\newblock {\em ArXiv e-prints}, October 2016.

\bibitem{Heron1972}
A.~P. Heron.
\newblock {Matroid polynomials}.
\newblock pages 164--202, 1972.

\bibitem{Hersh1999}
Patricia Hersh.
\newblock {Deformation of chains via a local symmetric group action}.
\newblock {\em Electron. J. Combin.}, 6:Research paper 27, 18, 1999.

\bibitem{HershMeszaros2017}
Patricia Hersh and Karola M{\'e}sz{\'a}ros.
\newblock {{$SB$}-labelings and posets with each interval homotopy equivalent
  to a sphere or a ball}.
\newblock {\em J. Combin. Theory Ser. A}, 152:104--120, 2017.

\bibitem{Huet1980}
G{\'e}rard Huet.
\newblock {Confluent reductions: Abstract properties and applications to term
  rewriting systems: Abstract properties and applications to term rewriting
  systems}.
\newblock {\em Journal of the ACM (JACM)}, 27(4):797--821, 1980.

\bibitem{McNamara2003}
Peter McNamara.
\newblock {E{L}-labelings, supersolvability and 0-{H}ecke algebra actions on
  posets}.
\newblock {\em J. Combin. Theory Ser. A}, 101(1):69--89, 2003.

\bibitem{Newman1942}
Maxwell Herman~Alexander Newman.
\newblock {On theories with a combinatorial definition of ``equivalence"}.
\newblock {\em Annals of mathematics}, pages 223--243, 1942.

\bibitem{Norton1979}
P.~N. Norton.
\newblock {{$0$}-{H}ecke algebras}.
\newblock {\em J. Austral. Math. Soc. Ser. A}, 27(3):337--357, 1979.

\bibitem{OrlikSolomon1980}
Peter Orlik and Louis Solomon.
\newblock {Combinatorics and topology of complements of hyperplanes}.
\newblock {\em Invent. Math.}, 56(2):167--189, 1980.

\bibitem{Reiner1978}
David~L. Reiner.
\newblock {The combinatorics of polynomial sequences}.
\newblock {\em Studies in Appl. Math.}, 58(2):95--117, 1978.

\bibitem{Reiner1997}
Victor Reiner.
\newblock {Non-crossing partitions for classical reflection groups}.
\newblock {\em Discrete Math.}, 177(1-3):195--222, 1997.

\bibitem{rota1964}
Gian-Carlo Rota.
\newblock {On the foundations of combinatorial theory. {I}. {T}heory of
  {M}{\"o}bius functions}.
\newblock {\em Z. Wahrscheinlichkeitstheorie und Verw. Gebiete}, 2:340--368
  (1964), 1964.

\bibitem{rota1971}
Gian-Carlo Rota.
\newblock {Combinatorial theory, old and new}.
\newblock pages 229--233, 1971.

\bibitem{Sagan1983}
Bruce~E. Sagan.
\newblock {A note on {A}bel polynomials and rooted labeled forests}.
\newblock {\em Discrete Math.}, 44(3):293--298, 1983.

\bibitem{Simion1997}
Rodica Simion.
\newblock {Noncrossing partitions}.
\newblock {\em Discrete Math.}, 217(1-3):367--409, 2000.
\newblock Formal power series and algebraic combinatorics (Vienna, 1997).

\bibitem{SimionStanley1999}
Rodica Simion and Richard~P. Stanley.
\newblock {Flag-symmetry of the poset of shuffles and a local action of the
  symmetric group}.
\newblock {\em Discrete Math.}, 204(1-3):369--396, 1999.

\bibitem{Stanley1972}
Richard~P. Stanley.
\newblock {\em {Ordered structures and partitions}}.
\newblock American Mathematical Society, Providence, R.I., 1972.
\newblock Memoirs of the American Mathematical Society, No. 119.

\bibitem{Stanley1973}
Richard~P. Stanley.
\newblock {Acyclic orientations of graphs}.
\newblock {\em Discrete Math.}, 5:171--178, 1973.

\bibitem{Stanley1974}
Richard~P. Stanley.
\newblock {Finite lattices and {J}ordan-{H}{\"o}lder sets}.
\newblock {\em Algebra Universalis}, 4:361--371, 1974.

\bibitem{Stanley1994}
Richard~P. Stanley.
\newblock {A survey of {E}ulerian posets}.
\newblock In {\em {Polytopes: abstract, convex and computational
  ({S}carborough, {ON}, 1993)}}, volume 440 of {\em {NATO Adv. Sci. Inst. Ser.
  C Math. Phys. Sci.}}, pages 301--333. Kluwer Acad. Publ., Dordrecht, 1994.

\bibitem{Stanley1997}
Richard~P. Stanley.
\newblock {Parking functions and noncrossing partitions}.
\newblock {\em Electron. J. Combin.}, 4(2):Research Paper 20, approx. 14, 1997.
\newblock The Wilf Festschrift (Philadelphia, PA, 1996).

\bibitem{Stanley2012}
Richard~P. Stanley.
\newblock {\em {Enumerative combinatorics. {V}olume 1}}, volume~49 of {\em
  {Cambridge Studies in Advanced Mathematics}}.
\newblock Cambridge University Press, Cambridge, second edition, 2012.

\bibitem{Wachs2007}
Michelle~L. Wachs.
\newblock {Poset topology: tools and applications}.
\newblock In {\em {Geometric combinatorics}}, volume~13 of {\em {IAS/Park City
  Math. Ser.}}, pages 497--615. Amer. Math. Soc., Providence, RI, 2007.

\bibitem{Welsh1976}
D.~J.~A. Welsh.
\newblock {\em {Matroid theory}}.
\newblock Academic Press [Harcourt Brace Jovanovich, Publishers], London-New
  York, 1976.
\newblock L. M. S. Monographs, No. 8.

\bibitem{Zaslavsky1975}
Thomas Zaslavsky.
\newblock {Facing up to arrangements: face-count formulas for partitions of
  space by hyperplanes}.
\newblock {\em Mem. Amer. Math. Soc.}, 1(issue 1, 154):vii+102, 1975.

\end{thebibliography}
\end{document}